\newtheorem{satz}{Theorem}[section]
\newtheorem{lemma}{Lemma}[section]
\newtheorem{bemerk1}{Remark}[section]
\newtheorem{korollar}{Corollary}[section]
\newtheorem{prop}{Proposition}[section]
\newcommand{\bvr}{\overline{\Phi}(r)}
\newcommand{\iR}{\mathbb{R}}
\newcommand{\iN}{\mathbb{N}}
\newcommand{\oH}{\hspace*{0.39em}\raisebox{0.6ex}{\textdegree}\hspace{-0.72em}H}
\DeclareMathOperator*{\eosc}{ess\,osc}
\DeclareMathOperator*{\esup}{ess\,sup}
\DeclareMathOperator*{\einf}{ess\,inf}
\newcommand{\dd}{d\mu(\alpha)}
\newcommand{\ki}{k_{1}}
\newcommand{\kjr}{k_{1}(\Phi(r))}
\newcommand{\tl}{\tilde{l}}
\newcommand{\epz}{\ep_{0}}
\newcommand{\supp}{\mathop{\mathrm{supp}}\limits}
\newcommand{\vs}{\varsigma}
\newcommand{\ka}{\kappa}
\newcommand{\muad}{\mu(\al) d \al}
\newcommand*{\norm}[1]{\left\Vert{#1}\right\Vert}
\newcommand*{\abs}[1]{\left\vert{#1}\right\vert}
\newcommand*{\om}{\omega}
\newcommand*{\Om}{\Omega}
\newcommand*{\mr}{\mathbb{R}}
\newcommand*{\izi}{\int_{0}^{\infty}}
\newcommand*{\izt}{\int_{0}^{t}}
\newcommand*{\izj}{\int_{0}^{1}}
\newcommand*{\p}{\partial}
\newcommand*{\dm}{D^{(\mu)}}
\newcommand*{\da}{D^{\alpha}}
\newcommand*{\im}{I^{(\mu)}}
\newcommand*{\al}{\alpha}
\newcommand*{\ma}{\mu(\alpha)}
\newcommand*{\mg}{\frac{\mu(\alpha)}{\Gamma(1-\alpha)}}
\newcommand*{\vf}{\varphi}
\newcommand*{\ve}{\varepsilon}
\newcommand{\podst}[2]{\left\{ \begin{array}{ll} #1 \\  #2 \\  \end{array} \right\}}
\def\Re{\operatorname{Re}}
\def\Im{\operatorname {Im}}
\def\arg{\operatorname{arg}}
\def\Arg{\operatorname{Arg}}
\def\divv{\operatorname {div}}
\newcommand{\eqq}[2]{\begin{equation}  #1  \label{#2}\end{equation}    }
\newcommand{\hd}{\hspace{0.2cm}}
\newcommand{\no}{\noindent}
\newcommand{\m}[1]{\mbox{#1}}
\newcommand{\R}{\mathbb{R}}
\newcommand{\jd}{\frac{1}{2}}
\newcommand{\ep}{\varepsilon}
\newcommand{\vp}{\varphi}
\newcommand{\ra}{\partial^{\alpha}}
\newcommand*{\esssup}{\mathop{\mathrm{ess\hspace{0.05cm}sup}}\limits}
\newcommand*{\essinf}{\mathop{\mathrm{ess\hspace{0.05cm}inf}}\limits}
\newcommand*{\gm}{\bar{\gamma}_{-}}
\newcommand*{\gp}{\bar{\gamma}_{+}}
\newcommand*{\gpb}{\gamma_{+}}
\newcommand*{\gmb}{\gamma_{-}}
\newcommand*{\imp}{\int_{\gm}^{\gp}}
\newcommand{\vr}{\Phi(r)}
\newcommand{\vdr}{\Phi(2r)}
\newcommand{\as}{\alpha_{*}}
\newcommand{\eqns}[1]{
\begin{eqnarray*}
\begin{split}
#1
\end{split}
\end{eqnarray*}
}
\newcommand{\eqnsl}[2]{
\begin{equation}
\label{#2}
\begin{split}
#1
\end{split}
\end{equation}
}
\newcommand{\vsi}{\varsigma}
\newcommand{\kaf}{\widetilde{\kappa}}
\newcommand{\muf}{\tilde{\nu}}
\newcommand{\inag}{\int_{0}^{\as}}
\newcommand{\sinpa}{\sin(\pi \al)}
\newcommand{\cospa}{\cos(\pi \al )}
\newcommand{\pal}{p^{\al}}
\newcommand{\ps}{p^{*}}
\newcommand{\naw}[1]{\left(  #1  \right)}
\newcommand{\ti}[1]{\tilde{#1}}
\newcommand{\tss}{t_{**}}
\newcommand{\tit}{\ti{t}}
\newcommand{\vkrl}{\overline{\Phi}(2^{-l}r)}
\newcommand{\vkrll}{\overline{\Phi}(2^{-(l+1)}r)}
\newcommand{\blrj}{B_{2^{-l}r}(x_{1})}
\newcommand{\bllrj}{B_{2^{-(l+1)}r}(x_{1})}
\newcommand{\bllrt}{B_{2^{-(l+1)}\cdot \frac{3}{2}r}(x_{1})}
\newcommand{\tw}{\tilde{w}}
\newcommand{\tv}{\tilde{v}}
\newcommand{\Qdl}{Q(2^{-l})}
\newcommand{\Qdll}{Q(2^{-(l+1)})}
\newcommand{\dml}{2^{-l}r}
\newcommand{\nic}[1]{ }
\newcommand{\Gf}{\tilde{G}}
\newcommand{\lf}{\tilde{l}}
\newcommand{\Pk}{\overline{\Phi}}
\newcommand{\Pkr}{\overline{\Phi}(r)}
\newcommand{\Pkro}{\overline{\Phi}(\rok)}
\newcommand{\tQ}{\tilde{Q}}
\newcommand{\rok}{\tilde{\rho}}
\newcommand{\nuNj}{\nu_{N+1}}
\newcommand{\nuN}{\nu_{N}}
\begin{document}
%*************************************************************************
\begin{center}
{\bf\Large H\"older continuity  of weak solutions to evolution equations with distributed order fractional time derivative}
\renewcommand{\thefootnote}{\fnsymbol{footnote}}
%\footnote[1]{Work partially supported by }
\end{center}
\vspace{1.7em}
\begin{center}
Adam Kubica*, Katarzyna Ryszewska*, Rico Zacher${}^{\diamond}$
\end{center}

\vspace{1cm}
{\footnotesize \noindent \nic{{\bf address:} }*Department of Mathematics and Information Sciences\\
Warsaw University of Technology\\
pl. Politechniki 1, 00-661 Warsaw, Poland \\
Adam.Kubica@pw.edu.pl \\
Katarzyna.Ryszewska@pw.edu.pl \\
\\
${}^{\diamond}$Institute of Applied Analysis \\
University of Ulm \\
89069 Ulm, Germany \\
rico.zacher@uni-ulm.de \\
} \vspace{0.7em}
\begin{abstract}
We study the regularity of weak solutions to evolution equations with distributed order fractional time derivative. We prove a weak Harnack inequality for nonnegative weak supersolutions and H\"older continuity of weak solutions to this problem. Our results substantially generalise analogous known results for the problem with single order fractional time derivative.
\end{abstract}
\vspace{0.7em}
\begin{center}
{\bf AMS subject classification:} 35R09, 45K05, 35B65
\end{center}

\noindent{\bf Keywords:} weak Harnack inequality, H\"older continuity, Moser iterations,
distributed order fractional derivative, weak solutions,
subdiffusion equations, anomalous diffusion
%*************************************************************************
\section{Introduction and main result}
%*************************************************************************
In this paper we study the local H\"older continuity of weak solutions to the following problem
\begin{equation} \label{MProb}
\partial_{t} [k * (u-u_{0})] -\mbox{div}\,\big(A(t,x)Du\big)=f,\quad t\in (0,T),\,x\in
\Omega,
\end{equation}
where $T>0$, $\Omega$ is a bounded domain in $\iR^N$, $N \geq 1$, $u_0$ is a given initial data, $f$ is a bounded function, $A=(a_{ij})$ is an $\mathbb{R}^{N\times N}$-valued given function and $k$ is the kernel related to the distributed order fractional derivative.  Here $Du$ stands for the gradient w.r.t.\ to the spatial variables and $f_1\ast f_2$ is the
convolution on the positive half-line w.r.t.\ time, that is
$(f_1\ast f_2)(t)=\int_0^t f_1(t-\tau)f_2(\tau)\,d\tau$, $t\ge 0$. In the paper, it is merely assumed that the coefficients $A(t,x)$ are measurable, bounded and uniformly elliptic.

In the classical case, which formally corresponds to $k$ being the Dirac delta distribution so that the integro-differential operator w.r.t.\ time in (\ref{MProb}) becomes the classical time derivative, there is a well known De Giorgi-Nash-Moser regularity theory, see for example \cite{LSU}, \cite{Lm}, \cite{Trud}. Here, we study the problem with memory, %i.e. we assume that $k$ is an integrable, non negative function  and in addition,  some structure conditions are imposed onto $k$.
in particular, $\partial_{t} [k * (u-u_{0})] $ covers such cases as the Caputo fractional derivative, multi-term fractional derivatives or a 'purely  distributed' fractional derivative.

Before describing the structure condition on $k$ we briefly give (only formally) some motivation for equation (\ref{MProb}) from physics. Assume that there exists a function $l\in L_{1,loc}(\iR_+)$ such that $k*l\equiv 1$. Let us further suppose that the (scalar) quantity $u$ is conserved in any (smooth) subdomain $V$ of the domain $\Omega$, i.e.
\eqq{\frac{d}{dt}  \int_{V} u(t,x)dx = - \int_{\partial V}  q(t,x) dS(x) ,}{wwa}
$q$ is the corresponding flux. It is well known that in case $q(t,x)= - A(t,x)D u(t,x)$, i.e.\  Fick's first law, (\ref{MProb}) becomes a classical diffusion equation. However, in many applications, especially in non-homogeneous materials this constitutive law is not appropriate and other forms of the flux are proposed (see for example \cite{Metz}, \cite{Voller}). Let us consider $q(t,x)= - \partial_{t} [ l* A(\cdot,x)D u(\cdot,x)](t) $,
 where $l\not \equiv 1$ (if $l\equiv 1 $, then it is Fick'a law), then  the flux clearly depends on all the values of $D u(\tau,x)$ for $\tau \in (0,t)$. In this case (\ref{wwa}) takes the form
\eqq{\frac{d}{dt}  \int_{V} u\,dx =  \int_{\partial V}  \partial_{t} [ l* (A D u)]\, dS(x) .}{wwb}
Assuming that the functions under consideration are smooth enough we arrive at
\[
\frac{d}{dt}  \int_{V} u\,dx =  \frac{d}{dt} \big[ l*    \int_{\partial V}   AD u \,dS(x)\big].
\]
We next apply the divergence theorem and convolve both sides with $k$ to obtain
\[
k*\frac{d}{dt}  \int_{V} u\, dx =  k*\frac{d}{dt} [ l*    \int_{ V} \divv\left(  AD u  \right)dx],
\]
i.e.
\[
 \int_{ V}\partial_{t} [k * (u-u|_{t=0})]\, dx  = \frac{d}{dt}\big[k*l*\int_{ V} \divv (ADu)\, dx\big].
\]
Since $k*l=1$ and $V $ is arbitrary, we arrive, at least formally, at (\ref{MProb}).
%The  memory effect is well visible if we integrate (\ref{wwb}) with respect to $t$, then
%\[
%\int_{V} u(t,x) dx - \int_{V} u(0,x) dx = \int_{0}^{t} l(t-\tau )   \int_{\partial V}   A(\tau,x)D u(\tau,x)  dS(x) d \tau.
%\]
%Hence, the change of $u$ in the volume $V$ may be interpreted as the average  of local flux on the boundary taken with the weight $l(t-\cdot)$, i.e. $u(t,x)$ is determined by the values of $D u(\tau,x)$ for $\tau \in (0,t)$.

%Equation (\ref{MProb}) is also interesting from the mathematical point of view and the question of regularity of weak solutions seems to be nontrivial. One of the main obstacles is a lack of scaling invariant solutions, which is the case of the heat equation ($|x|\sim t^{2}$) or the fractional diffusion equation with the Caputo derivative of $\al$-order  ($|x|\sim t^{\frac{2}{\al}}$). %However, under some structural assumption imposed on $k$ this problem may be overcome and in fact weak solution are H\"older continuous.

Let us now describe the class of kernels $k$ to be studied in this paper. Let $\{\alpha_{n}\}_{n=1}^{M}$ satisfy
\[
0<\al_{1}<\al_{2}<\dots <\al_{M}<1,
\]
$q_{n}$, $n=1, \dots, M$, be nonnegative numbers, and $w\in L_{1}((0,1))$ be nonnegative. We define the measure $\mu$ on the Borel sets in $\iR$ by
\eqq{d\mu=\sum_{n=1}^{M}q_{n}d\delta(\cdot - \al_{n})+w d \nu_{1},  }{bo1}
where $\delta(\cdot - \al_{n})$ is the Dirac measure at $\alpha_n$ and $\nu_{N}$ denotes the $N$-dimensional Lebesgue measure. Here we allow the first or the second component in the above
representation to vanish, but we always assume that $\mu \not \equiv 0$. Then we define
\begin{equation} \label{kintro}
 k(t):=\izj \frac{t^{-\al} }{\Gamma(1-\al)} \dd,\quad t>0,
\end{equation}
where $\Gamma$ is the Gamma function. Having introduced the kernel $k$, the associated distributed order fractional derivative (of Caputo type) of a sufficiently smooth function $v$ is given by
\[
\dm v:= \partial_{t} [k* (v-v(0))].
\]
We note that the notation $\dm$ coincides with the notation from \cite{decay} and \cite{nasza}.
We also point out that concerning integration w.r.t.\ $\mu$ we use the following convention. If $0\leq a<b\leq 1$, then $\int_{a}^{b} h(\al) \dd =\int_{(a,b]} h(\al) \dd$, but $\int_{a}^{a}h(\al) \dd = h(\al_{n}) q_{n}$ if $a=\al_{n}$ for some $n\in \{ 1,\dots, M\}$ and $0$ elsewhere.

There is a vast literature on diffusion equations with single order fractional time derivative, i.e. $\mu = \delta(\cdot-\al)$ with some $\al\in (0,1)$ and $k(t) = \frac{t^{-\al} }{\Gamma(1-\al)}$, which form an important class of {\em subdiffusion} equations. In fact, they can be used to model diffusive particles
with a mean squared displacement behaving as a multiple of $t^\alpha$ (\cite{Metz}). Diffusion equations with distributed order fractional derivatives
appear in the context of ultra-slow diffusion, see \cite{KochDO, SCK}. Here, the mean squared displacement might only have a logarithmic growth
in time.

\nic{We next introduce some notation which is frequently used in the paper. Observe that there exist $0<\gmb\leq \gpb<1$ such that
\eqq{\int_{\gmb}^{1}\dd >0.}{intmugk2}
and
\eqq{\int_{\gmb}^{\gpb}\dd >0.}{intmugkgmp}
Of course, the constants $\gmb$ and $\gpb$ are not uniquely defined, however in the arguments, we take fixed $\gmb$ and $\gpb$ satisfying property (\ref{intmugkgmp}). Usually we will take $\gmb$ and $\gpb$ close to the right endpoint of the support of $\mu$, for example
if $\izj w(\al) d\al  > 0$ then there exist some $0< \gm <\gp <1$ such that
\[
\imp  w(\al)d\al  > 0.
\]
For such $\gm$ and $\gp$ we set
\eqq{
\gpb= \max\{\gp,\al_{M}\}\m{ and }\gmb= \max\{\gm,\al_{M}\}.
}{fin1}
If $w \equiv 0$ we set just $\gpb=\gmb = \al_{M}$ and if $\mu = wd\nu_{1}$, then we set $\gpb=\gp$, $\gmb=\gm$.}

The regularity theory for weak solutions to the problem with single order fractional time derivative has been established in a series of papers by Zacher: \cite{Za2} (boundedness of weak solutions), \cite{base} (weak Harnack inequality for nonnegative weak supersolutions), and \cite{Zhol} (H\"older regularity of weak solutions).
Later, for the problem with single order fractional time derivative and fractional diffusion in space, H\"older continuity of weak solutions was proved in \cite{Caf}, and a weak Harnack inequality was derived in \cite{JPY}. In contrast to the classical parabolic De Giorgi-Nash-Moser
theory, the full Harnack inequality (in its usual form) fails to hold for nonnegative solutions to time-fractional diffusion equations (local or nonlocal in space) if the space dimension is at least two, see
\cite{DKSZ}.

It is worth emphasising that among these results, only the boundedness of weak solutions to (\ref{MProb}) have been proved with a more general kernel $k$ (\cite{Za2}). To describe the result, following \cite{ZWH}, a kernel $k\in L_{1,\,loc}(\iR_+)$ is called to be of type
$\mathscr{PC}$ if it is nonnegative and nonincreasing, and there
exists a kernel $l\in L_{1,\,loc}(\iR_+)$ such that $k\ast l=1$ in
$(0,\infty)$; $(k,l)$ is then a so-called $\mathscr{PC}$ pair. In this case, $l$ is completely
positive (cf.\ Thm. 2.2 in \cite{CN}) and thus nonnegative. The assumption on $k$ in the boundedness results from \cite{Za2} is now that $k$
is of type $\mathscr{PC}$ and that the corresponding kernel $l$ belongs to
$L_{p}((0,T))$ for some $p>1$. This assumption covers a wide class of kernels, including those considered in this paper.

So, regularity of weak solutions beyond boundedness has been established only in the case
of a single order fractional time derivative, i.e.\ $\mu = \delta(\cdot-\al)$. In this paper, we substantially generalise the above-mentioned results by developing a De Giorgi-Nash-Moser theory for evolution equations of the form \eqref{MProb} with a general distributed order fractional time derivative, i.e.\ the kernel $k$ is as described above in \eqref{bo1}, \eqref{kintro}. We establish a weak Harnack inequality for
nonnegative weak supersolutions and prove interior H\"older continuity of weak solutions to (\ref{MProb}). Our unified approach contains in particular such special cases as: single order fractional time derivatives ($\mu = \delta(\cdot-\al)$), multi-term fractional time derivatives ($\mu = \sum_{n=1}^{M}q_{n}\delta(\cdot - \al_{n})$) and 'purely distributed' fractional time derivatives ($d\ma = w(\al)d\nu_{1}(\al)$). One of the advantages of our approach is that we only assume nonnegativity and integrability of the weight function $w$. In the literature, the distributed order derivative is usually considered with a more regular weight $w$, and the authors frequently impose additional assumptions concerning the behaviour of $w$ near the endpoints of the interval $[0,1]$. In our treatment, we develop further some ideas introduced by the first two authors in \cite{decay} and \cite{nasza}, which allows us to work with very general measures $\mu$.

Scaling properties of the equations play an important role in De Giorgi-Nash-Moser regularity theory as the scaling indicates how the
local sets (time-space cylinders) are to be selected. Without suitable geometry, the iteration techniques of De Giorgi and Moser do not work.
We point out that, although the problem for a single fractional time derivative of order $\al\in (0,1)$ admits a natural scaling with similarity variable $s=|x|^2/t^\alpha$, this is no longer the case for the distributed order derivative. In fact, the {\em lack of natural scaling} is one of the biggest obstacles in establishing the regularity theory for (\ref{MProb}). We overcome this difficulty by introducing appropriate time-space cylinders whose shape depends on the kernel $k$.

In the proof of the weak Harnack inequality we mostly follow the approach of \cite{base}, i.e.\ we establish mean-value inequalities by means of Moser iteration schemes, prove weak $L_1$ estimates for the logarithm of the supersolution, and apply a lemma of Bombieri and Giusti. However, we point out that, although the
general idea of the proof and some basic estimates have been taken from \cite{base}, our case is much more involved. In particular, two crucial ingredients in the proof of the weak Harnack inequality are Lemma~\ref{scaling} and Lemma~\ref{rtheta}. It should be emphasised that these results are much easier to obtain if we assume that the support of the measure $\mu$ is cut-off from one.
The solution of the problem in the whole generality requires not only more careful and complicated calculations in the proofs of Lemma \ref{scaling} and Lemma \ref{rtheta}, but also a completely new argument in the essential part of the logarithmic estimates.
As a by-product, the latter leads to a significant improvement of Zacher's results on the single order case from \cite{base} by establishing
the {\em robustness of the estimates as $\alpha\to 1$}.

Having obtained the weak Harnack inequality, we use it to prove the H\"older continuity of weak solutions to (\ref{MProb}).
In addition to the substantial generalisation of the result in \cite{Zhol} on the single order case, another novelty is that,
in contrast to \cite{Zhol}, our argument is based on the weak Harnack inequality, which allows for a much less involved proof.
Even if the method of Harnack inequalities to prove regularity is well known, our argument seems to be new in the context of temporally non-local equations.

\nic{Let us introduce the Riemann-Liouville fractional derivative $\p^{\al}f(t) = \frac{1}{\Gamma(1-\al)}\frac{d}{dt}\izt (t-\tau)^{\al}f(\tau)d\tau$. By $\da f $ we denote the fractional Caputo derivative $\da f(t)= \ra [f(\cdot)-
f(0)](t)$, where $\alpha \in [0, 1]$.  Then we define the distributed order Caputo derivative
\eqq{\dm f(t)=\sum_{n=1}^{M} q_{n} D^{\al_n}f(t)+\int_{0}^{1}(\da f)(t) w(\al) d\al.
}{disCapdet}
We will use  the following shortcut
\eqq{\dm f (t) = \int_{0}^{1}(\da f)(t) \dd.}{disCap}
Similarly we denote
\[
\p^{(\mu)}f(t) = \int_{0}^{1}(\p^{\al} f)(t) \dd.
\]
Formula (\ref{disCap}) may be also written as a convolution
\[
\dm f(t) = \frac{d}{dt}(k*(f-f(0))), \m{ where } k(t):=\izj \frac{t^{-\al} }{\Gamma(1-\al)} \dd.
\]
}
Before we formulate the results, let us introduce the basic assumptions concerning $A$, $u_0$, and $f$. Letting $\Omega_T=(0,T)\times \Omega$ we will assume that
\begin{itemize}
\item [{\bf (H1)}] $A\in L_\infty(\Omega_T;\iR^{N\times
N})$, and
\[
\sum_{i,j=1}^N|a_{ij}(t,x)|^2\le \Lambda^2,\quad \mbox{for
a.a.}\;(t,x)\in \Omega_T.
\]
\item [{\bf (H2)}] There exists a $\nu>0$ such that
\[
\big(A(t,x)\xi|\xi\big)\ge \nu|\xi|^2,\quad\mbox{for a.a.}\;
(t,x)\in\Omega_T,\; \mbox{and all}\;\xi\in \iR^N.
\]
\item [{\bf (H3)}] $u_0\in L_2(\Omega)$ and $f\in L_2(\Omega_T)$.
\end{itemize}

We say that a function $u$ is a {\em weak solution (subsolution,
supersolution)} of (\ref{MProb}) in $\Omega_T$, if $u$ belongs to
the space
\[
Z:=\{v \in L_{2}((0,T);H^1_2(\Omega))\;\mbox{such that}\; k*v \in C([0,T];L_{2}(\Omega)), \mbox{and}\; (k*v)|_{t=0} = 0\}
\]
and for any nonnegative test function
\[
\eta\in \oH^{1,1}_2(\Omega_T):=H^1_2((0,T);L_2(\Omega))\cap
L_2((0,T);\oH^1_2(\Omega)) \quad\quad
\Big(\oH^1_2(\Omega):=\overline{C_0^\infty(\Omega)}\,{}^{H^1_2(\Omega)}\Big)
\]
with $\eta|_{t=T}=0$ we have
\begin{equation} \label{BWF}
\int_{0}^{T} \int_\Omega \Big(-\eta_t [k\ast (u-u_0)]+
(ADu|D \eta)\Big)\,dxdt= \,(\le,\,\ge )\, \int_{0}^{T} \int_\Omega f\eta \, dxdt.
\end{equation}

Weak solutions of (\ref{MProb}) in the class $Z$ have been
constructed in \cite{ZWH} under the assumptions (H1)--(H3). Note that the function $u_0$ plays
the role of the initial data for $u$, at least in a weak sense. In
case of sufficiently regular functions $u$ and
$k\ast(u-u_0)$ the condition $(k\ast
u)|_{t=0}=0$ implies $u|_{t=0}=u_0$, see \cite{ZWH} and Section 3.5 in \cite{nasza}.

%The main results of this paper are the weak Harnack inequality and, as a consequence, the interior H\"older continuity of weak solutions to (\ref{MProb}).

Next we describe the geometry of the time-space cylinders appearing in our estimates.
As already mentioned, the choice of the right geometry is crucial for the De Giorgi-Nash-Moser techniques to work.
In our case, the dependence of the shape of the cylinders on the kernel $k$ is more complicated than in the single order case,
where only the order $\alpha\in (0,1)$ determines the geometry. By $B(x,r)$ we denote the open ball with
radius $r>0$ centered at $x\in \iR^N$ and recall that by  $\nuN$ we mean the
Lebesgue measure in $\iR^N$. We then set
\eqq{
\ki(t) = \izj t^{-\al} \dd,\quad t>0.
}{kidef}
We will show in Lemma~\ref{fi} that there is unique increasing function $\Phi \in C([0,\infty))\cap C^{1}((0,\infty))$ such that $\Phi(0) = 0$ and $k_1(\vr) = r^{-2}$ for all $r>0$. With this function, for $\delta\in(0,1)$, $t_0\ge 0$,
$\tau>0$, and a ball $B(x_0,r)$, we then consider the cylinders
\eqnsl{
Q_-(t_0,x_0,r, \delta)&=(t_0,t_0+\delta\tau \Phi(2r))\times B(x_0,\delta r),\\
Q_+(t_0,x_0,r,\delta)&=(t_0+(2-\delta)\tau \Phi(2r),t_0+2\tau
\Phi(2r))\times B(x_0,\delta r).
}{defQpm}
We note that in the single fractional order case, i.e.\  $\mu = \delta(\cdot-\beta)$ with some $\beta\in (0,1)$ we have $\Phi(r) = r^{\frac{2}{\beta}}$, which leads to the cylinders used in \cite{base}.

Let us now present the main results of the article. For this purpose we fix a number $\gmb\in (0,1)$ as large as possible for which
\eqq{\int_{\gmb}^{1}\dd >0.}{intmugk}
Note that here the supremum need not be assumed. The larger $\gmb$, the larger is the critical exponent in the weak Harnack inequality. We have the following theorem.
%*************************************************************************
\begin{satz} \label{localweakHarnack}
Let $T>0, N \geq 1$, and $\Omega\subset \iR^N$ be a bounded
domain. Suppose the assumptions (H1)--(H3) are satisfied. For any $0<p<\frac{2+N\gmb}{2+N\gmb - 2\gmb}$ and any $\tau > 0$ there exists a number  $r^*=r^*(\mu,p)>0$ such that for every
 $\delta\in(0,1)$ and for
any $t_0\ge 0$, any $r\in (0,r^*]$ with $t_{0}+2\tau \Phi(2r) \leq T$, any
ball $B(x_0, r)\subset\Omega$, and any nonnegative weak
supersolution $u$ of (\ref{MProb}) in $(0,t_0+2\tau
\Phi(2r))\times B(x_0, r)$ with $u_0\ge 0$ in $B(x_0,
r)$ and $f\equiv 0$, there holds
\begin{equation} \label{localwHarnackF}
\Big(\frac{1}{\nuNj\big(Q_-(t_0,x_0,r, \delta)\big)}\,\int_{Q_-(t_0,x_0,r, \delta)}u^p\,d\nuNj\Big)^{1/p}
\le C \einf_{Q_+(t_0,x_0,r, \delta)} u,
\end{equation}
where $C=C(\nu,\Lambda,\delta,\tau,\mu,N,p)$.
\end{satz}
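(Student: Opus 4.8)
The plan is to follow the Moser–Bombieri–Giusti scheme, as in \cite{base}, adapted to the kernel-dependent geometry encoded by $\Phi$. Recall the Bombieri–Giusti lemma: if a nonnegative function $v$ on a family of nested sets satisfies (i) a mean-value inequality of the form $\esup_{U_{\sigma'}} v^{p_0} \le C (\sigma-\sigma')^{-\beta} |U|^{-1} \int_{U_\sigma} v^{p}$ for all $0<p\le p_1$ and all $\sigma'<\sigma$ in a suitable range, and (ii) a weak $L_1$-type bound $|\{\log v > s\}\cap U| \le C|U|/s$, then $\big(|U|^{-1}\int_U v^{p_0}\big)^{1/p_0}\le C\,\esup_U v^{-1}$-type control holds; applied to $v=1/(u+\varepsilon)$ (and then sending $\varepsilon\to 0$) this yields precisely \eqref{localwHarnackF}. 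So the proof decomposes into: establishing the mean-value (reverse Hölder / $L^p$-to-$L^\infty$) estimates on the lower cylinder $Q_-$ via Moser iteration, establishing the logarithmic weak-$L_1$ estimate bridging $Q_-$ and $Q_+$, and verifying that the geometric parameters match the hypotheses of the Bombieri–Giusti lemma with the exponent $p_0$ up to $\frac{2+N\gmb}{2+N\gmb-2\gmb}$.

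First I would derive the energy inequality for a nonnegative weak supersolution $u$ (with $f\equiv 0$, $u_0\ge 0$): testing \eqref{BWF} with $\eta$ built from powers $(u+\varepsilon)^{q}$ times a cutoff $\psi(t,x)$ supported in the relevant cylinder, using the fundamental convexity/positivity inequality for the operator $\partial_t[k*\cdot]$ (the ``$k$-version'' of $\int v\,\partial_t v \ge \frac12\partial_t v^2$; this is the key structural property of $\mathscr{PC}$ kernels, cf.\ \cite{ZWH}, and is where $k*l=1$ with $l$ completely positive enters). This produces, after the standard manipulations, control of $\int |D(u+\varepsilon)^{q/2}\psi|^2$ and of a weighted $k$-convolution norm of $(u+\varepsilon)^{q}\psi^2$ in terms of lower-order terms with the cutoff derivatives. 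Then, invoking Lemma~\ref{scaling} — which governs how $k*(\psi^2)$ and related kernel quantities rescale on cylinders of the shape $(t_0,t_0+\tau\Phi(2r))\times B(x_0,r)$ — together with Lemma~\ref{rtheta}, I can absorb these terms so that the cutoff losses appear as powers of $(\sigma-\sigma')^{-1}$ in the nested-cylinder parameter. Combining the energy estimate with Sobolev's inequality in space and an interpolation in time (using the $k$-weighted norm as the ``time regularity'' gain), I obtain a self-improving reverse-Hölder inequality; iterating over a geometric sequence of exponents and shrinking cylinders gives the mean-value estimate $\esup_{Q_-^{\sigma'}} (u+\varepsilon)^{-p_0}\le C(\sigma-\sigma')^{-\beta}|Q_-|^{-1}\int_{Q_-^{\sigma}}(u+\varepsilon)^{-p}$ for the relevant $p$, and (by a separate but analogous iteration near $Q_+$) a one-sided $L^\infty$ bound from above on $Q_+$.

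Second, the logarithmic estimate: testing \eqref{BWF} with $\eta = \psi^2/(u+\varepsilon)$ and again exploiting the convexity inequality for $\partial_t[k*\cdot]$ — now in the form producing $k*(\,\cdot\,)$ applied to $\log(u+\varepsilon)$, or more precisely a term like $-\partial_t[k*1]\log(u+\varepsilon)$ plus a good sign term — one controls the spatial oscillation of $w:=-\log(u+\varepsilon)$ by a constant depending on the cylinder but not on the ``time level'', which is exactly the input (ii) for Bombieri–Giusti after a Poincaré step. This is the part the authors flag as requiring ``a completely new argument'': in the general distributed-order case the kernel $k$ and its primitive are only controlled through $\Phi$ and $k_1$, the behavior of $l$ near $t=0$ is delicate, and one must extract the weak-$L_1$ bound for $w$ uniformly, exploiting the choice of $\gmb$ in \eqref{intmugk} (which pins down the integrability exponent, hence the critical $p$). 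I expect this logarithmic estimate — specifically, getting a clean, geometry-independent bound on $\osc$ of $\log u$ across $Q_-$ and $Q_+$ while only assuming $w\in L_1((0,1))$ — to be the main obstacle; the Moser iteration itself, given Lemma~\ref{scaling}, Lemma~\ref{rtheta}, and Lemma~\ref{fi}, is then comparatively routine bookkeeping. Finally, assembling the mean-value inequality on $Q_-$ and the log-estimate into the Bombieri–Giusti lemma yields \eqref{localwHarnackF}; the constraint $r\le r^*(\mu,p)$ arises because the iteration constants and the admissible exponent range in the reverse-Hölder chain degenerate unless $\Phi(2r)$ is small enough, and $C$ depends on $\nu,\Lambda,\delta,\tau,\mu,N,p$ as asserted.
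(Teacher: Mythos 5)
Your architecture matches the paper's proof: mean-value inequalities on the two sub-cylinders via Moser iteration (with the kernel-dependent geometry encoded by $\Phi$ and controlled by Lemma~\ref{scaling}), a weak-$L_1$ logarithmic estimate bridging $Q_-$ and $Q_+$ (where the resolvent kernel estimates of Lemma~\ref{rtheta} enter), and assembly by the Bombieri--Giusti lemma applied to $u^{-1}e^{c(u)}$ on one family of cylinders and $u\,e^{-c(u)}$ on the other. The one slip to correct before writing this up is that your two mean-value targets are swapped: the $L^\infty$ bound for negative powers ($\esup u^{-1}$, Theorem~\ref{superest1}) is what one needs on the later cylinders $Q_+$ to feed Bombieri--Giusti with $\beta_0=\infty$ and obtain $e^{c(u)}\le C\,\einf_{Q_+}u$, while on $Q_-$ one needs a finite-exponent $L^{p_0}$-to-$L^\gamma$ reverse-H\"older chain for positive powers of $u$ (Theorem~\ref{superest2}) to feed Bombieri--Giusti with $\beta_0=p<\tilde{\kappa}$ and obtain $\nuNj(Q_-)^{-1/p}\|u\|_{L_p(Q_-)}\le C\,e^{c(u)}$.
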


As a corollary from Theorem \ref{localweakHarnack}  we obtain the weak Harnack inequality for nonnegative supersolutions to the problem with bounded inhomogeneity. We note that one may repeat the proof of Theorem \ref{localweakHarnack} with inhomogeneity $f$ belonging to a Lebesgue space $L_{q_1}(L_{q_2})$ with sufficiently big $q_1,q_2$. However, to avoid technicalities we only consider bounded inhomogeneities, which is also sufficient to deduce H\"older regularity.
In order to simplify the notation we set $\bvr:=\Phi(2r)$, where $\Phi$ is the function introduced before \eqref{defQpm}.
For the function $f$ we write $f=f^{+}-f^{-}$, where $f^{+},f^{-}\geq 0$ denote the positive and negative part of $f$, respectively.

%We take $t_{0} = 0$. Can we do without it?
\begin{satz} \label{localweakHarnackinhomo}
Let $T>0$, $N\ge 1$, and $\Omega\subset \iR^N$ be a bounded
domain. Suppose the assumptions (H1)--(H3) are satisfied. Let
further $\delta\in(0,1)$ %, $\eta>1$,
and $\tau>0$ be fixed. Then, for any $0<p<\frac{2+N\gmb}{2+N\gmb - 2\gmb}$,  there exists a number $r^{*} = r^{*}(\mu,p)>0$ such that for every $r \in (0, r_{*}]$ with $2\tau \bvr \leq T$,  for
any  ball $B(x_{0},%\eta
r) \subset \Omega$  and any nonnegative weak
supersolution $u$ of
\begin{equation} \label{MProbG}
\partial_t (k*(u-u_{0})) -\divv \big(A(t,x)Du\big)=f \m{ \hd in \hd }(0,2\tau
\bvr)\times B(x_0, r)
\end{equation}
  with $f \in L_{\infty}((0,2\tau
\bvr)\times B(x_0, r))$ and $u_0\ge 0$ in $B(x_0,
r)$, there holds
\begin{equation} \label{localwHarnackFG}
\Big(\frac{1}{\nuNj\big(Q_-(0,x_0,r,\delta)\big)}\,\int_{Q_-(0,x_0,r,\delta)}u^p\,d\nuNj\Big)^{1/p}
\le C \left(\einf_{Q_+(0,x_0,r,\delta)} u+r^{2}\|f^{-}\|_{L_{\infty}((0,2\tau
\bvr)\times B(x_0, r))}\right),
\end{equation}
where  $C=C(\nu,\Lambda,\delta,\tau,\mu,N,p)$.
\end{satz}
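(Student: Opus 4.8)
\noindent\emph{Proof proposal.} The idea is to reduce Theorem~\ref{localweakHarnackinhomo} to the homogeneous weak Harnack inequality of Theorem~\ref{localweakHarnack} by correcting $u$ with a purely time-dependent function which absorbs the inhomogeneity. Recall that the kernel $k$ from \eqref{kintro} is of type $\mathscr{PC}$: there is $l\in L_{1,\,loc}(\iR_+)$ with $k\ast l\equiv 1$ on $(0,\infty)$, and $l$ is completely positive, hence nonnegative. Set $\mathcal Q:=(0,2\tau\bvr)\times B(x_0,r)$, $F:=\|f^{-}\|_{L_\infty(\mathcal Q)}$, and
\[
\psi(t):=F\,(1\ast l)(t)=F\izt l(s)\,ds,\qquad t\in[0,2\tau\bvr].
\]
Since $l\ge 0$, the function $\psi$ is nonnegative and nondecreasing with $\psi(0)=0$; moreover, by associativity and commutativity of the half-line convolution, $k\ast\psi=F\,(k\ast l)\ast 1=F\,(1\ast 1)$, so $(k\ast\psi)(t)=Ft$ and $\partial_t[k\ast(\psi-\psi(0))]=F$.

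I claim that $v:=u+\psi$ is a nonnegative weak supersolution of \eqref{MProbG} with $f\equiv 0$ and initial datum $u_0$ on $\mathcal Q$. Indeed $v\ge u\ge 0$, the initial datum is $u_0+\psi(0)=u_0\ge 0$, and $k\ast v=k\ast u+Ft$ is continuous in $t$ with values in $L_2$, vanishing at $t=0$, so $v$ belongs to the solution class on $\mathcal Q$. For the variational inequality \eqref{BWF} with right-hand side $0$, let $\eta\ge 0$ be an admissible test function on $\mathcal Q$ vanishing at the top time. Since $\psi$ is independent of $x$, $D\psi\equiv 0$, hence $(ADv|D\eta)=(ADu|D\eta)$; and integrating by parts in time (using $\eta|_{t=2\tau\bvr}=0$ and $t\eta|_{t=0}=0$) gives $-\iint_{\mathcal Q}\eta_t\,(k\ast\psi)=-F\iint_{\mathcal Q}t\,\eta_t=F\iint_{\mathcal Q}\eta$. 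Therefore
\[
\iint_{\mathcal Q}\big(-\eta_t[k\ast(v-u_0)]+(ADv|D\eta)\big)=\iint_{\mathcal Q}\big(-\eta_t[k\ast(u-u_0)]+(ADu|D\eta)\big)+F\iint_{\mathcal Q}\eta\ \ge\ \iint_{\mathcal Q}(f+F)\,\eta\ \ge\ 0,
\]
using that $u$ is a weak supersolution of \eqref{MProbG} and that $f+F\ge f+f^{-}=f^{+}\ge 0$. This proves the claim.

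Applying Theorem~\ref{localweakHarnack} to $v$ with $t_0=0$ and the same exponent $p$ (hence the same $r^{*}=r^{*}(\mu,p)$), for $r\in(0,r^{*}]$ with $2\tau\bvr\le T$ and $B(x_0,r)\subset\Omega$ we obtain
\[
\Big(\frac{1}{\nuNj(Q_-(0,x_0,r,\delta))}\int_{Q_-(0,x_0,r,\delta)}v^p\,d\nuNj\Big)^{1/p}\le C\,\einf_{Q_+(0,x_0,r,\delta)}v,\qquad C=C(\nu,\Lambda,\delta,\tau,\mu,N,p).
\]
Since $0\le u\le v$ we have $\int_{Q_-}u^p\,d\nuNj\le\int_{Q_-}v^p\,d\nuNj$; and since $Q_+(0,x_0,r,\delta)\subset\mathcal Q$ and $\psi$ is nondecreasing in $t$, $\einf_{Q_+}v\le\einf_{Q_+}u+\psi(2\tau\bvr)=\einf_{Q_+}u+F\,(1\ast l)(2\tau\bvr)$. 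Combining the two one-sided bounds, \eqref{localwHarnackFG} follows once we establish the kernel estimate $(1\ast l)(2\tau\bvr)\le C(\tau,\mu)\,r^{2}$, after which $C(\tau,\mu)$ is absorbed into the constant $C$. This estimate, which ties the $\mathscr{PC}$-conjugate kernel $l$ to the geometry function $\Phi$, is the crux of the argument. I would prove it via Laplace transforms: $k\ast l\equiv 1$ and the identity $\hat k(\lambda)=\lambda^{-1}\ki(1/\lambda)$ (read off from \eqref{kintro} and \eqref{kidef}) give $\widehat{1\ast l}(\lambda)=(\lambda\,\ki(1/\lambda))^{-1}$, and since $1\ast l$ is nonnegative and nondecreasing an elementary Tauberian-type bound yields $(1\ast l)(t)\le C\,\ki(t)^{-1}$; combined with $\ki(\Phi(2r))=(2r)^{-2}$ from Lemma~\ref{fi} and the monotonicity estimate $\ki(\lambda t)\ge\lambda^{-1}\ki(t)$ for $\lambda\ge 1$ (again from \eqref{kidef}) this gives $\ki(2\tau\bvr)\ge c(\tau)(2r)^{-2}$ and hence the claim. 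Alternatively, such a bound can be extracted from Lemma~\ref{scaling}. I expect this kernel estimate — needed precisely because we allow $\supp\mu$ to reach up to $1$ — to be the only genuine difficulty here, the substantial analysis being already contained in Theorem~\ref{localweakHarnack}.
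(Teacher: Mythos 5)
Your proposal is correct and follows essentially the same route as the paper: the paper also sets $w=u+M\,(1\ast l)$ with $M=\|f^-\|_{L_\infty}$, verifies that $w$ is a nonnegative weak supersolution of the homogeneous problem (using $\partial_t[k\ast(1\ast l)]\equiv 1$), applies Theorem~\ref{localweakHarnack}, and then bounds $\int_0^{2\tau\bvr}l\le C(\tau)r^2$ via Lemma~\ref{scaling} with $p=1$ together with \eqref{estiPhieta} — the very alternative you flag at the end. Your sketched Laplace/Tauberian derivation of $(1\ast l)(t)\lesssim k_1(t)^{-1}$ (from $\widehat{1\ast l}(\lambda)=(\lambda k_1(1/\lambda))^{-1}$, monotonicity of $1\ast l$, and $k_1(\lambda t)\ge\lambda^{-1}k_1(t)$) is a valid but redundant detour, since the needed estimate is already proved in Lemma~\ref{scaling}.
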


We apply this estimate to deduce H\"older regularity of weak solutions to (\ref{holdeq}), which is our final result.

\begin{satz}\label{holder}
Let $T>0, N \geq 1$ and $\Omega\subset \iR^N$ be a bounded
domain. Suppose the assumptions (H1)--(H3) are satisfied and let $u_{0} \in L_{\infty}(\Omega)$ and $f\in L_{\infty}(\Omega_{T})$. If $u$ is a bounded weak solution to
\eqq{
\partial_t (k*(u-u_{0}))-\divv\,\big(A(t,x)Du\big)=f,\quad t\in (0,T),\,x\in
\Omega,
}{holdeq}
then for any $V \subset \Omega_{T}$ separated from the parabolic  boundary of $\Omega_{T}$ by a positive distance, there exist
constants $C > 0$ and $\ve \in (0,1)$ depending only on $\mu$, $V$, $\Lambda$, $\nu$ and $N$ such that
\eqq{
\|u\|_{C^{0,\ve}(V)} \leq C(\|u\|_{L_{\infty}(\Omega_{T})} +\|u_{0}\|_{L_{\infty}(\Omega)} +\|f\|_{L_{\infty}(\Omega_{T})}).
}{holderkoniec}
\end{satz}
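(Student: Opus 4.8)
\textbf{Proof plan for Theorem~\ref{holder}.}
The strategy is the classical one: derive H\"older continuity from the weak Harnack inequality of Theorem~\ref{localweakHarnackinhomo} by an oscillation-decay argument adapted to the time-space cylinders $Q_\pm$. The key point is that since $u$ is a bounded weak solution, both $M-u$ and $u-m$ (where $M=\esup u$, $m=\einf u$ over a suitable cylinder) are nonnegative weak supersolutions on the relevant subcylinder, so one may apply \eqref{localwHarnackFG} to each. One must first address that a supersolution on $(0,2\tau\bvr)\times B(x_0,r)$ is required in Theorem~\ref{localweakHarnackinhomo}, whereas we have a solution on $\Omega_T$; this is handled by a time-shift, i.e.\ working on a cylinder $(t_0,t_0+2\tau\bvr)\times B(x_0,r)$ with $t_0>0$ bounded away from $0$ (possible since $V$ is separated from the parabolic boundary), and checking that a restriction/translation of a weak solution on $\Omega_T$ is a weak supersolution of a problem of the form \eqref{MProbG}, with the new ``initial data'' $\tilde u_0 = u(t_0,\cdot)$ (or, more carefully, with the kernel-memory term up to time $t_0$ absorbed appropriately). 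This translation step in the non-local-in-time setting is the first technical hurdle; one needs that $\partial_t(k*(u-u_0))$ restricted to $t>t_0$ can be rewritten, modulo a bounded forcing term coming from the ``history'' on $(0,t_0)$, as $\partial_t(\tilde k*(v-v_0))$ for $v(s,\cdot)=u(t_0+s,\cdot)$; here the monotonicity and $\mathscr{PC}$-type structure of $k$ and the boundedness of $u$ keep the extra term in $L_\infty$, so it can be lumped into $f^-$ in \eqref{localwHarnackFG}.

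The core of the argument is the iteration. Fix $(\bar t,\bar x)\in V$ and a sequence of nested cylinders: let $r_j=\theta^j r_0$ for a fixed $\theta\in(0,1/2)$ to be chosen, and consider cylinders $K_j$ of the shape dictated by \eqref{defQpm}, e.g.\ $K_j$ built over the ball $B(\bar x, r_j)$ and the time-length comparable to $\tau\Phi(2r_j)$, arranged so that $K_{j+1}$ sits in the ``lower'' part $Q_-$ of a cylinder whose ``upper'' part $Q_+$ contains, or is comparable to, a fixed portion near $(\bar t,\bar x)$, while $K_{j+1}\subset K_j$. Writing $M_j=\esup_{K_j}u$, $m_j=\einf_{K_j}u$, $\osc_j = M_j-m_j$, apply the weak Harnack inequality \eqref{localwHarnackFG} (with some fixed admissible $p$, $\delta$) to the nonnegative supersolutions $M_j-u$ and $u-m_j$ on $K_j$. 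Adding the two resulting inequalities and using that the $L^p$-average on $Q_-$ of $(M_j-u)+(u-m_j)=\osc_j$ equals $\osc_j$, one obtains, after the standard manipulation,
\begin{equation*}
\osc_{j+1} \le \gamma\,\osc_j + C r_j^{2}\|f\|_{L_\infty} + C r_j^{2}\|(\text{history term})\|_{L_\infty}
\end{equation*}
for some $\gamma=\gamma(\nu,\Lambda,\delta,\tau,\mu,N,p)\in(0,1)$; the forcing terms are $O(r_j^2)$, hence summable in a geometric-type way once we know $r_j^2\le C\Phi(2r_j)^{\,\sigma}$ or similar, which follows from the growth properties of $\Phi$ established in Lemma~\ref{fi} (indeed $\Phi(2r)$ is increasing and, by $k_1(\Phi(r))=r^{-2}$, comparable to a power of $r$ from above and below on bounded intervals, so $r_j^2$ decays geometrically). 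Iterating the recursion gives $\osc_j \le C(\gamma^{\,j}\osc_0 + \text{summable tail})$, i.e.\ $\osc_j \le C\lambda^{j}(\|u\|_{L_\infty}+\|u_0\|_{L_\infty}+\|f\|_{L_\infty})$ for some $\lambda\in(0,1)$.

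The final step converts the geometric oscillation decay into a H\"older modulus. Since $r_j=\theta^j r_0$, we have $\lambda^j = r_j^{\log\lambda/\log\theta}\cdot r_0^{-\log\lambda/\log\theta}$, so with $\ve := \log\lambda/\log\theta \in (0,1)$ (shrinking $\theta$ or adjusting $\gamma$ if needed to ensure $\ve<1$) we get $\osc_{B(\bar x,r_j)\times(\text{time-slab of size }\sim\Phi(r_j))} u \le C r_j^{\ve}(\|u\|_{L_\infty}+\|u_0\|_{L_\infty}+\|f\|_{L_\infty})$. One then upgrades this to a genuine $C^{0,\ve}$ estimate in $(t,x)$ jointly on $V$: for nearby points one separately controls the spatial oscillation (directly from the above with $\bar x$ varying) and the temporal oscillation, using that the time-radius of $K_j$ is $\sim\Phi(r_j)$ and that $\Phi^{-1}(s)\ge c\,s^{\beta_*}$ for some $\beta_*>0$ near $0$ (again from Lemma~\ref{fi}), so a time increment of size $h$ corresponds to a spatial scale $\gtrsim h^{\beta_*/?}$ and the temporal H\"older exponent is a possibly smaller positive power; one takes the minimum of the two exponents as the final $\ve$. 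The constants depend only on the listed quantities because $r^*$, $\gamma$, and $\lambda$ do. The main obstacle, as flagged above, is the rigorous ``restart'' of the equation at time $t_0$: making precise how the memory term over $(0,t_0)$ becomes a bounded inhomogeneity and verifying that the shifted function lies in the solution class $Z$ for the shifted problem — everything else is the by-now-standard Harnack-to-H\"older machinery, made only mildly more delicate by the non-standard cylinder geometry through $\Phi$.
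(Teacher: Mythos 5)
Your overall strategy (oscillation decay on nested cylinders via the weak Harnack inequality) is the right idea and is close in spirit to what the paper does, but there is a genuine gap in how you propose to handle the memory term, and it is exactly the gap that makes the non-local case hard.

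You propose to absorb the history over $(0,t_0)$ into the inhomogeneity $f^{-}$ and control it by $\|u\|_{L_\infty}$. This fails quantitatively inside the iteration. At scale $j$ the bottom of the cylinder sits a time of order $\Phi(2r_j)$ below the reference time, so the kernel weight applied to the frozen history is of order $k(\Phi(2r_j))\sim k_1(\Phi(2r_j))=r_j^{-2}$ by the very definition of $\Phi$. When you then multiply by the $r_j^2$ prefactor in \eqref{localwHarnackFG}, the contribution is $O(1)$ and does \emph{not} decay in $j$; your recursion $\osc_{j+1}\le\gamma\osc_j+O(r_j^{2})\cdot\|(\text{history})\|_\infty$ therefore does not close. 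The paper resolves this by not treating the history as generic bounded forcing: it sets up the dyadic sequences $a_l\le v\le b_l$ on $Q_{\mathrm{dom}}\cap Q(2^{-l})$ by strong induction and proves a pointwise bound on $|v-m_l|$ over the recent past in terms of the previously established oscillations (the lemma yielding \eqref{history}); feeding this into $H(w)$ gives a lower bound of the form $-(b_l-a_l)\,k_1(\overline{\Phi}(2^{-l}r))\,\varepsilon(\kappa)$ with $\varepsilon(\kappa)\to0$ as $\kappa\to0$. Thus the memory contribution is of the size of the \emph{current} oscillation times a small parameter, not of the size of $\|u\|_\infty$, and this is precisely what allows the induction to close.

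A second, related issue: $M_j-u$ (respectively $u-m_j$), with $M_j=\esup_{K_j}u$, need not be a nonnegative supersolution on the entire time interval $(0,t_0+2\tau\overline{\Phi}(2r))\times B$ that Theorem~\ref{localweakHarnackinhomo} requires, nor need $M_j-u_0\ge 0$ on the ball. You only control the sign on $K_j$. The paper circumvents this by working with $w=b_l-v$ (respectively $v-a_l$), which is nonnegative on all of $Q(2^{-l})$ \emph{by the inductive hypothesis} -- and then still has to split off and estimate the pre-$Q(2^{-l})$ history separately (via the inductive lemma and the ``far past'' term $b_l k(t_{**}+s)$). It also runs a dichotomy (cases (A) and (B) on the measure of a level set in a sub-cylinder $Q^-$) rather than adding two Harnack estimates for $M_j-u$ and $u-m_j$; this is what lets it decide, at each step, which of $a_l$ or $b_l$ to improve while keeping the memory under control. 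Your final step (converting geometric dyadic decay of the oscillation into a joint space-time H\"older modulus via the growth of $\Phi$, and the outer ``restart'' at a time $t_m>0$ whose history is $O(r^{-2})$ and hence absorbed once, not per scale) matches what the paper does.
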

Recall that \cite[Theorem 3.1]{Za2} gives the boundedness of weak solutions to (\ref{holdeq}), provided it is bounded on the parabolic boundary of $\Om_{T}$. In this case  the assumption concerning the boundedness of $u$  in Theorem~\ref{holder}, may be skipped.
\nic{
\begin{korollar}
Let $T>0, N \geq 1$ and $\Omega\subset \iR^N$ be a bounded
domain. Suppose the assumptions (H1)--(H2) are satisfied and let $u_{0} \in L_{\infty}(\Omega)$, $f\in L_{\infty}(\Omega_{T})$. If $u$ is a  weak solution to
\eqq{
\partial_t (k*(u-u_{0}))-\divv\,\big(A(t,x)Du\big)=f,\quad t\in (0,T),\,x\in
\Omega
}{holdeq}
then for any $V \subset \Omega_{T}$ separated from the parabolic  boundary of $\Omega_{T}$ by a positive distance, there holds
\[
\|u\|_{C^{0,\ve}(V)} \leq C(\|u_{0}\|_{L_{\infty}(\Omega)} +\|f\|_{L_{\infty}(\Omega_{T})}),
\]
where $C > 0$ and $\ve \in (0,1)$ depend only on $\mu$, $V$, $\Lambda$, $\nu$ and $N$.
\end{korollar}
}

As already mentioned, the exponent in the weak Harnack inequality depends on $\gmb$. Let us provide some examples. In the case of a multi-term fractional time derivative ($\ma = \sum_{n=1}^{M}q_{n}\delta(\cdot - \al_{n})$ with $q_M>0$) we take $\gmb = \alpha_M$, which leads to the condition $p< \frac{2+N\al_M}{2+N\al_M - 2\al_M}$ in the weak Harnack inequality. We obtain the same result
in the case where a single order fractional derivative dominates the distributed order part associated with the weight function $w$
in the sense that
\[
d\mu = \sum_{n=1}^{M}q_{n}d\delta(\cdot - \al_{n}) + w d\nu_1,\quad q_M>0,\quad \hd \supp w \subseteq [0,\al_M].
\]
On the other hand if for every $\gamma \in (0,1)$ we have that $\int_{\gamma}^{1} w(\al) d\al > 0$, then we can take $\gmb$ arbitrarily close to one and the weak Harnack inequality holds for any positive $p< 1+\frac{2}{N }$, that is, the critical exponent coincides
with the one from the classical parabolic case.

The paper is organised as follows. In Section 2, we recall various preliminary results from distributed order calculus and establish estimates for the kernel $l$ and the resolvent kernel associated with $l$ which are crucial to make our approach work, but which are also of independent interest in the context of Volterra equations with completely positive kernels. Then we recall Moser iteration lemmas and the lemma of Bombieri and Giusti as well as other auxiliary results. Section 3 is devoted to the proof of the weak Harnack estimate (Theorem \ref{localweakHarnack}), while the final Section 4 contains the proof of Theorem~\ref{holder}.
%**********************************************************************
\section{Preliminaries}
\subsection{Introduction from distributed order calculus}
We begin by showing that any kernel $k\in L_{1,\,loc}(\iR_+)$ from the class considered in this paper (see \eqref{kintro}) is of type
$\mathscr{PC}$. In \cite{nasza} it was already proven that there exists $l\in L_{1,loc}([0,\infty))$ such that $(k,l)$ is a $\mathscr{PC}$ pair if $d\mu\equiv wd\nu_{1}$, see also \cite{KochDO}. However for a $\mu$ given by (\ref{bo1}) the proof may be repeated without any changes. Thus, we arrive at the following result.
\begin{satz}\label{l}
Let $\mu$ be given by formula (\ref{bo1}), where $q_{n} > 0$ for $n=1,\dots, M$, $w \in L_{1}((0,1))$ is nonnegative a.e. on $(0,1)$.  Then, there exists a nonnegative $l\in L_{1,loc}([0,\infty))$
such that $k*l=1$ and  the operator of fractional integration $\im$, defined by the formula $ \im
u =l*u$  for $u \in AC([0,T])$ satisfies
\eqq{(\dm \im u)(t) = u(t) \hd \m{ and } \hd  (\im\dm u)(t) = u(t) - u(0).}{fi1}
Furthermore, $l$ is given by the formula
\eqq{l(t) = \frac{1}{\pi}\izi e^{-rt}H(r) dr,}{calka}
where
\eqq{
H(r) = \frac{ \izj r^{\al} \sin (\pi\al)\dd}{( \izj r^{\al} \sin (\pi\al)\dd)^{2}+ (\izj r^{\al} \cos
(\pi\al)\dd)^{2} }.
}{fin2}
\label{fint}
\end{satz}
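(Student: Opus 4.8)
The plan is to verify that the kernel $k$ defined in \eqref{kintro} is of type $\mathscr{PC}$ and to identify its associated kernel $l$ explicitly via its Laplace transform. First I would note that $k$ is manifestly nonnegative and nonincreasing: for each fixed $\al\in(0,1)$ the function $t\mapsto t^{-\al}/\Gamma(1-\al)$ has these properties, and integrating against the nonnegative measure $\mu$ preserves them; local integrability near $t=0$ follows since $\al<1$ uniformly on $\supp\mu\subset[\al_1,\al_M]\cup[0,1]$, giving $\int_0^\varepsilon k(t)\,dt<\infty$. Hence it remains only to produce $l\in L_{1,loc}([0,\infty))$ with $k*l\equiv 1$, since then $\mathscr{PC}$-theory (cf.\ the discussion in the introduction referencing \cite{CN}) yields that $l$ is completely positive, in particular nonnegative, and the identities \eqref{fi1} are the standard statement that convolution with $l$ inverts $\dm$ on absolutely continuous functions.

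The key computation is on the Laplace-transform side. Taking Laplace transforms in \eqref{kintro}, using $\mathcal{L}[t^{-\al}/\Gamma(1-\al)](z)=z^{\al-1}$ and Fubini, one gets $\hat k(z)=\int_0^1 z^{\al-1}\,d\mu(\al)=\tfrac1z\int_0^1 z^{\al}\,d\mu(\al)$. The relation $k*l=1$ becomes $\hat k(z)\hat l(z)=1/z$, so we are forced to take $\hat l(z)=\big(\int_0^1 z^{\al}\,d\mu(\al)\big)^{-1}$. I would then recover $l$ by inverting this Laplace transform. Writing $z=r e^{\pm i\pi}$ (the contour wrapped around the negative real axis, the branch cut of $z^\al$) and applying the Bromwich/Post–Widder type inversion, the jump of $\hat l$ across the cut produces the representation \eqref{calka} with $H(r)$ the imaginary part of $\hat l(-r)$. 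Concretely, $\int_0^1(r e^{i\pi})^\al\,d\mu(\al)=\int_0^1 r^\al(\cos\pi\al+i\sin\pi\al)\,d\mu(\al)$, and taking $\mathrm{Im}$ of the reciprocal of this complex number gives exactly the formula \eqref{fin2} for $H$ (up to sign, which matches the orientation of the contour). One must check that the resulting integral $\tfrac1\pi\int_0^\infty e^{-rt}H(r)\,dr$ converges and that no residue contribution arises, i.e.\ that $1/\int_0^1 z^\al\,d\mu(\al)$ is analytic in $\mathbb{C}\setminus(-\infty,0]$ with no poles; this holds because the denominator $\int_0^1 z^\al\,d\mu(\al)$ has strictly positive real part for $\Re z>0$ (each $z^\al$ lies in the right half-plane since $|\arg z^\al|=\al|\arg z|<\pi/2$), hence never vanishes there, and near $z=0$ the integrand $\hat l(z)$ is $O(z^{-\al_M})$, which is integrable against $e^{-rt}$ after the contour deformation.

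Since this argument is word-for-word the one already carried out in \cite{nasza} for the purely distributed case $d\mu=w\,d\nu_1$ — and as the paper remarks, the presence of the additional Dirac masses in \eqref{bo1} changes nothing, as $z\mapsto q_n z^{\al_n}$ is entire and contributes to the denominator in exactly the same way as the integral term — I would present the proof as an invocation of that reference, indicating the three points to be checked: (i) $k$ is $\mathscr{PC}$, handled by the monotonicity/inversion above; (ii) nonnegativity of $l$, which follows either from complete positivity or directly from \eqref{calka} once one verifies $H(r)\ge 0$ (immediate: numerator $\int_0^1 r^\al\sin\pi\al\,d\mu(\al)\ge 0$, denominator $\ge 0$); and (iii) the inversion identities \eqref{fi1}, which are the standard consequence of $k*l=1$ together with the fundamental theorem of fractional calculus for the $\mathscr{PC}$ pair $(k,l)$. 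The main obstacle, such as it is, is the contour-deformation/convergence bookkeeping in (iii)—justifying that the Bromwich integral collapses to \eqref{calka} with no extra terms and that all interchanges of integration are legitimate; but this is routine complex analysis given the positivity of $\Re\int_0^1 z^\al\,d\mu(\al)$ on the right half-plane, and it is already done in \cite{nasza}, so in the write-up I would simply cite it and note that the modification for general $\mu$ of the form \eqref{bo1} is trivial.
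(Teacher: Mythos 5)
Your proposal is correct and coincides with the paper's own treatment: the paper proves Theorem~\ref{l} simply by noting that the construction of the $\mathscr{PC}$ pair $(k,l)$ and the representation \eqref{calka}--\eqref{fin2} carried out in \cite{nasza} (see also \cite{KochDO}) for $d\mu = w\,d\nu_1$ goes through verbatim for the general measure \eqref{bo1}, and your sketch (Laplace transform $\hat l(z) = (\int_0^1 z^\al\,d\mu(\al))^{-1}$, contour deformation around the branch cut, positivity of $H$) is exactly that argument. The only small imprecision is the claim that $\hat l(z)=O(z^{-\al_M})$ near $z=0$: if $w$ is supported beyond $\al_M$ the correct exponent is some $\gpb<1$ tied to the upper edge of $\supp\mu$ rather than $\al_M$ (cf.\ the bound $|F(p)|\le c_0^{-1}\max\{r^{-\gmb},r^{-\gpb}\}$ used later for the resolvent kernel), but this does not affect the integrability conclusion or the validity of the inversion.
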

\begin{bemerk1}
Since the pair $(k,l)$ is a $\mathscr{PC}$ pair it follows from \cite[Theorem 3.1]{ZWH} that the initial-boundary value problem (\ref{MProb}) with homogeneous Dirichlet boundary condition and initial condition $u|_{t=0}=u_0$ has exactly one weak solution belonging to $Z$.
\end{bemerk1}

We remark that in the whole paper, in the estimates, the constants, usually denoted by $c$, may change from line to line.

In the paper we will frequently use the fact that for $x\in (0,1]$ the expression $\int_{\gmb}^{1}x^{-\al}\dd$ dominates $\int_{0}^{\gmb}x^{-\al}\dd$. We will establish this auxiliary result in the next remark.
\begin{bemerk1}
There exists a positive constant $c=c(\mu)$ such that for every $x\in (0,1]$ there holds
\eqq{ \int_{0}^{1}x^{-\al}\dd \leq c \int_{\gmb}^{1}x^{-\al}\dd.}{estioddolu}
Indeed, since $x\in (0,1]$ we may write
\eqq{
\int_{\gmb}^{1}x^{-\al}\dd \geq x^{-\gmb}\int_{\gmb}^{1}\dd.
}{xmgam}
Using   (\ref{xmgam}) for $x \leq 1$ and (\ref{intmugk})  we get
\[
\int_{0}^{1}x^{-\al}\dd = \int_{\gmb}^{1}x^{-\al}\dd + \int_{0}^{\gmb}x^{-\al}\dd
\leq \int_{\gmb}^{1}x^{-\al}\dd + x^{-\gmb}\int_{0}^{\gmb}\dd
\]
\[
\leq \int_{\gmb}^{1}x^{-\al}\dd + \int_{\gmb}^{1}x^{-\al}\dd \frac{\int_{0}^{\gmb}\dd}{\int_{\gmb}^{1}\dd}\equiv c(\mu)\int_{\gmb}^{1}x^{-\al}\dd.
\]

\end{bemerk1}
Let us now establish the crucial estimates for the kernel $l$.
\begin{lemma}\label{kernels}
The kernel $l$ defined in Theorem \ref{l} satisfies $l \in C^{\infty}((0,\infty))$. Furthermore, for every $t > 0$ there holds
\eqq{
l(t) \leq  \frac{1}{\izj t^{1-\al}\dd} .
}{kerestp}
Moreover, for every $T>0$, there exists $c=c(\mu,T)$ such that for every $t \in (0,T]$
\eqq{l(t) \geq c \frac{1}{\izj t^{1-\al}\dd}.}{ldol}

\end{lemma}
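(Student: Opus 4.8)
\textbf{Proof plan for Lemma~\ref{kernels}.}

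The starting point is the integral representation \eqref{calka}--\eqref{fin2} for $l$. Smoothness on $(0,\infty)$ is routine: for $t$ in a compact subset of $(0,\infty)$ one differentiates under the integral sign in \eqref{calka}, using that $H(r)$ is bounded and that each derivative in $t$ produces a factor $r^{j}e^{-rt}$, whose $r$-integral converges thanks to the exponential decay; one only needs a crude bound on $H$, namely $H(r)\le C/(\int_{0}^{1}r^{\al}\sin(\pi\al)\dd)$, which is integrable against $r^{j}e^{-rt}$ near $r=0$ because the denominator behaves like a positive power of $r$ there, and near $r=\infty$ the exponential dominates everything.

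For the upper bound \eqref{kerestp} the natural idea is to avoid \eqref{calka} altogether and argue directly from $k*l=1$ together with monotonicity. Since $l\ge 0$ and $k$ is nonincreasing, for every $t>0$ we have
\[
1=(k*l)(t)=\izt k(t-\tau)l(\tau)\,d\tau\ge k(t)\izt l(\tau)\,d\tau,
\]
so $\int_{0}^{t}l(\tau)\,d\tau\le 1/k(t)$; but this only controls the average of $l$, not its pointwise value. To get the pointwise estimate one instead uses that $l$ itself is nonincreasing (which follows because $l$ is completely positive, being the kernel in a $\mathscr{PC}$ pair, cf.\ the discussion after \cite{CN} cited in the introduction). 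Then for $0<\tau\le t$ one has $l(t)\le l(\tau)$, so
\[
l(t)\,(k*1)(t)=l(t)\izt k(\tau)\,d\tau\le \izt k(\tau)l(t-\tau)\,d\tau=(k*l)(t)=1,
\]
where $(k*1)(t)=\izt k(\tau)\,d\tau=\izt\izj\frac{\tau^{-\al}}{\Gamma(1-\al)}\dd\,d\tau=\izj\frac{t^{1-\al}}{(1-\al)\Gamma(1-\al)}\dd=\izj\frac{t^{1-\al}}{\Gamma(2-\al)}\dd$ by Fubini. Since $\Gamma(2-\al)\in(\Gamma(2-\al_{\min}),1]$ is bounded below by a positive constant depending only on $\mu$ (because $\al$ ranges over a set bounded away from $1$ precisely when $w$ or the atoms force it, but in general $\al\in[0,1]$ and $\Gamma(2-\al)\ge\Gamma(1)=1$ is false near $\al=1$)—here one has to be slightly careful: $\Gamma(2-\al)$ attains its minimum on $[0,1]$ at an interior point and is bounded below by $\Gamma(2-\al)\ge\min_{s\in[1,2]}\Gamma(s)=:c_{0}>0$, an absolute constant. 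Hence $(k*1)(t)\ge c_{0}\izj t^{1-\al}\dd$, and we obtain $l(t)\le \big(c_{0}\izj t^{1-\al}\dd\big)^{-1}$, which is \eqref{kerestp} up to the harmless constant $c_{0}$; if one wants the clean constant $1$ as stated, one notes $\Gamma(2-\al)\le 1$ on $[0,1]$ is also false, so in fact the displayed inequality in the statement should be read with an implicit $\mu$-dependent constant, or one reworks the estimate using $(k*1)(t)\ge \izj\frac{t^{1-\al}}{\Gamma(2-\al)}\dd\ge \izj t^{1-\al}\dd$ only on the range where $\Gamma(2-\al)\le1$, i.e.\ $\al\in[1,2]$—since here $\al\in(0,1)$ we do have $\Gamma(2-\al)\ge\Gamma(1)$ failing, so the constant $c_0$ genuinely enters. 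I will therefore prove \eqref{kerestp} with the constant absorbed, which is all that is used later.

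For the lower bound \eqref{ldol}, which I expect to be \textbf{the main obstacle}, I would return to \eqref{calka}. The point is to show $l(t)\gtrsim 1/\izj t^{1-\al}\dd$ for $t\in(0,T]$. Substituting $r\mapsto r/t$ in \eqref{calka} gives $l(t)=\frac{1}{\pi t}\izi e^{-r}H(r/t)\,dr$, so it suffices to bound $H(r/t)$ from below on a fixed $r$-interval, say $r\in[1,2]$. From \eqref{fin2}, $H(\rho)\ge \big(\izj \rho^{\al}\sin(\pi\al)\dd\big)^{-1}\cdot\frac{1}{1+(\cot\text{-ratio})^{2}}$-type lower bounds are delicate because the cosine integral can change sign; the robust route is $H(\rho)\ge c\,\big(\izj\rho^{\al}\dd\big)^{-1}$ provided one can show the denominator in \eqref{fin2} is comparable to $\big(\izj\rho^{\al}\sin(\pi\al)\dd\big)^{2}$, i.e.\ that the cosine-integral does not dominate the sine-integral by more than a constant factor uniformly for $\rho=r/t$ with $r\in[1,2]$, $t\in(0,T]$, hence $\rho\in[1/T,\infty)$. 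This is exactly the kind of estimate that the introduction flags as requiring "more careful and complicated calculations"; one splits $\izj$ according to whether $\al$ is near the support structure of $\mu$, uses that $\sin(\pi\al)$ and $\cos(\pi\al)$ are both $\mathcal{O}(1)$ and that $\sin(\pi\al)\ge c>0$ away from $\al\in\{0,1\}$, together with Remark~\ref{estioddolu}-type comparisons (the integral over $[\gmb,1]$ dominating the whole integral) to reduce everything to a single dominant power $\rho^{\gmb}$. Once $H(r/t)\ge c/\izj(r/t)^{\al}\dd$ is established on $r\in[1,2]$, integrating $e^{-r}$ over $[1,2]$ and recognising $\frac{1}{t}\izj(r/t)^{-\al}\dd$ up to constants as $\izj t^{\al-1}\dd$—wait, one must track the substitution carefully—yields $l(t)\ge \frac{c}{t}\cdot\frac{1}{\izj (1/t)^{\al}\dd}\cdot(\text{const from }r\text{-integral})$, and since $\frac{1}{t}\izj t^{-\al}\dd\,{}^{-1}$ is comparable to $\big(\izj t^{1-\al}\dd\big)^{-1}$ on $(0,T]$ by a further power-comparison (again using the $[\gmb,1]$-domination to collapse the integrals to their dominant term), \eqref{ldol} follows with $c=c(\mu,T)$. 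The $T$-dependence is unavoidable because the comparison of $\izi$-tails of $H$ to a single power degrades as $\rho\to 0$, i.e.\ as $t\to\infty$.
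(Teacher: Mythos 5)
Your treatment of the smoothness and of the upper bound \eqref{kerestp} is essentially the paper's argument (differentiate under the integral sign in \eqref{calka}; combine $k\ast l=1$ with monotonicity of $l$ and Fubini for $k\ast 1$). However, your hand-wringing over the constant in \eqref{kerestp} is unnecessary: the inequality $\Gamma(2-\al)\le 1$ for $\al\in[0,1]$ is \emph{true}, not false as you assert --- $\Gamma$ on $[1,2]$ attains its minimum $\approx 0.886$ at an interior point and equals $1$ at both endpoints, so $\Gamma(2-\al)\le 1$ and hence $(k\ast 1)(t)=\int_0^1\frac{t^{1-\al}}{\Gamma(2-\al)}\dd\ge\int_0^1 t^{1-\al}\dd$ with no loss; the clean constant $1$ in the statement is correct.

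For the lower bound \eqref{ldol} there is a genuine gap. Your plan substitutes $r\mapsto r/t$ and then bounds $H(r/t)$ from below on a fixed window $r\in[1,2]$, claiming $H(\rho)\ge c\big(\int_0^1\rho^\al\dd\big)^{-1}$. This estimate is false whenever $\supp\mu$ is not cut off from $\al=1$. Take $d\mu=d\nu_1$ on $(0,1)$: for large $\rho$ one computes $\int_0^1\rho^\al\sin(\pi\al)\,d\al\asymp\rho/(\log\rho)^2$ (the $\sin(\pi\al)\sim\pi(1-\al)$ factor costs one power of $\log\rho$), $\big|\int_0^1\rho^\al\cos(\pi\al)\,d\al\big|\asymp\rho/\log\rho$, and $\int_0^1\rho^\al\,d\al\asymp\rho/\log\rho$; hence $H(\rho)\asymp 1/\rho$, while $\big(\int_0^1\rho^\al\,d\al\big)^{-1}\asymp(\log\rho)/\rho$. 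So $H$ is smaller by a factor $\log\rho$, and your plan would give only $l(t)\gtrsim\text{const}$, missing the required growth $l(t)\gtrsim\big(\int_0^1 t^{1-\al}\,d\al\big)^{-1}\asymp\log(1/t)$. (Even granting your hoped-for comparability of the denominator in \eqref{fin2} with the sine-integral squared, you would only get $H\gtrsim 1/\int_0^1\rho^\al\sin(\pi\al)\dd$, which is not the same as $1/\int_0^1\rho^\al\dd$ precisely because of the $\sin(\pi\al)\sim 1-\al$ factor near $\al=1$.) The missing logarithmic factor cannot be recovered from a fixed $r$-window; it has to come from integrating over a range in $p$ that grows as $t\to 0$. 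The paper does exactly this: it keeps the integral $\int_1^{B/t}e^{-pt}H(p)\,dp$ with $B=B(\mu,T)$, lower bounds $H(p)$ by $c(\mu)\frac{\int_{\gmb}^1(1-\al)p^\al\dd}{(\int_{\gmb}^1 p^\al\dd)^2}$, and then observes that after pulling out $e^{-pt}\ge e^{-B}$ the remaining integrand is an exact derivative, $\frac{d}{dp}\Big(\frac{1}{\int_{\gmb}^1 p^{\al-1}\dd}\Big)$, so the integral telescopes and the long upper endpoint $B/t$ produces the required $\big(\int_0^1 t^{1-\al}\dd\big)^{-1}$. That telescoping trick is the key idea your plan lacks.
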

\begin{proof}
At first, we note that from (\ref{calka}) we have
\[
l^{(n)}(t) =  \frac{1}{\pi}\izi e^{-rt}(-r)^{n}H(r) dr\in C((0,\infty)),\quad n\in \iN.
\]
Since $k*l=1$ and $l$ is nonincreasing, we infer that
\[
l(t) \izt k(\tau)d\tau \leq (k*l) (t)= 1,\quad t>0,
\]
hence
\[
l(t) \leq \frac{1}{\izt k(\tau)d\tau} = \frac{1}{\izj \frac{t^{1-\al}}{\Gamma(2-\al)}\dd} \leq  \frac{1}{\izj t^{1-\al}\dd} .
\]

To show (\ref{ldol}) we recall that from (\ref{calka}) we have
\[
l(t) \geq \frac{1}{\pi}\int_{1}^{\infty} e^{-pt}H(p)dp.
\]
We apply (\ref{estioddolu})  with $x=p^{-1}$ and we obtain
\eqq{\izj \pal \dd
\leq c(\mu) \int_{\gmb}^{1} \pal \dd \hd \m{ for } \hd p \geq 1.
}{estijedenszesc}
On the other hand we have
\eqnsl{\izj \pal \sinpa \dd \geq \int_{\gmb}^{1} \pal \sinpa \dd \geq c(\mu)\int_{\gmb}^{1} \pal (1-\al) \dd .
}{estijedenczter}
Therefore, using (\ref{estijedenszesc}) and (\ref{estijedenczter}) for $p\geq 1$ we get
\eqnsl{H(p)\geq c(\mu) \frac{\int_{\gmb}^{1} \pal (1-\al) \dd}{\naw{  \int_{\gmb}^{1} \pal \dd}^{2}}.  }{estijedenpie}
We note that
\[
\int_{0}^{1}x^{1-\al}\dd \rightarrow 0 \quad \m{as}\quad x\rightarrow 0,
\]
thus, we may fix $B=B(\mu, T)\geq \max\{1,T \}$ such that for $t\in (0,T]$
\[
\int_{0}^{1}\left(\frac{t}{B}\right)^{1-\al}\dd \leq \int_{0}^{1}\left(\frac{T}{B}\right)^{1-\al}\dd  \leq \jd \int_{\gmb}^{1}\dd.
\]
Therefore, for every $t \in (0,T]$ we have
\[
\jd \frac{1}{\int_{0}^{1}\left(\frac{t}{B}\right)^{1-\al}\dd} \geq \frac{1}{\int_{\gmb}^{1}\dd},
\]
%there exists $\tilde{c}(\mu) > 0$ such that if we denote %$B:=\max\{1,\frac{T}{\tilde{c}(\mu)}\}$, then for every $t \in %(0,T]$
%\[
%\frac{1}{\int_{0}^{1}\left(\frac{t}{B}\right)^{1-\al}\dd} \geq %\frac{1}{2}\frac{1}{\int_{\gmb}^{1}\dd}.
%\]
and applying estimate (\ref{estijedenpie}) in (\ref{calka}) we obtain
\[
l(t) \geq c(\mu) \int_{1}^{\frac{B}{t}}e^{-pt} \frac{\int_{\gmb}^{1}  (1-\al) p^{\al}\dd}{\naw{  \int_{\gmb}^{1} p^{\al} \dd}^{2}}dp \geq \frac{c(\mu)}{e^{B}}\int_{1}^{\frac{B}{t}} \frac{\int_{\gmb}^{1}  (1-\al)p^{\al-2} \dd}{\naw{  \int_{\gmb}^{1} p^{\al-1} \dd}^{2}}dp
\]
\[
= \frac{c(\mu)}{e^{B}}\int_{1}^{\frac{B}{t}} \frac{d}{dp}\left(\frac{1}{  \int_{\gmb}^{1} p^{\al-1} \dd}\right)dp = \frac{c(\mu)}{e^{B}}\left(\frac{1}{ \int_{\gmb}^{1} (\frac{t}{B})^{1-\al} \dd} -\frac{1}{ \int_{\gmb}^{1} \dd}  \right)
\]
\[
\geq \frac{c(\mu)}{2e^{B}}\frac{1}{ \int_{0}^{1} (\frac{t}{B})^{1-\al} \dd}  \geq \frac{c(\mu)}{2e^{B}}\frac{1}{ \int_{0}^{1} t^{1-\al} \dd}.
\]

\end{proof}
\begin{bemerk1}\label{lbemerk}
There exists a positive constant $c=c(\mu)$ such that
\eqq{
l(t) \leq c t^{\gmb-1} \m{ for } t \in (0,1).
}{lesti}

Indeed, from Lemma \ref{kernels} and (\ref{intmugk}) we obtain that for $t \in (0,1)$
\[
l(t) \leq \frac{1}{\izj t^{1-\al}\dd} \leq \frac{1}{\int_{\gmb}^{1} t^{1-\al}\dd} \leq c(\mu)t^{\gmb-1}.
\]

%Smoothness of $l$ away from zero follows from the fact that $l$ is a Laplace %transform of function which is holomorphic on $C \setminus (-\infty,0]$ and %bounded on a halfplane $\Re z > r$ for every $r > 0$.
\end{bemerk1}

Next, we present the formula for the solution to the resolvent equation associated with the kernel $l$, which will play an important role in the logarithmic estimates.

\begin{lemma}
Let $l$ be the kernel given by Theorem \ref{l}. The solution to the resolvent equation associated with the kernel $l$,
\eqq{
r_{\theta}(t) + \theta (r_{\theta}* l)(t) = l(t), \quad t>0,
}{reso}
with $\theta\ge 0$, is given by
\eqq{
r_{\theta}(t) = \frac{1}{\pi}\izi e^{-pt}H_{\theta}(p)dp, \m{ where } H_{\theta}(p):=\frac{ \izj p^{\al} \sin (\pi\al)\dd}{( \izj p^{\al} \sin (\pi\al)\dd)^{2}+ (\theta + \izj p^{\al} \cos
(\pi\al)\dd)^{2} }.
}{rformula}
\end{lemma}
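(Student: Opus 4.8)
The plan is to derive the formula \eqref{rformula} by the same Laplace-transform strategy that produces \eqref{calka}: the resolvent equation \eqref{reso} is linear and of convolution type, so passing to Laplace transforms turns it into an algebraic identity. Writing $\hat{g}(z)=\int_0^\infty e^{-zt}g(t)\,dt$ for $\Re z>0$, the relation $k*l=1$ from Theorem~\ref{l} gives $\hat{k}(z)\hat{l}(z)=1/z$, and from \eqref{kintro} one computes $\hat{k}(z)=\int_0^1 z^{\al-1}\dd$, hence
\[
\hat{l}(z)=\frac{1}{z\,\hat{k}(z)}=\frac{1}{z\int_0^1 z^{\al-1}\dd}=\frac{1}{\int_0^1 z^{\al}\dd}.
\]
Taking Laplace transforms in \eqref{reso} yields $\hat{r}_\theta(z)+\theta\,\hat{r}_\theta(z)\hat{l}(z)=\hat{l}(z)$, so
\[
\hat{r}_\theta(z)=\frac{\hat{l}(z)}{1+\theta\hat{l}(z)}=\frac{1}{\theta+1/\hat{l}(z)}=\frac{1}{\theta+\int_0^1 z^{\al}\dd}.
\]
This is exactly $\hat{l}(z)$ with $\int_0^1 z^\al\dd$ replaced by $\theta+\int_0^1 z^\al\dd$, which strongly suggests the claimed answer.

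Next I would invert this Laplace transform by the same contour-integration/Bromwich argument used to obtain \eqref{calka} from \eqref{fin2}. One writes $r_\theta(t)=\frac{1}{2\pi i}\int_{\Gamma} e^{zt}\hat{r}_\theta(z)\,dz$ along a suitable Hankel-type contour wrapping the negative real axis (the branch cut of $z\mapsto z^\al$, $\cut$), collapses the contour onto the two sides of the cut, and uses that on the ray $z=pe^{\pm i\pi}=-p$ one has $z^\al=p^\al e^{\pm i\pi\al}=p^\al(\cos(\pi\al)\pm i\sin(\pi\al))$. Substituting into $\hat{r}_\theta$ and taking the difference of the boundary values $\hat{r}_\theta(pe^{i\pi})-\hat{r}_\theta(pe^{-i\pi})$ — a purely algebraic manipulation of the complex fraction $\big(\theta+\int p^\al e^{\pm i\pi\al}\dd\big)^{-1}$, whose imaginary part produces precisely the numerator $\int_0^1 p^\al\sin(\pi\al)\dd$ and whose modulus squared produces the denominator $(\int p^\al\sin(\pi\al)\dd)^2+(\theta+\int p^\al\cos(\pi\al)\dd)^2$ — gives $r_\theta(t)=\frac{1}{\pi}\int_0^\infty e^{-pt}H_\theta(p)\,dp$ with $H_\theta$ as in \eqref{rformula}. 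One must also check that the contributions of the arcs near $0$ and at infinity vanish, which follows from the growth/decay of $\hat{r}_\theta$; note $\hat{r}_\theta$ has no poles in $\cut$ because $\theta\ge 0$ and, for $z\notin(-\infty,0]$, the quantity $\int_0^1 z^\al\dd$ lies strictly in the open right half-plane (away from $(-\infty,0]$) when $z$ does, so $\theta+\int_0^1 z^\al\dd\ne 0$.

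I expect the main (and in fact only genuinely delicate) obstacle to be the rigorous justification of the contour deformation and the inversion formula — in particular verifying that $r_\theta$ is well defined, that the integral in \eqref{rformula} converges for each $t>0$, and that the arc estimates vanish — since the algebra itself is a direct adaptation of the already-established case $\theta=0$ (Theorem~\ref{l}). In practice this is handled by observing that $H_\theta(p)\le H_0(p)=H(p)$ for all $p>0$ (because adding the nonnegative quantity $2\theta\int p^\al\cos(\pi\al)\dd + \theta^2$ — wait, more carefully, $\theta$ enters only the denominator and $\int p^\al\cos(\pi\al)\dd$ has the sign of $\cos(\pi\al)$, so one compares term by term on the relevant ranges), so the integral in \eqref{rformula} is dominated by the one in \eqref{calka}, which converges by Theorem~\ref{l}; smoothness and the identity \eqref{reso} then follow either by differentiating under the integral sign and a direct computation, or simply by uniqueness of solutions to the resolvent equation together with the matching of Laplace transforms. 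Alternatively, and perhaps most cleanly, one can bypass contour integration entirely: \emph{define} $r_\theta$ by the right-hand side of \eqref{rformula}, verify directly that its Laplace transform equals $\big(\theta+\int_0^1 z^\al\dd\big)^{-1}$ (by the same computation that Theorem~\ref{l} implicitly uses for $l$), and conclude that it solves \eqref{reso} by uniqueness of the resolvent kernel for the completely positive kernel $l$.
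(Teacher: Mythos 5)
Your approach is essentially the paper's: take Laplace transforms, solve algebraically for $\hat r_\theta(z)=\big(\theta+\int_0^1 z^\alpha\,d\mu(\alpha)\big)^{-1}$, and invert by collapsing a Bromwich contour onto the branch cut along $(-\infty,0]$. The paper packages exactly this inversion step in Lemma~\ref{odwrotna} and then verifies its hypotheses 1)--4) for $F(p)=\big(\theta+\int_0^1 p^\alpha\,d\mu\big)^{-1}$, so the mechanism is the one you describe. Two of your side remarks are, however, inaccurate, although they do not derail the strategy. First, $\int_0^1 z^\alpha\,d\mu(\alpha)$ need not lie in the open right half-plane for $z\in\mathbb{C}\setminus(-\infty,0]$: when $\arg z$ is close to $\pm\pi$ and $\mu$ has mass near $\alpha=1$ its real part is negative. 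What is true, and what the paper actually invokes, is that its imaginary part is nonzero for $\arg z\in(0,\pi)\cup(-\pi,0)$ and it is a positive real for $z>0$, so $\theta+\int_0^1 z^\alpha\,d\mu$ never vanishes in the slit plane for any $\theta\ge 0$. Second, the bound $H_\theta(p)\le H_0(p)$ fails in general: writing $C(p)=\int_0^1 p^\alpha\cos(\pi\alpha)\,d\mu(\alpha)$, it requires $(\theta+C)^2\ge C^2$, which fails when $C<0$ and $0<\theta<2|C|$, and $C(p)<0$ does occur for large $p$ when $\mu$ is concentrated near $\alpha=1$. Convergence and the arc estimates are instead controlled by the $\theta$-uniform majorant $a(r)$ built from (\ref{f2}), which is precisely hypothesis~4) of Lemma~\ref{odwrotna}. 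Your closing alternative (define $r_\theta$ by the right-hand side of (\ref{rformula}), compute its Laplace transform, and conclude by uniqueness of the resolvent kernel) is sound and essentially equivalent.
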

\begin{proof}
We will prove this result applying the Laplace transform similarly as in  \cite[Theorem 2]{decay}. We recall Lemma~6 from \cite{nasza} (see also Lemma~2.1 in \cite{Laplace}), which serves as a tool for the inversion of the Laplace transform.

\begin{lemma}\label{odwrotna}
Let $F$ be a complex function satisfying the following assumptions:
\begin{enumerate}[1)]
\item
$F(p)$ is analytic in $\mathbb{C} \setminus (-\infty, 0].$
\item
The limit $F^{\pm}(t):=\lim\limits_{\vf\rightarrow \pi^{-}}F(te^{\pm i \vf})$ exists for a.a. $t>0$ and $F^{+} = \overline{F^{-}}$.
\item
For each   $0<\eta<\pi$
\begin{enumerate}[a)]
\item
 $|F(p)| = o(1)$, as $|p|\rightarrow \infty$ uniformly on $ |\arg(p)| < \pi - \eta$,
\item
$|F(p)|= o(\frac{1}{|p|})$, as $|p|\rightarrow 0$ uniformly on $|\arg(p)|< \pi - \eta$.
\end{enumerate}
\item
There exists $\ve_{0} \in (0,\frac{\pi}{2})$ and a function $a=a(r)$ such that \hd $\forall  \vf \in (\pi - \ve_{0}, \pi)$  the estimate  $\abs{F(re^{\pm i\vf})}   \leq a(r)$ holds, where $ \frac{a(r)}{1+r} \in L_{1}(\mr_{+})$.
\end{enumerate}
Then for $p\in \mathbb{C}$ such that $\Re{p}>0$ we have
\[
F(p) = \izi e^{-xp}f(x)dx,\quad \textrm{where}\quad f(x) = \frac{1}{\pi}\izi e^{-rx} \Im(F^{-}(r))dr.
\]
\end{lemma}
Applying the Laplace transform to (\ref{reso}) we have
\[
\hat{r_{\theta}}(p) + \theta \hat{r_{\theta}}(p)\cdot \hat{l}(p) = \hat{l}(p), \quad \mbox{and thus} \quad
\hat{r_{\theta}}(p) = \frac{\hat{l}(p)}{1+\theta \hat{l}(p)}.
\]
We recall that
\[
\hat{l}(p) = \frac{1}{p\hat{k}(p)} = \frac{1}{\izj p^{\al}\dd},
\]
where $p^{\al} = \exp(\al \log p)$ and $\log p$ denotes the principal branch of the logarithm.
Consequently,
\[
\hat{r_{\theta}}(p) = \frac{1}{\theta+\izj p^{\al}\dd}.
\]
We will show using Lemma \ref{odwrotna} that $r_{\theta}$ is given by (\ref{rformula}).

Let us define for $\theta > 0$
\[
F(p) = \frac{1}{\theta + \izj p^{\al}\dd}.
\]
Observe that $F$ is analytic in $\mathbb{C}\setminus (-\infty,0]$, because $\Im (p\hat{k}(p)) \neq 0$ if $\abs{\Arg(p)} \in (0,\pi)$ and $\Re (p\hat{k}(p))\neq 0$ if $\Arg(p) = 0$.
It is easy to see that $F^{+} = \overline{F^{-}}$ and
\[
|F(p)| \rightarrow 0 \m{ as } \abs{p} \rightarrow \infty \m{ and } |pF(p)| \rightarrow 0 \m{ as } \abs{p} \rightarrow 0,
\]
thus the assumptions 1)-3) of Lemma \ref{odwrotna} are satisfied.

We will now show that 4) holds, too. We fix $\epz \in (0, \frac{\pi}{2})$ and denote $p=re^{\pm i \vp}$, where $\vp \in (\pi-\epz, \pi)$. Then,
\[
\left| \izj p^{\al} \dd + \theta  \right|\geq \izj r^{\al} |\sin(\pm \vp \al ) | \dd \geq \int_{\gmb}^{\gpb} r^{\al} \sin(\vp \al ) \dd,
\]
with some fixed $\gamma_+\in [\gamma_-,1)$, such that $\int_{\gmb}^{\gpb}\dd > 0$. We note that $\sin(\vp \al ) \geq \min\{\sin{((\pi- \epz)\gmb)}, \sin{(\pi \gpb)}  \}$ for $\al \in (\gmb, \gpb)$, thus we obtain
\[
\left| \izj p^{\al} \dd + \lambda  \right|\geq c_{0} \min\{r^{\gmb}, r^{ \gpb} \},
\]
where the constant $c_{0}$ depends only on $\mu$. Thus, we may notice that
\eqq{|F(p)| \leq c_{0}^{-1} \max\{r^{-\gmb}, r^{-\gpb} \}. }{f2}
We define the majorant for $F(p)$ by the formula
\[
a(r) =  c_{0}^{-1} r^{-\gpb} \m{ for } 0<r \leq 1 \m{ and } a(r) =  c_{0}^{-1} r^{-\gmb} \m{ for } r > 1.
\]
Then $\frac{a(r)}{1+r} \in L_{1}(\mathbb{R_{+}})$. Since all the assumptions of Lemma \ref{odwrotna} are satisfied we obtain
\[
r_{\theta}(t) = \frac{1}{\pi}\izi e^{-pt} \Im(F^{-}(p))dp.
\]
After a brief calculation we arrive at (\ref{rformula}).
\end{proof}
%*********************************************************************************

%*************************************************************************

%*************************************************************************
\subsection{Properties of the cylinders}
In this subsection we introduce the function $\Phi$ which defines the shape of our time-space cylinders. Then we establish important estimates concerning the kernel $l$ and the resolvent kernel associated with~$l$.
\begin{lemma}\label{fi}
There exists  a unique strictly increasing function $\Phi \in C([0,\infty))\cap C^{1}((0,\infty))$ such that $\Phi(0) = 0$,
$\lim_{r\to \infty}\Phi(r)=\infty$ and
\eqq{
 \kjr = r^{-2}, \quad r>0.
}{zn1}
\end{lemma}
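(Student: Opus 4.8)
The plan is to construct $\Phi$ as the inverse of the function $r \mapsto k_1(r)$, suitably reparametrised. First I would study the function $k_1$ defined in \eqref{kidef} by $k_1(t) = \izj t^{-\al}\dd$. Since $\mu$ is a nonzero nonnegative measure on $(0,1)$, the integrand $t^{-\al}$ is, for each fixed $t>0$, a continuous positive function of $\al$, and $t\mapsto t^{-\al}$ is continuous and strictly decreasing in $t$ for every $\al\in(0,1)$. Dominated convergence (using that $t^{-\al}$ is bounded by $\max\{t_1^{-\al},t_2^{-\al}\}\le \max\{t_1^{-1},1\}\cdot\max\{t_2^{-1},1\}$ uniformly in $\al$ on compact $t$-intervals away from $0$, and that $\mu$ is a finite measure) shows $k_1\in C((0,\infty))$ and that $k_1$ is strictly decreasing on $(0,\infty)$. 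Hence $k_1$ is a continuous strictly decreasing bijection from $(0,\infty)$ onto its image.

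Next I would identify the image of $k_1$ as all of $(0,\infty)$, by showing $\lim_{t\to 0^+}k_1(t)=+\infty$ and $\lim_{t\to\infty}k_1(t)=0$. For the first limit, fix any $\gmb$ as in \eqref{intmugk}; then for $t\le 1$ we have $k_1(t)\ge \int_{\gmb}^{1}t^{-\al}\dd \ge t^{-\gmb}\int_{\gmb}^{1}\dd \to\infty$ as $t\to 0^+$. For the second limit, for $t\ge 1$ we have $t^{-\al}\le t^{-\al_1}$ where $\al_1$ is the infimum of the support (more simply $t^{-\al}\le 1$ and $t^{-\al}\to 0$ pointwise), so dominated convergence gives $k_1(t)\to 0$. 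Therefore $k_1:(0,\infty)\to(0,\infty)$ is a continuous strictly decreasing bijection, and its inverse $k_1^{-1}:(0,\infty)\to(0,\infty)$ is continuous and strictly decreasing as well. I would then define $\Phi(r):= k_1^{-1}(r^{-2})$ for $r>0$ and $\Phi(0):=0$. Since $r\mapsto r^{-2}$ is a continuous strictly decreasing bijection of $(0,\infty)$ onto $(0,\infty)$, the composition $\Phi$ is continuous and strictly increasing on $(0,\infty)$, with $\lim_{r\to 0^+}\Phi(r) = k_1^{-1}(+\infty)=0$ (so $\Phi$ is continuous at $0$ with $\Phi(0)=0$) and $\lim_{r\to\infty}\Phi(r)=k_1^{-1}(0^+)=+\infty$. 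By construction $k_1(\Phi(r)) = r^{-2}$, which is \eqref{zn1}, and uniqueness is immediate since \eqref{zn1} forces $\Phi(r)=k_1^{-1}(r^{-2})$ on $(0,\infty)$ and continuity forces $\Phi(0)=0$.

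It remains to show $\Phi\in C^1((0,\infty))$, for which it suffices to show $k_1\in C^1((0,\infty))$ with $k_1'<0$ everywhere, and then apply the inverse function theorem together with the smoothness of $r\mapsto r^{-2}$. Differentiating under the integral sign gives $k_1'(t) = -\izj \al\, t^{-\al-1}\dd$; this is justified by differentiating inside, since on any compact subinterval $[a,b]\subset(0,\infty)$ the derivative $-\al t^{-\al-1}$ is bounded uniformly in $\al\in(0,1)$ and $t\in[a,b]$ by $b^{-1}\max\{a^{-1},1\}$ times a finite quantity, and $\mu$ is finite. Moreover $k_1'(t)<0$ strictly because $\mu\not\equiv 0$ is supported in $(0,1)$ where $\al>0$. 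I do not expect any serious obstacle here; the only mild subtlety is making the dominated-convergence and differentiation-under-the-integral arguments uniform in $\al$, which is handled by noting that $\al\in(0,1)$ and $\mu$ is a finite measure, so all the $\al$-dependence is harmlessly bounded on compact $t$-intervals. The single nontrivial input is the divergence $k_1(t)\to\infty$ as $t\to 0^+$, which is exactly where the choice of $\gmb$ with $\int_{\gmb}^1\dd>0$ (equivalently, the nontriviality of $\mu$) enters.
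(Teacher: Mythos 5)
Your proof is correct and follows essentially the same route as the paper's: show $k_1$ is a smooth, strictly decreasing bijection of $(0,\infty)$ onto itself (via monotone/dominated convergence for the boundary limits), then obtain $\Phi$ by inverting $k_1$ composed with $r\mapsto r^{-2}$ and invoke the inverse (resp.\ implicit) function theorem for $C^1$ regularity, together with a direct check that $\Phi(0^+)=0$. The only cosmetic difference is that the paper proves $\lim_{r\to 0^+}\Phi(r)=0$ by contradiction whereas you read it off directly from $\Phi=k_1^{-1}(\cdot^{-2})$; both are equivalent, and your explicit $\gmb$-based lower bound for $k_1(t)$ near $t=0$ is a fine substitute for the paper's appeal to monotone convergence.
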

\begin{proof}
Note that $k_{1}(x) = \izj  x^{-\al}\dd$ is a decreasing and smooth function on $(0,\infty)$.
Furthermore, by monotone convergence, we deduce that  $\lim\limits_{x\rightarrow 0^{+}}k_{1}(x) = \infty$ and $\lim\limits_{x\rightarrow \infty}k_{1}(x) = 0$. Thus, by the Darboux property, for every $r >0$ there exists $\Phi(r) >0$ such that
\[
k_{1}(\Phi(r)) = r^{-2},
\]
and $\Phi(r)$ is uniquely determined because $k_{1}'<0$ on $(0,\infty)$. In particular, from the implicit function theorem we deduce that  $\Phi \in C^{1}((0,\infty))$ and $\Phi'(r)>0$. If $y_{0}:=\inf\limits_{r>0}\Phi(r)=\lim\limits_{r\rightarrow 0^{+}} \Phi(r)$ was positive, then we would have
\[
k_{1}(y_{0})= \lim_{r\rightarrow 0^{+}}k_{1}(\Phi(r))=\lim_{r\rightarrow 0^{+}} \frac{1}{r^{2}}=\infty,
\]
which gives a contradiction. Thus, we have $y_{0}=0$.
\end{proof}

It is easy to see that if $\mu = \delta_{\beta_{*}}$ for some $\beta_{*}\in (0,1)$, then $\Phi(r)= r^{\frac{2}{\beta_{*}}}$.
\begin{bemerk1}
We note that since $\frac{1}{\Gamma(1-\al)} \leq 1$ for $\al \in (0,1)$ we have
\eqq{
k(\vr) \leq \kjr = \frac{1}{r^2}.
}{znk}
\end{bemerk1}
\begin{prop}\label{philambda}
    For every $r > 0$ and for every $\lambda \in (0,1]$ there holds
    \[
    \Phi(\lambda r) \leq \lambda^2 \vr.
    \]
\end{prop}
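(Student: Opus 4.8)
The plan is to translate the asserted inequality into a statement about $k_{1}$, using the defining relation \eqref{zn1} together with the fact (established in the proof of Lemma~\ref{fi}) that $k_{1}$ is strictly decreasing on $(0,\infty)$, and then to exploit the elementary observation that $\al\mapsto\lambda^{-2\al}$ is nondecreasing when $\lambda\in(0,1]$.

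First, fix $r>0$ and $\lambda\in(0,1]$. By Lemma~\ref{fi}, $\Phi(\lambda r)$ is the unique point characterised by
\[
\kjr[\lambda r]\;=\;(\lambda r)^{-2}\;=\;\lambda^{-2}r^{-2}\;=\;\lambda^{-2}\,\kjr ,
\]
where I write $\kjr[\lambda r]$ for $k_{1}(\Phi(\lambda r))$. Since $k_{1}$ is strictly decreasing, the claim $\Phi(\lambda r)\le\lambda^{2}\vr$ is equivalent to $k_{1}(\lambda^{2}\vr)\le k_{1}(\Phi(\lambda r))=\lambda^{-2}\kjr$. Hence it suffices to prove the pointwise scaling estimate
\[
k_{1}(\lambda^{2}y)\;\le\;\lambda^{-2}k_{1}(y),\qquad y>0.
\]

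Second, I would verify this directly from \eqref{kidef}:
\[
k_{1}(\lambda^{2}y)=\izj(\lambda^{2}y)^{-\al}\dd=\izj\lambda^{-2\al}\,y^{-\al}\dd .
\]
Because $\lambda\in(0,1]$, we have $\lambda^{-1}\ge 1$, so $\al\mapsto\lambda^{-2\al}$ is nondecreasing; since $\al<1$ holds $\mu$-a.e.\ (see \eqref{bo1}), this gives $\lambda^{-2\al}\le\lambda^{-2}$ for $\mu$-a.e.\ $\al$. Therefore
\[
k_{1}(\lambda^{2}y)\le\lambda^{-2}\izj y^{-\al}\dd=\lambda^{-2}k_{1}(y),
\]
as required. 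Combining this with the first step yields $k_{1}(\lambda^{2}\vr)\le\lambda^{-2}\kjr=k_{1}(\Phi(\lambda r))$, and the monotonicity of $k_{1}$ then forces $\Phi(\lambda r)\le\lambda^{2}\vr$.

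\textbf{Main difficulty.} There is essentially no serious obstacle here: the only points that need care are orienting the inequalities correctly through the decreasing monotonicity of $k_{1}$, and noting that $\al<1$ on the support of $\mu$ so that $\lambda^{-2\al}\le\lambda^{-2}$. The estimate is simply a scaling artefact of the fact that $k_{1}$ is a superposition of the homogeneous kernels $x\mapsto x^{-\al}$ with exponents $\al\in(0,1)$.
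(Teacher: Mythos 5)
Your proof is correct and follows essentially the same route as the paper's: both reduce the claim to comparing $k_{1}(\Phi(\lambda r))=\lambda^{-2}k_{1}(\Phi(r))$ with $k_{1}(\lambda^{2}\Phi(r))$ via the pointwise inequality $\lambda^{-2\alpha}\le\lambda^{-2}$ for $\alpha\in(0,1)$, and then invoke the strict monotonicity of $k_{1}$. The only cosmetic difference is that you isolate the scaling estimate $k_{1}(\lambda^{2}y)\le\lambda^{-2}k_{1}(y)$ as a standalone step, whereas the paper carries out the same computation inline with $y=\Phi(r)$.
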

\begin{proof}
    From (\ref{zn1}) we have
    \[
    k_1(\Phi(\lambda r)) = \lambda^{-2}r^{-2} = \lambda^{-2}\izj (\vr)^{-\alpha}\dd \geq \izj \lambda^{-2\alpha}(\vr)^{-\alpha}\dd = k_1(\lambda^2 \Phi( r)).
    \]
    Since $k_1$ is decreasing we obtain the claim.
\end{proof}
The subsequent lemma plays an essential role in the derivation of the mean value inequalities.
\begin{lemma}\label{scaling}
Let $\Phi$ be the function from Lemma \ref{fi} and $\gmb\in (0,1)$ be such that (\ref{intmugk}) is satisfied. Then for every  $1 \leq  p < \frac{1}{1-\gmb}$ there exist $r^{*}=r^{*}(\mu, p) > 0$ and $C=C(\mu,p) >0$ such that $\Phi(2r^{*})\leq 1$ and for every $r \in [0,r^{*}]$ there holds
\eqq{
 \norm{l}_{L_{p}(0,\vdr)}^{p} (\vdr)^{p-1} \leq C r^{2p}.
}{zn2}
\end{lemma}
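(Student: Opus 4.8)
The plan is to estimate $\|l\|_{L_p(0,\Phi(2r))}^p$ directly from the pointwise bound on $l$ from Lemma~\ref{kernels}, namely $l(t)\le 1/\int_0^1 t^{1-\al}\dd$, combined with the remark that $\int_0^1 t^{1-\al}\dd$ is, up to a $\mu$-dependent constant, comparable to $\int_{\gmb}^1 t^{1-\al}\dd$ for $t\in(0,1]$ (this is essentially (\ref{estioddolu}) applied with exponent $1-\al$ rather than $-\al$; one checks the same argument works). Since $\Phi(2r^*)\le 1$ can be arranged by Lemma~\ref{fi} (as $\Phi(0)=0$ and $\Phi$ is continuous and increasing), for $r\le r^*$ the whole interval of integration lies in $(0,1]$, so these comparisons are available throughout.

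The key computation is then to bound $\int_0^{\Phi(2r)} l(t)^p\,dt$. Using $l(t)\le c\,\big(\int_{\gmb}^1 t^{1-\al}\dd\big)^{-1}$ and the crude lower bound $\int_{\gmb}^1 t^{1-\al}\dd \ge t^{1-\gmb}\int_{\gmb}^1\dd$ (valid since $t\le 1$ and $1-\al\le 1-\gmb$ for $\al\ge\gmb$), we get $l(t)\le c\, t^{\gmb-1}$, which is exactly Remark~\ref{lbemerk}. Hence
\[
\|l\|_{L_p(0,\Phi(2r))}^p \le c\int_0^{\Phi(2r)} t^{p(\gmb-1)}\,dt = \frac{c}{1-p(1-\gmb)}\,\Phi(2r)^{1-p(1-\gmb)},
\]
where the integral converges precisely because $p<\frac{1}{1-\gmb}$, i.e.\ $p(1-\gmb)<1$. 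Multiplying by $\Phi(2r)^{p-1}$ gives
\[
\|l\|_{L_p(0,\Phi(2r))}^p\,\Phi(2r)^{p-1} \le c\,\Phi(2r)^{\,p-1+1-p(1-\gmb)} = c\,\Phi(2r)^{\,p\gmb}.
\]
So it remains to show $\Phi(2r)^{p\gmb}\le C r^{2p}$, i.e.\ $\Phi(2r)^{\gmb}\le C r^2$, for small $r$.

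**The main obstacle.** The genuine difficulty is this last step: relating $\Phi(2r)$ back to a power of $r$. By the defining relation $k_1(\Phi(2r))=(2r)^{-2}$, i.e.\ $\int_0^1 \Phi(2r)^{-\al}\dd = (2r)^{-2}$, and using that for $\Phi(2r)\le 1$ the integral $\int_0^1 \Phi(2r)^{-\al}\dd$ is comparable to $\int_{\gmb}^1 \Phi(2r)^{-\al}\dd$ by (\ref{estioddolu}), which in turn satisfies $\int_{\gmb}^1 \Phi(2r)^{-\al}\dd \ge \Phi(2r)^{-\gmb}\int_{\gmb}^1\dd$ (again because $\Phi(2r)\le 1$). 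Therefore
\[
\Phi(2r)^{-\gmb}\int_{\gmb}^1\dd \;\le\; \int_0^1\Phi(2r)^{-\al}\dd \;=\;(2r)^{-2},
\]
which rearranges to $\Phi(2r)^{\gmb}\ge c\,r^2$ — the wrong direction. For the upper bound $\Phi(2r)^{\gmb}\le C r^2$ we instead estimate $\int_{\gmb}^1\Phi(2r)^{-\al}\dd$ from above. Here one writes $\Phi(2r)^{-\al}\le \Phi(2r)^{-\gmb}\cdot\Phi(2r)^{\gmb-\al}\le \Phi(2r)^{-\gmb}$ is false for $\al>\gmb$ with $\Phi(2r)<1$; rather $\Phi(2r)^{-\al}\le\Phi(2r)^{-1}$ and so $\int_0^1\Phi(2r)^{-\al}\dd\le \Phi(2r)^{-1}\mu((0,1])$, giving $\Phi(2r)^{-1}\ge c\,(2r)^{-2}$, i.e.\ $\Phi(2r)\le C r^2$; then $\Phi(2r)^{\gmb}\le \Phi(2r)\le C r^2$ once $\Phi(2r)\le 1$ and $\gmb\le 1$. (Since $p\gmb\ge\gmb>0$ we may freely pass from $\Phi(2r)^{p\gmb}$ to $\Phi(2r)^{p}\le(Cr^2)^p$ using $\Phi(2r)\le 1$.) Assembling the chain $\|l\|_{L_p}^p\,\Phi(2r)^{p-1}\le c\,\Phi(2r)^{p\gmb}\le c\,\Phi(2r)^{p}\le c\,(Cr^2)^p = C' r^{2p}$ finishes the proof, with $r^*$ chosen so that $\Phi(2r^*)\le 1$ and the constant $c/(1-p(1-\gmb))$ stays finite for the given $p$.
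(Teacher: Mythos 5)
Your proof has a genuine gap at Step~3, and it cannot be repaired within the framework of Steps~1--2. Having reduced the claim to $\Phi(2r)^{p\gmb}\le C r^{2p}$, you write ``since $p\gmb\ge\gmb>0$ we may freely pass from $\Phi(2r)^{p\gmb}$ to $\Phi(2r)^{p}\le(Cr^2)^p$ using $\Phi(2r)\le 1$'' and, earlier, ``$\Phi(2r)^{\gmb}\le\Phi(2r)\le Cr^2$.'' Both steps are \emph{backwards}: for $0<x\le 1$ and $\gmb\in(0,1)$ one has $x^{\gmb}\ge x$ and $x^{p\gmb}\ge x^{p}$, not the reverse. In fact the target inequality $\Phi(2r)^{\gmb}\le Cr^2$ is \emph{false} whenever $\gmb$ is strictly below the right endpoint of $\supp\mu$, which is unavoidable, e.g., for $d\mu=w\,d\nu_1$ with $w$ continuous. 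Already in the single-order case $\mu=\delta_\beta$ with $\gmb<\beta$ one has $\Phi(2r)=(2r)^{2/\beta}$, so $\Phi(2r)^{\gmb}=(2r)^{2\gmb/\beta}$ and $\Phi(2r)^{\gmb}/r^2=(2r)^{2(\gmb/\beta-1)}\to\infty$ as $r\to 0$. The only true inequality in this direction comes from $k_1(\Phi(2r))\ge\Phi(2r)^{-\gmb}\int_{\gmb}^{1}\dd$ for $\Phi(2r)\le 1$, which gives $\Phi(2r)^{\gmb}\ge c\,r^2$ --- the opposite of what you need.

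The root of the problem is Step~1: replacing $l(t)\le\big(\int_0^1 t^{1-\al}\dd\big)^{-1}$ by the cruder power bound $l(t)\le c\,t^{\gmb-1}$ discards precisely the information that makes the lemma true. The paper keeps the exact kernel and instead proves
\[
g(x):=\int_0^x\frac{d\tau}{\big(\int_0^1\tau^{1-\al}\dd\big)^p}\;\le\;c\,h(x),\qquad h(x):=\frac{x}{\big(\int_0^1 x^{1-\al}\dd\big)^p},
\]
for $x\in(0,\Phi(2r^*)]$, by comparing $g'$ with $h'$ and invoking (\ref{estioddolu}) and (\ref{estioddolualfa}). The whole point of this choice of $h$ is that
\[
h(\Phi(2r))\cdot\Phi(2r)^{p-1}=\Big(\int_0^1\Phi(2r)^{-\al}\dd\Big)^{-p}=\big(\ki(\Phi(2r))\big)^{-p}=(2r)^{2p}
\]
\emph{identically}, by the defining relation for $\Phi$. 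No comparison between $\Phi(2r)$ and a power of $r$ ever needs to be established; the delicate balance that defeats your Step~3 is built directly into $h$.
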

\begin{proof}
We recall that thanks to (\ref{zn1})
\[
r^{2p} = 2^{2p}(k_1(\vdr))^{-p}.
\]
Therefore, (\ref{zn2}) is equivalent to the estimate
\[
\norm{l}_{L_{p}(0,\vdr)}^{p} (\vdr)^{p-1} \leq C (k_1(\vdr))^{-p}.
\]
In view of (\ref{kerestp}), it is enough to show that for $r>0$ small enough
\[
\int_{0}^{\vdr}\frac{1}{\left(\izj  \tau^{1-\al}\dd\right)^{p}}d\tau (\vdr)^{p-1} \leq c \frac{1}{\left(\izj  (\vdr)^{-\al} \dd \right)^{p}}
\]
with some constant $c=c(\mu,p)$. Let us denote $x:=\Phi(2r)$. Then multiplying by $x^{1-p}$ the preceding inequality can be reformulated as
\eqq{
\int_{0}^{x}\frac{1}{\left(\izj  \tau^{1-\al} \dd \right)^{p}}d\tau \leq c \frac{1}{\left(\int_{0}^{1} x^{1-\al-\frac{1}{p}} \dd \right)^{p}}.
}{finn1}
Let us denote
\[
g(x):=\int_{0}^{x}\frac{1}{\left(\izj  \tau^{1-\al} \dd \right)^{p}}d\tau, \quad h(x):=\frac{x}{\left(\int_{0}^{1} x^{1-\al} \dd \right)^{p}}.
\]
For $\tau\in (0,1]$ we have
\[
\izj  \tau^{1-\al}\dd \geq \int_{\gmb}^{1}  \tau^{1-\al} \dd \geq \tau^{1-\gmb} \int_{\gmb}^{1} \dd  \equiv \tau^{1-\gmb} c(\mu)
\]
where $c(\mu)>0$, thanks to (\ref{intmugk}). Therefore, we have
\[
\left(  \izj  \tau^{1-\al} \dd \right)^{-p}\leq c(\mu)^{-p}\tau^{p(\gmb-1)}\in L_{1}((0,1)),
\]
because $p< \frac{1}{1-\gmb}$. Hence, $g(x)$ is well defined and $g(0)=0$. Using the same reasoning for the function $h(x)$ we get
\[
h(x)\leq c(\mu)^{-p} x^{1+p(\gmb-1)},\quad x\in (0,1],
\]
and then we also see that $h(0)=0$.

We will now show that there exists $c=c(\mu, p) > 0$ such that
\[
g'(x) \leq c h'(x) \m{ \hd  for \hd } x \in (0,\Phi(2r^{*})],
\]
for some positive $r^{*}=r^{*}(\mu, p)$, which will finish the proof, since $g',h' \in L_{1}((0,\Phi(2r^{*})))$.

Let us firstly discuss the case when $1<p < \frac{1}{1-\gmb}$. We have
\[
g'(x) = \frac{1}{\left(\int_{0}^{1}x^{1-\al}\dd \right)^{p}}, \hd \hd
h'(x)=   p \frac{\int_{0}^{1}(\al+\frac{1}{p}-1)x^{-\al-\frac{1}{p}}\dd}{\left(\int_{0}^{1}x^{1-\al-\frac{1}{p}}\dd \right)^{p+1}},
\]
and consequently
\[
h'(x) = p g'(x) \frac{\int_{0}^{1}(\al+\frac{1}{p}-1)x^{-\al}\dd}{\int_{0}^{1}x^{-\al}\dd},
\]
and we have to show that there exists $c=c(\mu,p) >0 $ such that
\eqq{
\int_{0}^{1}(\al+\frac{1}{p}-1)x^{-\al}\dd \geq c \int_{0}^{1}x^{-\al}\dd
}{finn4}
for $x$ as before.  To this end we write
\eqq{
\int_{0}^{1}(\al+\frac{1}{p}-1)x^{-\al}\dd = \int_{1-\frac{1}{p}}^{1}(\al+\frac{1}{p}-1)x^{-\al}\dd - \int_{0}^{1-\frac{1}{p}}(1-\al-\frac{1}{p})x^{-\al}\dd.
}{finn2}
If $1<p<\frac{1}{1-\gmb}$, then $\gmb > 1-\frac{1}{p}$, and thus
\eqq{
\int_{1-\frac{1}{p}}^{1}(\al+\frac{1}{p}-1)x^{-\al}\dd \geq \int_{\gmb}^{1}(\al+\frac{1}{p}-1)x^{-\al}\dd \geq (\gmb+\frac{1}{p}-1)\int_{\gmb}^{1}x^{-\al}\dd.
}{zn2p}
On the other hand, for $x\in (0,1]$ we have
\[
\int_{0}^{1-\frac{1}{p}}(1-\al-\frac{1}{p})x^{-\al}\dd \leq (1-\frac{1}{p})x^{\frac{1}{p}-1}\int_{0}^{1-\frac{1}{p}} \dd = (1-\frac{1}{p})x^{-\gmb}\cdot x^{\gmb-1+\frac{1}{p}}\int_{0}^{1-\frac{1}{p}} \dd.
\]
Since $x \leq 1$ we may write
\[
\int_{\gmb}^{1}x^{-\al}\dd \geq x^{-\gmb}\int_{\gmb}^{1}\dd.
\]
Inserting this result into the line above we obtain
\eqq{
\int_{0}^{1-\frac{1}{p}}(1-\al-\frac{1}{p})x^{-\al}\dd \leq (1-\frac{1}{p})\frac{\int_{0}^{1-\frac{1}{p}} \dd}{\int_{\gmb}^{1}\dd} \int_{\gmb}^{1}x^{-\al}\dd \cdot x^{\gmb-1+\frac{1}{p}}.
}{zn2r}
Combining (\ref{finn2}), (\ref{zn2p}) and (\ref{zn2r}) yields
\eqns{
\int_{0}^{1}(\al+  \frac{1}{p}-1)  x^{-\al}\dd
\geq  \left[(\gmb+\frac{1}{p}-1) - (1-\frac{1}{p})\frac{\int_{0}^{1-\frac{1}{p}} \dd}{\int_{\gmb}^{1}\dd}\cdot x^{\gmb-1+\frac{1}{p}}\right] \int_{\gmb}^{1}x^{-\al}\dd.
}
Note that $\gmb+\frac{1}{p}-1$ is positive, hence,  there exists $r^{*} \in (0,1)$ depending only on $\mu$ and $p$, such that $\Phi(2r^{*}) \leq 1$ and for every $x \in (0,\Phi(2r^{*}))$ there holds
\[
\int_{0}^{1}(\al+\frac{1}{p}-1)x^{-\al}\dd \geq \frac{1}{2}\left(\gmb+\frac{1}{p}-1\right)\int_{\gmb}^{1}x^{-\al}\dd.
\]
Applying (\ref{estioddolu}) we obtain (\ref{finn4}).

The case $p=1$ follows from the previous case, by H\"older's inequality. But it is also instructive to give a direct argument,
to also provide some further estimates required later in the paper.

Analogously to the previous case, we have to show that there exists $c > 0$ depending only on $\mu$ such that
\eqq{
g(x) \leq c h(x),\hd \textrm{ where } \hd g(x) = \int_{0}^{x}\frac{1}{\izj  \tau^{1-\al} \dd}d\tau \m{ and } h(x) = \left(\izj x^{-\al}\dd\right)^{-1}.
}{gest}
Note that $g(0)=h(0)=0$. Thus it is enough to show $g'(x) \leq c h'(x)$ for $x \in (0,1]$.
Observe that
\[
h'(x) = g'(x) \frac{\izj \al x^{-\al} \dd
}{\izj x^{-\al}\dd}.
\]
Applying (\ref{estioddolu}) we arrive at
\eqq{
\izj \al x^{-\al} \dd \geq \gmb \int_{\gmb}^{1}  x^{-\al} \dd \geq c(\mu)\izj x^{-\al}\dd
}{estioddolualfa}
and hence $h'(x) \geq c(\mu) g'(x)$ which finishes the argument.

\end{proof}

%\begin{proof}
%This is equivalent with
%\[
%r^{-2}2^{2l}\cdot 4 = k(\Phi(2^{-(l+1)}r)) < k(a\Phi(2^{-l}r)).
%\]
%We note that
%\[
%k(a\Phi(2^{-l}r)) \geq \int_{\gmb}^{1}\mg a^{-\al}\Phi(2^{-l}r)^{-\al}d\al \geq a^{-\gmb}\int_{\gmb}^{1}\mg \Phi(2^{-l}r)^{-\al}d\al \geq c(\mu)a^{-\gmb}k(\Phi(2^{-l}r)),
%\]
%where in the last estimate we used (\ref{estioddolu}). Thus it is enough to have $a$ satisfying
%\[
%4 < a^{-\gmb} c(\mu), \m{ hence } \frac{1}{a} > (\frac{4}{c(\mu)})^{\frac{1}{\gmb}}.
%\]
%\end{proof}
\nic{
\begin{prop}
For $\tau \geq 1$ there holds
\[
\tau \vr \leq \Phi(\tau^{\as/2} r), \hd r>0.
\]
\label{tauvr}
\end{prop}
\begin{proof}
The inequality is equivalent to
\[
k(\tau \vr) \geq k(\Phi(\tau^{\as/2} r))= \tau^{-\as} r^{-2},
\]
where we applied (\ref{zn1}). To prove this we write
\[
k(\tau \vr) = \inag \mg \tau^{-\al} \vr^{-\al} d\al \geq \tau^{-\as}\inag \mg  \vr^{-\al} d\al = \tau^{-\as} r^{-2}.
\]

\end{proof}
}

%%%%GOOD PROOF%%%%%%%%%%%%%%%%%%%%%

The following estimate from below of the resolvent kernel associated with $l$ will play a crucial role in the logarithmic estimates.

\begin{lemma}\label{rtheta}
Let $\bar{C},C_{1} > 0$. There exist a positive $r^{*}=r^{*}(\mu)$ and positive $c_1,c_2,c_3$ depending only on $C_{1},\bar{C}$ and $\mu$ such that for every $r \in (0,r^{*}]$, $\theta = \frac{C_{1}}{r^{2}}$, $t \in (0,\bar{C}\vr)$, the following estimates hold:
\eqq{
r_{\theta}(t)\geq c_1 \frac{1}{t}(1*r_{\theta})(t) \geq c_2 l(t) \geq c_3 \frac{1}{(1*k)(t)}.
}{rtheta2}
Thus, in particular, there exist a positive $r^{*}=r^{*}(\mu)$ and a positive $c=c(C_{1},\bar{C},\mu)$ such that for every $r \in (0,r^{*}]$, $\theta = \frac{C_{1}}{r^{2}}$, and $t \in (0,\bar{C}\vr)$, we have
\eqq{
r_{\theta}(t)\geq   c \frac{1}{\int_{0}^{1}t^{1-\al}\dd }.
}{rtheta1}
\end{lemma}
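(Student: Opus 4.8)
Write $v_\theta:=1\ast r_\theta$ and recall $k_1$ from \eqref{kidef}. We split \eqref{rtheta2} into the three links
\[
r_\theta(t)\ \ge\ c_1\,\tfrac1t\,v_\theta(t),\qquad
\tfrac{c_1}{t}\,v_\theta(t)\ \ge\ c_2\,l(t),\qquad
c_2\,l(t)\ \ge\ c_3\,\tfrac{1}{(1\ast k)(t)} .
\]
Then \eqref{rtheta1} will be immediate from \eqref{rtheta2} and the lower bound \eqref{ldol} (applied with $T=\bar C$: after shrinking $r^*$ so that $\Phi(2r^*)\le1$, every admissible $t$ satisfies $t<\bar C\Phi(r)\le\bar C$). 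The third link is just Lemma~\ref{kernels}: by \eqref{ldol} $l(t)\ge c\,(\izj t^{1-\al}\dd)^{-1}$, while $(1\ast k)(t)=\izj\frac{t^{1-\al}}{\Gamma(2-\al)}\dd$ is comparable to $\izj t^{1-\al}\dd$ because $\Gamma(2-\al)$ is bounded above and below on $[0,1]$.

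For the first two links the plan is to use the resolvent equation \eqref{reso}. Nonnegativity of $l,r_\theta$ gives $r_\theta\le l$, hence $v_\theta\le 1\ast l$; convolving \eqref{reso} with $1$ and using that $1\ast l$ is nondecreasing yields $v_\theta(t)\bigl(1+\theta\,(1\ast l)(t)\bigr)\ge (1\ast l)(t)$. Next, since $s^{1-\al}=s\cdot s^{-\al}$, \eqref{kerestp} gives $l(s)\le (s\,k_1(s))^{-1}$, and \eqref{estioddolualfa} gives $(s\,k_1(s))^{-1}\le c(\mu)^{-1}\frac{d}{ds}\bigl(k_1(s)^{-1}\bigr)$ on $(0,1]$; integrating, $(1\ast l)(t)\le C(\mu)/k_1(t)$ for $t\in(0,1]$. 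Because $\theta=C_1\,k_1(\Phi(r))$ by \eqref{zn1} and $k_1$ is decreasing, Proposition~\ref{philambda} gives $k_1(t)\ge k_1(\bar C\Phi(r))\ge c(\bar C)\,r^{-2}$ for all $t<\bar C\Phi(r)$, whence $\theta\,(1\ast l)(t)\le M$ with $M=M(C_1,\bar C,\mu)$ on the whole time interval. Therefore $v_\theta(t)\ge (1\ast l)(t)/(1+M)$, which together with $(1\ast l)(t)\ge\tfrac t2\,l(t)$ proves the middle link; and using in addition $v_\theta(t)\le(1\ast l)(t)\le C(\mu)/k_1(t)$ and $l(t)\ge c\,(t\,k_1(t))^{-1}$ (from \eqref{ldol}), the first link reduces to the pointwise estimate
\[
r_\theta(t)\ \ge\ c\,l(t),\qquad t\in(0,\bar C\Phi(r)),\qquad c=c(C_1,\bar C,\mu)>0 .
\]

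Proving this last estimate is the heart of the argument, and the step I expect to be the main obstacle: unlike the single-order case there is no self-similar scaling, so it cannot be reduced to one model computation, and the elementary resolvent estimate above (which only uses $r_\theta\le l$) degrades to a constant $\asymp1$, not to something $<1$, once $\bar C$ is large. The plan is instead to work with the explicit representations \eqref{calka}, \eqref{rformula}. Put $A(p)=\izj p^\al\sin(\pi\al)\dd$ and $B(p)=\izj p^\al\cos(\pi\al)\dd$; the estimate used in the proof of the representation \eqref{rformula} shows $A(p)^2+B(p)^2\ge A(p)^2\ge c(\mu)^2\,p^{2\gmb}$ for $p\ge1$ (here $\gpb$ keeps $\sin(\pi\al)$ bounded away from $0$ on $(\gmb,\gpb)$). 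Hence, for $p$ above a threshold $p^*$ of order $\theta^{1/\gmb}$ one has $\theta^2\le A(p)^2+B(p)^2$, so the shift by $\theta$ in the denominator of $H_\theta$ is harmless, $H_\theta(p)\ge\tfrac14 H(p)$, and integrating against $e^{-pt}$ over $p\ge p^*$ gives $r_\theta(t)\ge\tfrac14\bigl(l(t)-\tfrac1\pi\int_0^{p^*}e^{-pt}H(p)\,dp\bigr)$. Since $\theta=C_1 k_1(\Phi(r))$, the frequency scale $p^*$ is tied to the geometry $\Phi(r)$ through \eqref{zn1} and the $\gmb$-lower bound on $k_1$; the remaining—and expected hardest—task is to show that for every admissible $t<\bar C\Phi(r)$ the low-frequency remainder $\int_0^{p^*}e^{-pt}H(p)\,dp$ is at most a fixed fraction of $l(t)$. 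A crude $e^{-pt}\le1$ bound is insufficient here; one needs a frequency-localised estimate in the spirit of Lemma~\ref{scaling}, again exploiting Proposition~\ref{philambda} and the $\gmb$-homogeneity of the $\mu$-integrals. Granting this pointwise bound, all three links hold and both \eqref{rtheta2} and \eqref{rtheta1} follow.
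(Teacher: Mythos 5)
Your plan correctly settles the easier links: the third inequality is Lemma~\ref{kernels}; convolving the resolvent equation \eqref{reso} with $1$ and using that $1\ast r_\theta$ is nondecreasing gives $v_\theta(t)\ge (1\ast l)(t)/(1+\theta(1\ast l)(t))$; and $(1\ast l)(t)\le C(\mu)/k_1(t)$ (this is essentially \eqref{gest}) together with $k_1(t)\ge k_1(\bar C\Phi(r))\ge \min\{1,\bar C^{-1}\}\,r^{-2}$ (direct from the definition of $k_1$ -- Proposition~\ref{philambda} is not actually needed here) gives $\theta(1\ast l)(t)\le M(C_1,\bar C,\mu)$, hence $\tfrac1t v_\theta(t)\ge c\,l(t)$. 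You also correctly observe that the first link follows once $r_\theta(t)\ge c\,l(t)$, by combining $\tfrac1t v_\theta(t)\le \tfrac1t(1\ast l)(t)\le C(\mu)/(tk_1(t))$ with $l(t)\ge c/(tk_1(t))$ from \eqref{ldol}.

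The genuine gap is the pointwise bound $r_\theta(t)\ge c\,l(t)$, which you acknowledge you have not proved, and the route you sketch will not close it. The threshold $p^*\sim \theta^{1/\gmb}$ coming from the crude bound $A(p)\ge c\,p^{\gmb}$ is \emph{not} commensurate with $\Phi(r)^{-1}$ when the support of $\mu$ reaches up to $1$: take $d\mu=d\nu_1$, so that $\Phi(r)^{-1}\asymp r^{-2}\log(1/r)$ while $\theta^{1/\gmb}\asymp r^{-2/\gmb}$ with $\gmb<1$, hence $p^*\Phi(r)\to\infty$. For times $t$ comparable to $\bar C\Phi(r)$ the high-frequency integral $\int_{p^*}^\infty e^{-pt}H_\theta(p)\,dp$ then carries an exponentially small factor and cannot be a fixed fraction of $l(t)$. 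The obstruction is real, not technical: when $\supp\mu$ is not cut off from $1$, the sine integral $A(p)=\izj p^\al\sin(\pi\al)\dd$ lags behind $\izj p^\al\dd$ (the $\sin(\pi\al)$ factor vanishes at $\al=1$), so dominating $\theta\asymp k_1(\Phi(r))$ by $A(p)$ at $p\asymp\Phi(r)^{-1}$ is impossible. This is precisely where the paper's proof diverges from yours: instead of bounding $r_\theta$ directly from below, it proves the reverse inequality $(1\ast r_\theta)(t)\le c\,t\,r_\theta(t)$ (equation \eqref{intrts}) and combines this with the lower bound $\tfrac1t(1\ast r_\theta)(t)\ge c\,l(t)$ (equation \eqref{intrt}), which only needs the resolvent equation and monotonicity. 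The key new ingredient in the proof of \eqref{intrts} is a \emph{sign} observation: for $p$ large and $r$ small, $\izj p^\al\cos(\pi\al)\dd$ is strongly negative (dominated by the contribution near $\al=1$ where $\cos(\pi\al)\approx -1$), so $|\theta+\izj p^\al\cos(\pi\al)\dd|\ge c\izj p^\al\dd$, which gives the needed \emph{upper} bound on $H_\theta(p)$ at high frequency. Your plan works only in the separate case $\supp\mu\subseteq[0,\alpha_*]$, $\alpha_*<1$, which the paper treats first precisely because a direct lower bound on $r_\theta$ is possible there.
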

\begin{proof}
We will distinguish two cases. Let us firstly assume that the support of $\mu$ is cut-off from one, i.e.
\eqq{
\exists \as \in (0,1)\quad\mbox{such that}\quad \supp \mu \subseteq [0,\as].
}{am1}
In this case the proof is easier. We note that for any $ c_{2} > 0$ and  for any $p \geq p^{*}:=c_{2}(\vr)^{-1}$ there holds
 \[
 \theta \leq \frac{C_{1}}{\min\{c_{2},1\}} \izj p^{\al} \dd.
 \]
 Indeed, applying (\ref{zn1}), for $p \geq p^{*}$ we have
 \eqq{
 \izj p^{\al} \dd \geq \izj c_{2}^{\al}(\vr)^{-\al} \dd \geq \min\{c_{2},1\}\izj (\vr)^{-\al} \dd = \frac{\min\{c_{2},1\}}{C_{1}}\theta.
 }{estteta}
Let us take here $c_2 = 1$. Further, for $p > p^{*}$ we may write
\[ \left( \theta + \izj \pal \cospa \dd   \right)^{2}  + \left( \izj \pal \sinpa \dd \right)^{2}
\]
\eqq{
\leq 2 \theta^{2} + 3\left( \izj p^{\al}\dd\right)^{2} \leq \left(2C^{2}_{1} + 3\right) \left( \izj p^{\al}\dd\right)^{2}.
}{estijedentrzy}

On the other hand, there holds
\eqnsl{\izj \pal \sinpa \dd = \inag \pal \sinpa \dd \geq c(\as) \inag \pal \al \dd .
}{estijedencztery}
Therefore, using (\ref{estijedentrzy}), (\ref{estijedencztery}) and (\ref{am1}) for $p\geq \ps$ we obtain
\eqnsl{H_{\theta}(p) \geq c(\mu,C_{1}) \frac{\inag \pal \al \dd}{\naw{  \int_{0}^{\as} \pal \dd}^{2}}.  }{estijedenpiec}
Applying (\ref{estioddolualfa}) with $t = \frac{1}{p}$ we arrive at

\[
H_{\theta}(p) \geq c(\mu,C_{1}) \frac{1}{  \int_{0}^{\as} \pal \dd}
\]
for  $p\geq \ps$, provided $\vr \leq 1$.
Using this estimate in (\ref{rformula}) we get
\[
r_{\theta}(t) \geq c(\mu,C_{1}) \int_{p^{*}}^{\infty}e^{-pt} \frac{1}{\izj p^{\al} \dd}dp = \podst{pt = w}{tdp = dw}
=c(\mu,C_{1})\int_{tp^{*}}^{\infty}e^{-w} \frac{1}{\int_{0}^{\as} w^{\al} t^{1-\al}\dd}dw.
\]
Letting $t \in (0,\bar{C} \vr)$ we thus have
\[
r_{\theta}(t) \geq  c(\mu,C_{1}) \int_{\bar{C}}^{\infty}e^{-w} \frac{1}{\int_{0}^{\as}w^{\al}t^{1-\al}\dd} dw \geq c(\mu,C_{1}) \int_{\max\{ \bar{C} ,1\} }^{\infty}e^{-w} \frac{1}{\int_{0}^{\as}w^{\al}t^{1-\al}\dd} dw
\]
\[
\geq c(\mu,C_{1}) \int_{\max\{ \bar{C},1\}}^{\infty}e^{-w}w^{-1}dw \frac{1}{\int_{0}^{\as}t^{1-\al}\dd} = c(\mu,C_{1},\bar{C})\frac{1}{\int_{0}^{\as}t^{1-\al}\dd}
\]
and we obtain (\ref{rtheta1}) in the case when (\ref{am1}) holds.

To show (\ref{rtheta2}) we first note that from (\ref{ldol}) we get
\[
\frac{1}{(1*k)(t)} = \frac{1}{\izj \frac{t^{1-\al}}{\Gamma(2-\al)} \dd} \leq \frac{1}{\izj t^{1-\al} \dd} \leq c(\mu, T) l(t).
\]
Next, from (\ref{kerestp}) we get
\[
l(t)\leq \frac{1}{\izj t^{1-\al} \dd} \leq \frac{1}{t} \izt \frac{1}{\izj \tau^{1-\al} \dd} d\tau  \leq \frac{c}{t} \izt r_{\theta}(\tau ) d\tau,
\]
where in the second inequality we use the fact that $t\mapsto \frac{1}{\izj t^{1-\al} \dd}$ is decreasing and in the last one we apply (\ref{rtheta1}). Finally, we note that  (\ref{rformula}) gives $r_{\theta}>0$, hence from (\ref{reso}) we get $r_{\theta}< l$ and then
\[
\frac{1}{t}(1*r_{\theta})(t)\leq \frac{1}{t}(1*l)(t)\leq  \frac{1}{t} \izt \frac{1}{\izj \tau^{1-\al} \dd} d\tau \leq \frac{c}{t}  \frac{1}{\izj t^{-\al} \dd} \leq c r_{\theta}(t),
\]
where we applied (\ref{kerestp}), (\ref{gest}) and (\ref{rtheta1}).

In order to show the result when (\ref{am1}) does not hold we firstly show that in the general case for $t \in (0,\bar{C}\vr)$  there exists $c=c(C_{1},\bar{C},\mu)>0$ such that
\eqq{
\frac{1}{t} (1* r_{\theta})(t) \geq c l(t).
}{intrt}
We note that since $r_\theta$ and $l$ are nonincreasing
\[
\theta \izt r_\theta(t-\tau)l(\tau)d\tau \leq \theta \left[r_{\theta}(\frac{t}{2})\int_{0}^{\frac{t}{2}}l(\tau)d\tau +  l(\frac{t}{2})\int_{\frac{t}{2}}^{t}r_{\theta}(t-\tau)d\tau \right].
\]
Applying (\ref{kerestp}), (\ref{gest}) and (\ref{zn1}) we have
\[
\theta \izt r_\theta(t-\tau)l(\tau)d\tau \leq C_{1}\izj (\Phi(r))^{-\al}\dd \left[ \frac{c(\mu)}{\izj t^{-\al}\dd}  r_{\theta}(\frac{t}{2}) +  \frac{1}{\izj 2^{\al-1} t^{1-\al}\dd} \int_{\frac{t}{2}}^{t}r_{\theta}(t-\tau)d\tau\right].
\]
Since $t \leq \bar{C} \Phi(r)$ we have $(\Phi(r))^{-\al} \leq \bar{C}^{\al} t^{-\al} $ and
\[
\theta \izt r_\theta(t-\tau)l(\tau)d\tau \leq c(C_{1},\bar{C},\mu) \left[r_\theta(\frac{t}{2}) + \frac{1}{t}\int_{0}^{\frac{t}{2}}r_{\theta}(\tau)d\tau\right]\leq c(C_{1},\bar{C},\mu) \frac{1}{t}\izt r_{\theta}(\tau)d\tau,
\]
where in the last estimate we used the fact that $r_{\theta}$ is nonincreasing. Recalling (\ref{reso}) we arrive at
\[
r_\theta(t) + c(C_{1},\bar{C},\mu)\frac{1}{t} \izt r_{\theta}(\tau)d\tau \geq l(t).
\]
Using again monotonicity of $r_\theta$ we obtain (\ref{intrt}).

Now, we will show that under the assumption that the support of $\mu$ is not cut-off from one there exists $c = c(C_{1},\bar{C},\mu)>0$ such that for any $t \in (0,\bar{C}\Phi(r))$, $0<r\leq r^{*} = r^{*}(\mu)$
\eqq{
(1* r_{\theta})(t) \leq c t r_\theta(t).
}{intrts}
Recalling (\ref{rformula}), for any $A > 0$ we have
\[
\pi \izt r_{\theta}(\tau)d\tau =\izi \frac{1 - e^{-pt}}{p} H_{\theta}(p) dp = t \int_{0}^{At^{-1}} \frac{1 - e^{-pt}}{pt} H_{\theta}(p) dp + \int_{At^{-1}}^{\infty} \frac{1 - e^{-pt}}{p} H_{\theta}(p) dp
\]
\eqq{
\leq e^{A} t \int_{0}^{At^{-1}}e^{-pt} H_{\theta}(p) dp + \int_{At^{-1}}^{\infty} \frac{1 - e^{-pt}}{p} H_{\theta}(p) dp.
}{hth3}
We will show that for appropriately chosen $A= A(C_{1},\bar{C},\mu)$, for every $0<r \leq r^{*} = r^{*}(\mu)$ and every $t \in (0,\bar{C}\vr)$ there holds
\eqq{
\int_{At^{-1}}^{\infty} \frac{1 - e^{-pt}}{p} H_{\theta}(p) dp \leq c(\bar{C},C_{1},\mu) t \int_{0}^{At^{-1}}e^{-pt} H_{\theta}(p) dp.
}{hth}
At first we note that
\eqq{
\int_{At^{-1}}^{\infty} \frac{1 - e^{-pt}}{p} H_{\theta}(p) dp \leq \int_{At^{-1}}^{\infty} \frac{1}{p} H_{\theta}(p) dp.
}{ht1}
Let us take for simplicity $A > \bar{C}$. We note that since (\ref{am1}) does not hold, $\int_{\frac{3}{4}}^{1} \dd$ is positive. Thus, similarly to (\ref{estteta})   for $p >At^{-1} > \frac{A}{\bar{C}}(\Phi(r))^{-1}$ there holds
 \[
 \int_{\frac{3}{4}}^{1} p^{\al} \dd \geq \int_{\frac{3}{4}}^{1} \left(\frac{A}{\bar{C}}\right)^{\al}(\vr)^{-\al} \dd \geq \left(\frac{A}{\bar{C}}\right)^{\frac{3}{4}}\int_{\frac{3}{4}}^{1} (\vr)^{-\al} \dd
 \]
 \[
 \geq \left(\frac{A}{\bar{C}}\right)^{\frac{3}{4}} c(\mu) \izj  (\vr)^{-\al} \dd = \left(\frac{A}{\bar{C}}\right)^{\frac{3}{4}} \frac{c(\mu)}{C_{1}}\theta,
\]
provided $\Phi(r) \leq 1$, where in the last estimate we used (\ref{estioddolu}).
Let us choose $A = A(\bar{C},C_{1},\mu)> \max\{1,\bar{C}\}$ such that
\[
\left(\frac{A}{\bar{C}}\right)^{\frac{3}{4}} \frac{c(\mu)}{C_{1}} > 4
\]
and fix it from now. Then,
\eqq{
\theta < \frac{1}{4} \int_{\frac{3}{4}}^{1} p^{\al} \dd \m { for } p > \frac{A}{t}.
}{tht4}
On the other hand, since $p\geq 1$ and the support of $\mu$ is not cut-off from one,
\[
\int_{0}^{\frac{1}{2}}p^{\al}\cos(\pi \al)\dd \leq p^{\frac{1}{2}} \int_{0}^{\frac{1}{2}}\cos(\pi \al)\dd = p^{-\frac{1}{4}}p^{\frac{3}{4}} \int_{0}^{\frac{1}{2}}\cos(\pi \al)\dd
\]
\[
\leq \frac{\int_{0}^{\frac{1}{2}}\cos(\pi \al)\dd}{p^{\frac{1}{4}} \int_{\frac{3}{4}}^{1}(-\cos(\pi\al))\dd} \int_{\frac{3}{4}}^{1}p^{\al}(-\cos(\pi\al))\dd.
\]
Observe that for $p >At^{-1}$ and $t < \bar{C}\Phi(r)$ there holds
\[
p^{-\frac{1}{4}} \leq \frac{(\bar{C}\vr)^{\frac{1}{4}}}{A^{\frac{1}{4}}} \leq \vr^{\frac{1}{4}}.
\]
Since $\vr$ is increasing and $\Phi(0)=0$, then there exists a positive $r^{*}$ depending only on $\mu$ such that for every $r\in (0,r^*]$ and $t < \bar{C}\Phi(r)$, and $p >At^{-1}$ we have
\[
\int_{0}^{\frac{1}{2}}p^{\al}\cos(\pi \al)\dd
\leq \frac{1}{2}\int_{\frac{3}{4}}^{1}p^{\al}(-\cos(\pi\al))\dd \leq \frac{1}{2}\int_{\frac{1}{2}}^{1}p^{\al}(-\cos(\pi\al))\dd.
\]
Thus, for such $p$ there holds
\[
\izj p^{\al} \cos(\pi \al)\dd \leq \frac{1}{2}\int_{\frac{1}{2}}^{1}p^{\al}(\cos(\pi\al))\dd \leq -\frac{\sqrt{2}}{4}\int_{\frac{3}{4}}^{1}p^{\al}\dd.
\]

Henceforth, we discuss only $0<r<r^{*}$. Using the estimate above together with (\ref{tht4}) and  (\ref{estioddolu}) we arrive at
\[
\abs{\theta + \izj p^{\al}\cos(\pi\al)\dd} \geq \frac{\sqrt{2}-1}{4}\int_{\frac{3}{4}}^{1}p^{\al}\dd \geq c(\mu)\int_{0}^{1}p^{\al}\dd \m{ for } p > \frac{A}{t}.
\]
Applying this estimate in (\ref{ht1}) we obtain
\[
\int_{At^{-1}}^{\infty} \frac{1 - e^{-pt}}{p} H_{\theta}(p) dp
\leq c(\mu)\int_{At^{-1}}^{\infty} \frac{\izj p^{\al}\sin(\pi\al)\dd}{\left(\izj p^{\al}\dd \right)^{2}}\frac{1}{p} dp = \podst{w = pt}{dw = t dp} =
\]
\[
=c(\mu)\int_{A}^{\infty} \frac{\izj w^{\al}t^{-\al}\sin(\pi\al)\dd}{\left(\izj w^{\al}t^{-\al}\dd \right)^{2} w} dw \leq c(\mu)\int_{A}^{\infty} \frac{\izj t^{-\al}\sin(\pi\al)\dd}{\left(\int_{\gmb}^{1} w^{\al}t^{-\al}\dd \right)^{2}} dw.
\]
Since the support of $\mu$ is not cut-off from one, we may take $\gmb > \frac{1}{2}$ and we have further
\eqq{
\int_{At^{-1}}^{\infty} \frac{1 - e^{-pt}}{p} H_{\theta}(p) dp \leq c(\mu) \int_{A}^{\infty} w^{-2\gmb} dw \frac{\izj t^{-\al}\sin(\pi\al)\dd}{\left(\int_{\gmb}^{1} t^{-\al}\dd \right)^{2}} \leq c(C_{1},\bar{C},\mu)  \frac{\izj t^{-\al}\sin(\pi\al)\dd}{\left(\int_{0}^{1} t^{-\al}\dd \right)^{2}},
}{ht2}
where in the last estimate we used (\ref{estioddolu}).
Let us now estimate
\[
t \int_{0}^{At^{-1}}e^{-pt} H_{\theta}(p) dp
\]
from below. We have
\[
t \int_{0}^{At^{-1}}e^{-pt} H_{\theta}(p) dp \geq e^{-A} t \int_{0}^{At^{-1}} H_{\theta}(p) dp.
\]
We note that for $t \in (0,\bar{C}\Phi(r))$ there holds  $At^{-1} > A \bar{C}^{-1}(\Phi(r))^{-1}$. Let us choose $B = \frac{A}{2\bar{C}}$. Then, $At^{-1} > B(\Phi(r))^{-1}$ for every $t \in (0,\bar{C}\Phi(r))$ and
\[
t \int_{0}^{At^{-1}}e^{-pt} H_{\theta}(p) dp  \geq e^{-A} t \int_{\frac{B}{\Phi(r)}}^{At^{-1}} H_{\theta}(p) dp.
\]
For every $p > \frac{B}{\Phi(r)}$ we may apply (\ref{estijedentrzy}) and we arrive at
\[
t \int_{0}^{At^{-1}}e^{-pt} H_{\theta}(p) dp  \geq c(C_{1},\bar{C},\mu) t \int_{\frac{B}{\Phi(r)}}^{At^{-1}} \frac{\izj p^{\al} \sin (\pi \al)\dd}{(\izj p^{\al}\dd)^{2}}dp = \podst{w = pt}{dw = t dp}
\]
\[
=c(C_{1},\bar{C},\mu)  \int_{\frac{Bt}{\Phi(r)}}^{A} \frac{\izj w^{\al} t^{-\al}\sin (\pi \al)\dd}{(\izj w^{\al}t^{-\al}\dd)^{2}}dw \geq c(C_{1},\bar{C},\mu) \int_{B\bar{C}}^{A} \frac{\izj w^{\al} t^{-\al}\sin (\pi \al)\dd}{(\izj w^{\al}t^{-\al}\dd)^{2}}dw
\]
\[
=c(C_{1},\bar{C},\mu) \int_{\frac{A}{2}}^{A} \frac{\izj w^{\al} t^{-\al}\sin (\pi \al)\dd}{(\izj w^{\al}t^{-\al}\dd)^{2}}dw \geq c(C_{1},\bar{C},\mu) \frac{1}{A^{2}}\int_{\frac{A}{2}}^{A} \frac{\izj  t^{-\al}\sin (\pi \al)\dd}{(\izj t^{-\al}\dd)^{2}}dw
\]
\[
\ge  c(C_{1},\bar{C},\mu) \frac{1}{A^{2}} \frac{A}{2} \frac{\izj  t^{-\al}\sin (\pi \al)\dd}{(\izj t^{-\al}\dd)^{2}} = c(C_{1},\bar{C},\mu) \frac{\izj  t^{-\al}\sin (\pi \al)\dd}{(\izj t^{-\al}\dd)^{2}}.
\]
Combining this result with (\ref{ht2}) we obtain (\ref{hth}). Applying (\ref{hth}) in (\ref{hth3}) we obtain that there exists $c=c(C_{1},\bar{C},\mu)$ such that for any $t \in (0,\bar{C}\Phi(r))$ and $r\in (0,r^*]$
\[
\pi \izt r_{\theta}(\tau)d\tau \leq c  t \int_{0}^{At^{-1}}e^{-pt} H_{\theta}(p) dp \leq c  t \int_{0}^{\infty}e^{-pt} H_{\theta}(p) dp = c \pi t r_{\theta}(t).
\]
This shows (\ref{intrts}). Now (\ref{intrt}) together with (\ref{ldol}) gives
\[
r_{\theta}(t) \geq c l(t) \geq c \frac{1}{\izj t^{1-\al}\dd} \m{ for every } t \in (0,\bar{C}\Phi(r)),  r\in (0,r^*]
\]
where $c = c(\bar{C},C_{1},\mu)$ and $r^{*}$ depends only on $\mu$. Thus, we obtain (\ref{rtheta1}) for  $\mu$ not satisfying (\ref{am1}) and then the estimates (\ref{rtheta2}) are deduced as in the previous case.
\end{proof}
\nic{\begin{proof}
Let us denote
\[
g(t):= \frac{1}{\int_{0}^{1}t^{1-\al}\ma d\al }.
\]
At first we will show that for $t \in (0,\bar{C}\vr)$
\eqq{
g(t) + \theta (g*l)(t) \leq c(\mu,C_{1},\bar{C})l(t).
}{gint}
From (\ref{ldol}) we have $g(t) \leq l(t)$ for $t \in (0,\bar{C}\vr)$, $r \leq r_{*}$.
Furthermore, using (\ref{kerestp}) we have
\[
\theta (g*l)(t) \leq \theta \izt \frac{1}{\izj \tau^{1-\al}\ma d\al}\frac{1}{\izj( t-\tau)^{1-\al}\ma d\al}d\tau
\]
\[
=\theta \izj \frac{1}{\izj t^{-\al} a^{1-\al}\ma d\al}\frac{1}{\izj t^{1-\al}( 1-a)^{1-\al}\ma d\al}da
\]
\[
\leq B(\gmb,\gmb) \theta  \frac{1}{\int_{\gmb}^{1} t^{-\al} \ma d\al}\frac{1}{\int_{\gmb}^{1} t^{1-\al}\ma d\al}
\]
We recall that
\[
\theta = C_{1}r^{-2} = C_{1}\izj (\Phi(r))^{-\al} \ma d\al \leq C_{1}\izj \bar{C}^{\al} t^{-\al} \ma d\al.
\]
Thus
\[
\theta (g*l)(t) \leq c(\mu,C_{1},\bar{C})\izj  t^{-\al} \ma d\al \frac{1}{\int_{\gmb}^{1} t^{-\al} \ma d\al}\frac{1}{\int_{\gmb}^{1} t^{1-\al}\ma d\al}.
\]
Applying (\ref{estioddolu}) and
(\ref{ldol}) we arrive at
\[
\theta (g*l)(t) \leq c(\mu,C_{1},\bar{C}) \frac{1}{\int_{0}^{1} t^{1-\al}\ma d\al} \leq c(\mu,C_{1},\bar{C}) l(t).
\]
This way we obtained (\ref{gint}). Thus if we define $\tilde{g} := c g$ with $c=c(\mu,C_{1},\bar{C})$ small enough we arrive at
\[
\tilde{g} + \theta l* \tilde{g}(tj) \leq \frac{1}{2} l(t)
\]
and thus
\eqq{
(r_{\theta} - \tilde{g})(t) + \theta l*[r_{\theta}- \tilde{g}](t) \geq \frac{1}{2}l(t).
}{gint2}
I hope to infer
\[
r_{\theta} \geq \tilde{g}.
\]

\end{proof}

\begin{proof}
THE PROOF BELOW DOES NOT WORK !! :(

On the other hand we have
\[\izj \pal \sinpa \muad \geq \int_{\gmb}^{1} \pal \sinpa \muad \geq \frac{\pi}{2}\int_{\gmb}^{1} \pal (1-\al) \muad .
\]
Therefore, using (\ref{estijedentrzy}) and (\ref{estijedencztery}) for $p\geq \ps$ we obtain
\[\frac{ \izj p^{\al} \sin (\pi\al)\mu(\al) d\al}{( \izj p^{\al} \sin (\pi\al)\mu(\al) d\al)^{2}+ (\theta +  \izj p^{\al} \cos(\pi\al)\mu(\al) d\al)^{2}} \geq c(\mu,C_{3}) \frac{\int_{\gmb}^{1} \pal (1-\al) \muad}{\naw{  \int_{\gmb}^{1} \pal \muad}^{2}}. \]
Applying this estimate in (\ref{rformula}) we obtain
\[
r_{\theta}(t) \geq c(\mu,C_{3}) \int_{p^{*}}^{\infty}e^{-pt} \frac{\int_{\gmb}^{1} \pal (1-\al) \muad}{\naw{  \int_{\gmb}^{1} \pal \muad}^{2}}dp
\]
\[
= c(\mu,C_{3})\left[\int_{p^{*}}^{\infty}e^{-pt} \frac{1}{ \int_{\gmb}^{1} \pal \muad}dp - \int_{p^{*}}^{\infty}e^{-pt} \frac{\int_{\gmb}^{1} \al \pal \muad}{\naw{  \int_{\gmb}^{1} \pal \muad}^{2}}dp\right]
\]
\[
=c(\mu,C_{3})\left[\int_{p^{*}}^{\infty}e^{-pt} \frac{1}{ \int_{\gmb}^{1} \pal \muad}dp + \int_{p^{*}}^{\infty}e^{-pt} p \left(\frac{1}{ \int_{\gmb}^{1} \pal \muad}\right)_{p}dp\right]
\]
\[
=c(\mu,C_{3})\left[-e^{-p^{*}t}\frac{1}{\int_{\gmb}^{1} p^{* (\al-1)} \muad} +t\int_{p^{*}}^{\infty}e^{-pt}\frac{1}{ \int_{\gmb}^{1} p^{\al-1} \muad} dp \right]
\]
We note that after the substitution $rt=w$
\[
t\int_{p^{*}}^{\infty}e^{-pt}\frac{1}{ \int_{\gmb}^{1} p^{\al-1} \muad} dp =
\frac{1}{t}\int_{p^{*}t}^{\infty}e^{-w}\frac{1}{ \int_{\gmb}^{1} w^{\al-1}t^{-\al} \muad} dw \geq \int_{c_{2}\bar{C}}^{\infty}e^{-w}dw \frac{1}{ \int_{\gmb}^{1}t^{1-\al} \muad}.
\]
Since $c_{2} < 1$ we finally get
\[
t\int_{p^{*}}^{\infty}e^{-pt}\frac{1}{ \int_{\gmb}^{1} p^{\al-1} \muad} dp \geq e^{-\bar{C}} \frac{1}{ \int_{\gmb}^{1}t^{1-\al} \muad}
\]
Let us estimate the absolute value of the negative term. Since $t \in (0,\bar{C}\vr)$ we have
\[
e^{-p^{*}t}\frac{1}{\int_{\gmb}^{1} p^{* (\al-1)} \muad} \leq \frac{1}{\int_{\gmb}^{1} c_{2}^{\al-1}(\vr)^{1-\al}\ma d\al}
\leq \frac{1}{\int_{\gmb}^{1} (\bar{C}c_{2})^{\al-1}t^{1-\al}\ma d\al}.
\]
\end{proof}
}

\subsection{Moser iterations and an abstract lemma of Bombieri and Giusti}
%*************************************************************************
In  this subsection, let $U_\sigma$, $0<\sigma\le 1$, denote
a family of measurable subsets of a fixed finite measure space
endowed with a measure $\varpi$, such that $U_{\sigma'}\subset
U_\sigma$ if $\sigma'\le \sigma$. For $p\in (0,\infty)$ and
$0<\sigma\le 1$, by $L_p(U_\sigma)$ we mean the Lebesgue space
$L_p(U_\sigma,d\varpi)$ of all $\varpi$-measurable functions
$f:U_\sigma\rightarrow \iR$ with
$\|f\|_{L_p(U_\sigma)}:=(\int_{U_\sigma}|f|^p\,d\varpi)^{1/p}<\infty$.

Our proof of the weak Harnack inequality relies on methods used in the proof of the weak Harnack inequality for single order fractional derivative \cite{base}. Here we recall several general lemmas on Moser iterations (see \cite[Lemma 2.3 and Lemma 2.5]{CZ}, \cite[Lemma 2.1 and Lemma 2.2]{base}) and the lemma of Bombieri and Giusti (see \cite{BomGiu}, \cite[Lemma 2.6]{CZ}, \cite[Lemma 2.2.6]{SalCoste}).
In contrast to the treatment in \cite{base}, we need to control the power of the constant $C$ in the inequalities resulting from the iteration process.
For this reason, and for the convenience of the reader, we repeat the proof of these lemmas taking additional care of the constant $C$.

%The following two lemmas are basic to Moser's iteration technique.
%The arguments in their proofs have been repeatedly used in the
%literature (see e.g. \cite{GilTrud}, \cite{Lm}, \cite{Mosell},
%\cite{Moser64}, \cite{SalCoste}, \cite{Trud}), so it is worthwhile
%to formulate them as lemmas in abstract form.
%The first Moser iteration result reads as follows, see also
%\cite[Lemma 2.3]{CZ}.
%*************************************************************************
\begin{lemma} \label{moserit1}
Let $\kappa>1$, $\bar{p}\ge 1$, $C>0$, and $a>0$. Suppose
$f$ is a $\varpi$-measurable function on $U_1$ such that
\begin{equation} \label{mositer1}
\|f\|_{L_{\gamma\kappa}(U_{\sigma'})}\le
\Big(\frac{C(1+\gamma)^{a}}{(\sigma-\sigma')^{a}}\Big)^{1/\gamma}\,\|f\|_{L_{\gamma}(U_{\sigma})},
\quad 0<\sigma'<\sigma\le 1,\;\gamma>0.
\end{equation}
Then there exists a constant $M=M(a,\kappa,\bar{p})>0$  such that
\[
\esup_{U_{ \varsigma  }}{|f|} \le
\Big(\frac{M C^{\frac{\kappa}{\kappa-1}}}{(1-\varsigma)^{a_0}}\Big)^{1/p}
\|f\|_{L_{p}(U_1)}\quad \mbox{for all}\;\;\varsigma\in(0,1),\;p\in
(0,\bar{p}] ,\]
where $a_{0}= \frac{a \kappa }{\kappa -1}$.
\end{lemma}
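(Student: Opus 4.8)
The plan is to run the classical Moser iteration on the hypothesis (\ref{mositer1}), paying attention only to how the constant $C$ and the factor $1-\varsigma$ propagate into the final estimate; this is precisely the point where the statement sharpens the versions in \cite{CZ,base,SalCoste}, and it amounts to careful exponent bookkeeping. Fix $p\in(0,\bar p]$ and $\varsigma\in(0,1)$, and set $\gamma_j=p\kappa^j$ and $\sigma_j=\varsigma+(1-\varsigma)2^{-j}$ for $j\ge 0$, so that $\sigma_0=1$, $\sigma_{j+1}<\sigma_j\le 1$, $\sigma_j\downarrow\varsigma$, and $\sigma_j-\sigma_{j+1}=(1-\varsigma)2^{-j-1}$. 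We may assume $\|f\|_{L_p(U_1)}<\infty$. Applying (\ref{mositer1}) with $\gamma=\gamma_j$, $\sigma=\sigma_j$, $\sigma'=\sigma_{j+1}$ (all hypotheses are met, and $\kappa\gamma_j=\gamma_{j+1}$) and iterating over $j=0,\dots,n-1$ gives
\[
\|f\|_{L_{\gamma_n}(U_{\sigma_n})}\le \prod_{j=0}^{n-1}\Big(\frac{C\,(1+\gamma_j)^{a}}{\big((1-\varsigma)2^{-j-1}\big)^{a}}\Big)^{1/\gamma_j}\,\|f\|_{L_p(U_1)}.
\]

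The second step is to bound the infinite product $P:=\prod_{j\ge 0}\big(C(1+\gamma_j)^{a}/((1-\varsigma)2^{-j-1})^{a}\big)^{1/\gamma_j}$. The crucial identities are $\sum_{j\ge 0}\gamma_j^{-1}=\tfrac1p\cdot\tfrac{\kappa}{\kappa-1}$ and $\sum_{j\ge 0}j\,\gamma_j^{-1}=\tfrac1p\cdot\tfrac{\kappa}{(\kappa-1)^2}<\infty$. Estimating $1+\gamma_j\le(1+\bar p)\kappa^j$ and pulling out the $C$- and $(1-\varsigma)$-dependent factors,
\[
P\le C^{\sum_j\gamma_j^{-1}}\,(1-\varsigma)^{-a\sum_j\gamma_j^{-1}}\,\prod_{j\ge 0}\big((1+\bar p)\,\kappa^{j}\,2^{\,j+1}\big)^{a/\gamma_j},
\]
and the last product equals $(1+\bar p)^{\,a\sum_j\gamma_j^{-1}}\,2^{\,a\sum_j(j+1)\gamma_j^{-1}}\,\kappa^{\,a\sum_j j\gamma_j^{-1}}$, a finite number depending only on $a,\kappa,\bar p$, all of whose exponents carry the factor $1/p$. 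Since $\sum_j\gamma_j^{-1}=\tfrac1p\cdot\tfrac{\kappa}{\kappa-1}$, this turns $C^{\sum_j\gamma_j^{-1}}$ into $(C^{\kappa/(\kappa-1)})^{1/p}$ and $(1-\varsigma)^{-a\sum_j\gamma_j^{-1}}$ into $((1-\varsigma)^{-a_0})^{1/p}$ with $a_0=\tfrac{a\kappa}{\kappa-1}$; collecting everything yields $P\le\big(M\,C^{\kappa/(\kappa-1)}(1-\varsigma)^{-a_0}\big)^{1/p}$ for a suitable $M=M(a,\kappa,\bar p)>0$.

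Finally I would let $n\to\infty$. Because $\sigma_n>\varsigma$ and $U_{\sigma'}\subset U_{\sigma}$ whenever $\sigma'\le\sigma$, we have $U_\varsigma\subset U_{\sigma_n}$, hence $\|f\|_{L_{\gamma_n}(U_\varsigma)}\le\|f\|_{L_{\gamma_n}(U_{\sigma_n})}\le P\,\|f\|_{L_p(U_1)}$ for all $n$; since $\gamma_n\to\infty$ and $\varpi$ is a finite measure, $\|f\|_{L_{\gamma_n}(U_\varsigma)}\to\esup_{U_\varsigma}|f|$, which gives the claimed inequality with exponent $a_0=\tfrac{a\kappa}{\kappa-1}$. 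The main obstacle is entirely in the middle step: one must verify that exactly the power $\kappa/(\kappa-1)$ of $C$ and the power $a_0$ of $1-\varsigma$ are produced, while the $\bar p$-dependence (from $1+\gamma_j\le(1+\bar p)\kappa^j$) and the geometric factors $2^{j}$ (from the choice of $\sigma_j$) are absorbed into $M$; there is no genuine analytic difficulty.
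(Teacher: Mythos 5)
Your proposal is correct and follows essentially the same route as the paper's own argument: the same geometric choice of $\sigma_j$ (the paper writes $\sigma_i=1-\sum_{j=1}^i 2^{-j}(1-\varsigma)$, which equals your $\varsigma+(1-\varsigma)2^{-i}$), the same iteration of \eqref{mositer1} with $\gamma_j=p\kappa^j$, the same convergent series $\sum_j\gamma_j^{-1}=\tfrac{1}{p}\tfrac{\kappa}{\kappa-1}$ and $\sum_j j\gamma_j^{-1}$ to track the powers of $C$ and $1-\varsigma$, and the same limit $\|f\|_{L_{\gamma_n}(U_\varsigma)}\to\esup_{U_\varsigma}|f|$. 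The only cosmetic difference is your bound $1+\gamma_j\le(1+\bar p)\kappa^j$ versus the paper's $1+p\kappa^{n-1}\le 2\bar p\kappa^{n-1}$, which changes $M$ but not the stated exponents.
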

%************************************************************************

%*************************************************************************

\begin{proof}
For $q>0$ and $0<\sigma\le 1$, let
\[ \Phi(q,\sigma)=(\int_{U_\sigma} |f|^q\,d\mu)^{1/q}.\]
Let $0<p\le \bar{p}$ and $\varsigma \in (0,1)$. Set $p_i=p\kappa^{i}$,
$i=0,1,\ldots$ and define the sequence $\{\sigma_i\}$,
$i=0,1,\ldots$, by $\sigma_0=1$ and $\sigma_i=1-\sum_{j=1}^i 2^{-j}
(1-\varsigma )$, $i=1,2,\ldots$; observe that
$1=\sigma_0>\sigma_1>\ldots>\sigma_i>\sigma_{i+1}>\varsigma $ as well as
$\sigma_{i-1}-\sigma_{i}=2^{-i}(1-\varsigma )$, $i\ge 1$. Let now
$n\in \iN$. By employing (\ref{mositer1}) with $\gamma=p_i$,
$i=0,1,\ldots,n-1$, we get that
\begin{align*}
\Phi(p_n,\varsigma )& \;\le
\Phi(p_n,\sigma_n)\;=\;\Phi(p_{n-1}\kappa,\sigma_n)\;\le\;
\Big(\frac{C(1+p\kappa^{n-1})^{a}}{[2^{-n}(1-\varsigma )]^{a}}\Big)^
{\frac{1}{p}\,\kappa^{-(n-1)}}\Phi(p_{n-1},\sigma_{n-1})\\ & \;\le
\Big(\frac{C(2\bar{p}\kappa^{n-1})^{a}}{[2^{-n}(1-\varsigma )]^{a}}\Big)^
{\frac{1}{p}\,\kappa^{-(n-1)}}\Phi(p_{n-1},\sigma_{n-1})\\ &\;\le
\Big(\frac{C\tilde{C}(\bar{p},a)^n
\kappa^{a(n-1)}}{(1-\varsigma )^{a}}\Big)^
{\frac{1}{p}\,\kappa^{-(n-1)}}\Phi(p_{n-1},\sigma_{n-1})\;\le\;\ldots\\
& \;\le \Big(C^{\sum_{j=0}^{n-1}
\kappa^{-j}}\tilde{C}^{\sum_{j=0}^{n-1}
(j+1)\kappa^{-j}}\kappa^{a\sum_{j=0}^{n-1}
j\kappa^{-j}}(1-\varsigma )^{-a\sum_{j=0}^{n-1}
\kappa^{-j}}\Big)^{1/p}\,\Phi(p_0,\sigma_0)\\ & \; \le
\Big(\frac{C^{\frac{\kappa}{\kappa-1}}M(\bar{p},a,\kappa)}{(1-\varsigma )^{\frac{a\kappa}{\kappa-1}}}\Big)^
{1/p}\,\Phi(p,1).
\end{align*}
We now send $n$ to $\infty$ and use the fact that
\[ \lim_{n\to\infty}\Phi(p_n,\varsigma )=\esup_{U_{\varsigma }}{|f|}\]
to obtain that
\[ \esup_{U_{\varsigma }}{|f|} \le
\Big(\frac{C^{\frac{\kappa}{\kappa-1}}M(\bar{p},a,\kappa)}{(1-\varsigma )^{\frac{a\kappa}{\kappa-1}}}\Big)^
{1/p}\,\|f\|_{L_{p}(U_1)}.\] Hence the proof is complete.
\end{proof}

\noindent The second Moser iteration result is the following.
%*************************************************************************
\begin{lemma} \label{moserit2}
Assume that $\varpi(U_1)\le 1$. Let $\kappa>1$, $0<p_0<\kappa$, and
$C>0,\,a>0$. Suppose $f$ is a $\varpi$-measurable function on
$U_1$ such that
\begin{equation} \label{mositer2}
\|f\|_{L_{\gamma\kappa}(U_{\sigma'})}\le
\Big(\frac{C}{(\sigma-\sigma')^{a}}\Big)^{1/\gamma}\,\|f\|_{L_{\gamma}(U_{\sigma})},
\quad 0<\sigma'<\sigma\le 1,\;0<\gamma\le \frac{p_0}{\kappa}<1.
\end{equation}
Then
\[ \|f\|_{L_{p_0}(U_{\varsigma})}\le
\Big(\frac{M}{(1-\varsigma)^{a_0}}\Big)^{1/p-1/p_0}
\|f\|_{L_{p}(U_1)}\quad \mbox{for all}\;\;\varsigma\in(0,1),\;p\in
(0,\frac{p_0}{\kappa}],\]
where $M=C^{\frac{\kappa (\kappa+1)}{\kappa-1}}\cdot 2^{\frac{a \kappa^{3}}{(\kappa-1)^{3}}}$ and $a_{0}= \frac{a \kappa (\kappa+1)}{\kappa-1}$.
\end{lemma}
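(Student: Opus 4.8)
I would run a \emph{backward} Moser iteration which, in contrast to Lemma~\ref{moserit1}, \emph{lowers} the integrability exponent and terminates after finitely many steps; this keeps everything elementary — no absorption or limiting argument is needed — and follows the scheme of \cite{base,CZ} with the constant $C$ tracked explicitly. If $\|f\|_{L_p(U_1)}=\infty$ there is nothing to prove, so assume it is finite. Since $0<\sigma-\sigma'<1$, hypothesis \eqref{mositer2} is monotone in $C$ and in $a$, so we may assume $C\ge1$ and $a\ge1$; then every ``base'' quantity built from $C,a,\kappa$ in what follows is $\ge1$, so that enlarging its exponent only enlarges it.

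Set $\gamma_i=p_0\kappa^{-i}$ and let $n\ge1$ be the smallest integer with $\gamma_n\le p$ (it exists, and $n\ge1$ because $p\le p_0/\kappa<p_0$). By minimality $\kappa^{n}<\kappa p_0/p$, and since $p_0/p-1\ge\kappa-1$ this gives the key bound
\[
\kappa^{n}-1<(\kappa+1)\,p_0\Big(\tfrac1p-\tfrac1{p_0}\Big).
\]
Pick $\varsigma=\sigma_0<\sigma_1<\dots<\sigma_n=1$ with $\sigma_{i+1}-\sigma_i=(1-\varsigma)(\kappa-1)\kappa^{i}/(\kappa^{n}-1)$; this weighting is dictated by the factors $\kappa^{i+1}$ arising below which, unlike in Lemma~\ref{moserit1}, \emph{grow} with $i$. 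Applying \eqref{mositer2} with $\gamma=\gamma_{i+1}\le p_0/\kappa$, $\sigma'=\sigma_i$, $\sigma=\sigma_{i+1}$ for $i=0,\dots,n-1$ and multiplying the $n$ resulting inequalities gives
\[
\|f\|_{L_{p_0}(U_\varsigma)}\le\Big(\prod_{i=0}^{n-1}\frac{C^{\kappa^{i+1}/p_0}}{(\sigma_{i+1}-\sigma_i)^{a\kappa^{i+1}/p_0}}\Big)\,\|f\|_{L_{\gamma_n}(U_{\sigma_n})}.
\]
Since $U_{\sigma_n}\subseteq U_1$, $\gamma_n\le p$ and $\varpi(U_1)\le1$, monotonicity of the integral together with Hölder's inequality yields $\|f\|_{L_{\gamma_n}(U_{\sigma_n})}\le\|f\|_{L_{\gamma_n}(U_1)}\le\|f\|_{L_p(U_1)}$.

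It remains to estimate the product. Taking logarithms, the exponent of $C$ equals $\frac1{p_0}\sum_{i=0}^{n-1}\kappa^{i+1}=\frac{\kappa(\kappa^{n}-1)}{p_0(\kappa-1)}$, which by the key bound is $\le\frac{\kappa(\kappa+1)}{\kappa-1}\big(\frac1p-\frac1{p_0}\big)$; the factor $(1-\varsigma)^{-1}$ enters with that same exponent multiplied by $a$, i.e. with power $\le a_0\big(\frac1p-\frac1{p_0}\big)$, $a_0=\frac{a\kappa(\kappa+1)}{\kappa-1}$. The remaining purely numerical part comes from the term $-\frac{a}{p_0}\sum_{i=0}^{n-1}\kappa^{i+1}\log\!\big((\kappa-1)\kappa^{i}/(\kappa^{n}-1)\big)$: bounding $\log(\kappa^{n}-1)\le n\log\kappa$, inserting the closed form $\sum_{j=1}^{n}j\kappa^{j}=\frac{\kappa+\kappa^{n+1}(n(\kappa-1)-1)}{(\kappa-1)^2}$, and using once more $\kappa^{n}<\kappa p_0/p$ together with $\frac1p-\frac1{p_0}\ge\frac{\kappa-1}{p_0}$ (to absorb a bounded remainder), one bounds it by a constant depending only on $\kappa,a$ times $\frac1p-\frac1{p_0}$, which — after the normalisations $C,a\ge1$ and some elementary but delicate estimates — produces the factor $2^{a\kappa^{3}/(\kappa-1)^{3}}$ in $M$. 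Collecting all contributions gives $\|f\|_{L_{p_0}(U_\varsigma)}\le\big(M(1-\varsigma)^{-a_0}\big)^{1/p-1/p_0}\|f\|_{L_p(U_1)}$ with $M=C^{\kappa(\kappa+1)/(\kappa-1)}2^{a\kappa^{3}/(\kappa-1)^{3}}$ and $a_0$ as above. I expect this last step — converting the raw iteration output into the stated clean closed forms for $M$ and $a_0$ — to be the only genuine difficulty: it is entirely elementary, but it hinges on the weighted choice of the $\sigma_i$ and on a careful comparison of $\sum\kappa^{j}$, $\sum j\kappa^{j}$ and $\kappa^{n}$ with $1/p-1/p_0$.
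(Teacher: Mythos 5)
Your overall architecture is the same as the paper's: the paper defers to \cite[Lemma~2.2]{base}, and the (disabled) derivation included in the source runs exactly the kind of finite backward Moser iteration you describe and then passes to $L_p$ via H\"older's inequality and $\varpi(U_1)\le 1$. Your variations — growth-weighted $\sigma_i$ with $\sigma_{i+1}-\sigma_i\propto\kappa^i$ (the AM--GM-optimal choice for the product that arises) in place of the dyadic $\sigma_{i-1}-\sigma_i=2^{-i}(1-\varsigma)$, and $n$ taken as the least integer with $\gamma_n\le p$ rather than the $n$ with $p_n<p\le p_{n-1}$ — are legitimate minor refinements, not a different route. The calculation of the exponents of $C$ and of $(1-\varsigma)^{-1}$ via $\kappa^n-1<(\kappa+1)p_0(1/p-1/p_0)$ is correct and yields $C^{\kappa(\kappa+1)/(\kappa-1)}$ and $a_0=a\kappa(\kappa+1)/(\kappa-1)$ as stated.

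The gap is your last paragraph: you assert, but do not prove, that the residual factor $\prod_{i=0}^{n-1}\bigl(\tfrac{\kappa^n-1}{(\kappa-1)\kappa^i}\bigr)^{a\kappa^{i+1}/p_0}$ is bounded by $\bigl(2^{a\kappa^3/(\kappa-1)^3}\bigr)^{1/p-1/p_0}$. This is precisely the quantitative content the paper needs — $M$ is tracked explicitly so that it propagates cleanly through Theorems~\ref{superest1} and \ref{superest2} — and it is not immediate. Besides $\kappa^n-1<(\kappa+1)p_0(1/p-1/p_0)$, one also needs the lower bound $1/p-1/p_0>(\kappa^{n-1}-1)/p_0$ coming from minimality of $n$ to absorb residual terms, and the outcome depends delicately on inserting the closed forms of $\sum_{j\le n}\kappa^j$ and $\sum_{j\le n}j\kappa^j$ into the product; your sketch does not carry this out. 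I would also flag that the paper's own disabled computation with the dyadic weighting lands on the exponent $\tfrac{a\kappa^3(\kappa+1)}{(\kappa-1)^3}$ for the power of $2$, not $\tfrac{a\kappa^3}{(\kappa-1)^3}$ as written in the lemma statement — so either the stated $M$ contains a slip or a genuinely sharper estimate is needed. Either way, the explicit numerical bound is exactly the non-trivial part, and your proposal leaves it open. (A smaller issue: you cannot simply ``assume $C\ge1$, $a\ge1$'': enlarging $C$ or $a$ enlarges the $M$ you end up proving, which does not imply the bound with the original smaller $M$. This is harmless for the paper's applications, but it is not a valid reduction for the lemma as literally stated.)
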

%************************************************************************
This follows immediately from \cite[Lemma 2.2]{base}  and its proof.

%*************************************************************************
\nic{
\begin{proof}
 Set $p_i=p_0 \kappa^{-i}$, $i=1,2,\ldots$. Given
$\varsigma\in(0,1)$ we take again the sequence $\{\sigma_i\}$,
$i=0,1,2,\ldots$, defined by $\sigma_0=1$ and
$\sigma_i=1-\sum_{j=1}^i 2^{-j} (1-\delta)$, $i\ge 1$. Suppose now
$n\in \iN$. By using (\ref{mositer2}) with $\gamma=p_i$,
$i=1,\ldots,n$, we obtain
\begin{align*}
\Phi(p_0,\varsigma) & \le \; \Phi(p_0,\sigma_n)\;=\;\Phi(p_1
\kappa,\sigma_n)\;\le
\;\frac{C^{\kappa/p_0}}{[2^{-n}(1-\varsigma)]^{a
 \kappa/p_0}}\;\Phi(p_1,\sigma_{n-1})\\
& \le \; \frac{C^{\kappa/p_0}}{[2^{-n}(1-\varsigma)]^{a
 \kappa/p_0}}\; \frac{C^{\kappa^2/p_0}}{[2^{-(n-1)}(1-\varsigma)]^{a
 \kappa^2/p_0}}\;\Phi(p_2,\sigma_{n-2})\;\le\;\dots\\
& \le \; \frac{C^{\frac{1}{p_{0}}\,(\kappa+\kappa^2+\ldots+\kappa^n)}}
{2^{-\frac{a}{p_0}\,(n\kappa+(n-1)\kappa^2+\ldots+2\kappa^{n-1}+\kappa^n)}
(1-\varsigma)^{\frac{a}{p_0}\,(\kappa+\kappa^2+\ldots+\kappa^n)}}\;\Phi(p_n,\sigma_0).
\end{align*}
Since $p_i=p_0 \kappa^{-i}$, we have
\[ \frac{1}{p_0}\,\sum_{j=1}^n \kappa^j = \frac{\kappa(\kappa^n-1)}{p_0(\kappa-1)}
= \frac{\kappa}{p_0(\kappa-1)}\;(\frac{p_0}{p_n}-1)=
\frac{\kappa}{\kappa-1}\;(\frac{1}{p_n}-\frac{1}{p_0}).\] Employing
the formula
\[ \sum_{j=1}^n j
\kappa^{j-1}=\frac{1-(n+1)\kappa^n+n\kappa^{n+1}}{(\kappa-1)^2}\] we
have further
\begin{align*}
\sum_{j=1}^n (n+1-j)\kappa^j & = \; (n+1)\sum_{j=1}^n
\kappa^j-\sum_{j=1}^n j \kappa^j\\ & =
\;(n+1)\kappa\,\frac{\kappa^n-1}{\kappa-1}-\kappa\;
\frac{1-(n+1)\kappa^n+n\kappa^{n+1}}{(\kappa-1)^2}\\ & = \;
\kappa\;\frac{\kappa^{n+1}-(n+1)\kappa+n}{(\kappa-1)^2}\;\le \;
\frac{\kappa}{(\kappa-1)^2}\kappa^{n+1}\\ & \le \;
\frac{\kappa^3}{(\kappa-1)^3}\;(\kappa^n-1)\;\le\;\frac{\kappa^3}{(\kappa-1)^3}\;
(\frac{p_0}{p_n}-1),
\end{align*}
which yields
\[ \frac{1}{p_0}\,\sum_{j=1}^n (n+1-j)\kappa^j \le \frac{\kappa^3}{(\kappa-1)^3}\;
(\frac{1}{p_n}-\frac{1}{p_0}).\] Therefore
\[ \Phi(p_0,\varsigma) \le \Big[
\frac{2^{\frac{a
\kappa^3}{(\kappa-1)^3}}C^{\frac{\kappa}{\kappa-1}}}
{(1-\varsigma)^{\frac{a\kappa}{\kappa-1}}}\Big]^
{\frac{1}{p_n}-\frac{1}{p_0}} \Phi(p_n,\sigma_0).
\]
Given $p\in(0,p_0/\kappa]$ there exists $n\ge 2$ such that $p_n<p\le
p_{n-1}$. We then have
\begin{align*}
\frac{1}{p_n}-\frac{1}{p_0} & \; =\frac{\kappa^n-1}{p_0}\;\le
\;\frac{\kappa^n+\kappa^{n-1}-\kappa-1}{p_0}\;=\;
\frac{(1+\kappa)(\kappa^{n-1}-1)}{p_0}\\ & \; =
(1+\kappa)(\frac{1}{p_{n-1}}-\frac{1}{p_0})\;\le\;(1+\kappa)(\frac{1}{p}-\frac{1}{p_0}),
\end{align*}
as well as
\[ \Phi(p_n,\sigma_0)=\Phi(p_n,1)\le \Phi(p,1),\]
by H\"older's inequality and the assumption $\varpi(U_1)\le 1$. All in
all, we obtain
\[ \Phi(p_0,\varsigma)\le \Big[
\frac{2^{\frac{a
\kappa^3}{(\kappa-1)^3}}C^{\frac{\kappa}{\kappa-1}}}
{(1-\varsigma)^{\frac{a\kappa}{\kappa-1}}}\Big]^
{(1+\kappa)(\frac{1}{p}-\frac{1}{p_0})}\Phi(p,1),\] which proves the
lemma.

\end{proof}
}

$\mbox{}$

Let us now recall the lemma by Bombieri and Giusti.
%*************************************************************************
\begin{lemma} \label{abslemma}
Let $\delta,\,\eta\in(0,1)$, and let $\gamma,\,C$ be positive
constants and $0<\beta_0\le \infty$. Suppose $f$ is a positive
$\varpi$-measurable function on $U_1$ which satisfies the following two
conditions:

(i)
\[
\|f\|_{L_{\beta_0}(U_{\sigma'})}\le
[C(\sigma-\sigma')^{-\gamma}\varpi(U_1)^{-1}]^{1/\beta-1/\beta_0}\|f\|_{L_{\beta}(U_{\sigma})},
\]
for all $\sigma,\,\sigma',\,\beta$ such that $0<\delta\le
\sigma'<\sigma\le 1$ and $0<\beta\le \min\{1,\eta\beta_0\}$.

(ii)
\[
\varpi(\{ \log f>\lambda\}) \le C\varpi(U_1)\lambda^{-1}
\]
for all $\lambda>0$.

Then
\[
\|f\|_{L_{\beta_0}(U_{\delta})}\le M \varpi(U_1)^{1/\beta_0},
\]
where $M$ depends only on $\delta,\,\eta,\,\gamma,\,C$, and
$\beta_0$.
\end{lemma}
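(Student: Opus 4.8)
The statement is the classical iteration lemma of Bombieri and Giusti, and the plan is to follow the standard argument (cf.\ \cite{BomGiu}, \cite[Lemma~2.2.6]{SalCoste}, \cite[Lemma~2.6]{CZ}); we only sketch it. First I would normalise: replacing $\varpi$ by $\varpi/\varpi(U_1)$, one may assume $\varpi(U_1)=1$, since the hypotheses (i), (ii) and the conclusion are all invariant under this rescaling in view of their homogeneity in $\varpi(U_1)$. Set $\beta_{*}:=\min\{1,\eta\beta_0\}$ and, for $0<\beta\le\beta_0$ and $\delta\le\sigma\le1$, write $\Phi(\beta,\sigma):=\|f\|_{L_\beta(U_\sigma)}$; since $\varpi$ is now a probability measure and $U_{\sigma'}\subset U_\sigma$ for $\sigma'\le\sigma$, this quantity is nondecreasing in both $\sigma$ and $\beta$. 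I would also note that one may assume $\Phi(\beta_0,\sigma)<\infty$ for $\sigma<1$ and $\Phi(\beta_0,1)<\infty$: this holds automatically in the situations where the lemma is applied in this paper (there $f$ is a power of a locally bounded function), and it is in any case needed for hypothesis (i) to be a genuine constraint.

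The heart of the proof is to convert (i) and (ii) into a recursion for $\Phi(\beta_0,\cdot)$. Fix $\delta\le\sigma'<\sigma\le1$, $0<\beta\le\beta_{*}$, and a cutoff $s_0>0$. Splitting the integral over $\{f\le e^{s_0}\}$ and $\{f>e^{s_0}\}$ and using H\"older's inequality with exponent $\beta_0/\beta$ on the second piece together with the weak bound (ii), one obtains
\[
\Phi(\beta,\sigma)^\beta=\int_{U_\sigma}f^\beta\,d\varpi\le e^{\beta s_0}+\Phi(\beta_0,\sigma)^\beta\Big(\frac{C}{s_0}\Big)^{1-\beta/\beta_0}.
\]
The decisive idea is to choose the cutoff $s_0$ in terms of the still unknown right-hand side, namely $s_0=\tfrac12\log\Phi(\beta_0,\sigma)$ (when $\Phi(\beta_0,\sigma)>1$): then the first term is negligible against the second once $\Phi(\beta_0,\sigma)$ is large, and inserting the resulting bound into (i) yields
\[
\Phi(\beta_0,\sigma')\le 2^{1/\beta}\,\Phi(\beta_0,\sigma)\Big(\frac{2C^2(\sigma-\sigma')^{-\gamma}}{\log\Phi(\beta_0,\sigma)}\Big)^{1/\beta-1/\beta_0}.
\]
From this I would extract a dichotomy: there is a function $T(\sigma-\sigma')=c\,C^2(\sigma-\sigma')^{-\gamma}$, with $c$ depending only on $\eta$ and $\beta_0$, such that for every pair $\sigma'<\sigma$ either $\log\Phi(\beta_0,\sigma)\le T(\sigma-\sigma')$, or else --- taking $\beta\in(0,\beta_{*}]$ small enough, so that the factor in parentheses, being then less than $1$, is raised to a large power $1/\beta-1/\beta_0$ and beats the prefactor $2^{1/\beta}$ --- one has $\log\Phi(\beta_0,\sigma')\le\log\Phi(\beta_0,\sigma)-G$ for any prescribed gain $G>0$.

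It remains to iterate. Fix an increasing sequence $\sigma_0=\delta<\sigma_1<\dots\uparrow1$ with $\sigma_1-\sigma_0$ a fixed fraction of $1-\delta$, and at the $i$-th step prescribe the gain $G(\sigma_i-\sigma_{i-1}):=T(\sigma_{i+1}-\sigma_i)$. Walking from $\sigma_0$ upward and applying the dichotomy, exactly three things can happen: we meet the bounded branch already at step $1$, giving $\log\Phi(\beta_0,\delta)\le T(\sigma_1-\sigma_0)$, a constant depending only on $\delta,\eta,\beta_0,C,\gamma$; or we meet it first at some step $j\ge2$, in which case the accumulated gains $\sum_{i=1}^{j-1}T(\sigma_{i+1}-\sigma_i)$ telescope against $T(\sigma_j-\sigma_{j-1})$ and force $\log\Phi(\beta_0,\delta)\le0$; or we never meet it, which is impossible, since then $\log\Phi(\beta_0,\sigma_n)\ge\log\Phi(\beta_0,\delta)+\sum_{i=1}^{n}T(\sigma_{i+1}-\sigma_i)\to\infty$ (the series diverges because $\sigma_{i+1}-\sigma_i\to0$), contradicting $\Phi(\beta_0,1)<\infty$. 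Hence $\log\Phi(\beta_0,\delta)\le\max\{0,T(\sigma_1-\sigma_0)\}$, and undoing the normalisation gives $\|f\|_{L_{\beta_0}(U_\delta)}\le M\varpi(U_1)^{1/\beta_0}$ with $M$ depending only on the stated quantities. The main obstacle is making the second paragraph precise: one must control how small $\beta$ has to be chosen to realise a prescribed gain and check that the reductions ``$\Phi(\beta_0,\sigma)$ large'' are legitimate whenever $\log\Phi(\beta_0,\sigma)>T(\sigma-\sigma')$. A naive attempt with a fixed small $\beta$ fails, because forcing a contraction in the recursion then costs a free term of size $\exp\!\big(c(\sigma-\sigma')^{-\gamma}\big)$, which no summation over scales can absorb; it is exactly the coordinated choice $s_0\sim\log\Phi(\beta_0,\sigma)$ together with $\beta\to0$ that removes this obstruction --- this being the content of the Bombieri--Giusti lemma, for whose detailed proof I would refer to the references cited above.
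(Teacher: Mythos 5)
The paper does not give a proof of this lemma: it is recalled as a classical result and the reader is sent to \cite{BomGiu}, \cite[Lemma 2.6]{CZ} and \cite[Lemma 2.2.6]{SalCoste}. So there is no in-paper argument to compare against; I can only assess your sketch on its own terms. Your route is the standard Bombieri--Giusti one (normalise $\varpi(U_1)=1$, split $\int f^\beta$ over the super-level set $\{\log f>s_0\}$, H\"older plus the weak $L_1$ bound (ii), feed the result into (i), iterate with a dichotomy), which is indeed what the cited sources do.

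There is, however, a concrete flaw in the way you set up the dichotomy. The displayed recursion
$\Phi(\beta_0,\sigma')\le 2^{1/\beta}\,\Phi(\beta_0,\sigma)\bigl(2C^2(\sigma-\sigma')^{-\gamma}/\log\Phi(\beta_0,\sigma)\bigr)^{1/\beta-1/\beta_0}$
was obtained by absorbing $e^{\beta s_0}=\Phi(\beta_0,\sigma)^{\beta/2}$ into the second summand, and this absorption is \emph{not} free: writing $\psi=\log\Phi(\beta_0,\sigma)$ and $b=2C/\psi$, the requirement $\Phi(\beta_0,\sigma)^{\beta/2}\le\Phi(\beta_0,\sigma)^\beta\, b^{1-\beta/\beta_0}$ forces $\beta\psi/2\ge(1-\beta/\beta_0)\log(1/b)$, i.e.\ a \emph{lower} bound $\beta\gtrsim \log(\psi/2C)/\psi$. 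For $\beta$ below this threshold both summands are of order one, the second carries the small factor $b^{1-\beta/\beta_0}$, and your displayed recursion is simply false. Consequently your claim that one can realise ``any prescribed gain $G>0$'' by sending $\beta\to 0$ does not hold; the admissible $\beta$ is bounded below, and with the optimal choice $\beta\sim\log\psi/\psi$ the gain is of size roughly a fixed fraction of $\psi$ itself, rather than a freely chosen constant. The iteration is then multiplicative ($\psi(\sigma')\lesssim\theta\,\psi(\sigma)$ with $\theta<1$ above a threshold depending on $\sigma-\sigma'$), not the additive-gain telescoping you describe with $G_i:=T(\sigma_{i+1}-\sigma_i)$; that telescoping computation would have to be replaced. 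You do gesture at this difficulty in your closing sentences, but your second paragraph states the dichotomy as if the arbitrary gain were available, and the iteration in the third paragraph relies on it. The repair is exactly the coordinated choice of the cut-off $s_0$ \emph{and} of $\beta$ as functions of $\psi$; for the clean bookkeeping I would, as you say, defer to \cite[Lemma~2.2.6]{SalCoste}.
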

%***********************************************************************
\subsection{The Yosida approximation of the non-local operator} \label{SecYos}
%***********************************************************************
In this section we first introduce the Yosida approximation of the non-local operator of the form $\frac{d}{dt}k*$. The special case $k(t) = \frac{t^{-\al}}{\Gamma(1-\al)}$ with $\alpha\in (0,1)$ has been discussed in \cite{base}. Here, we repeat the reasoning for general $k$ for the reader's convenience, see also \cite{Za2}.

Let  $1\le p<\infty$, $T>0$, and $X$ be a real Banach
space. Then the non-local operator $B$ defined by
\[ B u=\,\frac{d}{dt}\,(k\ast u),\;\;D(B)=\{u\in L_p((0,T);X):\,k\ast u\in \mbox{}_0 H^1_p((0,T);X)\},
\]
where the zero means vanishing at $t=0$, is known to be
$m$-accretive in $L_p((0,T);X)$, cf. \cite{Phil1}, \cite{CP}, and
\cite{Grip1}. Its Yosida approximations $B_{n}$, given by
$B_{n}=nB(n+B)^{-1},\,n\in \iN$, have the property that for any
$u\in D(B)$, one has $B_{n}u\rightarrow Bu$ in $L_p((0,T);X)$ as
$n\to \infty$. Furthermore, we have the representation
\begin{equation} \label{Yos}
B_n u=\,\frac{d}{dt}\,(k_{n}\ast u),\quad u\in
L_p((0,T);X),\;n\in \iN,
\end{equation}
where $k_{n}=n s_{n}$, and $s_{n}$ is the
unique solution of the scalar-valued Volterra equation
\[
s_{n}(t)+n(s_{n}\ast l)(t)=1,\quad
t>0,\;n\in\iN,
\]
see e.g. \cite{VZ}. Denote by $h_{n}\in L_{1,\,loc}(\iR_+)$ the
resolvent kernel associated with $nl$, that is
\begin{equation} \label{hndef}
h_{n}(t)+n(h_{n}\ast l)(t)=nl(t),\quad
t>0,\;n\in\iN.
\end{equation}
Convolving (\ref{hndef}) with $k$ and using $l\ast
k=1$, we find that
\[
(k\ast h_{n})(t)+n([k\ast
h_{n}]\ast l)(t)=n,\quad t>0,\;n\in\iN.
\]
Consequently,
\begin{equation} \label{gnprop}
k_{n}=ns_{n}=k\ast h_{n},\quad
n\in \iN.
\end{equation}
The kernels $k_{n}$ are nonnegative and nonincreasing for
all $n\in\iN$, and they belong to $H^1_1((0,T))$, cf. \cite{JanI}
and \cite{VZ}. Note that for any $f\in L_p((0,T);X)$, $1\le
p<\infty$, we have $h_{n}\ast f\to f$ in $L_p((0,T);X)$
as $n\to \infty$. In fact, setting $u=l\ast f$, we have $u\in
D(B)$, and
\[
B_n u=\,\frac{d}{dt}\,(k_{n}\ast
u)=\,\frac{d}{dt}\,(k\ast l\ast h_{n}\ast
f)=h_{n}\ast f\,\to\,Bu=f\quad\mbox{in}\;L_p((0,T);X)
\]
as $n\to \infty$. This implies in particular that $k_{n}\to k$ in
$L_1((0,T))$ as $n\to \infty$.

Next, we recall a fundamental identity for integro-differential
operators of the form $\frac{d}{dt}(k\ast u)$, cf.\ also \cite{Za2}, \cite{base}.
Suppose that $k\in H^1_1((0,T))$, $U$ is an open subset of $\iR$, and $H\in C^1(U)$. Then % it follows
%from a straightforward computation that
for any sufficiently smooth
function $u$ on $(0,T)$ taking values in $U$, we have for a.a. $t\in (0,T)$
%*****************************************************************************
\begin{align} \label{fundidentity}
H'(u(t))&\frac{d}{dt}\,(k \ast u)(t) =\;\frac{d}{dt}\,(k\ast
H(u))(t)+
\Big(-H(u(t))+H'(u(t))u(t)\Big)k(t) \nonumber\\
 & +\int_0^t
\Big(H(u(t-s))-H(u(t))-H'(u(t))[u(t-s)-u(t)]\Big)[-\dot{k}(s)]\,ds,
\end{align}
%****************************************************************************
where $\dot{k}$ denotes the derivative of $k$. In particular this
identity applies to the Yosida approximations of the non-local operator. An integrated version of
(\ref{fundidentity}) can be found in \cite[Lemma 18.4.1]{GLS}.

We will  frequently
use that if $k$ is nonincreasing and $H$ is convex then the last term in (\ref{fundidentity}) is nonnegative. However, we would like to point out that, exactly as in \cite{base},
for the delicate logarithmic estimates in Section \ref{logsection}
one really needs the full identity (\ref{fundidentity}), even though
we work all the time with convex or concave functions. In
particular, similarly as in \cite{base}, the crucial fractional differential inequality
(\ref{log7}) cannot be obtained by using merely convexity
inequalities.

In view of the regularity of $l$ established in Remark \ref{lbemerk}, the subsequent two lemmas are also obtained by simple algebra. They are straightforward generalization of \cite[Lemma 2.4 and Lemma 2.5]{base}.
%*****************************************************************************
\begin{lemma} \label{comm}
Let $T>0$. Suppose that $v\in
{}_0H^1_1((0,T))$ and $\varphi\in C^1([0,T])$. Then
\[
\big(l\ast(\varphi \dot{v}))(t)=\varphi(t)(l\ast
\dot{v})(t)+\int_0^t
v(\sigma)\partial_\sigma\big(l(t-\sigma)
[\varphi(t)-\varphi(\sigma)]\big)\,d\sigma,\;\;\mbox{a.a.}\;t\in
(0,T).
\]
If in addition $v$ is nonnegative and $\varphi$ is nondecreasing
there holds
\[
\big(l\ast(\varphi \dot{v}))(t)\ge \varphi(t)(l\ast
\dot{v})(t)-\int_0^t l(t-\sigma)
\dot{\varphi}(\sigma)v(\sigma)\,d\sigma,\;\;\mbox{a.a.}\;t\in
(0,T).
\]
\end{lemma}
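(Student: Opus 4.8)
Here the final statement is the commutator-type identity for the kernel $l$ in Lemma~\ref{comm}, and the plan is to reduce it to a single integration by parts after the algebraic splitting $\varphi(\sigma)=\varphi(t)-\big[\varphi(t)-\varphi(\sigma)\big]$. Fix $t\in(0,T)$ with $l(t-\cdot)\,|\dot v|\in L_1(0,t)$; this holds for a.a.\ $t$ by Young's inequality, since $\varphi$ is bounded and $\dot v\in L_1$. Splitting the convolution gives
\[
\big(l\ast(\varphi\dot v)\big)(t)=\varphi(t)\,(l\ast\dot v)(t)-\int_0^t g(\sigma)\dot v(\sigma)\,d\sigma,\qquad g(\sigma):=l(t-\sigma)\big[\varphi(t)-\varphi(\sigma)\big],
\]
so everything comes down to integrating the last term by parts against $v$.

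The first thing I would check is that $g$ extends to a continuous function of bounded variation on $[0,t]$. Continuity on $(0,t)$ is immediate from $l\in C^\infty((0,\infty))$ (Lemma~\ref{kernels}) and $\varphi\in C^1$; at $\sigma=0$ the value is $l(t)\big[\varphi(t)-\varphi(0)\big]$, and from $|\varphi(t)-\varphi(\sigma)|\le \|\dot\varphi\|_{L_\infty(0,T)}(t-\sigma)$ together with the mild-singularity bound $l(s)\le c\,s^{\gmb-1}$ for small $s$ (Remark~\ref{lbemerk}) we obtain $|g(\sigma)|\le c\,(t-\sigma)^{\gmb}$, hence $g$ is bounded and $g(\sigma)\to 0$ as $\sigma\to t^-$. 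For the bounded variation, write $dg=-\dot l(t-\sigma)\big[\varphi(t)-\varphi(\sigma)\big]\,d\sigma-l(t-\sigma)\dot\varphi(\sigma)\,d\sigma$; since $\dot l\le 0$ (by \eqref{calka}, $\dot l(s)=-\tfrac1\pi\int_0^\infty r e^{-rs}H(r)\,dr$ and $H\ge 0$),
\[
\int_0^t|dg|\ \le\ \|\dot\varphi\|_{L_\infty(0,T)}\Big(\int_0^t (t-\sigma)\,(-\dot l(t-\sigma))\,d\sigma+\int_0^t l(t-\sigma)\,d\sigma\Big),
\]
and after the substitution $s=t-\sigma$ and one integration by parts the first integral equals $\int_0^t s\,(-\dot l(s))\,ds=-t\,l(t)+\int_0^t l(s)\,ds<\infty$ (the boundary term at $s=0$ vanishes because $s\,l(s)\le c\,s^{\gmb}\to 0$), while the second is finite since $l\in L_{1,loc}$. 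Thus $g\in BV(0,t)$.

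Since $v\in{}_0H^1_1((0,T))$ is absolutely continuous with $v(0)=0$ and $g$ is continuous and of bounded variation with $g(t)=0$, the Lebesgue–Stieltjes integration by parts formula yields
\[
\int_0^t g(\sigma)\dot v(\sigma)\,d\sigma=\big[g\,v\big]_0^t-\int_0^t v(\sigma)\,dg(\sigma)=-\int_0^t v(\sigma)\,\partial_\sigma\big(l(t-\sigma)[\varphi(t)-\varphi(\sigma)]\big)\,d\sigma,
\]
the right-hand integral being an ordinary Lebesgue integral ($v$ bounded, $dg$ absolutely continuous with the indicated density). Inserting this into the split identity proves the first claim. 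For the second one I would expand $\partial_\sigma\big(l(t-\sigma)[\varphi(t)-\varphi(\sigma)]\big)=-\dot l(t-\sigma)\,[\varphi(t)-\varphi(\sigma)]-l(t-\sigma)\dot\varphi(\sigma)$ inside the integral: when $v\ge 0$, $\varphi$ is nondecreasing (so $\varphi(t)-\varphi(\sigma)\ge 0$ on $[0,t]$) and $-\dot l\ge 0$, the first summand contributes a nonnegative amount, and dropping it leaves exactly $-\int_0^t l(t-\sigma)\dot\varphi(\sigma)v(\sigma)\,d\sigma$, which gives the stated lower bound.

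I expect the only genuinely delicate point to be the behaviour of $g$ at $\sigma=t$ — the vanishing of the boundary term and the finiteness of the total variation of $g$ near $t$ — and this is precisely where the estimate $l(s)\le c\,s^{\gmb-1}$ of Remark~\ref{lbemerk}, combined with the $C^1$-regularity of $\varphi$, is used. Everything else is the routine algebra mentioned in the text, and the argument is the direct analogue of the proof of \cite[Lemma 2.4]{base} in the single-order case; no density reduction of $v$ to smooth functions is needed, absolute continuity already sufficing for the integration by parts.
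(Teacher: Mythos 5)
Your proof is correct, and it takes exactly the route the paper intends: the paper omits the proof, stating only that Lemma~\ref{comm} is a straightforward generalization of \cite[Lemma~2.4]{base} once the mild-singularity regularity of $l$ from Remark~\ref{lbemerk} is available, and your argument supplies precisely those details (the splitting $\varphi(\sigma)=\varphi(t)-[\varphi(t)-\varphi(\sigma)]$, the vanishing of the boundary term at $\sigma=t$ via $l(s)\le cs^{\gmb-1}$, the sign $\dot l\le 0$ read off the Laplace representation, and the Lebesgue--Stieltjes integration by parts). No gap; this is a faithful expansion of the reference.
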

%********************************************************************
\begin{lemma} \label{comm2}
Let $T>0$, $k\in H^1_1((0,T))$, $v\in L_1((0,T))$, and $\varphi\in
C^1([0,T])$. Then
\[
\varphi(t)\,\frac{d}{dt}\,(k\ast v)(t)=\,\frac{d}{dt}\,\big(k\ast
[\varphi v]\big)(t)+\int_0^t
\dot{k}(t-\tau)\big(\varphi(t)-\varphi(\tau)\big)v(\tau)\,d\tau,\;\;\mbox{a.a.}\;t\in
(0,T).
\]

\end{lemma}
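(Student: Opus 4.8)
The identity is an elementary consequence of the Leibniz-type differentiation rule for convolutions, and the plan is to establish that rule first and then subtract two instances of it. Since $k\in H^1_1((0,T))$ we may, after modification on a null set, treat $k$ as absolutely continuous on $[0,T]$ with trace $k(0)$ and with $\dot k\in L_1((0,T))$. The preliminary fact to record is that for every $w\in L_1((0,T))$ the convolution $k\ast w$ belongs to ${}_0H^1_1((0,T))$ and
\[
\frac{d}{dt}(k\ast w)(t)=k(0)\,w(t)+(\dot k\ast w)(t)\qquad\mbox{for a.a. }t\in(0,T).
\]
To prove this I would write $k(s)=k(0)+\int_0^s\dot k(\sigma)\,d\sigma$, insert it into $(k\ast w)(t)=\int_0^t k(t-\tau)w(\tau)\,d\tau$, apply Fubini's theorem to get $(k\ast w)(t)=k(0)\,(1\ast w)(t)+\big(\dot k\ast(1\ast w)\big)(t)$, and then differentiate in $t$, using that $1\ast w$ is absolutely continuous with $(1\ast w)(0)=0$ and $(1\ast w)'=w$ a.e. This is precisely the integrated identity behind \cite[Lemma 18.4.1]{GLS}, specialised to a linear $H$.

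With this in hand I would apply the rule twice: once with $w=v$, and once with $w=\varphi v$, which again lies in $L_1((0,T))$ because $\varphi\in C^1([0,T])$ is bounded. Multiplying the first instance by $\varphi(t)$ and subtracting the second gives, for a.a. $t\in(0,T)$,
\[
\varphi(t)\frac{d}{dt}(k\ast v)(t)-\frac{d}{dt}\big(k\ast[\varphi v]\big)(t)
=\varphi(t)(\dot k\ast v)(t)-\big(\dot k\ast(\varphi v)\big)(t),
\]
since the two boundary contributions $k(0)\varphi(t)v(t)$ cancel. Finally, writing out the two convolutions on the right-hand side and combining them under a single integral,
\[
\varphi(t)(\dot k\ast v)(t)-\big(\dot k\ast(\varphi v)\big)(t)
=\int_0^t\dot k(t-\tau)\big(\varphi(t)-\varphi(\tau)\big)v(\tau)\,d\tau,
\]
which is the asserted formula.

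The argument is essentially routine; the only point needing a little care is the a.e.\ differentiation of $\dot k\ast(1\ast w)$, i.e.\ justifying differentiation under the integral sign with the merely $L_1$ kernel $\dot k$ and the absolutely continuous factor $1\ast w$. This is standard, but if one prefers to avoid it one can instead verify the identity first for $v\in C([0,T])$, where every term is classical and the computation is immediate, and then extend to $v\in L_1((0,T))$ by density, using that all four terms depend continuously on $v$ with respect to the $L_1$-norm (each being, up to multiplication by the bounded function $\varphi$, the identity or a convolution with an $L_1$ kernel acting on $v$). This is the route taken for the analogous \cite[Lemma 2.5]{base}.
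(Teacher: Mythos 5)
Your argument is correct and is exactly the ``simple algebra'' the paper has in mind when it refers to \cite[Lemma 2.5]{base}: write $\tfrac{d}{dt}(k\ast w)=k(0)w+\dot k\ast w$ (valid since $k\in H^1_1$ has an absolutely continuous representative, and cleanly justified via $\dot k\ast(1\ast w)=1\ast(\dot k\ast w)$), apply it with $w=v$ and $w=\varphi v$, and subtract so that the $k(0)\varphi(t)v(t)$ terms cancel. No gap; the density alternative you sketch is also fine but unnecessary.
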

%*******************************************************************
%***********************************************************************
\subsection{An embedding result and a weighted Poincar\'e inequality}
%***********************************************************************
We finish this chapter by recalling a fundamental result on parabolic embeddings and a weighted Poincar\'e inequality. We will apply these tools in a similar manner as in \cite{base}.

The following embedding result is a particular case of \cite[Proposition 2.1]{VZd}.
\nic{The case $p=\infty$ is contained, e.g., in \cite[p.\ 74 and
75]{LSU}.}
\begin{prop}
Let $T>0$ and $\Omega$ be a bounded domain in $\iR^N$ and assume that $\partial \Om$ satisfies the property of positive density. For $1<p\le
\infty$ we define the space
\begin{equation} \label{Vdef}
V_p:=V_p((0,T)\times \Omega)=L_{2p}((0,T);L_2(\Omega))\cap
L_2((0,T);H^1_2(\Omega)),
\end{equation}
endowed with the norm
\[
\|u\|_{V_p((0,T)\times \Omega)}:=\|u\|_{L_{2p}((0,T);L_2(\Omega))}
+\|Du\|_{L_2((0,T);L_2(\Omega))}.
\]
\nic{Set
\begin{equation} \label{kappa}
\kappa:=\kappa_p:=\,\frac{2p+N(p-1)}{2+N(p-1)}
\end{equation}
with $\kappa_\infty=1+2/N$. }
Then $V_p\hookrightarrow
L_{2\kappa}((0,T)\times\Omega)$, and for all $u\in V_p\cap L_2((0,T);\oH^1_2(\Omega))$
\eqq{
\norm{u}_{L_{2\kappa}((0,T)\times\Omega)}
\leq C(N)  \norm{Du}^{\theta}_{L_{2}((0,T)\times\Omega)}  \norm{u}_{L_{2p}((0,T);L_{2}(\Omega))}^{1-\theta}
}{interapara}
where
\eqq{
\kappa:=\kappa_{p} := \frac{2p+N(p-1)}{2+N(p-1)}, \hd \hd \theta = \frac{N(p-1)}{N(p-1)+2p}
}{defkappatheta}
with $\kappa_\infty=1+2/N$.
\end{prop}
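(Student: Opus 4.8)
The plan is to reduce the parabolic embedding to a pointwise-in-time spatial Gagliardo--Nirenberg inequality and then integrate in time, using H\"older's inequality to separate the two norms appearing on the right-hand side of \eqref{interapara}. This is the classical strategy for parabolic embeddings, and the statement is in fact a particular case of \cite[Proposition~2.1]{VZd}, so I would present the argument only in outline.

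First I would fix $t$ and work with $u(t)\in\oH^1_2(\Omega)$. Extending $u(t)$ by zero to $\iR^N$ and using, for $N\ge 3$, the Sobolev inequality $\norm{v}_{L_{2^*}(\iR^N)}\le C(N)\norm{Dv}_{L_2(\iR^N)}$ with $2^*=2N/(N-2)$, together with the elementary Lebesgue interpolation $\norm{v}_{L_q}\le\norm{v}_{L_2}^{1-\lambda}\norm{v}_{L_{2^*}}^{\lambda}$ where $1/q=(1-\lambda)/2+\lambda/2^*$, I would obtain
\[
\norm{u(t)}_{L_{2\kappa}(\Omega)}\le C(N)\norm{Du(t)}_{L_2(\Omega)}^{\theta}\norm{u(t)}_{L_2(\Omega)}^{1-\theta}.
\]
Choosing $q=2\kappa$ forces $\lambda=\frac{N}{2}\big(1-\frac{1}{\kappa}\big)$, and since $\kappa-1=\frac{2(p-1)}{2+N(p-1)}$ and $\kappa\big(2+N(p-1)\big)=2p+N(p-1)$ this is exactly the $\theta$ from \eqref{defkappatheta}; one also checks $2\kappa\le 2+4/N<2^*$, so the interpolation is legitimate. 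Then I would raise this inequality to the power $2\kappa$, integrate over $(0,T)$, and apply H\"older's inequality in $t$ with the exponents $r=1/(\kappa\theta)$ and $r'=p/\big(\kappa(1-\theta)\big)$. These are conjugate and admissible since $\kappa\theta=\frac{N(p-1)}{2+N(p-1)}\in(0,1)$, and with these choices $2\kappa\theta\, r=2$ while $2\kappa(1-\theta)\, r'=2p$, so the two time integrals become precisely $\norm{Du}_{L_2((0,T)\times\Omega)}^{2\kappa\theta}$ and $\norm{u}_{L_{2p}((0,T);L_2(\Omega))}^{2\kappa(1-\theta)}$. Taking the $2\kappa$-th root yields \eqref{interapara}, and in particular finiteness of the right-hand side gives $u\in L_{2\kappa}((0,T)\times\Omega)$.

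A few points remain. In the limiting case $p=\infty$ one has $\kappa=1+2/N$, $\theta=N/(N+2)$ and $2\kappa\theta=2$, so no H\"older step is needed: it suffices to bound $\norm{u(t)}_{L_2(\Omega)}^{2\kappa(1-\theta)}\le\norm{u}_{L_\infty((0,T);L_2(\Omega))}^{2\kappa(1-\theta)}$ and integrate the leftover $\norm{Du(t)}_{L_2(\Omega)}^{2}$ in $t$. For the low dimensions $N\in\{1,2\}$, where $2^*$ is infinite, I would instead invoke the Gagliardo--Nirenberg inequality $\norm{v}_{L_q(\iR^N)}\le C(N,q)\norm{v}_{L_2}^{1-\vartheta}\norm{Dv}_{L_2}^{\vartheta}$, valid for every finite $q$ with $\vartheta$ fixed by scaling; since $2\kappa\le 2+4/N<\infty$ this covers the required exponent and, by the same scaling, reproduces $\vartheta=\theta$. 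Finally, the embedding $V_p\hookrightarrow L_{2\kappa}((0,T)\times\Omega)$ for a general $u\in V_p$ with no prescribed boundary behaviour follows by composing with a bounded extension operator $H^1_2(\Omega)\to H^1_2(\iR^N)$, whose existence is ensured by the positive-density assumption on $\partial\Omega$. The only genuinely delicate point is the bookkeeping of the interpolation and H\"older exponents --- checking that the spatial exponent produced by Sobolev interpolation coincides with $\theta$ and that the time exponents are conjugate --- but as indicated this is a direct computation using the identities $\kappa-1=\frac{2(p-1)}{2+N(p-1)}$ and $\kappa\big(1+\theta(p-1)\big)=p$.
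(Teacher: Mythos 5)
Your proposal cannot be matched against an in-paper argument, because the paper does not prove this proposition at all: it is quoted as a particular case of \cite[Proposition 2.1]{VZd}. Your route --- Sobolev/Gagliardo--Nirenberg interpolation in space at each fixed time, then H\"older in time --- is the standard proof of such parabolic embeddings and is essentially the argument behind the cited result. The exponent bookkeeping is correct: with $q=2\kappa$ one gets $\lambda=\frac{N}{2}\bigl(1-\frac{1}{\kappa}\bigr)=\frac{N(p-1)}{2p+N(p-1)}=\theta$, the time exponents $r=1/(\kappa\theta)$ and $r'=p/(\kappa(1-\theta))=\frac{2+N(p-1)}{2}$ are conjugate because $\kappa\theta=\frac{N(p-1)}{2+N(p-1)}<1$, and indeed $2\kappa\theta r=2$, $2\kappa(1-\theta)r'=2p$, so taking the $2\kappa$-th root gives exactly (\ref{interapara}) with powers $\theta$ and $1-\theta$. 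The limiting case $p=\infty$ (where $2\kappa\theta=2$, so no H\"older step is needed) and the low dimensions $N=1,2$ (Gagliardo--Nirenberg instead of the exponent $2^*$) are handled correctly.

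One peripheral claim is wrong, however. The positive-density condition on $\partial\Omega$ (condition (A) in \cite{LSU}) does \emph{not} guarantee a bounded extension operator $H^1_2(\Omega)\to H^1_2(\iR^N)$: a domain with an inward cusp satisfies the positive-density condition but is not an $H^1_2$-extension domain. So your final step, deducing the embedding $V_p\hookrightarrow L_{2\kappa}((0,T)\times\Omega)$ for functions with no prescribed boundary behaviour by composing with such an extension operator, is not justified by the stated hypothesis; under condition (A) that part of the statement is obtained by different means (this is precisely what \cite[Proposition 2.1]{VZd}, resp.\ the multiplicative embedding inequalities in \cite{LSU}, supply). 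This gap does not affect the inequality (\ref{interapara}) itself, which is asserted only for $u\in V_p\cap L_2((0,T);\oH^1_2(\Omega))$, where your zero-extension argument is complete --- and this is the only form of the proposition actually used in the paper.
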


The following result can be found in \cite[Lemma 3]{Moser64}, see
also \cite[Lemma 6.12]{Lm}.
%**************************************************************************
\begin{prop} \label{WeiPI}
Let $\varphi\in C(\iR^N)$ with non-empty compact support of diameter
$d$ and assume that $0\le \varphi\le 1$. Suppose that the domains
$\{x\in\iR^N:\varphi(x)\ge a\}$ are convex for all $a\le 1$. Then
for any function $u\in H^{1}_2(\iR^N)$,
\[
\int_{\iR^N} \big(u(x)-u_\varphi\big)^2 \varphi(x)\,dx \le \,\frac{2
d^2\nuN(\mbox{{\em supp}}\,\varphi)}{|\varphi|_{L_1(\iR^N)}}\,
\int_{\iR^N} |Du(x)|^2 \varphi(x)\,dx,
\]
where
\[
u_\varphi=\frac{\int_{\iR^N} u(x)\varphi(x)\,dx}{\int_{\iR^N}
\varphi(x)\,dx}.
\]
\end{prop}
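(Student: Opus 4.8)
The plan is to prove the inequality by the classical Moser argument: reduce it to a symmetric double integral over $\supp\varphi\times\supp\varphi$, bound the increment $u(x)-u(y)$ by integrating $|Du|$ along the segment $[y,x]$, and use the convexity of the superlevel sets of $\varphi$ to carry the weight $\varphi$ from the two endpoints of that segment to every point of it.

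First I would record the elementary identity
\[
\int_{\iR^N}(u(x)-u_\varphi)^2\varphi(x)\,dx=\frac{1}{2m}\int_{\iR^N}\int_{\iR^N}\big(u(x)-u(y)\big)^2\varphi(x)\varphi(y)\,dx\,dy,\qquad m:=|\varphi|_{L_1(\iR^N)},
\]
which follows by expanding both sides and using $\int_{\iR^N}u\varphi\,dx=m\,u_\varphi$ (here $m>0$ since $\varphi$ is continuous, non-negative and $\not\equiv 0$). Thus it suffices to estimate the double integral. Because $u$ is only assumed to lie in $H^1_2(\iR^N)$, I would first prove the bound for $u\in C^\infty(\iR^N)$ and then pass to the limit, all the integrals involved being continuous in the $H^1_2$-norm for fixed $\varphi$ and $C^\infty$ being dense in $H^1_2(\iR^N)$. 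For smooth $u$ and $x,y\in\supp\varphi$, the fundamental theorem of calculus along the segment $[y,x]$ together with the Cauchy--Schwarz inequality and $|x-y|\le d$ give
\[
\big(u(x)-u(y)\big)^2=\Big(\int_0^1 Du\big((1-t)y+tx\big)\cdot(x-y)\,dt\Big)^{\!2}\le d^2\int_0^1\big|Du\big((1-t)y+tx\big)\big|^2\,dt .
\]

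Now the geometric hypothesis enters. Fix $x,y\in\supp\varphi$ and set $a:=\min\{\varphi(x),\varphi(y)\}$. If $a>0$, then $\{\varphi\ge a\}$ is convex and contains $x$ and $y$, hence contains $(1-t)y+tx$ for all $t\in[0,1]$, so that $\varphi\big((1-t)y+tx\big)\ge a\ge\varphi(x)\varphi(y)$, the last inequality using $0\le\varphi\le 1$; for $a=0$ the bound $\varphi((1-t)y+tx)\ge\varphi(x)\varphi(y)$ is trivial. Multiplying the previous display by $\varphi(x)\varphi(y)$ and inserting this yields
\[
\big(u(x)-u(y)\big)^2\varphi(x)\varphi(y)\le d^2\int_0^1\big|Du\big((1-t)y+tx\big)\big|^2\,\varphi\big((1-t)y+tx\big)\,dt .
\]
Integrating over $x,y\in\supp\varphi$ (off this set the left-hand side vanishes), applying Fubini and splitting $\int_0^1=\int_0^{1/2}+\int_{1/2}^1$, I would on the piece $t\in[1/2,1]$ substitute $z=(1-t)y+tx$ in the inner $x$-integral with $y$ fixed: the Jacobian $t^N$ is $\ge 2^{-N}$, $z$ runs over a translate of $t\,\supp\varphi$ on which $|Du(z)|^2\varphi(z)$ is bounded by its integral over $\iR^N$, and the outer integration of $y$ over $\supp\varphi$ contributes the factor $\nuN(\supp\varphi)$; on $t\in[0,1/2]$ I would do the same substitution in the $y$-integral. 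This yields a bound of the form $C(N)\,d^2\,\nuN(\supp\varphi)\int_{\iR^N}|Du|^2\varphi\,dx$ for the double integral, and, combined with the opening identity, proves the stated inequality up to the value of the constant.

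The density reduction and the change-of-variables bookkeeping are routine; the one delicate point is the \emph{numerical} constant. Obtaining precisely $2d^2\nuN(\supp\varphi)/m$, rather than a cruder dimensional factor, requires — as carried out in \cite[Lemma 3]{Moser64} and \cite[Lemma 6.12]{Lm} — an optimised weight in the Cauchy--Schwarz step and a careful choice of the splitting point of the $t$-integral, and I would follow the bookkeeping there for the sharp factor. The essential, non-routine ingredient of the whole proof, however, is the geometric observation above: convexity of the superlevel sets of $\varphi$ is exactly what permits transferring the weight $\varphi$ from the endpoints of a segment to all of it, which is what makes the weighted Poincaré inequality hold with a constant that does not depend on the shape of $\varphi$.
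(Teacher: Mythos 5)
The paper does not actually prove this proposition; it is quoted verbatim from the literature (``can be found in \cite[Lemma 3]{Moser64}, see also \cite[Lemma 6.12]{Lm}''), so your blind attempt is being compared against a citation rather than an argument. Your sketch is the standard Moser argument and its substance is sound: the symmetrization identity $\int(u-u_\varphi)^2\varphi\,dx=\frac{1}{2m}\iint(u(x)-u(y))^2\varphi(x)\varphi(y)\,dx\,dy$ is correct, the FTC/Cauchy--Schwarz bound along the segment is correct, and the key geometric step — $\varphi((1-t)y+tx)\ge\min\{\varphi(x),\varphi(y)\}\ge\varphi(x)\varphi(y)$, using convexity of the superlevel sets together with $0\le\varphi\le1$ — is exactly the right mechanism and is stated correctly, including the trivial case $\min\{\varphi(x),\varphi(y)\}=0$. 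The density reduction from $H^1_2$ to smooth $u$ is routine, as you say. The one point where your proof falls short of the statement as written is the constant: the splitting of the $t$-integral at $1/2$ with the crude Jacobian bound $t^{-N}\le 2^N$ yields the inequality with constant $2^{N-1}d^2\nuN(\supp\varphi)/\|\varphi\|_{L_1}$ rather than $2d^2\nuN(\supp\varphi)/\|\varphi\|_{L_1}$, which for $N\ge 3$ is strictly weaker; you acknowledge this and defer the sharp factor $2$ to Moser's and Lieberman's bookkeeping (polar coordinates around one endpoint and a more careful treatment of the chord integral), which is precisely the source the paper itself invokes. So as a self-contained proof you establish the result only up to a dimensional constant, and for the exact statement you are, like the paper, ultimately relying on the cited lemma. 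It is worth noting that for the way the proposition is used in this paper (the weighted Poincar\'e step \eqref{log3} in the logarithmic estimates, where the constant is immediately absorbed into $C_1=C_1(\delta,N,\nu,\Lambda)$), a constant depending only on $N$ would be entirely sufficient, so your weaker version would not affect any of the paper's results.
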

%***********************************************************************************
%***********************************************************************************
\section{Proof of the weak Harnack inequality}
%***********************************************************************************
\subsection{The regularized weak formulation and  time shifts}\label{SSS}
%***********************************************************************************
We recall a lemma which provides an
equivalent weak formulation of (\ref{MProb}). The idea is to replace the singular
kernel $k$ by its more regular approximation
$k_{n}$ ($n\in\iN$). Here, $k_n, h_n$, $n\in\iN$, are defined as
in Section \ref{SecYos}. This lemma plays an important role in deriving {\em a priori} estimates
for weak (sub-/super-) solutions of (\ref{MProb}).
%****************************************************************************
\begin{lemma} \label{LemmaReg}
Let  $T>0$, and $\Omega\subset \iR^N$ be a bounded
domain. Suppose the assumptions (H1)--(H3) are satisfied and $f$ is bounded on $\Om_{T}$. Then $u\in
Z$ is a weak solution (subsolution, supersolution) of
(\ref{MProb}) in $\Omega_T$ if and only if for any nonnegative
function $\psi\in \oH^1_2(\Omega)$ one has
\begin{equation} \label{LemmaRegF}
\int_\Omega \Big(\psi \partial_t[k_{n}\ast
(u-u_0)]+(h_n\ast [ADu]|D\psi)\Big)\,dx\nonumber\\
=\,(\le,\,\ge)\,\int_{\Om}(h_{n}*f)\psi dx ,\quad\mbox{a.a.}\;t\in (0,T),\,n\in \iN.
\end{equation}
\end{lemma}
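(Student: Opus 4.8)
The plan is to derive the pointwise-in-time formulation \eqref{LemmaRegF} from the space-time weak formulation \eqref{BWF} (and conversely) by a convolution argument built on the identity $k_n=k\ast h_n$ from \eqref{gnprop} and on the approximation properties $k_n\to k$ in $L_1((0,T))$ and $h_n\ast g\to g$ in $L_2((0,T);L_2(\Omega))$ established in Section~\ref{SecYos}. Throughout we fix $n\in\iN$ and use that $h_n\in L_1((0,T))$ and $h_n\ge 0$ (being the resolvent kernel of the completely positive kernel $nl$).

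\textbf{From \eqref{BWF} to \eqref{LemmaRegF}.} Given nonnegative $\psi\in\oH^1_2(\Omega)$ and nonnegative $\phi\in C_0^\infty((0,T))$, insert into \eqref{BWF} the test function
\[
\eta(t,x)=\psi(x)\int_t^T h_n(s-t)\phi(s)\,ds .
\]
Since $h_n\in L_1((0,T))$ and $\phi$ is smooth with compact support, $t\mapsto\int_t^T h_n(s-t)\phi(s)\,ds$ belongs to $H^1_\infty((0,T))$ and vanishes at $t=T$, so $\eta\in\oH^{1,1}_2(\Omega_T)$, $\eta|_{t=T}=0$, and $\eta\ge 0$; hence $\eta$ is admissible and preserves the inequality sign in the sub-/supersolution case. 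Computing $\eta_t$ (using $\phi(T)=0$) and $D\eta$, then interchanging the $t$- and $s$-integrations by Fubini's theorem, each term of \eqref{BWF} collapses: the memory term becomes $-\int_0^T\int_\Omega\psi\,\phi'\,[h_n\ast k\ast(u-u_0)]\,dx\,ds=-\int_0^T\int_\Omega\psi\,\phi'\,[k_n\ast(u-u_0)]\,dx\,ds$ by \eqref{gnprop}, which after integration by parts in $s$ equals $\int_0^T\int_\Omega\psi\,\phi\,\partial_t[k_n\ast(u-u_0)]\,dx\,ds$; similarly the diffusion term becomes $\int_0^T\int_\Omega\phi\,(h_n\ast[ADu]\,|\,D\psi)\,dx\,ds$ and the right-hand side becomes $\int_0^T\int_\Omega\phi\,(h_n\ast f)\,\psi\,dx\,ds$. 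Hence
\[
\int_0^T\phi(s)\Big(\int_\Omega\big(\psi\,\partial_t[k_n\ast(u-u_0)]+(h_n\ast[ADu]\,|\,D\psi)-(h_n\ast f)\psi\big)dx\Big)ds\ (\le,=,\ge)\ 0
\]
for all such $\phi$. The bracketed function of $s$ lies in $L_1((0,T))$, because $\partial_t[k_n\ast(u-u_0)]=k_n(0)(u-u_0)+\dot k_n\ast(u-u_0)\in L_1((0,T);L_2(\Omega))$ and $h_n\ast[ADu],\,h_n\ast f\in L_2(\Omega_T)$, so the fundamental lemma of the calculus of variations yields \eqref{LemmaRegF} for a.a.\ $t$.

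\textbf{From \eqref{LemmaRegF} to \eqref{BWF}.} Assume now \eqref{LemmaRegF} holds for all $n$, fix $n$, and let $\eta\in\oH^{1,1}_2(\Omega_T)$ be nonnegative with $\eta|_{t=T}=0$. Using separability of $\oH^1_2(\Omega)$ together with continuity of both sides of \eqref{LemmaRegF} in $\psi$, we may apply \eqref{LemmaRegF} for a.a.\ $t$ with the (nonnegative) choice $\psi=\eta(t,\cdot)\in\oH^1_2(\Omega)$ and integrate over $(0,T)$. Since $k_n\in H^1_1((0,T))$ gives $k_n\ast(u-u_0)\in H^1_1((0,T);L_2(\Omega))$ with $(k_n\ast(u-u_0))|_{t=0}=0$, and $\eta|_{t=T}=0$, an integration by parts in time converts the memory term into $-\int_0^T\int_\Omega\eta_t\,[k_n\ast(u-u_0)]\,dx\,dt$. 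Finally we pass to the limit $n\to\infty$: by Young's inequality $\|k_n\ast(u-u_0)-k\ast(u-u_0)\|_{L_2(\Omega_T)}\le\|k_n-k\|_{L_1((0,T))}\|u-u_0\|_{L_2(\Omega_T)}\to 0$, and $h_n\ast[ADu]\to ADu$, $h_n\ast f\to f$ in $L_2(\Omega_T)$; since $\eta_t,\,D\eta,\,\eta$ are fixed $L_2(\Omega_T)$ functions, each term converges to the corresponding term of \eqref{BWF}, which is thereby established.

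\textbf{Main obstacle.} The computation is essentially bookkeeping with time convolutions, Fubini's theorem and \eqref{gnprop}; the delicate points are: (i) verifying that the auxiliary $\eta$ is an admissible test function — it must lie in $\oH^{1,1}_2(\Omega_T)$, vanish at $t=T$, and be \emph{nonnegative}, the last relying on $h_n\ge 0$; (ii) the separability/measurable-selection argument needed to substitute the $t$-dependent function $\psi=\eta(t,\cdot)$ into the a.e.-in-$t$ identity \eqref{LemmaRegF}; and (iii) justifying the time integration by parts, which uses both $(k_n\ast(u-u_0))|_{t=0}=0$ and $\eta|_{t=T}=0$. None of these is deep, but all are needed for the equivalence to be exact rather than merely formal.
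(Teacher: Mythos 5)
Your proposal is correct and reproduces the argument of \cite[Lemma~3.1]{base}, to which the paper defers for this lemma: you test \eqref{BWF} with the adjoint-convolution ansatz $\eta(t,x)=\psi(x)\int_t^T h_n(s-t)\phi(s)\,ds$, collapse the time integrals via Fubini and $k_n=k\ast h_n$, and conversely integrate the regularized identity against an admissible $\eta$, integrate by parts using $(k_n\ast(u-u_0))|_{t=0}=0$ and $\eta|_{t=T}=0$, and pass $n\to\infty$ using $k_n\to k$ in $L_1$ and $h_n\ast g\to g$ in $L_2$. This is essentially the same approach as the paper's, including the points of care you correctly single out (nonnegativity of $\eta$ via $h_n\ge 0$, the separability argument when substituting $\psi=\eta(t,\cdot)$, and the two boundary conditions that make the time integration by parts legitimate).
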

%************************************************************************
The proof of Lemma \ref{LemmaReg} is exactly the same as the proof of Lemma 3.1 in \cite{base}.
Analogously as in \cite{base}, if $u\in Z$ is a weak supersolution of (\ref{MProb}) in
$\Omega_T$ and $u_0\ge 0$ in $\Omega$, then
\begin{equation} \label{u0weg}
\int_\Omega \Big(\psi \partial_t(k_{n}\ast u)+(h_n\ast
[ADu]|D\psi)\Big)\,dx \ge \,\int_{\Om}(h_{n}*f)\psi dx,\quad\mbox{a.a.}\;t\in (0,T),\,n\in
\iN,
\end{equation}
for any nonnegative function $\psi\in \oH^1_2(\Omega)$.

Let now $t_1\in (0,T)$ be fixed. For $t\in (t_1,T)$ we introduce the
shifted time $s=t-t_1$ and put $\tilde{g}(s)=g(s+t_1)$, $s\in
(0,T-t_1)$, for functions $g$ defined on $(t_1,T)$. Using the
decomposition
\[
(k_{n}\ast u)(t,x)=\int_{t_1}^t
k_{n}(t-\tau)u(\tau,x)\,d\tau+\int_{0}^{t_1}
k_{n}(t-\tau)u(\tau,x)\,d\tau,\quad t\in (t_1,T),
\]
we then see that
\begin{equation} \label{shiftprop}
\partial_t(k_{n}\ast u)(t,x)=\partial_s(k_{n}\ast
\tilde{u})(s,x)+\int_0^{t_1}\dot{k}_{n}(s+t_1-\tau)u(\tau,x)\,d\tau.
\end{equation}
Assuming in addition that $u\ge 0$ on $(0,t_1)\times \Omega$ it
follows from (\ref{u0weg}), (\ref{shiftprop}), and the positivity
of $\psi$ and of $-\dot{k}_{n}$ that
\begin{equation} \label{shiftprob}
\int_\Omega \Big(\psi \partial_s(k_{n}\ast
\tilde{u})+\big((h_n\ast [ADu])\,\tilde{}\;|D\psi\big)\Big)\,dx \ge
\,\int_{\Om}(h_{n}*f)\,\tilde{} \, \psi dx,\quad\mbox{a.a.}\;s\in (0,T-t_1),\,n\in \iN,
\end{equation}
for any nonnegative function $\psi\in \oH^1_2(\Omega)$. This
relation will be the starting point for all of the estimates below.

%***********************************************************************************
\subsection{Mean value inequalities} \label{mvi}
%***********************************************************************************
For simplicity of the notation, for $r>0$ we set $B_r(x):=B(x, r)$. Recall that
$\nuN$ denotes the Lebesgue measure in $\iR^N$.
%*************************************************************************
\begin{satz} \label{superest1}
Let  $\Omega\subset \iR^N$ be a bounded
domain and $T>0$. Suppose the assumptions (H1)--(H3) are satisfied and let $\eta>0$ and $\delta\in (0,1)$ be fixed. Then there exists
$r^{*} =r^{*}(\mu)>0 $ such that for any $0<r \leq r^{*}$, any $t_0\in(0,T]$
 with $t_0-\eta \vdr \ge 0$, any ball
$B_r(x_{0})\subset\Omega$, and any weak supersolution $u\ge
\varepsilon>0$ of (\ref{MProb}) in $(0,t_0)\times B_r(x_{0})$ with $u_0\ge 0$
in $B_r(x_{0})$  and $f\equiv 0$, there holds
\[
\esup_{U_{\sigma'}}{u^{-1}} \le \Big(\frac{C \nuNj(U_1)^{-1}
}{(\sigma-\sigma')^{\tau_0}}\Big)^{1/\gamma}
\|u^{-1}\|_{L_{\gamma}(U_\sigma)},\quad \delta\le \sigma'<\sigma\le
1,\; \gamma\in (0,1].
\]
Here $U_\sigma=(t_0-\sigma\eta \vdr,t_0)\times B_{\sigma r}(x_0)$,
$0<\sigma\le 1$, $C=C(\nu,\Lambda,\delta,\eta,\mu,N)$ and
$\tau_0=\tau_0(\mu,N)$.
\end{satz}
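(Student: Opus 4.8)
The plan is to run a Moser iteration for the negative powers of $u$. For $\gamma>0$ put $v:=u^{-\gamma/2}$; the goal is first to prove a \emph{reverse H\"older inequality}
\[
\|u^{-1}\|_{L_{\gamma\kappa}(U_{\sigma'})}\le\Big(\frac{C(1+\gamma)^{a}}{(\sigma-\sigma')^{a}\,\nuNj(U_1)}\Big)^{1/\gamma}\,\|u^{-1}\|_{L_{\gamma}(U_\sigma)},\qquad \delta\le\sigma'<\sigma\le1,\ \gamma>0,
\]
with $\kappa:=1+\tfrac{2}{N}$, $a=a(\mu,N)$ and $C=C(\nu,\Lambda,\delta,\eta,\mu,N)$, and then to feed it into Lemma~\ref{moserit1} — applied with $\bar p=1$, this $\kappa$, the normalised measure $\varpi:=\nuNj(U_1)^{-1}\nuNj$ on $U_1$, and, after the reparametrisation $\rho\mapsto U_{\sigma'+\rho(\sigma-\sigma')}$, $\rho\in(0,1]$, the family $\{U_\sigma\}$ — to obtain the claimed $L_\gamma\!\to\!L_\infty$ bound with $\tau_0=\frac{a\kappa}{\kappa-1}=a(\mu,N)\big(1+\tfrac{N}{2}\big)$.

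For the reverse H\"older inequality I follow the scheme of \cite{base}. Since $u\ge\varepsilon>0$, $u_0\ge0$ and $f\equiv0$, Lemma~\ref{LemmaReg} and \eqref{shiftprob} give, after shifting time by $t_1:=t_0-\eta\vdr\ge0$, the regularised weak inequality for the shifted supersolution $\tilde u$. I test it (for a.a.\ shifted time $s$) with $\psi=\zeta(x)^2\,\tilde u^{-(\gamma+1)}$, where $\zeta$ is a Lipschitz spatial cut-off with $\zeta\equiv1$ on $B_{\sigma' r}(x_0)$, $\supp\zeta\subset B_{\sigma r}(x_0)$ and $|D\zeta|\le c\,[(\sigma-\sigma')r]^{-1}$; this is admissible because $\varepsilon\le\tilde u$, and $\varepsilon$ enters only here and drops out of all estimates. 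For the memory term I use the fundamental identity \eqref{fundidentity}, applied in $x$ with $H_x(y)=-\tfrac{1}{\gamma}\zeta(x)^2y^{-\gamma}$ (so $H_x'(\tilde u)=\psi$): $H_x$ is concave, so $k_n$ being non-increasing the last term of \eqref{fundidentity} has a favourable sign, while the ``$k_n(s)$-term'' equals $\tfrac{\gamma+1}{\gamma}k_n(s)\int_\Omega\zeta^2v^2\,dx$. For the elliptic term I use (H1)--(H2) and Young's inequality, which (via the identity $\tilde u^{-\gamma-2}|D\tilde u|^2=\tfrac{4}{\gamma^{2}}|Dv|^2$) produces the good gradient term $\tfrac{4(\gamma+1)\nu}{\gamma}\int_\Omega\zeta^2|Dv|^2\,dx$ and a bad term controlled by $\int_\Omega|D\zeta|^2v^2\,dx$; the convolution $h_n\ast[A D\tilde u]$ is handled exactly as in \cite{base}.

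Next I multiply the resulting scalar inequality by a time cut-off $\chi\ge0$ ($\chi\equiv0$ near the shifted origin, $\chi\equiv1$ on the time interval of $U_{\sigma'}$, $|\dot\chi|\le c\,[(\sigma-\sigma')\eta\vdr]^{-1}$), commute $\chi$ through $\partial_s(k_n\ast\cdot)$ by Lemma~\ref{comm2}, and let $n\to\infty$ using $h_n\ast g\to g$, $k_n\to k$ and $l\ast k_n\to1$. After absorbing a small multiple of the left-hand side, this yields an energy inequality whose good side is $\esup_{s}\int_{B_{\sigma'r}}v(s)^2\,dx+\int_{U_{\sigma'}}|Dv|^2\,d\nuNj$ and whose bad side has the shape $\tfrac{C(1+\gamma)}{(\sigma-\sigma')^{2}}\big(\tfrac{1}{r^{2}}+\tfrac{1}{\eta\vdr}\big)\int_{U_\sigma}v^2\,d\nuNj$. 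Two points are essential here: to pass from the $k\ast$-weighted quantity to the genuine $\esup$ in $s$ one uses that $(k,l)$ is a $\mathscr{PC}$-pair together with the kernel bounds of Lemma~\ref{kernels}, and to convert all memory contributions (including the commutator terms produced by Lemma~\ref{comm2}) into the stated powers one uses $\|l\|_{L_1(0,\vdr)}\le Cr^{2}$ from Lemma~\ref{scaling} — this is exactly what substitutes for the missing parabolic scaling. Finally, the embedding \eqref{interapara} (with $p=\infty$, so $\kappa=1+\tfrac{2}{N}$, $\theta=\tfrac{N}{N+2}$) applied to $\zeta v$, together with the defining identity $k_1(\vdr)=(2r)^{-2}$ of the cylinders (which balances the gradient term against $r^{-2}$ and the $\esup$-term against $(\eta\vdr)^{-1}$), turns the energy inequality into the reverse H\"older inequality above — crucially with a constant independent of $r$, leaving only the volume factor $\nuNj(U_1)$.

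The main obstacle is the treatment of the nonlocal time term: as in \cite{base} one cannot argue by convexity alone but must exploit the full identity \eqref{fundidentity}, and — in the absence of a scaling symmetry — the correct $r$-powers and the $L_\infty$-in-time control of $\int\zeta^2v^2$ have to be squeezed out of the $\mathscr{PC}$-structure and the sharp kernel estimates of Lemmas~\ref{kernels} and~\ref{scaling}; the choice of cylinder shape via $\Phi$ is precisely what makes all $r$-dependent factors cancel in the end. A secondary but indispensable point is to keep track of the dependence of every constant on $\gamma$ (the $(1+\gamma)^a$ factor, coming from the Young parameter, the $k_n(s)$-term and \eqref{interapara}) and on $\sigma-\sigma'$ throughout, so that Lemma~\ref{moserit1} applies with the asserted exponents.
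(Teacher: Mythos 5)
The central step of your plan---obtaining $\esup_s \int_{B_{\sigma' r}} v(s)^2\,dx$ on the good side of the energy inequality and then invoking \eqref{interapara} with $p=\infty$, $\kappa = 1+\tfrac{2}{N}$---does not go through, and this is precisely the crux of the time-fractional De Giorgi--Nash--Moser theory. After multiplying by the cut-off, convolving with $l$, and using $l\ast\partial_s(k_n\ast\cdot)=h_n\ast(\cdot)$, the energy inequality takes the form of \eqref{sup9}: $\int_{B_{r\sigma}}\varphi\psi^2w^2\,dx + c\,l\ast\int\varphi\psi^2|Dw|^2\,dx\le (l\ast F)(s)$ for a.a.\ $s$, with $F\ge 0$. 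To extract $\esup_s$ of the first term you would need to bound $\esup_s(l\ast F)(s)$ by $\int F$, which would require $l\in L_\infty$ near zero. But $l$ is \emph{unbounded} at the origin: Remark~\ref{lbemerk} gives $l(t)\le c\,t^{\gmb-1}$ and Lemma~\ref{kernels} shows this rate is sharp up to constants, so $l\notin L_\infty$ whenever $\gmb<1$. Your appeal to the $\mathscr{PC}$-structure and Lemma~\ref{kernels} cannot rescue $L_\infty$-in-time control---those facts are exactly what exhibit the singularity of $l$.

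The correct replacement, used in the paper, is Young's inequality for convolution, $\|l\ast F\|_{L_p}\le\|l\|_{L_p}\|F\|_{L_1}$, for a finite $p\in(1,\tfrac{1}{1-\gmb})$; this is the range in which $\|l\|_{L_p(0,\vdr)}$ is finite and, by Lemma~\ref{scaling}, decays with the right power of $r$. This yields $L_{2p}(L_2)$-in-time control of $\psi w$ (not $L_\infty(L_2)$), together with the usual $L_2(L_2)$ control of $D(\psi w)$ from \eqref{sup12}, and \eqref{interapara} then has to be applied with the smaller $\kappa_p=\tfrac{2p+N(p-1)}{2+N(p-1)}<1+\tfrac{2}{N}$ from \eqref{defkappatheta}. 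For Theorem~\ref{superest1} itself this difference is harmless in the \emph{conclusion}---any $\kappa>1$ feeds into Lemma~\ref{moserit1} and gives an $L_\gamma\to L_\infty$ bound, only $\tau_0$ changes---but your derivation of the reverse H\"older inequality does not close as written, and the same error would be fatal in Theorem~\ref{superest2}, where $\kappa_p$ is exactly what limits the admissible $p_0<\tilde\kappa=\tfrac{2+N\gmb}{2+N\gmb-2\gmb}$ in the final weak Harnack inequality. (Minor points: folding $\zeta^2$ into an $x$-dependent $H_x$ and switching the sign convention of $H$ are cosmetic; and for this theorem the paper commutes the time cut-off via Lemma~\ref{comm} rather than Lemma~\ref{comm2}, though either can be made to work.)
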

%*************************************************************************
{\em Proof:}
In the proof we follow the idea of the proof of \cite[Theorem 3.1]{base}. The main novelty here is to introduce the cylinders with the shape dependent on the kernel $k$. Since the problem which we consider lacks a natural scaling, one has to treat the terms which depend on the radius $r$ very carefully and finally apply the crucial Lemma \ref{scaling}.
Since here, we only consider balls centered at fixed $x_0$, we abbreviate the notation $B_{r} := B_{r}(x_{0})$.

Let us fix $\sigma'$ and $\sigma$ such that $\delta\le \sigma'<\sigma\le
1$ and for $\rho\in (0,1]$ set
$V_\rho=U_{\rho\sigma}$. For any fixed $0<\rho'<\rho\le 1$, $r> 0$, we set
$t_{1} = t_{0}-\rho\sigma\eta\vdr$ and $t_{2} = t_{0}-\rho'\sigma\eta\vdr$. Clearly, we have $0\le t_1<t_2<t_0$. Further, we introduce the
shifted time ${s}=t-t_1$ and for functions $f$ defined on $(t_1,t_0)$, we set $\tilde{f}(s)=f(s+t_1)$, $s\in
(0,t_0-t_1)$ . Since
$u_0\ge 0$ in $B_r$ and $u$ is a positive weak supersolution of
(\ref{MProb}) in $(0,t_0)\times B_r$, we have (cf. (\ref{shiftprob}))
\begin{equation} \label{sup0}
\int_{B_r} \Big(v \partial_s(k_{n}\ast \tilde{u})+\big((h_n\ast
[ADu])\,\tilde{}\;|Dv\big)\Big)\,dx \ge \,0,\quad\mbox{a.a.}\;s\in
(0,t_0-t_1),\,n\in \iN,
\end{equation}
for any nonnegative function $v\in \oH^1_2(B_r)$. For $s\in
(0,t_0-t_1)$ we introduce the cut-off function $\psi\in C^1_0(B_{r\sigma})$ so that
$0\le \psi\le 1$, $\psi=1$ in $B_{\rho' r\sigma}$, supp$\,\psi\subset B_{\rho r\sigma}$, and $|D \psi|\le 2/[r \sigma (\rho-\rho')]$. Then, we choose  in (\ref{sup0}) the test function $v=\psi^2
\tilde{u}^{\beta}$ with $\beta<-1$. Applying the fundamental
identity (\ref{fundidentity}) with $k=k_{n}$ and the
convex function $H(y)=-(1+\beta)^{-1}y^{1+\beta}$, $y>0$, we have for a.a. $(s,x)\in (0,t_0-t_1)\times B_r$
\begin{align}
 -\tilde{u}^{\beta}\partial_{s}(k_{n}\ast \tilde{u}) & \ge -\,\frac{1}{1+\beta}\,\partial_{s}
(k_{n}\ast\tilde{u}^{1+\beta})+\Big(\frac{\tilde{u}^{1+\beta}}{1+\beta}\,-\tilde{u}^{1+\beta}\Big)k_{n}\nonumber\\
 & =
-\,\frac{1}{1+\beta}\,\partial_{s}
(k_{n}\ast\tilde{u}^{1+\beta})-\,\frac{\beta}{1+\beta}\,\tilde{u}^{1+\beta}
k_{n}. \label{sup1}
\end{align}
Moreover, there holds
\[ Dv=2\psi D\psi \,\tilde{u}^{\beta}+\beta\psi^2 \tilde{u}^{\beta-1}D \tilde{u}.\]
Applying this, together with (\ref{sup1}), in (\ref{sup0}) we arrive at
\begin{align}
-\,\frac{1}{1+\beta}\,& \int_{B_{r\sigma}}\psi^2\partial_{s}
(k_{n}\ast\tilde{u}^{1+\beta})\,dx+|\beta|\int_{B_{r\sigma}}\big((h_n\ast
[ADu])\,\tilde{}\;|\psi^2 \tilde{u}^{\beta-1}D
\tilde{u}\big)\,dx \nonumber\\
\le  & \,2\int_{B_{r\sigma}}\big((h_n\ast [ADu])\,\tilde{}\;|\psi D\psi
\,\tilde{u}^{\beta}\big)\,dx+\,\frac{\beta}{1+\beta}\,\int_{B_{r\sigma}}\psi^2\tilde{u}^{1+\beta}
k_{n}\,dx \label{sup2}
\end{align}
for a.a. $s\in (0,t_0-t_1)$. Next, choose $\varphi\in C^1([0,t_0-t_1])$ such that $0\le
\varphi\le 1$, $\varphi=0$ in $[0,(t_2-t_1)/2]$, $\varphi=1$ in
$[t_2-t_1,t_0-t_1]$, and $0\le \dot{\varphi}\le 4/(t_2-t_1)$.
Let us multiply (\ref{sup2}) by $-(1+\beta)\varphi(s)>0$
and convolve the resulting inequality with $l$, then
\begin{align}
\int_{B_{r\sigma}} & l\ast
\big(\varphi\partial_{s}(k_{n}\ast
[\psi^2\tilde{u}^{1+\beta}])\big)\,dx+\beta(1+\beta)\,l\ast\int_{B_{r\sigma}}\big((h_n\ast
[ADu])\,\tilde{}\;|\psi^2 \tilde{u}^{\beta-1}D
\tilde{u}\big)\varphi\,dx \nonumber\\
\le \, & \,2|1+\beta|\,l\ast\int_{B_{r\sigma}}\big((h_n\ast
[ADu])\,\tilde{}\;|\psi D\psi
\,\tilde{u}^{\beta}\big)\varphi\,dx+|\beta|\,l\ast\int_{B_{r\sigma}}\psi^2\tilde{u}^{1+\beta}
k_{n}\varphi\,dx, \label{sup3}
\end{align}
for a.a. $s\in (0,t_0-t_1)$. Applying Lemma \ref{comm}, we have
\begin{align}
\int_{B_{r\sigma}} l\ast &
\big(\varphi\partial_{s}(k_{n}\ast
[\psi^2\tilde{u}^{1+\beta}])\big)\,dx \ge \int_{B_{r\sigma}} \varphi
l\ast \big(\partial_{s}(k_{n}\ast
[\psi^2\tilde{u}^{1+\beta}])\big)\,dx\nonumber\\
& -\int_0^s l(s-\tau)\dot{\varphi}(\tau)
\big(k_{n}\ast
\int_{B_{r\sigma}}\psi^2\tilde{u}^{1+\beta}\,dx\big)(\tau)\,d\tau.
\label{sup4}
\end{align}
Moreover,
\begin{equation} \label{sup5}
l\ast \partial_{s}(k_{n}\ast
[\psi^2\tilde{u}^{1+\beta}])=\partial_s(l\ast
k_{n}\ast [\psi^2\tilde{u}^{1+\beta}])=h_n\ast
(\psi^2\tilde{u}^{1+\beta}),
\end{equation}
since $k_{n}=k\ast h_n$, $l\ast
k=1$ and
\[
k_{n}\ast [\psi^2\tilde{u}^{1+\beta}]\in
{}_0H^1_1([0,t_0-t_1];L_1(B_{r\sigma})).
\]

Combining (\ref{sup3}), (\ref{sup4}), and (\ref{sup5}), passing to the limit with
$n$ (if necessary on an appropriate subsequence), we obtain the following estimate
\begin{align}
& \int_{B_{r\sigma}}\varphi\psi^2\tilde{u}^{1+\beta}\,dx+
\beta(1+\beta)\,l\ast\int_{B_{r\sigma}}\big(\tilde{A}D\tilde{u}|\psi^2
\tilde{u}^{\beta-1}D \tilde{u}\big)\varphi\,dx\nonumber\\
\le \, &
\,2|1+\beta|\,l\ast\int_{B_{r\sigma}}\big(\tilde{A}D\tilde{u}|\psi
D\psi
\,\tilde{u}^{\beta}\big)\varphi\,dx+|\beta|\,l\ast\int_{B_{r\sigma}}\psi^2\tilde{u}^{1+\beta}
k\varphi\,dx \nonumber\\
& +\int_0^s l(s-\tau)\dot{\varphi}(\tau)
\big(k\ast
\int_{B_{r\tau}}\psi^2\tilde{u}^{1+\beta}\,dx\big)(\tau)\,d\tau,
\;\;\mbox{a.a.}\;s\in(0,t_0-t_1).
\label{sup6}
\end{align}
Put $w=\tilde{u}^{\frac{\beta+1}{2}}$. Then $Dw=\frac{\beta+1}{2}
\tilde{u}^{\frac{\beta-1}{2}} D\tilde{u}$ and by (H2), we
may estimate
\begin{align}
\beta(1+\beta)\,l\ast\int_{B_{r\sigma}}\big(\tilde{A}D\tilde{u}|\psi^2
\tilde{u}^{\beta-1}D \tilde{u}\big)\varphi\,dx & \,\ge \nu
\beta(1+\beta)\,l\ast\int_{B_{r\sigma}} \varphi
\psi^2\tilde{u}^{\beta-1}|D\tilde{u}|^2\,dx \nonumber\\
& \, = \,\frac{4\nu \beta}{1+\beta}\,l\ast\int_{B_{r\sigma}}\varphi
\psi^2|Dw|^2\,dx. \label{sup7}
\end{align}
Using (H1) and Young's inequality we get
\begin{align}
2\big|\big(\tilde{A}D\tilde{u}|\psi D\psi
\,\tilde{u}^{\beta}\big)\varphi\big| & \le 2\Lambda\psi|D\psi|\,|D
\tilde{u}|\tilde{u}^\beta \varphi=2\Lambda\psi|D\psi|\,|D
\tilde{u}|\tilde{u}^{\frac{\beta-1}{2}}\tilde{u}^{\frac{\beta+1}{2}}\varphi\nonumber\\
& \le \,\frac{\nu |\beta|}{2}\, \psi^2\varphi |D \tilde{u}|^2
\tilde{u}^{\beta-1}+\,\frac{2}{\nu |\beta|}\,\Lambda^2
|D\psi|^2\varphi \tilde{u}^{\beta+1}\nonumber\\
& = \,\frac{2\nu |\beta|}{(1+\beta)^2}\,
\psi^2\varphi|Dw|^2+\,\frac{2}{\nu |\beta|}\,\Lambda^2
|D\psi|^2\varphi w^2. \label{sup8}
\end{align}
From (\ref{sup6}), (\ref{sup7}), and (\ref{sup8}) we conclude that
\begin{equation} \label{sup9}
\int_{B_{r\sigma}}\varphi\psi^2w^2\,dx+\,\frac{2\nu
|\beta|}{|1+\beta|}\,l\ast\int_{B_{r\sigma}}\varphi \psi^2|Dw|^2\,dx
\le l\ast F,\quad\mbox{a.a.}\;s\in(0,t_0-t_1),
\end{equation}
where
\eqnsl{
F(s) =\,  \,\frac{2\Lambda^2|1+\beta|}{\nu |\beta|}\, & \int_{B_{r\sigma}}
|D\psi|^2\varphi w^2\,dx
 +|\beta|\varphi(s)k(s)\int_{B_{r\sigma}}\psi^2 w^2
\,dx \\
& \,+\dot{\varphi}(s) \big(k\ast \int_{B_{r\sigma}}\psi^2
w^2\,dx\big)(s)\ge 0,\quad\mbox{a.a.}\;s\in(0,t_0-t_1).
}{defeF}
We note that both terms on the left-hand-side of (\ref{sup9}) are nonnegative, thus we may drop the second one. Then, using the properties of
$\varphi$ and Young's inequality for convolution we obtain
\begin{equation} \label{sup10}
\Big(\int_{t_2-t_1}^{t_0-t_1} (\int_{B_{r\sigma}}
[\psi(x)w(s,x)]^2\,dx)^p\,ds\Big)^{1/p} \,\le
\|l\|_{L_p([0,t_0-t_1])} \int_0^{t_0-t_1} \!\!\!\!F(s)\,ds \hd \m{  for all } \hd p\in(1,\frac{1}{1-\gmb}).
\end{equation}
We choose any of these $p$ and fix it.

On the other hand, we may also drop the first term in (\ref{sup9}) and convolve
the resulting inequality with $k$. Evaluating it at
$s=t_0-t_1$, we arrive at
\begin{equation} \label{sup12}
\int_{t_2-t_1}^{t_0-t_1}\int_{B_{r\sigma}}\psi^2|Dw|^2\,dx\,ds \le
\,\frac{|1+\beta|}{2\nu |\beta|}\,\int_0^{t_0-t_1} \!\!\!\!F(s)\,ds.
\end{equation}
Let us estimate $\int_0^{t_0-t_1}F(s)ds$. Firstly, we get
\eqq{
\int_{0}^{t_{0}-t_{1}}\int_{B_{r\sigma}}\abs{D \psi}^{2}w^{2}dxds \leq \frac{4}{r^{2}\sigma^{2}(\rho-\rho')^{2}}\int_{0}^{t_{0}-t_{1}}\int_{B_{\rho r\sigma}}w^{2}dxds.
}{pierw1}
Next, we note that
\[
\vf(s)k(s) \leq k\left(\frac{t_{2}-t_{1}}{2}\right) =k\left(\jd (\rho-\rho') \sigma \eta \Phi(2r)  \right) = \int_{0}^{1}\frac{1}{\Gamma(1-\al)} 2^{\al} (\rho-\rho')^{-\al} \sigma^{-\al} \eta^{-\al}  \Phi(2r)^{-\al}\dd
\]
\[
\leq  2(\rho-\rho')^{-1} \delta^{-1} \max\{\eta^{-1},1 \}k(\Phi(2r)),
\]
and thus we obtain
\eqq{
\vf(s)k(s)\int_{B_{r\sigma}}\psi^{2}w^{2}dx \leq c(\eta,\delta)(\rho-\rho')^{-1}k(\vdr) \int_{B_{\rho r\sigma}}w^{2}dx.
}{fa}
Further,
\[
\dot{\vf}(s)(k*\int_{B_{r\sigma}}\psi^{2}w^{2}dx)(s)\leq \frac{4}{\sigma\eta(\rho-\rho')\vdr}(k*\int_{B_{\rho \sigma r}}w^{2}dx)(s)
\]
and consequently, we get
\[
\int_{0}^{t_{0}-t_{1}}\dot{\vf}\cdot k*\int_{B_{\rho \sigma r}}w^{2}dxd\tau \leq  \frac{4}{\sigma\eta(\rho-\rho')\vdr} \izj \frac{1}{\Gamma(2-\al)}\int_{0}^{t_{0}-t_{1}}(t_{0}-t_{1}-\tau)^{1-\al}\int_{B_{\rho \sigma r}}w^{2}dxd\tau \dd
\]
\[
 \leq \frac{4}{\sigma\eta(\rho-\rho')\vdr} \izj \frac{1}{\Gamma(2-\al)}(t_{0}-t_{1})^{1-\al}\dd\int_{0}^{t_{0}-t_{1}}\int_{B_{\rho \sigma r}}w^{2}dxd\tau
\]
\[
=\frac{4}{\sigma\eta(\rho-\rho')} \izj \frac{1}{\Gamma(2-\al)}(\rho \sigma \eta)^{1-\al} \Phi(2r)^{-\al}\dd\int_{0}^{t_{0}-t_{1}}\int_{B_{\rho \sigma r}}w^{2}dxd\tau
\]

\[
 \leq c(\mu,\eta) \frac{1}{\sigma(\rho-\rho')} \ki(\vdr)
 \int_{0}^{t_{0}-t_{1}}\int_{B_{\rho \sigma r}}w^{2}dxd\tau.
\]
Recall that $\ki(\Phi(2r))=r^{-2}/4$ and $k(\Phi(2r)) \leq \ki(\Phi(2r)) $, hence from (\ref{defeF}),  (\ref{pierw1}), (\ref{fa}) and the last estimate we get
\eqq{
\int_{0}^{t_{0}-t_{1}}|F(s)|ds  \leq c(\mu,\eta,\delta,\Lambda,\nu) \frac{1+\abs{1+\beta}}{(\rho-\rho')^{2} r^{2}}\int_{0}^{t_{0}-t_{1}}\int_{B_{\rho \sigma r}}w^{2}dxd\tau.
}{estiF1}
\nic{
Let us now estimate $\|l\|_{L_{p}}$. To this end we denote
\[
h(r) = \int_{\gmb}^{\as}r^{\al}\ma d\al.
\]
We note that
\[
h(r)\leq c(\mu) \int_{\gmb}^{\as} r^{\al}\sin(\pi \al)\ma d\al \leq c(\mu) \int_{0}^{1} r^{\al}\sin(\pi \al)\ma d\al
\]
and from (\ref{calka}) and (\ref{fin2}) we obtain
\[
l(t) \leq \frac{1}{\pi}\int_{0}^{\infty}e^{-tr}\frac{1}{\izj r^{\al}\sin(\pi \al)\ma d\al}dr \leq  c(\mu)\int_{0}^{\infty}\frac{e^{-tr}}{h(r)}dr=c(\mu)\izi \frac{e^{-w}}{h(\frac{w}{t})}\frac{dw}{t}.
\]
If we apply the estimates
\[
h\left(\frac{w}{t}\right)\geq w^{\as}h\left(\frac{1}{t}\right) \m{ \hd for \hd }  w\leq 1 \m{\hd and \hd } h\left(\frac{w}{t}\right)\geq w^{\gmb}h\left(\frac{1}{t}\right) \m{ \hd for \hd }  w> 1,
\]
then we obtain
\eqq{
l(t) \leq \frac{c(\mu)}{th(\frac{1}{t})} \left(  \izj w^{-\as} e^{-w}dw +\int_{1}^{\infty}w^{-\gmb} e^{-w}dw  \right)\leq  \frac{c(\mu)}{\int_{\gmb}^{\as}t^{1-\al}\ma d\al}.
}{ll1}
Thus,
\eqq{
\|l\|_{L_{p}(0,t)} \leq c(\mu) \left(\izt \left(\int_{\gmb}^{\as}\tau^{1-\al}\ma d\al\right)^{-p}d\tau\right)^{\frac{1}{p}} \m{ for } 1<p<\frac{1}{1-\gmb}.
}{lnormest}
}
From (\ref{sup10}) and  (\ref{estiF1}) we obtain
\[
\norm{\psi w}_{L_{2p}((t_{2}-t_{1},t_{0}-t_{1});L_{2}(B_{r\sigma}))} \leq \norm{l}_{L_{p}((0,t_{0}-t_{1}))}^{\frac{1}{2}}\left(\int_{0}^{t_{0}-t_{1}}F(s)ds\right)^{\frac{1}{2}}
\]
\[
\leq C(\mu,\eta,\delta,\Lambda, \nu)\|l\|_{L_{p}((0,\rho\sigma\eta\Phi(2r)))}^{\frac{1}{2}}\frac{1+\abs{1+\beta}}{\rho-\rho'}\frac{1}{r}\left(\int_{0}^{t_{0}-t_{1}}\int_{B_{\rho \sigma r}}w^{2}dxd\tau\right)^{\frac{1}{2}}.
\]
We note that since $\rho\sigma \le 1$ we have
\eqq{
\norm{l}^{p}_{L_{p}((0,\rho\sigma\eta\Phi(2r)))} \leq \norm{l}^p_{L_{p}((0,\eta\Phi(2r)))} = \frac{1}{\pi^p}\int_{0}^{\eta\vdr} \left(\izi e^{-r\tau}H(r)dr\right)^{p}d\tau \leq \max\{1,\eta\} \norm{l}^p_{L_{p}((0,\Phi(2r)))},
}{estiPhieta}
where  we used the representation (\ref{calka}) and the substitution $s:=\tau/\eta$ in the case $\eta > 1$.
Therefore, we have
\eqnsl{ & \norm{\psi w}_{L_{2p}((t_{2}-t_{1},t_{0}-t_{1});L_{2}(B_{r\sigma}))}   \\
& \leq C(\mu, \eta,\delta,\Lambda, \nu,p)  \norm{l}_{L_{p}((0,\Phi(2r)))}^{\frac{1}{2}}\frac{1+\abs{1+\beta}}{\rho-\rho'}\frac{1}{r}\left(\int_{0}^{t_{0}-t_{1}}\int_{B_{\rho \sigma r}}w^{2}dxd\tau\right)^{\frac{1}{2}}.}{estiwwl2p}
Furthermore, from (\ref{sup12}), (\ref{pierw1}), (\ref{estiF1})  and
\[
\int_{t_2-t_1}^{t_0-t_1}\int_{B_{r\sigma}} |D(\psi w)|^2\,dx\,ds\le
2\int_{t_2-t_1}^{t_0-t_1}\int_{B_{r\sigma}}
\big(\psi^2|Dw|^2+|D\psi|^2w^2\big)\,dx\,ds
\]
we infer that
\eqq{
\norm{D(\psi w)}_{L_{2}((t_{2}-t_{1},t_{0}-t_{1});L_{2}(B_{r\sigma}))} \le
C(\mu,p,\eta,\Lambda, \delta,\nu)\frac{1+\abs{1+\beta}}{\rho-\rho'}\frac{1}{r}\left(\int_{0}^{t_{0}-t_{1}}\int_{B_{\rho \sigma r}}w^{2}dxd\tau\right)^{\frac{1}{2}}.
}{estiDpsiw}
Applying the interpolation inequality (\ref{interapara}) we arrive at
\eqns{
\norm{\psi w}_{L_{2\kappa}((t_{2}-t_{1},t_{0}-t_{1});L_{2\kappa}(B_{r\sigma}))}  & \\
\leq C(N)  & \norm{D(\psi w)}^{\theta}_{L_{2}((t_{2}-t_{1},t_{0}-t_{1});L_{2}(B_{r\sigma}))}  \norm{\psi w}_{L_{2p}((t_{2}-t_{1},t_{0}-t_{1});L_{2}(B_{r\sigma}))}^{1-\theta}
}
where $\kappa, \theta$ are given by (\ref{defkappatheta}).
Hence, if we apply in the above inequality the estimates (\ref{estiwwl2p})  and (\ref{estiDpsiw}), then we obtain
\eqq{
\norm{\psi w}_{L_{2\kappa}((t_{2}-t_{1},t_{0}-t_{1});
L_{2\kappa}(B_{r\sigma}))}
 \leq C(\mu,p,\eta,\Lambda, \delta,\nu,N)\frac{1}{K(r)} \frac{1+\abs{1+\beta}}{\rho-\rho'}\left(\int_{0}^{t_{0}-t_{1}}\int_{B_{\rho \sigma r}}w^{2}dxd\tau\right)^{\frac{1}{2}},
}{mainMoser1}
where
\eqq{
K(r):= r^{\theta}r^{(1-\theta)}\norm{l}_{L_{p}((0,\Phi(2r)))}^{\frac{\theta-1}{2}}.
}{defKr}
We denote  $\gamma=|1+\beta|$ and we may write
\[
\norm{\psi w}_{L_{2\kappa}((t_{2}-t_{1},t_{0}-t_{1});
L_{2\kappa}(B_{r\sigma}))} \geq \left( \int_{t_{2}-t_{1}}^{t_{0}-t_{1}} \int_{B\rho' r\sigma} w^{2\kappa} dxds \right)^{\frac{1}{2\kappa}}
\]
\[
=\left(  \int\limits_{(\rho-\rho')\sigma \eta \Phi(2r)}^{\rho \sigma \eta \Phi(2r)}  \int_{B\rho' r\sigma} [u^{-1}(s+t_{0}-\rho\sigma \eta \Phi(2r),x)]^{\kappa \gamma}dx ds \right)^{\frac{1}{2\kappa}}=\| u^{-1}\|_{L_{\kappa \gamma }(V_{\rho'})}^{\frac{\gamma}{2}}
\]
and
\[
\left( \int_{0}^{t_{0}-t_{1}} \int_{B_{\rho\sigma r}} w^{2}dxds \right)^{\frac{1}{2}} = \left( \int_{0}^{\rho \sigma \eta \Phi(2r)}
\int_{B_{\rho \sigma r}} [u^{-1}(s+t_{0}-\rho\sigma \eta \Phi(2r),x) ]^{\gamma}  dxds \right)^{\frac{1}{2}} = \| u^{-1} \|_{L_{\gamma}(V_{\rho})}^{\frac{\gamma}{2}}.
\]
Therefore, (\ref{mainMoser1}) leads to the estimate
\[
\|u^{-1}\|_{L_{\gamma\kappa}(V_{\rho'})} \leq \left(\frac{C^{2}(1+\gamma)^{2}}{(\rho-\rho')^{2}(K(r))^2}\right)^{\frac{1}{\gamma}} \|u^{-1}\|_{L_{\gamma}(V_{\rho})},\quad 0<\rho'<\rho\le 1.
\]
We note that $\kappa>1$ and we apply the first  Moser iteration lemma (Lemma~\ref{moserit1}) to get
\[
\esssup_{V_{\vsi}}u^{-1}\leq \left(\frac{M_{0}}{(1-\vsi)^{\frac{2\kappa}{\kappa-1}}(K(r))^{2\frac{\kappa}{\kappa-1}}}\right)^{\frac{1}{\gamma}}\|u^{-1}\|_{L_{\gamma}(V_{1})}, \hd \gamma\in (0,1], \hd \vsi \in (0,1),
\]
where $M_{0}=M_{0}( \mu, p ,\eta, \Lambda, \delta, \nu ,N)$.
We note that from (\ref{defkappatheta}) we get
\[
\theta\frac{2\kappa}{\kappa-1} = N, \hd \hd (1-\theta)\frac{2\kappa}{\kappa-1} = \frac{2p}{p-1}.
\]
Hence, using  the definition of $K(r)$ (\ref{defKr}) we obtain
\eqq{
(K(r))^{2\frac{\kappa}{\kappa-1}} = \norm{l}_{L_{p}((0,\Phi(2r)))}^{\frac{-p}{p-1}}r^{N}r^{\frac{2p}{p-1}}.
}{Krdopotegi}
Applying estimate (\ref{zn2}) we obtain for $r \leq r^{*}(p,\mu)$, where $r^{*}$ comes from Lemma (\ref{scaling}),
\[
\esssup_{V_{\vsi}}u^{-1}\leq \left(\frac{M_{0}}{(1-\vsi)^{\frac{2\kappa}{\kappa-1}}r^{N}\vdr}\right)^{\frac{1}{\gamma}}\|u^{-1}\|_{L_{\gamma}(V_{1})}, \hd \gamma\in (0,1], \hd \vsi \in (0,1),
\]
where $M_{0}=M_{0}( \mu, p ,\eta, \Lambda, \delta, \nu ,N)$. Thus, if we take
$\vsi=\sigma'/\sigma$ and notice that $V_{\vsi}=U_{\sigma'}$, $V_{1}=U_{\sigma}$, $\nuNj(U_{1})=\eta\Phi(2r)r^{N}$ and
\[ \frac{1}{1-\vsi}\,=\,\frac{\sigma}{\sigma-\sigma'}\,\le
\frac{1}{\sigma-\sigma'},\]
then we obtain for $r \leq r^{*}(p,\mu)$
\[
\esup_{U_{\sigma'}}{u^{-1}} \le
\Big(\frac{M_0 \nuNj(U_{1})^{-1}}{(\sigma-\sigma')^{\tau_0}}\Big)^{1/\gamma}
\|u^{-1}\|_{L_{\gamma}(U_\sigma)},\quad \gamma\in (0,1],
\]
where $M_{0}=M_{0}( \mu, p ,\eta, \Lambda, \delta, \nu ,N)$ and $p\in (1,\frac{1}{1-\gmb})$ had been fixed. Hence the proof is complete. $\square$
%*************************************************************************************

${}$

For a fixed $\gmb\in(0,1)$ satisfying (\ref{intmugk}) we put
\eqq{
\tilde{\kappa}:= \frac{2+N\gmb}{2+N\gmb - 2\gmb}
}{defkappaf}
%*************************************************************************
\begin{satz} \label{superest2}
Let $\Omega\subset \iR^N$ be a bounded
domain. Suppose the assumptions (H1)--(H3) are satisfied. Let
$\eta>0$ and $\delta\in (0,1)$ be fixed. Let $\gmb\in (0,1)$ be such that (\ref{intmugk}) is fulfilled. Then, for any $p_0\in(0,\tilde{\kappa})$, there exists $r^{*} = r^{*}(\mu,p_{0})$ such that for any $t_0\in [0,T)$
and $r\in (0, r^{*}]$ with $t_0+\eta \vdr \le T$, any ball
$B_r(x_0)\subset\Omega$ and any
nonnegative weak supersolution $u$ of (\ref{MProb}) in $(0,t_0+\eta
\vdr )\times B_r(x_0)$ with $u_0\ge 0$ in $B_r(x_0)$ and $f\equiv 0 $,  there holds
\eqq{
\|u\|_{L_{p_0}(U_{\sigma'}')}\le \left( \frac{C\cdot \nuNj(U'_1)^{-1}
}{(\sigma-\sigma')^{\gamma_0}}\right)^{1/\gamma-1/p_0}
\|u\|_{L_{\gamma}(U'_\sigma)},\quad \delta\le \sigma'<\sigma\le 1,\;
0<\gamma\le p_0/\tilde{\kappa}.
}{fin3}
Here $U'_\sigma=(t_0,t_0+\sigma\eta \vdr)\times B_{\sigma r}(x_0)$,
$C=C(\nu,\Lambda,\delta,\eta,\mu,N,p_0)$ and
$\gamma_0=\gamma_0(\mu,p_{0},N)$.
\end{satz}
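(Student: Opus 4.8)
The plan is to establish, for the forward cylinders $U'_\sigma$, a reverse H\"older inequality of the type required in Lemma~\ref{moserit2}, and then to invoke that lemma; the scheme parallels the proof of Theorem~\ref{superest1} and of the corresponding single-order result in \cite{base}, the genuinely new points being the $k$-dependent geometry together with Lemma~\ref{scaling}, and the choice of the interpolation exponent. Given $p_0\in(0,\tilde{\kappa})$, with $\tilde{\kappa}$ as in \eqref{defkappaf}, the starting observation is that $\tilde{\kappa}=\sup\{\kappa_p:\ p\in(1,\tfrac1{1-\gmb})\}$ (the supremum being the limiting value as $p\to\tfrac1{1-\gmb}$), where $\kappa_p$ and $\theta$ are given by \eqref{defkappatheta}; hence one may fix once and for all some $p\in(1,\tfrac1{1-\gmb})$ with $\kappa_p>p_0$ and set $\kappa:=\kappa_p$. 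Replacing $u$ by $u+\varepsilon$ and letting $\varepsilon\to0^+$ at the very end, I may assume $u\ge\varepsilon>0$, which makes negative powers of $u$ admissible.

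Next, fix $\delta\le\sigma'<\sigma\le1$ and, for $\rho\in(0,1]$, put $V'_\rho:=U'_{\rho\sigma}$. For $0<\rho'<\rho\le1$ I would pass to the shifted time $s=t-t_0$ (this is legitimate since $u\ge0$ on $(0,t_0+\eta\vdr)\times B_r(x_0)$, so that the memory tail is dropped exactly as in \eqref{shiftprob}) and test the regularised, shifted weak inequality with $v=\psi^2\tilde u^{\beta}$, where now $\beta=\gamma-1\in(-1,0)$, so that $\gamma=1+\beta\in(0,1)$ is a small positive exponent; here $\psi$ is the usual spatial cut-off between $B_{\rho'r\sigma}$ and $B_{\rho r\sigma}$ with $|D\psi|\le2/(r\sigma(\rho-\rho'))$, and $\varphi$ a time cut-off localising in the $\rho$-direction. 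Using the fundamental identity \eqref{fundidentity} with the same convex power function $H(y)=-(1+\beta)^{-1}y^{1+\beta}$ as in Theorem~\ref{superest1}, setting $w=\tilde u^{(1+\beta)/2}$ (so $w^2=\tilde u^{1+\beta}$ is the positive power to be estimated), applying (H1)--(H2) and Young's inequality, and passing to the limit in $n$ as in Theorem~\ref{superest1}, I expect to arrive at an energy estimate of the same structure as \eqref{sup9}--\eqref{defeF}, with the ratio $|1+\beta|/|\beta|$ and the factor $|\beta|$ playing the role of the corresponding quantities there. The decisive point is uniformity in the exponent: since $0<\gamma\le p_0/\kappa<1$, one has $|\beta|=1-\gamma\in[1-p_0/\kappa,1)$ and $\tfrac{1+\beta}{|\beta|}=\tfrac{\gamma}{1-\gamma}\le\tfrac{p_0}{\kappa-p_0}$, so that $|\beta|$, $|\beta|^{-1}$ and $\tfrac{1+\beta}{|\beta|}$ are all controlled by constants depending only on $\mu,N,p_0$ --- this is precisely where the hypothesis $p_0<\tilde{\kappa}$ is used.

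From the energy estimate I would then, exactly as in the proof of Theorem~\ref{superest1}, deduce an $L_{2p}((0,\cdot);L_2)$ bound for $\psi w$ (via Young's convolution inequality and the bound \eqref{kerestp} for $\|l\|_{L_p((0,\vdr))}$, together with $\rho\sigma\le1$) and an $L_2((0,\cdot);L_2)$ bound for $D(\psi w)$, and then the parabolic embedding \eqref{interapara} with the parameters \eqref{defkappatheta} upgrades these to an $L_{2\kappa}(L_{2\kappa})$ bound for $\psi w$. Rewriting this in terms of $u$ gives a reverse H\"older estimate
\[
\|u\|_{L_{\gamma\kappa}(V'_{\rho'})}\le\Big(\frac{C(1+\gamma)^{2}}{(\rho-\rho')^{2}\,(K(r))^{2}}\Big)^{1/\gamma}\|u\|_{L_{\gamma}(V'_\rho)},\qquad 0<\rho'<\rho\le1,
\]
with $K(r)$ as in \eqref{defKr} and $C=C(\nu,\Lambda,\delta,\eta,\mu,N,p_0)$ (the factor $(1+\gamma)^2<4$ being harmless). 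Normalising the measure on $U'_\sigma$ by $\nuNj(U'_1)$ so that it has total mass $\le1$, the factor involving $K(r)$ is, by Lemma~\ref{scaling} (raising it to the power dictated by \eqref{defkappatheta} and invoking \eqref{zn2}), exactly compensated by the change-of-measure factor, so that for all $r\in(0,r^*(\mu,p_0)]$ the displayed inequality becomes, in the normalised norms, hypothesis \eqref{mositer2} of Lemma~\ref{moserit2} with this $\kappa=\kappa_p>p_0$ and $a=2$. Iterating over $\rho\in[\sigma'/\sigma,1]$ via Lemma~\ref{moserit2} and translating back to the unnormalised Lebesgue measure then yields exactly \eqref{fin3}, with $\gamma_0=\gamma_0(\mu,p_0,N)$ (determined by $\kappa_p,\theta$ and Lemma~\ref{moserit2}) and $C=C(\nu,\Lambda,\delta,\eta,\mu,N,p_0)$; finally $\varepsilon\to0^+$ removes the auxiliary lower bound on $u$.

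The main obstacle is the derivation of the energy estimate for positive powers with constants uniform in $\gamma$: as in \cite{base} this rests on the full identity \eqref{fundidentity} rather than mere convexity, on a careful choice of the time cut-off so that the memory contributions carry the correct sign, and --- the new difficulty here --- on a sufficiently careful bookkeeping of every $r$-dependent quantity to make Lemma~\ref{scaling} applicable, the underlying reason being that \eqref{MProb} admits no natural scaling.
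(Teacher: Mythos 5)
Your plan is essentially the paper's: forward cylinders $U'_\sigma$, negative power $\beta=\gamma-1\in(-1,0)$ with $w=\tilde u^{(1+\beta)/2}$, a fixed $p\in(1,\tfrac1{1-\gmb})$ with $\kappa_p>p_0$, the interpolation inequality \eqref{interapara}, normalization of the measure, Lemma~\ref{scaling} to cancel $K(r)$, and Lemma~\ref{moserit2}. Your observation that $|\beta|$, $|\beta|^{-1}$ and $(1+\beta)/|\beta|$ are bounded in terms of $p_0/\kappa_p$ alone is exactly what makes the constants uniform in $\gamma$.

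One place where you are too optimistic, however, is in asserting that the energy estimate is obtained "exactly as in the proof of Theorem~\ref{superest1}" with "the same structure as \eqref{sup9}--\eqref{defeF}." In Theorem~\ref{superest1} the time cut-off $\varphi$ vanishes at $s=0$ and equals $1$ near the right endpoint, which permits the use of Lemma~\ref{comm} (valid for nondecreasing $\varphi$) followed by a direct convolution with $l$. Here the geometry is reversed: since you must control $\psi w$ on $(0,t_1-t_0)$ and the cylinders shrink towards $t_0$, you need $\varphi\equiv1$ near $s=0$ and $\varphi\equiv0$ near $s=t_2-t_0$, so $\varphi$ is nonincreasing and Lemma~\ref{comm} no longer applies. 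The paper instead uses Lemma~\ref{comm2} to absorb $\varphi$ under the nonlocal operator, then — because one cannot simply convolve the resulting inequality with $l$ — defines the nonnegative quantity $G_n:=\partial_s[k*(h_n*W)]+F_n$, exploits complete positivity of $l$ (so $h_n\ge0$, hence $0\le h_n*W\le l*G_n + l*[-F_n]_+$), applies Young's inequality, and chooses $t_*$ along a subsequence on which $(k_{n}*W)(t_*)\to(k*W)(t_*)$. The resulting bounds \eqref{L5}--\eqref{L6} carry an extra boundary term $(k*W)(t_*)$ which must then be estimated as in \eqref{estikW}. You do flag the time cut-off as a source of difficulty in your final paragraph, but the route to the energy estimate is genuinely different from Theorem~\ref{superest1}, not a carbon copy. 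Also, a small point: in the proof of this theorem the \emph{full} identity \eqref{fundidentity} is not needed — since $1+\beta>0$ the extra $k_n$-term in \eqref{sup1} is nonnegative and simply dropped, and the remaining commutation is handled by Lemma~\ref{comm2}; the full identity is essential only in the logarithmic estimates of Section~\ref{logsection}. Apart from these points, your proposal is correct and matches the paper's argument.
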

%*************************************************************************
\begin{proof}
It is enough to prove (\ref{fin3}) only for $p_0>1$, because otherwise we first apply H\"older's inequality with exponents $(q,\frac{q}{q-1})$ where $qp_{0}>1$ and next apply (\ref{fin3}) with $p_{0}>1$. We proceed similarly as in the proof of Theorem \ref{superest1}. Here again, we follow the approach from \cite{base} and then we use Lemma \ref{scaling}.

Without loss of generality, we may may further assume that $u$ is bounded away from zero. Otherwise, we replace $u$ with $u+\varepsilon$ and $u_0$ with
$u_0+\varepsilon$ and eventually pass with $\varepsilon$ to zero.
To abbreviate the notation, we again denote $B_{r}:=B_{r}(x_{0}).$

Fix $\sigma'$, $\sigma$ such that $\delta\le \sigma'<\sigma\le 1$. For $\rho\in (0,1]$ we set
$V'_\rho=U'_{\rho\sigma}$. Given $0<\rho'<\rho\le 1$ and $r>0$, let
$t_1=t_0+\rho'\sigma\eta\vdr$ and $t_2=t_0+\rho\sigma\eta\vdr$. We notice that then $0\le
t_0<t_1<t_2$. We shift the time putting ${s}=t-t_0$ and
$\tilde{f}(s)=f(s+t_0)$, $s\in (0,t_2-t_0)$, for functions $f$
defined on $(t_0,t_2)$.
We begin similarly as in the proof of Theorem  \ref{superest1}, the only
difference is that now we take $\beta\in (-1,0)$. In this case, (\ref{sup1}) simplifies to
\[
-\tilde{u}^{\beta}\partial_{s}(k_{n}\ast \tilde{u}) \ge
-\,\frac{1}{1+\beta}\,\partial_{s}
(k_{n}\ast\tilde{u}^{1+\beta}),\quad
\mbox{a.a.}\;(s,x)\in (0,t_2-t_0)\times B_r,
\]
hence taking $\psi\in C_0^1(B_{r\sigma})$ as above, we arrive at the following estimate
\begin{align}
-\,\frac{1}{1+\beta}\,& \int_{B_{r\sigma}}\psi^2\partial_{s}
(k_{n}\ast\tilde{u}^{1+\beta})\,dx+|\beta|\int_{B_{r\sigma}}\big((h_n\ast
[ADu])\,\tilde{}\;|\psi^2 \tilde{u}^{\beta-1}D
\tilde{u}\big)\,dx \nonumber\\
\le  & \,2\int_{B_{r\sigma}}\big((h_n\ast [ADu])\,\tilde{}\;|\psi D\psi
\,\tilde{u}^{\beta}\big)\,dx,\quad\quad
\mbox{a.a.}\;s\in(0,t_2-t_0). \label{L1}
\end{align}

Next, choose $\varphi\in C^1([0,t_2-t_0])$ such that $0\le
\varphi\le 1$, $\varphi=1$ in $[0,t_1-t_0]$, $\varphi=0$ in
$[t_1-t_0+(t_2-t_1)/2,t_2-t_0]$, and $0\le -\dot{\varphi}\le
4/(t_2-t_1)$. We multiply (\ref{L1}) by $(1+\beta)\varphi(s)$ (recall that $1+\beta > 0$) and apply Lemma \ref{comm2} to the first term
to get
\begin{align}
-\int_{B_{r\sigma}}  &
\partial_{s}(k_{n}\ast
[\varphi\psi^2\tilde{u}^{1+\beta}]\big)\,dx+|\beta|(1+\beta)\,
\int_{B_{r\sigma}}\big(\tilde{A}D\tilde{u}|\psi^2 \tilde{u}^{\beta-1}D
\tilde{u}\big)\varphi\,dx \nonumber\\
\le & \,\int_0^s
\dot{k}_{n}(s-\tau)\big(\varphi(s)-\varphi(\tau)\big)
\big(\int_{B_{r\sigma}}\psi^2\tilde{u}^{1+\beta}\,dx\big)(\tau)\,d\tau\nonumber\\
& \;\,+2(1+\beta)\,\int_{B_{r\sigma}}\big(\tilde{A}D\tilde{u}|\psi D\psi
\,\tilde{u}^{\beta}\big)\varphi\,dx+\mathcal{R}_n(s) ,\quad
\mbox{a.a.}\;s\in(0,t_2-t_0), \label{L2}
\end{align}
where
\begin{align*}
\mathcal{R}_n(s)= &\,\,-|\beta|(1+\beta)\, \int_{B_{r\sigma}}\big((h_n\ast
[ADu])\,\tilde{}\;-\tilde{A}D\tilde{u}|\psi^2 \tilde{u}^{\beta-1}D
\tilde{u}\big)\varphi\,dx\\
&\,+2(1+\beta)\,\int_{B_{r\sigma}}\big((h_n\ast
[ADu])\,\tilde{}\;-\tilde{A}D\tilde{u}|\psi D\psi
\,\tilde{u}^{\beta}\big)\varphi\,dx,\quad
\mbox{a.a.}\;s\in(0,t_2-t_0).
\end{align*}
We set again $w=\tilde{u}^{\frac{\beta+1}{2}}$ and estimate
exactly as in the proof of Theorem \ref{superest1}, using (H1), (H3) and
(\ref{sup8}), to the result
\begin{align}
-\int_{B_{r\sigma}}  &
\partial_{s}(k_{n}\ast
[\varphi\psi^2w^2]\big)\,dx+\,\frac{2\nu
|\beta|}{1+\beta}\,\int_{B_{r\sigma}}\varphi
\psi^2|Dw|^2\,dx \nonumber\\
\le & \,\int_0^s
\dot{k}_{n}(s-\tau)\big(\varphi(s)-\varphi(\tau)\big)
\big(\int_{B_{r\sigma}}\psi^2w^2\,dx\big)(\tau)\,d\tau\nonumber\\
& \;\,+\,\frac{2\Lambda^2(1+\beta)}{\nu |\beta|}\, \int_{B_{r\sigma}}
|D\psi|^2\varphi w^2\,dx+\mathcal{R}_n(s) ,\quad
\mbox{a.a.}\;s\in(0,t_2-t_0). \label{L3}
\end{align}
Recall that $k_{n}=k\ast h_n$. We denote  the right-hand side of (\ref{L3}) by $F_n(s)$ and set
\[
W(s)=\int_{B_{r\sigma}}\varphi(s)\psi(x)^2w(s,x)^2\,dx.
\]
Then it
follows from (\ref{L3}) that
\[
G_n(s):=\partial_s [k* (h_n\ast W)](s)+F_n(s)\ge 0,\quad\quad
\mbox{a.a.}\;s\in(0,t_2-t_0).
\]
By complete positivity of $l$, $h_n$ is nonnegative for every $n\in \iN$ (\cite{CN}). Hence, applying (\ref{sup5}), we have
\[
0\le h_n\ast W =l\ast \partial_s[ k* (h_n\ast W)]\le
l\ast G_n+l\ast [-F_n(s)]_+
\]
a.e. in $(0,t_2-t_0)$, where $[y]_+$ stands for the positive part
of $y\in \iR$. For any $p\in (1,1/(1-\gmb))$ and any
$t_*\in[t_2-t_0-(t_2-t_1)/4,t_2-t_0]$ using Young's
inequality we obtain
\begin{equation} \label{L4}
\|h_n\ast W\|_{L_p((0,t_*))}\le
\|l\|_{L_p((0,t_*))}\big(\|G_n\|_{L_1((0,t_*))}+
\|[-F_n]_+\|_{L_1((0,t_*))}\big).
\end{equation}
Since $G_n$ is positive we have
\eqq{
\|G_n\|_{L_1([0,t_*])}=(k_{n}\ast
W)(t_*)+\int_0^{t_*}\!\!\!F_n(s)\,ds.
}{Gnsuma}
Observe that $\mathcal{R}_n\rightarrow 0$ in $L_1((0,t_2-t_0))$ as
$n\to \infty$. The functions $k_{n}$ and $\varphi$ are nonincreasing,  hence
$[-F_n]_+ \leq [-\mathcal{R}_n]_{+}$ and   $  \|[-F_n]_+\|_{L_1((0,t_*))}\to 0$ as
$n\to\infty$. Further, since $\varphi$ is nonincreasing there holds

\eqnsl{
\int_0^{t_*}\!\!&\!\int_0^s
\dot{k}_{n}(s-\tau)\big(\varphi(s)-\varphi(\tau)\big)
\big(\int_{B_{r\sigma}}\psi^2w^2\,dx\big)(\tau)\,d\tau\,ds\\
=&\,\int_0^{t_*}
k_{n}(t_*-\tau)\big(\varphi(t_*)-\varphi(\tau)\big)
\big(\int_{B_{r\sigma}}\psi^2w^2\,dx\big)(\tau)\,d\tau\\
&\,-\int_0^{t_*}\!\!\!\dot{\varphi}(s)\int_0^s
k_{n}(s-\tau)
\big(\int_{B_{r\sigma}}\psi^2w^2\,dx\big)(\tau)\,d\tau\,ds\\
\le&\,-\int_0^{t_*}\!\!\!\dot{\varphi}(s)\int_0^s
k_{n}(s-\tau)
\big(\int_{B_{r\sigma}}\psi^2w^2\,dx\big)(\tau)\,d\tau\,ds.
}{kndot}
We also know that
$k_{n}\ast W\to k\ast W$ in
$L_1((0,t_2-t_0))$. Thus, we can fix some
$t_*\in[t_2-t_0-(t_2-t_1)/4,t_2-t_0]$ such that for a
subsequence $(k_{n_m}\ast W)(t_*)\to (k\ast
W)(t_*)$ as $m\to \infty$. From (\ref{Gnsuma}) for  such $t_*$ we have
\[
\limsup_{m\rightarrow \infty} \|G_{n_{m}}\|_{L_{1}((0,t_*))}\leq (k*W)(t_*)+\limsup_{m\rightarrow \infty} \int_0^{t_*}\!\!\!F_{n_{m}}(s)\,ds,
\]
and using the previous estimate we get
\[
\limsup_{m\rightarrow \infty} \int_0^{t_*}\!\!\!F_{n_{m}}(s)\,ds \leq
\limsup_{m\rightarrow \infty}  \int_0^{t_*}\!\!\!- \dot{\varphi}(s)\int_0^s
k_{n_{m}}(s-\tau)
\big(\int_{B_{r\sigma}}\psi^2w^2\,dx\big)(\tau)\,d\tau\,ds
\]
\[
+\frac{2\Lambda^2(1+\beta)}{\nu |\beta|}\, \int_{0}^{t_*}\int_{B_{r\sigma}}
|D\psi|^2\varphi w^2\,dx ds +\limsup_{m\rightarrow \infty} \int_{0}^{t_*} |\mathcal{R}_{n_{m}}(s)|ds
\]
\[
=\int_0^{t_*}\!\!\!- \dot{\varphi}(s)\int_0^s
k(s-\tau)
\big(\int_{B_{r\sigma}}\psi^2w^2\,dx\big)(\tau)\,d\tau\,ds
+\frac{2\Lambda^2(1+\beta)}{\nu |\beta|}\, \int_{0}^{t_*}\int_{B_{r\sigma}}
|D\psi|^2\varphi w^2\,dx ds.
\]
Recall that  $  \|[-F_n]_+\|_{L_1((0,t_*))}\to 0$ hence, from the above estimates and (\ref{L4}) we obtain
\[
\|W\|_{L_{p}((0,t_*))} \leq \|l\|_{L_{p}((0,t_*))} \Big((k\ast
W)(t_*)+\|F\|_{L_1((0,t_*))}\Big),
\]
with
\eqq{
F(s)=\,\frac{2\Lambda^2(1+\beta)}{\nu |\beta|}\, \int_{B_{r\sigma}}
|D \psi|^2\varphi w^2\,dx-\dot{\varphi}(s)\big(k\ast
\int_{B_{r\sigma}}\psi^2w^2\,dx\big)(s).
}{Fsdef}
Recall that  $\varphi=1$ in
$[0,t_1-t_0]$, so we have
\begin{equation} \label{L5}
\Big(\int_{0}^{t_1-t_0} (\int_{B_{r\sigma}}
[\psi(x)w(s,x)]^2\,dx)^p\,ds\Big)^{1/p}\le
\|l\|_{L_{p}((0,t_{2}-t_{0}))}\Big((k\ast
W)(t_*)+\|F\|_{L_1((0,t_2-t_0))}\Big).
\end{equation}

On the other hand, we can integrate (\ref{L3}) over $(0,t_*)$, apply (\ref{kndot}) and then
take the limit as $m\to \infty$ for the same subsequence as
before, thereby obtaining
\begin{equation} \label{L6}
\int_{0}^{t_1-t_0}\!\!\!\int_{B_{r\sigma}}
\psi^2|Dw|^2\,dx\,ds\le\,\frac{1+\beta}{2\nu
|\beta|}\,\Big((k\ast
W)(t_*)+\|F\|_{L_1((0,t_2-t_0))}\Big).
\end{equation}
We note that $\varphi=0$ on $[\frac{1}{2}(\rho+\rho')\sigma \eta \Phi(2r), \rho\sigma \eta \Phi(2r)]$ and $t_*\in [(\frac{3}{4}\rho+\frac{1}{4}\rho')\sigma \eta \Phi(2r), \rho\sigma \eta \Phi(2r)]$, thus we have
\eqnsl{
(k*W)(t_{*})= & \int_{0}^{\frac{1}{2}(\rho+\rho')\sigma \eta \Phi(2r)} k(t_*-\tau) \varphi(\tau ) \left( \int_{B_{r\sigma}}
\psi^2 w^2\,dx\right) (\tau ) d\tau \\
& \leq k\left(t_* -\frac{1}{2}(\rho+\rho')\sigma \eta \Phi(2r)\right) \int_{0}^{t_{2}-t_{0}} \int_{B_{\rho r\sigma} } w^{2} dx d\tau \\
& \leq k\left( \frac{1}{4}(\rho-\rho')\sigma \eta \Phi(2r)\right) \int_{0}^{t_{2}-t_{0}} \int_{B_{\rho r\sigma} } w^{2} dx d\tau \\
& \leq c(\eta,\delta)\frac{k(\vdr)}{(\rho-\rho')}\int_{0}^{t_{2}-t_{0}}\int_{B_{\rho \sigma r}}w^{2}dxd\tau.
}{estikW}
We also have
\eqq{
\int_{0}^{t_{2}-t_{0}}\int_{B_{r\sigma}}\abs{D \psi}w^{2}dxds \leq \frac{4}{r^{2}\sigma^{2}(\rho-\rho')^{2}}\int_{0}^{t_{2}-t_{0}}\int_{B_{\rho r\sigma}}w^{2}dxds.
}{estinabpsi}
Now, we shall estimate the $L_1$-norm of (\ref{Fsdef})
\[
\int_{0}^{t_{2}-t_{0}} - \dot{\varphi}(s) \left( k* \int_{B_{r\sigma} } \psi^{2} w^{2}dx   \right)(s)ds \leq \frac{4}{t_{2}-t_{1}} \int_{0}^{t_{2}-t_{0}}   k* \int_{B_{r\sigma} } \psi^{2} w^{2}dx   ds
\]
\[
= \frac{4}{(\rho - \rho ') \sigma \eta \Phi(2r)} \int_{0}^{1}\int_{0}^{t_{2}-t_{0}} \frac{(t_2-t_0-s)^{1-\al} }{\Gamma(2-\al)}    \int_{B_{ r\sigma} } \psi^2(x) w^{2}(x,s)dx ds \dd
\]
\[
\leq \frac{4}{(\rho - \rho ') \sigma \eta \Phi(2r)} \int_{0}^{1} \frac{(t_2-t_0)^{1-\al}}{\Gamma(2-\al)} \dd  \int_{0}^{t_{2}-t_{0}}  \int_{B_{ r\sigma} } \psi^2(x) w^{2}(x,s)dx ds
\]
\[
\leq  \frac{4}{(\rho - \rho ') \sigma \eta } c(\eta) k_1(\vdr) \int_{0}^{t_{2}-t_{0}}  \int_{B_{\rho r\sigma} } w^{2}(x,s)dx ds.
\]
Using (\ref{zn1}) we find that
\eqq{\int_{0}^{t_{2}-t_{0}} - \dot{\varphi}(s) \left( k* \int_{B_{r\sigma} } \psi^{2} w^{2}dx   \right)(s)ds \leq
\frac{c( \eta, \delta)}{(\rho - \rho ')^{2} r^{2}} \int_{0}^{t_{2}-t_{0}}    \int_{B_{\rho r\sigma} }  w^{2}dx   ds.}{estivphidot}
Hence, from (\ref{Fsdef}), (\ref{estinabpsi}) and (\ref{estivphidot}) we obtain
\eqq{
\norm{F}_{L_{1}((0,t_{2}-t_{0}))} \leq c(\mu,\delta,\eta,\Lambda, \nu )\frac{1+(1+\beta)}{|\beta|(\rho-\rho')^{2}r^{2}}\int_{0}^{t_{2}-t_{0}}\int_{B_{\rho r\sigma}}w^{2}dxds.
}{estiFlj}
 Using (\ref{L5}), (\ref{estikW}), (\ref{zn1}) and (\ref{estiFlj}) we get
\eqnsl{ \| \psi w \|_{L_{2p}((0,t_{1}- t_{0});L_{2}(B_{r\sigma })  )}&  \\
\leq c(\mu,\delta,\eta,\Lambda, \nu )&  \frac{1+(1+\beta)}{|\beta|(\rho-\rho')r} \| l \|_{L_{p}((0,t_{2}-t_{0}))}^{\frac{1}{2}} \left( \int_{0}^{t_{2}-t_{0}}\int_{B_{\rho r\sigma}}w^{2}dxds\right)^{\frac{1}{2}}. }{estildpphiw}
Recall that $t_{2}-t_{0}= \rho \sigma \eta \Phi(2r)$, and hence employing  (\ref{estiPhieta}) we see that
\[
\| l\|_{L_{p}((0,t_{2}-t_{0}))} \leq c(\eta  )\| l\|_{L_{p}((0,\vdr))},
\]
and (\ref{estildpphiw}) gives
\eqnsl{ \| \psi w \|_{L_{2p}((0,t_{1}- t_{0});L_{2}(B_{r\sigma })  )} &  \\
\leq c(\mu,\delta,\eta,\Lambda, \nu ) \frac{1+(1+\beta)}{|\beta|(\rho-\rho')r} &  \| l\|_{L_{p}((0,\vdr))}^{\frac{1}{2}}   \left( \int_{0}^{t_{2}-t_{0}}\int_{B_{\rho r\sigma}}w^{2}dxds\right)^{\frac{1}{2}}. }{estildpphiw2}
Next, from (\ref{L6}), (\ref{estikW}), (\ref{zn1}) and (\ref{estiFlj}) we obtain
\[
\int_{0}^{t_{1}-t_{0}}\int_{B_{r\sigma }} \psi^{2} |Dw|^{2} dx ds \leq  c(\mu, \delta, \eta, \Lambda,\nu) \frac{1+\beta}{2\nu |\beta|} \frac{1+(1+\beta)}{|\beta|(\rho - \rho ')^{2} r^{2}} \int_{0}^{t_{2}-t_{0}}\int_{B_{\rho r\sigma }}  w^{2} dx ds.
\]
Using the above estimate and (\ref{estinabpsi}) yields
\eqq{\| D(\psi w)\|_{L_{2}((0,t_{1}-t_{0});L_{2}(B_{r \sigma }))} \leq  c(\mu, \delta, \eta, \Lambda, \nu )  \frac{1+(1+\beta)}{|\beta|(\rho - \rho ') r} \left( \int_{0}^{t_{2}-t_{0}}\int_{B_{\rho r\sigma }}  w^{2} dx ds\right)^{\frac{1}{2}}.
}{estinabpsiw}
Having (\ref{estildpphiw2}) and (\ref{estinabpsiw}) we shall again apply  the interpolation inequality (\ref{interapara}) with $\kappa, \theta$ from (\ref{defkappatheta}) to obtain
\eqns{
\norm{\psi w}_{L_{2\kappa}((0,t_{1}-t_{0});L_{2\kappa}(B_{r\sigma}))}  & \\
\leq C(N)  & \norm{D(\psi w)}^{\theta}_{L_{2}((0,t_{1}-t_{0});L_{2}(B_{r\sigma}))}  \norm{\psi w}_{L_{2p}((0,t_{1}-t_{0});L_{2}(B_{r\sigma}))}^{1-\theta}.
}
Then we arrive at
\eqq{
\norm{\psi w}_{L_{2\kappa}((0,t_{1}-t_{0});
L_{2\kappa}(B_{r\sigma}))}
 \leq C(\mu,\eta,\Lambda, \delta,\nu, N)\frac{1}{K(r)} \frac{1+\abs{1+\beta}}{\rho-\rho'}\left(\int_{0}^{t_{2}-t_{0}}\int_{B_{\rho \sigma r}}w^{2}dxd\tau\right)^{\frac{1}{2}},
}{mainMoser12}
where again
\eqq{
K(r):= r^{\theta}r^{(1-\theta)}\| l\|_{L_{p}((0,\vdr))}^{\frac{\theta-1}{2}}.
}{defKr2}
In particular, we have
\eqq{
\|w\|_{L_{2\kappa}((0,t_{1}-t_{0})\times B_{\rho'r\sigma})} \leq  C(\mu,\eta,\Lambda, \delta,\nu, N)\frac{1+|1+\beta|}{|\beta|(\rho-\rho')K(r)}\|w\|_{L_{2}((0,t_{2}-t_{0})\times B_{\rho r\sigma})}.
}{estiVrho}
If we denote $\gamma:=1+\beta$, then we may write
\[
\|w\|_{L_{2\kappa}((0,t_{1}-t_{0})\times B_{\rho'r\sigma})} = \left( \int_{0}^{\rho' \sigma \eta \Phi(2r)} \int_{B_{\rho' \sigma r}} w^{2\kappa} dx ds \right)^{\frac{1}{2\kappa}}=\left( \int_{0}^{\rho' \sigma \eta \Phi(2r)} \int_{B_{\rho' \sigma r}} u(s+t_{0},x)^{\gamma \kappa } dx ds \right)^{\frac{1}{2\kappa}}
\]
\[
=\left( \int_{t_{0}}^{t_{0}+\rho' \sigma \eta \Phi(2r)} \int_{B_{\rho' \sigma r}} u(t,x)^{\gamma \kappa } dx dt \right)^{\frac{1}{2\kappa}} = \| u \|^{\frac{\gamma}{2}}_{L_{\gamma \kappa }(V_{\rho'}')}.
\]
Similarly we get $\|w\|_{L_{2}((0,t_{2}-t_{0})\times B_{\rho r\sigma})} =  \| u \|^{\frac{\gamma}{2}}_{L_{\gamma  }(V_{\rho}')}$ and (\ref{estiVrho}) gives
\[
\|u\|_{L_{\kappa \gamma}(V'_{\rho'})} \leq \left(\frac{C}{|\beta|^{2}(\rho-\rho')^{2}(K(r))^{2}}\right)^{\frac{1}{\gamma}}\|u\|_{L_{\gamma}(V'_{\rho})},
\]
where $C=C(\mu,\eta,\Lambda, \delta,\nu, N)$. If $p\in (1, \frac{1}{1-\gmb})$, then $\kappa = \frac{2p+N(p-1)}{2+N(p-1)}$ belongs to $(1,\kaf)$, where $\kaf$ was defined in (\ref{defkappaf}), hence we may choose  $p=p(p_{0})\in (1, \frac{1}{1-\gmb})$ such that $\kappa = (p_{0}+\kaf)/2$  (recall the   assumption $p_{0}\in (0, \kaf)$). We have $\frac{p_{0}}{\ka}<1$, so  for any $\gamma = 1+\beta \in (0,\frac{p_{0}}{\ka}]$ we get the estimate
\eqq{
\|u\|_{L_{\kappa \gamma}(V'_{\rho'})} \leq \left(\frac{C}{(\rho-\rho')^{2}(K(r))^{2}}\right)^{\frac{1}{\gamma}}\|u\|_{L_{\gamma}(V'_{\rho})},
}{mainestiVprim}
where $C=C(\mu,\eta,\Lambda, \delta,\nu, N, p_{0})$.  We multiply (\ref{mainestiVprim}) by $(\eta \om_{N}r^{N}\vdr)^{-\frac{1}{\gamma \kappa}}$, where $\om_{N}$ is the measure of the unit ball in $\R^{N}$, and we have
\eqns{
\left(\int_{V_{\rho'}'} |u|^{\kappa \gamma}\frac{1}{\eta \om_{N}r^{N}\vdr}dxdt\right)^{\frac{1}{\gamma \kappa}} &  \\
\leq \left(\frac{C}{(\rho-\rho')^{2}(K(r))^{2}}\right)^{\frac{1}{\gamma}} & \left(\int_{V_{\rho}'} |u|^{ \gamma}\frac{1}{\eta \om_{N}r^{N}\vdr}dxdt\right)^{\frac{1}{\gamma}} \left(\eta \om_{N} r^{N}\vdr\right)^{\frac{(\kappa-1)}{\kappa \gamma}}.
}
Hence, if we denote by $d\muf_{N+1} $ the measure $ (\eta \om_{N} r^{N}\vdr)^{-1} dxdt $, then we get
\[
\|u\|_{L_{\gamma\kappa}(V'_{\rho'},d\muf_{N+1})} \leq \left(\frac{C}{(\rho-\rho')^{2}(K(r))^{2}(\eta \om_{N}r^{N}\vdr)^{-\frac{\kappa-1}{\kappa}}}\right)^{\frac{1}{\gamma}}
\|u\|_{L_{\gamma}(V'_{\rho},d\muf_{N+1})},
\]
for $0<\rho'<\rho\leq 1$ and $\gamma\in (0,\frac{p_{0}}{\kappa}]$. Since $\muf_{N+1}(V_{1}')= \muf_{N+1}(U_{\sigma}')= \sigma^{N+1}\leq 1$ we may apply the second Moser iteration lemma (see Lemma~\ref{moserit2}). This gives
\[
\|u\|_{L_{p_{0}}(V'_{\vs},d\muf_{N+1})} \leq \left(\frac{C^{\frac{\kappa(\kappa+1)}{\kappa-1}} \cdot 2^{\frac{2\kappa^{3}}{(\kappa-1)^{3}}} }{(1-\vs)^{\frac{2\kappa(\kappa+1)}{\kappa-1}}[(K(r))^{2}(\eta \om_{N}r^{N}\vdr)^{-\frac{\kappa-1}{\kappa}}]^{(\frac{\kappa(\kappa+1)}{\kappa-1})}}\right)^{\frac{1}{\gamma}-\frac{1}{p_{0}}}
\|u\|_{L_{\gamma}(V'_{1},d\muf_{N+1})},
\]
where  $ \vs\in (0,1)$ and $ \gamma\in (0,\frac{p_{0}}{\kappa}]$.  Multiplying by $(\eta \om_{N}r^{N}\vdr)^{\frac{1}{p_{0}}} =(\eta \om_{N}r^{N}\vdr)^{\frac{1}{\gamma}}(\eta \om_{N}r^{N}\vdr)^{\frac{1}{p_{0}}-\frac{1}{\gamma}}$ then yields
\[
\|u\|_{L_{p_{0}}(V'_{\vs})} \leq
\left(\frac{C^{\frac{\kappa(\kappa+1)}{\kappa-1}} \cdot 2^{\frac{2\kappa^{3}}{(\kappa-1)^{3}}} }{(1-\vs )^{\frac{2\kappa(\kappa+1)}{\kappa-1}}\eta \om_{N}r^{N}\vdr [(K(r))^{2}(\eta \om_{N}r^{N}\vdr)^{-\frac{\kappa-1}{\kappa}}]^{(\frac{\kappa(\kappa+1)}{\kappa-1})}}\right)^{\frac{1}{\gamma}-\frac{1}{p_{0}}}
\|u\|_{L_{\gamma}(V'_{1})}.
\]
Observe that from (\ref{Krdopotegi}) and (\ref{zn2}) we get for $r \leq r^{*}(\mu,p(p_{0}))$, where $r^{*}$ comes from Lemma \ref{scaling},
\eqns{
\left[(K(r))^{2}\left(\eta \om_{N}r^{N}\vdr\right)^{-\frac{\kappa-1}{\kappa}}\right]^{\frac{\kappa}{\kappa-1}} & =
(K(r))^{\frac{2\kappa}{\kappa-1}}\frac{1}{\eta \om_{N}r^{N}\vdr} \\
& = \frac{r^{\frac{2p}{p-1}}}{\eta \om_{N}\vdr \| l\|_{L_{p}((0,\vdr))}^{\frac{p}{p-1}}}\geq \frac{C(\mu)}{\eta \om_{N}}.
}
Therefore, for $r \leq r^{*}(\mu,p(p_{0}))$, we have
\[
\|u\|_{L_{p_{0}}(V'_{\vs})} \leq
\left(\frac{C }{(1-\vs )^{\gamma_{0}}\eta \om_{N}r^{N}\vdr }\right)^{\frac{1}{\gamma}-\frac{1}{p_{0}}}
\|u\|_{L_{\gamma}(V'_{1})} \hd \m{ for } \hd \vs \in (0,1), \hd \gamma\in (0,\frac{p_{0}}{\kappa}],
\]
where $C=C(\mu,\eta,\Lambda, \delta,\nu, N, p_{0})$ and $\gamma_{0}= \gamma_{0}(p_{0}, \mu, N)$. If we take $\vs=\sigma'/\sigma$, then $V_{\vs}'=U_{\sigma'}'$ and   we obtain  for $r \leq r^{*}(\mu,p(p_{0}))$
\begin{equation*}
\|u\|_{L_{p_0}(U_{\sigma'}')}\le \Big(\frac{C}{(1-\frac{\sigma'}{\sigma})^{\gamma_{0}} \eta \om_{N} r^{N} \vdr }\Big)^{1/\gamma-1/p_0}
\|u\|_{L_{\gamma}(U'_\sigma)}, \hd \delta< \sigma'<\sigma\leq 1,  \quad 0<\gamma\le p_0/\kappa.
\end{equation*}
In particular, if we notice that $\nuNj(U_{1}') = \eta \om_{N} r^{N} \vdr$, then  we have  for $r \leq r^{*}(\mu,p(p_{0}))$
\eqnsl{
\|u\|_{L_{p_0}(U_{\sigma'}')}\le \Big(\frac{C  \nuNj(U_{1}')^{-1}}{(\sigma-\sigma')^{\gamma_{0}}  }\Big)^{1/\gamma-1/p_0}
\|u\|_{L_{\gamma}(U'_\sigma)}, \hd \delta< \sigma'<\sigma\leq 1,  \quad 0<\gamma\le p_0/\kappa.
}{L11}
Since $\kappa<\tilde{\kappa}$ the above estimate holds for
all $\gamma\in (0,p_0/\tilde{\kappa}]$ and the proof is finished.
\end{proof}
%*************************************************************************
%***********************************************************************************
\subsection{Logarithmic estimates} \label{logsection}
%**********************************************************************************
\begin{satz} \label{logest}
Let  $T>0$ and $\Omega\subset \iR^N$ be a bounded
domain. We assume that (H1)--(H3) are satisfied and fix
$\tau>0$, $\delta,\,\eta\in(0,1)$. Then there exists $r^{*}=r^{*}(\mu)>0$ such that for every $0<r \leq r^{*}$ for any $t_0\ge
0$ with $t_0+\tau \vr\le T$, any ball
$B_r(x_0)\subset\Omega$, and any weak supersolution $u\ge
\varepsilon>0$ of (\ref{MProb}) in $(0,t_0+\tau \vr)\times
B_r(x_0)$ with $u_0\ge 0$ in $B_r(x_0)$ and $f\equiv 0$, there is a constant $c=c(u)$ such that
\begin{equation} \label{logestleft}
\nuNj\big(\{(t,x)\in K_-: \log u(t,x)>c(u)+\lambda\}\big)\le C
\vr \nuN(B_r)\lambda^{-1},\quad \lambda>0,
\end{equation}
and
\begin{equation} \label{logestright}
\nuNj\big(\{(t,x)\in K_+: \log u(t,x)<c(u)-\lambda\}\big)\le C
\vr \nuN(B_r)\lambda^{-1},\quad \lambda>0,
\end{equation}
where $K_-=(t_0,t_0+\eta \tau \vr)\times B_{\delta r}(x_0)$ and
$K_+=(t_0+\eta \tau \vr,t_0+\tau \vr)\times
B_{\delta r}(x_0)$. Here the constant $C$ depends only on $\delta, \eta,
\tau, N, \mu, \nu$, and $\Lambda$.
\end{satz}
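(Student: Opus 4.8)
The plan is to adapt the logarithmic estimates of \cite[Section~4]{base} to the distributed-order setting, the decisive new inputs being the $\Phi$-shaped cylinders together with the scaling estimate of Lemma~\ref{scaling}, the lower bound for the resolvent kernel in Lemma~\ref{rtheta}, and a robust treatment of the memory term in the fundamental identity~(\ref{fundidentity}).

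First I would pass to the regularized weak formulation. By Lemma~\ref{LemmaReg}, (\ref{u0weg}) and the time shift $s=t-t_0$ as in (\ref{shiftprob}) (taking $t_1=t_0$, so that no extra term is produced; here the hypothesis $u\ge\varepsilon>0$ makes $\log u$ and the test function below admissible, but it does not enter the final constants), one works with the kernels $k_n,h_n$ of Section~\ref{SecYos} and passes to the limit $n\to\infty$ along a subsequence at the end. Choose a radial cut-off $\psi\in C_0^1(B_r(x_0))$ with $0\le\psi\le1$, $\psi\equiv1$ on $B_{\delta r}(x_0)$, $|D\psi|\le c/r$, and with convex superlevel sets, so that Proposition~\ref{WeiPI} applies. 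Testing the shifted, regularized supersolution inequality with $v=\psi^2\tilde u^{-1}\ge0$, applying (\ref{fundidentity}) with the \emph{concave} function $H=\log$ and $k=k_n$, using (H2) for the leading term and Young's inequality together with (H1) for the cross term, and then letting $n\to\infty$, I would obtain a pointwise-in-$s$ differential inequality for the $\psi^2$-weighted spatial mean $W(s)$ of $-\log\tilde u(s,\cdot)$. Converting the Dirichlet term into an $L^2$-oscillation term via Proposition~\ref{WeiPI}, this takes the form of the crucial fractional differential inequality
\begin{equation*}
\partial_s\big(k\ast(W-W(0))\big)(s)+c_0\,\kjr\,G(s)\le C_0\,\kjr,\qquad 0<s<\tau\vr,
\end{equation*}
(in essence; the rigorous version carries additional terms of the form $k(s)$ and $k(s)(W(s)-W(0))$ that become harmless upon convolution with $l$, since $l\ast k=1$ and $W-W(0)$ will be bounded above), where $G(s)\ge \nuN(B_r(x_0))^{-1}\int_{B_{\delta r}(x_0)}(\log\tilde u(s,x)+W(s))^2\,dx\ge0$ and $\kjr=r^{-2}$ by Lemma~\ref{fi}. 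The main obstacle of the whole proof is here: the memory term $\int_0^s(\cdots)(-\dot k_n)\,d\sigma$ from (\ref{fundidentity}), jointly with the term $(1-\log\tilde u)k_n$, must be estimated so that $c_0,C_0$ remain \emph{uniform as the support of $\mu$ approaches~$1$}; this is exactly the new argument that, in the single-order case, yields robustness as $\alpha\to1$.

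Next I would extract two-sided control of the mean and control of $\int G$. Convolving the differential inequality with $l$ and using $l\ast k=1$ and the vanishing initial trace, one gets $W(s)-W(0)+c_0\,\kjr\,(l\ast G)(s)\le C\,\kjr\,(1\ast l)(s)$ for $0<s<\tau\vr$. By Lemma~\ref{scaling} with $p=1$ (with an elementary estimate reducing to $\tau\le1$, using Proposition~\ref{philambda} and Lemma~\ref{kernels} when $\tau>1$) one has $(1\ast l)(\tau\vr)\le C/\kjr$, hence $W(s)-W(0)\le C$ (drop the nonnegative good term) and $c_0\,\kjr\,(l\ast G)(s)\le C$. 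The \emph{lower} bound $W(s)-W(0)\ge -C$ is the delicate one; here I would solve the differential inequality by means of the resolvent kernel $r_\theta$ associated with $l$ (see (\ref{reso})) for the choice $\theta=c_0\,\kjr=c_0/r^2$, invoking Lemma~\ref{rtheta} --- concretely the bound $r_\theta(t)\ge c/(1\ast k)(t)$ on $(0,\tau\vr)$ --- which is precisely the ingredient that survives the limit $\alpha\to1$. Since $l$ is nonincreasing, $\int_0^sG\le (l\ast G)(s)/l(s)$, and combining this with (\ref{ldol}) in the form $1/l(\tau\vr)\le C\,\vr\,\kjr$ yields $\int_0^{\tau\vr}G\le C\,\vr$, that is,
\begin{equation*}
\int_{K_\pm}\big(\log u+W\big)^2\,d\nuNj\le C\,\vr\,\nuN(B_r(x_0)).
\end{equation*}

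Finally I would conclude by Chebyshev's inequality. Set $c(u):=-W(0)$. If $(t,x)\in K_-$ satisfies $\log u(t,x)>c(u)+\lambda$ with $\lambda$ larger than a data-dependent threshold $\lambda_0\ge1$, then $|\log u(t,x)+W(t)|>\lambda/2$ by the bound $|W(t)-W(0)|\le C$, so that
\[
\nuNj\big(\{(t,x)\in K_-:\log u>c(u)+\lambda\}\big)\le\frac{4}{\lambda^2}\int_{K_-}(\log u+W)^2\,d\nuNj\le\frac{C\,\vr\,\nuN(B_r(x_0))}{\lambda},
\]
while for $0<\lambda\le\lambda_0$ the claimed bound is trivial because $\nuNj(K_-)=\eta\tau\vr\,\nuN(B_r(x_0))$; this proves (\ref{logestleft}). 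The estimate (\ref{logestright}) follows in the same way with $K_+$ in place of $K_-$ and the inequality reversed. Throughout, $r^*$ is the minimum of the thresholds provided by Lemma~\ref{scaling} (with $p=1$) and Lemma~\ref{rtheta}, so $r^*=r^*(\mu)$, and all constants depend only on $\delta,\eta,\tau,N,\mu,\nu,\Lambda$.
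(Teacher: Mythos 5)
Your proposal correctly identifies the main ingredients (regularized formulation, fundamental identity, weighted Poincar\'e inequality, Lemma~\ref{scaling}, Lemma~\ref{rtheta}), but the central step is incorrect: you claim a uniform two-sided bound $|W(s)-W(0)|\le C$ for the weighted spatial mean, and this is neither derivable from the inequality you write down nor true in general for weak supersolutions. The paper does not prove such a uniform bound; it proves only one-sided, weak-$L^1$ level-set estimates for $W$ (the paper's $I_2$ on the past cylinder $K_-$, and $I_4$ on the future cylinder $K_+$), each measured against a quantity $c(u)$ defined via a convolution, namely $c(u)=\log\bigl((k\ast e^W)(\eta\tau\vr)/(A\int_0^{\eta\tau\vr}k)\bigr)$. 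The time-asymmetry (control of superlevels on $K_-$ but sublevels on $K_+$) is essential to the memory structure, and your symmetric uniform estimate overstates what the equation gives.

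The technical reason your scheme breaks down is the memory term in the fundamental identity. Applying~(\ref{fundidentity}) with $H(y)=-\log y$ and averaging with $\psi^2/\int\psi^2$ yields, after Jensen, a \emph{nonnegative} history integral $\int_0^t\Psi(W(t-s)-W(t))\,[-\dot k_n(s)]\,ds$ with $\Psi(y)=e^y-1-y$. Discarding this term (which your linear inequality for $W$ implicitly does) leaves only
\[
\partial_s\bigl(k\ast W\bigr)(s)+G(s)\le \frac{C_1}{r^2}+(W(s)+1)k(s),
\]
a one-sided inequality from which only an upper bound on $W$ can possibly follow, and even that bound is circular because the right-hand side contains $W\cdot k$ and convolving with $l$ does not remove it (one needs to know $W$ is bounded above to conclude $W$ is bounded above). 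More importantly, no lower bound for $W$ follows at all: $G\ge 0$ only worsens the left-hand side, and the resolvent $r_\theta$ cannot be brought to bear because there is no $\theta W$-type term in the inequality. The paper instead refuses to discard the memory term, and re-packages the entire right-hand side of (\ref{log5}) back into the nonlinear form $-e^{-W}\partial_t(k_n\ast e^W)$. Multiplying~(\ref{log7}) by $e^W$ then gives the genuine Volterra structure $\partial_t(k_n\ast e^W)+\theta\,e^W\ge\text{(history)}\ge 0$ with $\theta=C_1/r^2$, which is exactly what Lemma~\ref{rtheta} acts upon to produce the lower bound~(\ref{West5}) for $e^W$; this is the mechanism you cannot reproduce for $W$ linearly. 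Similarly, the upper control of $W$ on $J_-$ (the $I_2$ estimate) uses a Gronwall-type argument for $\bar w=e^W$, not for $W$.

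Finally, two subsidiary issues. First, your choice $c(u):=-W(0)$ relies on a pointwise trace of a weakly defined quantity; the paper is careful to define $c(u)$ via $(k\ast e^W)(\eta\tau\vr)$, whose continuity follows from $k\ast u\in C(J;L_2)$. Second, your concluding variance estimate $\int_{K_\pm}(\log u+W)^2\,d\nuNj\le C\,\vr\,\nuN(B_r)$ is not obtainable either, since integrating~(\ref{log7}) in time produces the boundary term $(k_n\ast W)(t_*)$ and the zero-order term $\int(W-1)k_n$, which again require a priori control of $W$. You should instead follow the paper's four-term decomposition $I_1+I_2$ and $I_3+I_4$ of~(\ref{mainleft})--(\ref{mainright}), deriving the $W$-estimates through the exponential $e^W$ and the $w-W$ estimates through the truncated concave/convex extensions $\bar H$ of $(c(u)\mp\log y\pm\lambda)^{-1}$ as in the paper.
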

%*************************************************************************
\begin{proof}
Similarly as in the previous proofs we abbreviate $B_{r} = B_{r}(x_{0})$. Let us take $\gmb\in (0,1)$ satisfying (\ref{intmugk}), fix $1 < p <\frac{1}{1-\gmb}$ and let $r \in (0,r^{*}]$, where $r^{*}$ denotes the minimum of $r^{*}(\mu)$ from Lemma~\ref{rtheta} and $r^{*}(\mu,p)$ from Lemma~\ref{scaling}. Then, in particular $\Phi(r^{*}) \leq 1$. Since $u_0\ge 0$ in $B_r$ and $u$ is a positive weak
supersolution we may assume that $u_0=0$. Without loss of generality we also may take $t_0=0$. Indeed, if $t_0>0$ we shift the time as $t\to
t-t_0$ and we obtain an inequality of the same type on the
time-interval $J:=[0,\tau \vr]$. Due to
$k\ast u\in C([0,t_0+\tau \vr];L_2(B))$, we have
$k\ast \tilde{u}\in C(J;L_2(B))$ for the shifted function
$\tilde{u}(s,x)=u(s+t_0,x)$. Thus $u$ satisfies
\begin{equation} \label{log1}
\int_{B_r} \Big(v \partial_t(k_{n}\ast u)+(h_n\ast
[ADu]|Dv)\Big)\,dx \ge \,0,\quad\mbox{a.a.}\;t\in J,\,n\in \iN,
\end{equation}
for any nonnegative test function $v\in \oH^1_2(B_r)$.

We begin similarly as in the proof of \cite[Theorem 3.3]{base}. We choose $\psi\in C^1_0(B_r)$ such that supp$\,\psi\subset B_r$, $\psi=1$ in
$B_{\delta r}$, $0\le \psi \le 1$, $|D \psi|\le 2/[(1-\delta)r]$ and
the domains $\{x\in B_r:\psi(x)^2 \ge b\}$ are convex for all $b\le
1$. Then for $t\in J$ we take the test function $v=\psi^2 u^{-1}$. We have
\[ Dv=2\psi D\psi \,u^{-1}-\psi^2 u^{-2}Du.\]
Inserting this into (\ref{log1}) we obtain for a.a. $t\in
J$
\begin{align} \label{log1a}
-\int_{B_r} \psi^2 u^{-1}\partial_t & (k_{n}\ast
u)\,dx+\int_{B_r}\big(ADu|u^{-2}Du\big)\psi^2\,dx\nonumber\\ &\le
2\int_{B_r}\big(ADu|u^{-1}\psi D\psi\big)\,dx+\mathcal{R}_n(t),
\end{align}
where
\[
\mathcal{R}_n(t)=\int_{B_r}\big(h_n\ast [ADu]-ADu|Dv\big)\,dx.
\]
Using (H1) and Young's inequality we find that
\[
\big|2\big(ADu|u^{-1}\psi D\psi\big)\big|\le 2\Lambda \psi
|D\psi|\,|Du| u^{-1}\le \frac{\nu}{2}\,\psi^2|Du|^2
u^{-2}+\frac{2}{\nu}\,\Lambda^2|D\psi|^2.
\]
Using this, assumption (H2) and the estimate $|D \psi|\le 2/[(1-\delta)r]$, we infer from
(\ref{log1a}) that for a.a. $t\in J$
\begin{equation} \label{log2}
-\int_{B_r} \psi^2 u^{-1}\partial_t(k_{n}\ast
u)\,dx+\frac{\nu}{2}\,\int_{B_r} |Du|^2 u^{-2} \psi^2\,dx \le \frac{8
\Lambda^2 \nuN(B_r)}{\nu (1-\delta)^2 r^2}\,+\mathcal{R}_n(t).
\end{equation}
We set $w=\log u$ (not to confuse with the weight function appearing in the definition of the kernel $k$), then  $Dw=u^{-1} Du$. Applying Proposition \ref{WeiPI} with weight $\psi^2$gives
\begin{equation} \label{log3}
\int_{B_r} (w-W)^2 \psi^2 dx \le \frac{8 r^2 \nuN(B_r)}{\int_{B_r} \psi^2
dx}\,\int_{B_r} |Dw|^2 \psi^2 dx,\quad \mbox{a.a.}\;t\in J,
\end{equation}
where
\[ W(t)=\,\frac{\int_{B_r} w(t,x) \psi(x)^2 dx}{\int_{B_r} \psi(x)^2
dx}\,,\quad\quad \mbox{a.a.}\;t\in J.
\]
From (\ref{log2}) and (\ref{log3}) it follows that
\[
-\int_{B_r} \psi^2 u^{-1}\partial_t(k_{n}\ast u)
\,dx+\,\frac{\nu \int_{B_r} \psi^2 dx}{16r^2 \nuN(B_r)}\,\int_{B_r} (w-W)^2
\psi^2 dx \le \frac{8 \Lambda^2 \nuN(B_r)}{\nu (1-\delta)^2
r^2}\,+\mathcal{R}_n(t),
\]
which in turn implies
\begin{equation} \label{log4}
\frac{-\int_{B_r} \psi^2 u^{-1}\partial_t(k_{n}\ast u)
\,dx}{\int_{B_r} \psi^2 dx}+\,\frac{\nu}{16r^2 \nuN(B_r)}\,\int_{{B_{\delta r}}} (w-W)^2 dx \le \frac{C_1}{r^2}\,+S_n(t),
\end{equation}
for a.a. $t\in J$, with some constant
$C_1=C_1(\delta,N,\nu,\Lambda)$ and
$S_n(t)=\mathcal{R}_n(t)/\int_{B_r} \psi^2 dx$.

Suppressing the spatial variable $x$, the identity (\ref{fundidentity}) with $H(y)=-\log y$
reads
\begin{align*}
-u^{-1}\partial_t(k_{n} & \ast
u)=-\partial_t(k_{n}\ast \log u)+(\log
u-1)k_{n}(t)\nonumber\\
&+\int_0^t \Big(-\log u(t-s)+\log
u(t)+\frac{u(t-s)-u(t)}{u(t)}\Big)[-\dot{k}_{n}(s)]\,ds.
\end{align*}
Rewriting this identity in terms of $w=\log u$ we obtain
\begin{align} \label{log5}
-u^{-1}\partial_t(k_{n}  \ast u)= & \,
-\partial_t(k_{n}\ast
w)+(w-1)k_{n}(t)\nonumber\\
& \,\,+\int_0^t
\Psi\big(w(t-s)-w(t)\big)[-\dot{k}_{n}(s)]\,ds,
\end{align}
where $\Psi(y)=e^y-1-y$. Since $\Psi$ is convex, it follows from
Jensen's inequality that
\[
\frac{\int_{B_r} \psi^2 \Psi\big(w(t-s,x)-w(t,x)\big)\,dx}{\int_{B_r}
\psi^2 dx} \ge \Psi \Big( \frac{\int_{B_r} \psi^2
\big(w(t-s,x)-w(t,x)\big)\,dx}{\int_{B_r} \psi^2 dx}\Big).
\]
Using this inequality in (\ref{log5}) we have
\begin{align}
\frac{-\int_{B_r} \psi^2 u^{-1}\partial_t(k_{n}\ast u)
\,dx}{\int_{B_r} \psi^2 dx} & \ge -\partial_t(k_{n}\ast
W)+(W-1)k_{n}(t)\nonumber\\
&\quad +\int_0^t
\Psi\big(W(t-s)-W(t)\big)[-\dot{k}_{n}(s)]\,ds \nonumber\\
& = -e^{-W} \partial_t(k_{n}\ast e^W), \label{log6}
\end{align}
where the last equality holds again due to (\ref{log5}) with $u$
replaced by $e^W$. From (\ref{log4}) and (\ref{log6}) we deduce
that
\begin{equation} \label{log7}
\frac{\nu}{16r^2 \nuN(B_r)}\,\int_{B_{\delta r}} (w-W)^2 dx \le e^{-W}
\partial_t(k_{n}\ast
e^W)+\,\frac{C_1}{r^2}\,+S_n(t),\quad \mbox{a.a.}\;t\in J.
\end{equation}

%Let us define
%\[
%\tilde{k}(t) = \izj \al  \mg t^{-\al}d\al.
%\]
Next, we define
\eqq{
c(u) = \log \left(\frac{(k*e^{W})(\eta \tau \vr)}{A\int_{0}^{\eta \tau \vr}k(t)dt}\right),
}{cwahl}
where $A$ is a positive constant depending only on $C_{1}, \mu,\tau,\eta$ which will be chosen later.
We note that this definition makes sense, since $k\ast e^W\in C(J)$.
The latter is a consequence of $k\ast u\in C(J;L_2(B_r))$
and
\[
e^{W(t)}\le \,\frac{\int_{B_r} u(t,x) \psi(x)^2 dx}{\int_{B_r} \psi(x)^2
dx}\,,\quad\quad \mbox{a.a.}\;t\in J,
\]
where we apply again Jensen's inequality.

Similarly as in \cite{base}, to prove (\ref{logestleft}) and (\ref{logestright}), we use the inequalities
\begin{align}
\nuNj(\{(t,x) & \in K_-:\; w(t,x)>c(u)+\lambda\})\nonumber\\
\le &\;\nuNj(\{(t,x)\in K_-:
w(t,x)>c(u)+\lambda\;\,\mbox{and}\,\;W(t)\le c(u)+\lambda/2 \})\nonumber\\
&\; +\nuNj(\{(t,x)\in K_-:\,W(t)> c(u)+\lambda/2
\})=:I_1+I_2,\quad \lambda>0,\label{mainleft}\\
\nuNj(\{(t,x) & \in K_+:\; w(t,x)<c(u)-\lambda\})\nonumber\\
\le &\;\nuNj(\{(t,x)\in K_+:
w(t,x)<c(u)-\lambda\;\,\mbox{and}\,\;W(t)\ge c(u)-\lambda/2 \})\nonumber\\
&\; +\nuNj(\{(t,x)\in K_+:\,W(t)< c(u)-\lambda/2
\})=:I_3+I_4,\quad \lambda>0.\label{mainright}
\end{align}
We will estimate each of the four terms $I_j$ separately. As to $I_{1}$ and $I_3$ we follow the lines of proof of the analogous estimates from \cite[Theorem 3.3]{base}. However, it is important to notice that in the case when (\ref{am1}) does not hold, the arguments from \cite{base} to estimate $I_2$ and $I_4$ break down. Here, we present a new approach to estimate $I_2$ and also modify the estimate of $I_4$.

We begin with the estimates for $W$.  We set $J_-:=(0,\eta
\tau \vr)$, $J_+:=(\eta\tau \vr,\tau
\vr)$, and introduce for $\lambda>0$ the sets
$J_-(\lambda):=\{t\in J_-:\,W(t)> c(u)+\lambda \}$ and
$J_+(\lambda):=\{t\in J_+:\,W(t)< c(u)-\lambda \}$.

{\bf Estimate of $I_{2}$.}
Let us denote $\overline{w}:=e^{W}$. Multiplying (\ref{log7}) by $e^{W}$ we see that
\[
\partial_t (k_n * \overline{w}) + \frac{C_{1}}{r^{2}} \overline{w} + S_{n}(t)\overline{w} \geq 0.
\]
Denoting $\rho=\tau \vr$ and integrating the above inequality from $t$ to $\eta \rho$ we get
\[
(k_n * \overline{w})(\eta\rho) - (k_n * \overline{w})(t)  + \frac{C_{1}}{r^{2}} \int_{t}^{\eta\rho}\overline{w}(s)ds + \int_{t}^{\eta\rho}S_{n}(s)\overline{w}(s)ds \geq 0.
\]
We may choose a subsequence $n_m$ such that $(k_{n_m} * \overline{w})(\eta\rho)\rightarrow (k* \overline{w})(\eta\rho)$ for almost all $r$ and $(k_{n_m} * \overline{w})(t)\rightarrow (k* \overline{w})(t)$ for almost all $t \in J_{-}$. We proceed for such $r$ and $t$. Then we obtain
\[
(k* \overline{w})(\eta\rho) - (k* \overline{w})(t) + \frac{C_{1}}{r^{2}} \int_{t}^{\eta\rho}\overline{w}(s)ds \geq 0.
\]
Let us define $v(t) = \frac{(k*\overline{w})(t)}{(k*\overline{w})(\eta\rho)}$. Since $l*v = \frac{1*\overline{w}}{(k*\overline{w})(\eta\rho)}$ we have
\[
v(t) + \frac{C_{1}}{r^{2}}(l*v)(t) \leq 1 + \frac{C_{1}}{r^{2}}(l*v)(\eta\rho).
\]
Using positivity of $v$ and monotonicity of $l$ we arrive at
\begin{align*}
v(t) & \leq 1 - \frac{C_{1}}{r^{2}}\int_{0}^{t}[l(t-s)-l(\eta\rho-s)]v(s)ds +\frac{C_{1}}{r^{2}}\int_{t}^{\eta\rho}l(\eta\rho-s)v(s)ds \\
& \leq 1 + \frac{C_{1}}{r^{2}}\int_{t}^{\eta\rho}l(\eta\rho-s)v(s)ds.
\end{align*}
Let us denote $\overline{v}(t) = v(\eta\rho-t)$ for $t \in J_{-}$. Then, the last estimate may be rewritten as
\[
\overline{v}(t) \leq 1 + \frac{C_{1}}{r^{2}}\izt l(s)\overline{v}(s)ds,\quad t \in J_{-}.
\]
Next, we multiply the obtained inequality by $e^{-\beta t}$, where $\beta > 0$ is to be chosen later. Denoting $\overline{v}_\beta(t):=e^{-\beta t}\overline{v}(t)$ we have
\[
\overline{v}_{\beta}(t) \leq 1+ \frac{C_{1}}{r^{2}}(e^{-\beta \cdot}*l)(t)\norm{\overline{v}_{\beta}}_{L_{\infty}(J_-)} \leq 1+\frac{C_{1}}{r^{2}}\norm{e^{-\beta \cdot}}_{L_{p'}((0,t))}\norm{l}_{L_{p}((0,t))}\norm{\overline{v}_{\beta}}_{L_{\infty}(J_-)}
\]
\[
\leq 1 + \frac{C_{1}}{r^{2}}(\beta p')^{-\frac{1}{p'}}\norm{l}_{L_{p}((0,t))}\norm{\overline{v}_{\beta}}_{L_{\infty}(J_-)}
\]
where $p < \frac{1}{1-\gmb}$ is fixed.
We note that $r/2 \leq r^{*}(\mu,p)$, where $r^{*}(\mu,p)$ comes from Lemma \ref{scaling}. We proceed as in (\ref{estiPhieta}) and next we apply (\ref{zn2}) to obtain
\[
\overline{v}_{\beta}(t) \leq 1+ 4 c(\tau,\mu,p) (\Phi(r))^{-\frac{p-1}{p}}C_{1}(\beta p')^{-\frac{1}{p'}}\norm{\overline{v}_{\beta}}_{L_{\infty}(J_-)}.
\]
We choose
\[
\beta = \frac{1}{\vr}\frac{p-1}{p} (8 c(\tau,\mu,p)C_{1})^{\frac{p}{p-1}},
\]
then
\[
4 c(\tau,\mu,p) (\Phi(r))^{-\frac{p-1}{p}}C_{1}(\beta p')^{-\frac{1}{p'}} = \frac{1}{2} \hd \m{ and } \hd \norm{\overline{v}_{\beta}}_{L_{\infty}(J_-)} \leq 2.
\]
Thus, we have
\[
\frac{\int_{0}^{\eta\rho}{\overline{w}(s)ds}}{(k*\overline{w})(\eta\rho)} = (l*v)(\eta\rho) = \int_{0}^{\eta\rho}l(s)\overline{v}(s)ds \leq 2 \int_{0}^{\eta\rho}l(s)e^{\beta s}ds\leq 2e^{\beta \eta\rho} \int_{0}^{\eta\rho}l(s) ds.
\]
By the definition of $\beta$ and $\rho$ we arrive at
\eqq{
\frac{\int_{0}^{\eta\rho}{\overline{w}(s)ds}}{(k*\overline{w})(\eta\rho)} \leq c(\tau,C_{1},p,\mu) \int_{0}^{\eta\rho}l(s) ds \leq c(\tau,C_{1},p,\mu) r^{2},
}{rz2}
where in the last estimate we used (\ref{zn2}) with $p=1$. In order to abbreviate the notation let us also denote by $|\cdot| $ the one-dimensional Lebesgue measure. Then, for $\lambda>0$ we obtain
\begin{align*}
e^\lambda | J_-(\lambda) | & =
e^\lambda\big|\{t\in J_-:\,
e^{W(t)}>e^{c(u)}e^{\lambda}\}\big|=\int_{J_-(\lambda)}e^\lambda \,dt\\
& \le \int_{J_-(\lambda)}e^{W(t)-c(u)} \,dt\le
\int_{J_-}e^{W(t)-c(u)} \,dt =\frac{A \int_{0}^{\eta \rho}k(t)dt}{(k*e^{W})(\eta \rho)} \int_{0}^{\eta \rho}e^{W(t)}dt.
\end{align*}
Using this and (\ref{rz2}) we get
\[
e^\lambda |J_-(\lambda)| \leq  c(\mu,\tau,C_{1},\eta)Ar^2 \int_{0}^{\eta \rho}k(t)dt
\leq c(\mu,\tau,C_{1},\eta) A\vr r^{2} k_{1}(\vr) = c(\mu,\tau,C_{1},\eta) A\vr.
\]
\nic{and therefore
\[
e^\lambda |J_-(\lambda)| \leq  c(\mu,\tau,\delta,N,\nu,\Lambda,\eta)\vr.
\]}
Hence, we have
\begin{equation} \label{I2est}
I_2=| J_-(\lambda/2)| \nuN( B_{\delta r})\le\,A\frac{c(\mu,\tau,\delta,N,\nu,\Lambda,\eta)}{\lambda}\,\vr\nuN(B_r),\quad \lambda>0.
\end{equation}

{\bf Estimate of $I_{4}$.} We define the function $H_m$ on $\iR$
by $H_m(y)=y$, $y\le m$, and $H_m(y)=m+(y-m)/(y-m+1)$, $y\ge m$, $m>0$.
Note that $H_m\in C^1(\iR)$ is increasing, concave, and bounded above by $m+1$.
Moreover, by concavity
\begin{equation} \label{Hmprop}
0\le yH_m'(y)\le H_m(y)\le m+1,\quad y\ge 0.
\end{equation}
Then, from (\ref{fundidentity}) we obtain
\begin{equation} \label{FIappl}
\partial_t\Big(k_{n}\ast
H_m\big(e^W\big)\Big) \geq H_{m}'(e^{W})\partial_t\Big(k_{n}\ast
e^W\Big),
\end{equation}
and thus multiplying (\ref{log7}) by $e^W H_m'\big(e^W\big)$ and employing
\eqref{FIappl}, we infer that
\begin{equation} \label{West1}
\partial_t\Big(k_{n}\ast
H_m\big(e^W\big)\Big)+\,\frac{C_1}{r^2}\,H_m\big(e^W\big)\ge - S_n
e^W H_m'\big(e^W\big),\quad \mbox{a.a.}\;t\in J.
\end{equation}
For $t\in J_+$ we shift the time by setting $s=t-\eta\tau
\vr=t-\eta \rho$ and put $\tilde{f}(s)=f(s+\eta \rho)$,
$s\in (0,(1-\eta)\rho)$, for functions $f$ defined on $J_+$. Using
the time-shifting identity (\ref{shiftprop}), (\ref{West1})
implies that for a.a. $s\in (0,(1-\eta)\rho)$
\begin{equation} \label{West2}
\partial_s\Big(k_{n}\ast
H_m\big(e^{\tilde{W}}\big)\Big)+\,\frac{C_1}{r^2}\,H_m\big(e^{\tilde{W}}\big)\ge
\Upsilon_{n,m}(s)- \tilde{S}_n e^{\tilde{W}}
H_m'\big(e^{\tilde{W}}\big),
\end{equation}
where $\Upsilon_{n,m}$ denotes the history term
\[
\Upsilon_{n,m}(s)=\int_0^{\eta\rho}\big[-\dot{k}_{n}(s+\eta\rho-\sigma)\big]
H_m\big(e^{W(\sigma)}\big)\,d\sigma, \hd s\in (0,(1-\eta)\rho).
\]

Now, we shall deduce the estimate from below for $e^{\tilde{W}}$. For this purpose we set $\theta = \frac{C_{1}}{r^{2}}$ and convolve (\ref{West2}) with
$r_{\theta}$, where $r_{\theta}$ satisfies (\ref{reso}). We have a.e. in $(0,(1-\eta)\rho)$
\begin{align*}
r_{\theta}\ast \partial_s\Big(k_{n} & \ast
H_m\big(e^{\tilde{W}}\big)\Big)\,=\partial_s\Big(r_{\theta}\ast
k_{n}\ast H_m\big(e^{\tilde{W}}\big)\Big)\\
&\,=\partial_s\Big([l-\theta (r_{\theta}\ast
l)]\ast k_{n}\ast H_m\big(e^{\tilde{W}}\big)\Big)\\
&\,=h_n\ast H_m\big(e^{\tilde{W}}\big) -\theta
r_{\theta}\ast h_n\ast H_m\big(e^{\tilde{W}}\big),
\end{align*}
thus
\begin{align} \label{West3}
h_n\ast H_m\big(e^{\tilde{W}}\big)\ge &
\,\,r_{\theta}\ast \Upsilon_{n,m}-
r_{\theta}\ast\big[\tilde{S}_n
e^{\tilde{W}} H_m'\big(e^{\tilde{W}}\big)\big]\nonumber\\
& \,\,+\theta h_n\ast r_{\theta}\ast
H_m\big(e^{\tilde{W}}\big) -\theta r_{\theta}\ast
H_m\big(e^{\tilde{W}}\big)  \hd\m{ a.e. in } \hd (0,(1-\eta)\rho).
\end{align}
Passing to the limit with  $n$ (on an appropriate subsequence), it follows that
\begin{equation} \label{West4}
H_m\big(e^{\tilde{W}}\big)\ge r_{\,\theta}\ast
\Upsilon_{m},\quad\quad \mbox{a.a.}\;s\in(0,(1-\eta)\rho),
\end{equation}
where
\[
\Upsilon_{m}(s)=\int_0^{\eta\rho}\big[-\dot{k}(s+\eta\rho-\sigma)\big]
H_m\big(e^{W(\sigma)}\big)\,d\sigma.
\]
Applying Fubini's theorem we obtain
\eqq{
H_m\big(e^{\tilde{W}(s)}\big)\ge \int_{0}^{\eta\rho}H_m\big(e^{W(\xi)}\big)\int_{0}^{s} r_{\theta}(s-\sigma)\big[-\dot{k}(\sigma+\eta\rho-\xi)\big]d\sigma d\xi, \hd s\in (0,(1-\eta)\rho).
}{West33}
In order to estimate the inner integral we apply (\ref{rtheta1}) from  Lemma \ref{rtheta} with $\bar{C} = (1-\eta)\tau$. Then we have %we may estimate similarly as in the paper for single order for $t \in (0,(1-\eta)\tau \vr)$ and $\theta = \frac{C_{1}}{r^{2}}$
\[
\int_{0}^{s}r_{\theta}(s-\sigma)\izj \frac{\al}{\Gamma(1-\al)}(\sigma +  \eta \rho -\xi)^{-\al-1}\dd d\sigma = \podst{\sigma = s\zeta}{d\sigma = s d\zeta}
\]
\[
=s \izj r_{\theta}(s(1-\zeta))\izj \frac{\al}{\Gamma(1-\al)}(s\zeta + \eta \rho-\xi)^{-\al-1}\dd d\zeta
\]
\[
\geq c(\mu,C_{1},\eta,\tau)s  \izj \frac{1}{\int_{0}^{1}s^{1-\beta}(1-\zeta)^{1-\beta} d\mu(\beta)} \izj \frac{\al}{\Gamma(1-\al)}(s\zeta + \eta \rho-\xi)^{-\al-1}\dd d\zeta
\]
\[
\geq c(\mu,C_{1},\eta,\tau) \frac{1}{\int_{0}^{1}s^{1-\beta}d\mu(\beta)} \izj  \frac{\al}{\Gamma(1-\al)}s^{-\al}\izj \left(\zeta + \frac{\eta \rho-\xi}{s}\right)^{-\al-1} d\zeta \dd
\]
\[
=c(\mu,C_{1},\eta,\tau) \frac{1}{\int_{0}^{1}s^{1-\beta}d\mu(\beta)} \izj  \frac{1}{\Gamma(1-\al)} s^{-\al} \left[\left(\frac{\eta\rho-\xi}{s}\right)^{-\al} - \left(1+\frac{\eta\rho-\xi}{s}\right)^{-\al}\right]\dd
\]

\[
= c(\mu,C_{1},\eta,\tau) \frac{1}{\int_{0}^{1}s^{1-\beta}d\mu(\beta)} \izj  \frac{1}{\Gamma(1-\al)}  (\eta\rho-\xi)^{-\al} \left(1-\left(\frac{\eta\rho-\xi}{s+\eta\rho-\xi}\right)^{\al}\right) \dd
\]
\[
\geq c(\mu,C_{1},\eta,\tau) \frac{1}{\int_{0}^{1}s^{1-\beta}d\mu(\beta)} \izj \frac{\al}{\Gamma(1-\al)} (\eta \rho-\xi)^{-\al} \dd\frac{s}{s+\eta\rho-\xi},
\]
where in the last inequality we used the estimate $1-x^{\al}\geq \al (1-x)$ for $x\in (0,1)$.
We note that since $\rho = \tau \vr$, we have
\[
\izj \frac{\al}{\Gamma(1-\al)} (\eta \rho-\xi)^{-\al} \dd = \izj \frac{\al}{\Gamma(1-\al)} (\eta \tau)^{-\al}(\vr - \frac{\xi}{\eta \tau})^{-\al}\dd.
\]
 Recalling that $\Phi(r) \leq 1$ we may apply (\ref{estioddolualfa}) to get
\[
\izj \frac{\al}{\Gamma(1-\al)} (\eta \rho-\xi)^{-\al} \dd
\geq \min\{(\eta \tau)^{-1},1\}\izj \frac{\al}{\Gamma(1-\al)} \left(\vr - \frac{\xi}{\eta \tau}\right)^{-\al}\dd
\]
\[
\geq \min\{(\eta \tau)^{-1},1\} c(\mu)\izj \frac{1}{\Gamma(1-\al)} (\vr - \frac{\xi}{\eta \tau})^{-\al}\dd
\]
\[
\geq \min\{(\eta \tau)^{-1},1\} \min\{(\eta \tau),1\} c(\mu)\izj \frac{1}{\Gamma(1-\al)} (\eta \rho -\xi)^{-\al}\dd = c(\eta,\tau,\mu)k(\eta\rho-\xi).
\]
Thus, we have
\[
\int_{0}^{s}r_{\theta}(s-\sigma)\izj \frac{\al}{\Gamma(1-\al)}(\sigma + \eta \rho -\xi)^{-\al-1}\dd d\sigma \geq c(\mu,C_{1},\eta,\tau)k(\eta \rho -\xi) \frac{1}{\int_{0}^{1}s^{-\al}\dd (\eta\rho+s)}.
\]
If we denote the constant from the estimate above by $C_{2}=c(\mu,C_{1},\eta,\tau)$ and apply this estimate in (\ref{West33})  we arrive at
\[
H_m\big(e^{\tilde{W}(s)}\big)\ge C_{2}\frac{1}{\rho\int_{0}^{1}s^{-\al}\dd  }\big(k\ast
H_m\big(e^{W}\big)\big) (\eta\rho),\quad \mbox{a.a.}\;s\in
(0,(1-\eta)\rho).
\]
Passing to the limit with $m$ we conclude
that
\begin{equation} \label{West5}
e^{\tilde{W}(s)}\ge C_{2}\frac{1}{\rho\int_{0}^{1}s^{-\al}\dd  }\big(k\ast
e^{W}\big) (\eta\rho),\quad \mbox{a.a.}\;s\in (0,(1-\eta)\rho),
\end{equation}
where we applied Fatou's lemma and the fact that  $H_m(y)\nearrow y$ as $m\to \infty$.
Thanks to (\ref{West5}) we may estimate as follows,
\[
\lambda |J_+(\lambda)| = \int_{J_+(\lambda)} \lambda dt
 \le \int_{J_+(\lambda)} \big(c(u)-W(t)\big) dt= \int_{J_+(\lambda)-\eta\rho} \big(c(u)-\tilde{W}(s)\big) ds\\
\]
\[
\leq \int_{J_+(\lambda)-\eta\rho} \Big[\log \left(\frac{(k*e^{W})(\eta \rho)}{A\int_{0}^{\eta \rho}k(t)dt}\right) - \log \left(\frac{(k*e^{W})(\eta \rho)}{\frac{1}{C_{2}}\rho\izj s^{-\al}\dd}\right)\Big]ds
\]
\[
= \int_{J_+(\lambda)-\eta\rho} \log \left(\frac{\frac{1}{C_{2}}\rho\izj s^{-\al}\dd}{A\int_{0}^{\eta \rho}k(t)dt}\right)ds.
\]
We note that for any  $A < \frac{1}{2C_{2}}$ the expression under the integral is nonnegative on the whole interval $(0,(1-\eta)\rho)$. Indeed, for any $s \in (0,(1-\eta)\rho)$ we have,
\[
\frac{\frac{1}{C_{2}}\rho\izj s^{-\al}\ma d\al}{\int_{0}^{\eta \rho}k(t)dt} \geq \frac{\frac{1}{C_{2}}\izj (1-\eta)^{-\al}\rho^{1-\al}\dd}{\int_{0}^{1}\frac{1}{\Gamma(2-\al)}\eta^{1-\al}\rho^{1-\al}\dd} \geq \frac{1}{2C_{2}}.
\]
We choose
\eqq{
A=\frac{1}{4 C_{2}}
}{numerek}
and continue
\begin{align*}
\lambda |J_+(\lambda)| & \leq \int_{0}^{(1-\eta)\rho} \Big[\log \left(\rho\izj s^{-\al}\dd\right) - \log \left(\frac{1}{4 }\int_{0}^{1}\frac{1}{\Gamma(2-\al)}\eta^{1-\al}\rho^{1-\al}\dd\right)\Big]ds \\
& =(1-\eta)\rho \log \left(\izj (1-\eta)^{-\al}\rho^{1-\al}\dd\right) \\
& \quad + \int_{0}^{(1-\eta)\rho} \frac{\izj \al s^{-\al}\dd}{\izj s^{-\al}\dd}ds - (1-\eta)\rho \log \left(\frac{1}{4}\int_{0}^{1}\frac{ 1}{\Gamma(2-\al)}\eta^{1-\al}\rho^{1-\al}\dd\right)\\
& \leq (1-\eta)\rho + (1-\eta)\rho \log \left(\frac{4\izj (1-\eta)^{-\al}\rho^{1-\al}\dd}{\int_{0}^{1}\frac{1}{\Gamma(2-\al)}\eta^{1-\al}\rho^{1-\al}\dd}\right).
\end{align*}
We note that
\[
\log \left(\frac{4\izj (1-\eta)^{-\al}\rho^{1-\al}\dd}{\int_{0}^{1}\frac{1}{\Gamma(2-\al)}\eta^{1-\al}\rho^{1-\al}\dd}\right) \leq \log \left(\frac{4}{(1-\eta)\eta}\right)
\]
and consequently, we obtain
\eqnsl{
\lambda|J_{+}(\lambda)| \leq
c(\eta,\tau) \vr.
}{estidwadwa}
Hence,
\begin{equation} \label{I4est}
I_4=|J_+(\lambda/2)|\nuN(
B_{\delta r})\le\,\frac{c(\eta,\tau)
\delta^N}{\lambda}\,\vr\nuN(B_r),\quad \lambda>0.
\end{equation}

{\bf Estimate of $I_{1}$.}
In order to estimate $I_1$ we set $J_1(\lambda)=\{t\in
J_-:\,c(u)-W(t)+\lambda/2\ge 0\}$ and $\Omega^-_t(\lambda)=\{x\in
B_{\delta r}:\,w(t,x)>c(u)+\lambda\},\,t\in J_1(\lambda)$, where $c(u)$
is given by (\ref{cwahl}). Then, for $t\in J_1(\lambda)$, we have
\[ w(t,x)-W(t)>c(u)-W(t)+\lambda\ge \lambda/2,\quad x\in
\Omega^-_t(\lambda),\] and thus (\ref{log7}) gives

\begin{equation} \label{log8}
\frac{\nu}{16r^2
\nuN(B_r)}\,\,\nuN\big(\Omega^-_t(\lambda)\big)\le
\frac{1}{(c(u)-W+\lambda)^2}\,\Big(e^{-W}
\partial_t(k_{n}\ast
e^W)+\,\frac{C_1}{r^2}\,+S_n\Big) \hd \m{ a.e. in } \hd J_1(\lambda).
\end{equation}
We set $\chi(t,\lambda)=\nuN\big(\Omega^-_t(\lambda)\big)$, if $t\in
J_1(\lambda)$, and $\chi(t,\lambda)=0$ in case $t\in J_-\setminus
J_1(\lambda)$. We define $H(y)=(c(u)-\log y+\lambda)^{-1},\,0<y\le
y_*:=e^{c(u)+\lambda/2}$. Then, $H'(y)= (c(u)-\log
y+\lambda)^{-2}y^{-1}$ and
\[ H''(y)=\,\frac{1}{(c(u)-\log y+\lambda)^2 y^2}\,\Big(\frac{2}{c(u)-\log
y+\lambda}-1\Big),\quad 0<y\le y_*,\] and so we see that $H$ is
concave in $(0,y_*]$ whenever $\lambda\ge 4$. We will assume the latter in
what follows.

We next choose a $C^1$ extension $\bar{H}$ of $H$ on $(0,\infty)$
such that $\bar{H}$ is concave, $0\le\bar{H}'(y)\le
\bar{H}'(y_*),\,y_*\le y \le 2 y_*$, and $\bar{H}'(y)=0,\,y\ge 2
y_*$. Then
\begin{equation} \label{log8a}
 0\le y\bar{H}'(y)\le
\,\frac{2}{\lambda},\quad y>0.
\end{equation}
Indeed, for $y\in(0,y_*]$ we have
\begin{equation} \label{log8aa}
y\bar{H}'(y)=\,\frac{1}{(c(u)-\log y+\lambda)^2}\,\le
\,\frac{1}{(c(u)-\log y_*+\lambda)^2}\,\le \,\frac{4}{\lambda^2}\,\le
\,\frac{1}{\lambda},
\end{equation}
while in case $y\in[y_*,2y_*]$ we may simply estimate
\[
y\bar{H}'(y)\le 2y_*\bar{H}'(y_*)\le \,\,\frac{2}{\lambda}.
\]
Next, we shall show that
\begin{equation} \label{log8b}
\bar{H}(y)\le \,\frac{3}{\lambda},\quad y>0.
\end{equation}
Firstly note that since $\bar{H}$ is nondecreasing with
$\bar{H}'(y)=0$ for all $y\ge 2 y_*$, the claim follows if the
inequality is valid for all $y\in [y_*,2y_*]$. For  $y\in [y_*,2y_*]$
by concavity of $\bar{H}$ and (\ref{log8aa}) there holds
\[
\bar{H}(y)\le \bar{H}(y_*)+\bar{H}'(y_*)(y-y_*)\le
\bar{H}(y_*)+y_*\bar{H}'(y_*)\le \,\frac{3}{\lambda}
\]
and we arrive at (\ref{log8b}). Furthermore,
\[ e^{W(t)} H'(e^{W(t)})=\,\frac{1}{(c(u)-W(t)+\lambda)^2
},\quad \mbox{a.a.}\;t\in J_1(\lambda).\] Since $\bar{H}'\ge 0$,
and $e^{-W}
\partial_t(k_{n}\ast
e^W)+C_1 r^{-2}+S_n\ge 0$ on $J_-$ by virtue of (\ref{log7}), we
deduce from (\ref{log8}) and (\ref{log8a}) that
\begin{align} \label{log9}
\frac{\nu}{16r^2 \nuN(B_r)}\,\,\chi(t,\lambda) & \le
e^W\bar{H}'(e^{W})\Big(e^{-W}\partial_t(k_{n}\ast
e^W)+\,\frac{C_1}{r^2}\,+S_n\Big)\nonumber\\
& \le \bar{H}'(e^{W})\partial_t(k_{n}\ast
e^W)+\,\frac{2C_1}{\lambda r^2}\,+\,\frac{2|S_n(t)|}{\lambda},
\quad\mbox{a.a.}\; t\in J_-.
\end{align}
Using the identity (\ref{fundidentity}) with concave $\bar{H}$ we obtain
\begin{align*}
\bar{H}'(e^{W})\partial_t(k_{n}\ast e^W) & \le
\partial_t\Big(k_{n}\ast \bar{H}\big(e^W\big)\Big)
+ \Big(-\bar{H}(e^{W})+\bar{H}'(e^{W})
e^W\Big)k_{n}\\
& \le \partial_t\Big(k_{n}\ast
\bar{H}\big(e^W\big)\Big)+\,\frac{2}{\lambda}\,k_{n},\quad
\mbox{a.a.}\; t\in J_-,
\end{align*}
which, together with (\ref{log9}), gives a.e. in $J_-$
\begin{equation} \label{log10}
\frac{\nu}{16r^2 \nuN(B_r)}\,\,\chi(t,\lambda) \le
\partial_t\Big(k_{n}\ast \bar{H}\big(e^W\big)\Big)
+\,\frac{2}{\lambda}\,k_{n}+ \,\frac{2C_1}{\lambda
r^2}\,+\,\frac{2|S_n(t)|}{\lambda}\,.
\end{equation}
We then integrate (\ref{log10}) over $J_-=(0,\eta \rho)$ and
employ (\ref{log8b}) for the estimate
\[ \Big(k_{n}\ast
\bar{H}\big(e^W\big)\Big)(\eta\rho)\le
\,\frac{3}{\lambda}\,\int_0^{\eta\rho}k_{n}(t)\,dt.
\]
Passing to the limit $n\to \infty$ yields
\[
\int_{J_1(\lambda)}\nuN   \big(\Omega^-_t(\lambda)\big) \,dt = \int_0^{\eta\rho} \chi(t,\lambda)\,dt
\leq \frac{16 r^{2}\nuN(B_r)}{\nu}\left(\frac{5}{\lambda} \int_{0}^{\eta \rho } k(t)dt + \frac{2C_{1}}{\lambda r^{2}} \eta \rho
\right)
\]
\[
= \frac{16 r^{2}\nuN(B_r)}{\nu \lambda } \left(5
\izj \frac{1}{\Gamma(2-\al)}(\eta \rho)^{1-\al}\dd + \frac{2C_{1}\eta\rho}{ r^{2}}\right).
\]
Note that
\[
\izj \frac{1}{\Gamma(2-\al)}(\eta \rho)^{1-\al}
\dd \leq  c(\mu,\eta,\tau)\vr \ki(\vr)=c(\mu,\eta,\tau) \frac{\vr}{r^{2}},
\]
where we used (\ref{zn1}). Therefore, we obtain
\[
I_{1}=\int_{J_1(\lambda)}\nuN   \big(\Omega^-_t(\lambda)\big) \,dt \leq c(\mu,\eta,\tau, \nu,\delta)\frac{\vr\nuN(B_r)}{\lambda},\quad\lambda\ge
4.
\]
For $\lambda <4$ we trivially have
\[
I_{1} \leq |K_{-}|= (1-\eta) \tau \vr \delta^{N} \nuN(B_r) \leq 4\tau \delta^{N} \frac{\vr\nuN(B_r)}{\lambda},
\]
and hence
\begin{equation} \label{I1est}
I_1\le \,c(\mu,\eta,\tau, \nu,\delta)\frac{ \vr\nuN(B_r)}{\lambda},\quad\lambda>0.
\end{equation}

{\bf Estimate of $I_{3}$.}
In order to estimate $I_3$ we shift again the time by putting $s=t-\eta\rho$, and
denote the corresponding transformed functions as above by
$\tilde{W}$, $\tilde{w}$, ... and so forth. Further, we set
$\tilde{J}_+:=(0,(1-\eta)\rho)$. By the time-shifting property
(\ref{shiftprop}) and by positivity of $e^W$, relation
(\ref{log7}) then yields
\begin{equation} \label{log10a}
\frac{\nu}{16r^2 \nuN(B_r)}\,\int_{B_{\delta r}}
(\tilde{w}-\tilde{W})^2 dx \le e^{-\tilde{W}}
\partial_s(k_{n}\ast
e^{\tilde{W}})+\,\frac{C_1}{r^2}\,+\tilde{S}_n(s),\quad
\mbox{a.a.}\;s\in \tilde{J}_+.
\end{equation}
Next, set $J_2(\lambda)=\{s\in
\tilde{J}_+:\tilde{W}(s)-c(u)+\lambda/2\ge 0\}$ and
$\Omega_{s}^+(\lambda)=\{x\in B_{\delta r}:
\tilde{w}(s,x)<c(u)-\lambda\},\,s\in J_2(\lambda)$. For $s\in
J_2(\lambda)$, we have
\[ \tilde{W}(s)-\tilde{w}(s,x)\ge
\tilde{W}(s)-c(u)+\lambda\ge \lambda/2,\quad
x\in\Omega_{s}^+(\lambda),
\]
and thus (\ref{log10a}) implies that a.e. in $J_2(\lambda)$
\begin{equation} \label{log12}
\frac{\nu}{16r^2
\nuN(B_r)}\,\,\nuN\big(\Omega^+_s(\lambda)\big)\le
\frac{1}{(\tilde{W}-c(u)+\lambda)^2}\,\Big(e^{-\tilde{W}}
\partial_s(k_{n}\ast
e^{\tilde{W}})+\,\frac{C_1}{r^2}\,+\tilde{S_n}\Big).
\end{equation}

We proceed now similarly as above for the term $I_1$. Set
$\chi(s,\lambda)=\nuN\big(\Omega^+_{s}(\lambda)\big)$, if $s\in
J_2(\lambda)$, and $\chi(s,\lambda)=0$ in case $s\in
\tilde{J}_+\setminus J_2(\lambda)$. We consider this time the
convex function $H(y)=(\log y-c(u)+\lambda)^{-1}$ for $y\ge
y_*:=e^{c(u)-\lambda/2}$ with derivative $H'(y)=-(\log
y-c(u)+\lambda)^{-2} y^{-1}<0$. We define a $C^1$ extension $\bar{H}$
of $H$ on $[0,\infty)$ by means of
\[
\bar{H}(y)=\left\{ \begin{array}{l@{\;:\;}l}
H'(y_*)(y-y_*)+H(y_*) & 0\le y< y_* \\
H(y) & y\ge y_*.
\end{array} \right.
\]
Evidently, $-\bar{H}$ is concave in $[0,\infty)$ and
\begin{equation} \label{log13}
0\le-\bar{H}'(y)y \le \,\frac{1}{(\log
y_*-c(u)+\lambda)^2}\,\le\,\frac{1}{(\lambda/2)^2}\,\le
\,\frac{4}{\lambda},\quad y \ge 0,\;\lambda\ge 1 .
\end{equation}
We will assume $\lambda\ge 1$ in the following lines.
Observe that
\[ -e^{\tilde{W}(s)} H'(e^{\tilde{W}(s)})=\,\frac{1}{(\tilde{W}(s)-c(u)+\lambda)^2
},\quad \mbox{a.a.}\;s\in J_2(\lambda).\]
Since $-\bar{H}'\ge 0$,
and $e^{-\tilde{W}}
\partial_s(k_{n}\ast
e^{\tilde{W}})+C_1 r^{-2}+\tilde{S_n}\ge 0$ on $\tilde{J}_+$ due
to (\ref{log10a}), it thus follows from (\ref{log12}) and
(\ref{log13}) that
\begin{align} \label{log14}
\frac{\nu}{16r^2 \nuN(B_r)}\,\,\chi(s,\lambda) & \le
-e^{\tilde{W}}\bar{H}'(e^{\tilde{W}})\Big(e^{-\tilde{W}}\partial_s(k_{n}\ast
e^{\tilde{W}})+\,\frac{C_1}{r^2}\,+\tilde{S}_n\Big)\nonumber\\
& \le -\bar{H}'(e^{\tilde{W}})\partial_s(k_{n}\ast
e^{\tilde{W}})+\,\frac{4C_1}{\lambda
r^2}\,+\,\frac{4|\tilde{S}_n(s)|}{\lambda}, \quad\mbox{a.a.}\;
s\in \tilde{J}_+.
\end{align}
Using the fundamental identity
(\ref{fundidentity}) and concavity of $-\bar{H}$  we obtain
\begin{align*}
-\bar{H}'(e^{\tilde{W}}) & \partial_s(k_{n}\ast
e^{\tilde{W}})  \le -\partial_s\Big(k_{n}\ast
\bar{H}\big(e^{\tilde{W}}\big)\Big)+\Big(\bar{H}(e^{\tilde{W}})-
\bar{H}'(e^{\tilde{W}})e^{\tilde{W}}\Big)k_{n} \\
& \le -\partial_s\Big(k_{n}\ast
\bar{H}\big(e^{\tilde{W}}\big)\Big)+\bar{H}(0)k_{n} \le
-\partial_s\Big(k_{n}\ast
\bar{H}\big(e^{\tilde{W}}\big)\Big)
+\,\frac{6}{\lambda}\,k_{n},
\end{align*}
a.e. in $\tilde{J}_+$. This together with (\ref{log14})
gives
\[
\frac{\nu}{16r^2 \nuN(B_r)}\,\,\chi(s,\lambda) \le
-\partial_s\Big(k_{n}\ast
\bar{H}\big(e^{\tilde{W}}\big)\Big)
+\,\frac{6}{\lambda}\,k_{n}+\,\frac{4C_1}{\lambda
r^2}\,+\,\frac{4|\tilde{S}_n(s)|}{\lambda},
\]
for a.a. $s\in \tilde{J}_+$. We integrate this estimate over
$\tilde{J}_+$ and send $n\to \infty$ to the result
\[
I_{3}=\int_{J_2(\lambda)}\nuN\big(   \Omega^+_s(\lambda)\big) \,ds
  = \int_0^{(1-\eta)\rho} \!\!\!\!\chi(s,\lambda)\,ds\le
  \,\frac{16r^2\nuN(B_r)}{\nu}\,\Big(\,\frac{6}{\lambda}\,\int_{0}^{(1-\eta)\rho} k(s)ds +\,\frac{4C_1(1-\eta)\rho}{\lambda
  r^2}\,\Big)
\]
\[
= \,\frac{16r^2\nuN(B_r)}{\nu}\,\Big(\,\frac{6}{\lambda}\,\izj \frac{1}{\Gamma(2-\al)}((1-\eta) \rho)^{1-\al}\dd+\,\frac{4C_1(1-\eta)\rho}{\lambda
  r^2}\,\Big)
\]
\[
\leq c(\tau , C_{1}, \nu) \frac{r^{2}\nuN(B_r)}{\lambda} \Big( \kjr \vr+\,\frac{\vr}{
  r^2}\,\Big)
\]
\[
= c(\tau , C_{1}, \nu, \mu) \frac{\nuN(B_r) \vr}{\lambda},\quad \lambda\ge 1,
\]
where in the last equality we applied (\ref{zn1}). For $\lambda \leq 1$ we simply have
\[
I_{3}\leq |K_{+}|=(1-\eta)\tau \vr \delta^{N} \nuN(B_r)\leq \tau \delta^{N} \frac{\vr \nuN(B_r)}{\lambda},
\]
therefore we obtain
\begin{equation} \label{I3est}
I_3\le \, c(\tau , C_{1}, \nu, \mu,\delta) \frac{\vr\nuN(B_r)}{\lambda},\quad\lambda>0.
\end{equation}

Finally, combining (\ref{mainleft}), (\ref{mainright}),
(\ref{I2est}),  (\ref{numerek}),  (\ref{I4est}), (\ref{I1est}), and (\ref{I3est})
we obtain the claim.
\end{proof}
\begin{bemerk1}
    We notice that as a byproduct of the proof of Theorem \ref{logest}, we obtain robust logarithmic estimates for $\mu=\delta(\cdot-\alpha)$
    as $\alpha \rightarrow 1$. We point out that in \cite{base}, the constants in the estimates of $I_2$ and $I_4$ blow up as $\alpha \rightarrow 1$. Here, we provide new estimates which in the case of a single order fractional derivative are uniform with respect to the order of derivative $\alpha\in [\alpha_0,1)$, with an arbitrarily fixed $\alpha_0\in (0,1).$
\end{bemerk1}
%***********************************************************************************
\subsection{The final step}
%***********************************************************************************
We are now in position to prove Theorem \ref{localweakHarnack}.
Without loss of generality we may assume that $u\ge \varepsilon$ for
some $\varepsilon>0$; otherwise replace $u$ by $u+\varepsilon$,
which is a supersolution of (\ref{MProb}) with $u_0+\varepsilon$
instead of $u_0$, and eventually let $\varepsilon\to 0+$.

For $0<\sigma\le 1$, we set $U_\sigma=(t_0+(2-\sigma)\tau
\Phi(2r),t_0+2\tau \Phi(2r))\times B_{\sigma r}(x_{0})$ and
$U'_\sigma=(t_0,t_0+\sigma\tau \Phi(2r))\times B_{\sigma r}(x_{0}) $.
Clearly, from (\ref{defQpm}) we have $Q_-(t_0,x_0,r,\delta)=U'_\delta$ and $Q_+(t_0,x_0,r,\delta)=U_\delta$.

From  Theorem \ref{superest1} applied with $\eta:=\tau$ and $t_{0}:=t_{0}+2\tau \Phi(2r)$ we obtain for $0 < r \leq r^{*}=r^{*}(\mu)$
\[
\esup_{U_{\sigma'}}{u^{-1}} \le \Big(\frac{C \nuNj(U_1)^{-1}
}{(\sigma-\sigma')^{\tau_0}}\Big)^{1/\gamma}
\|u^{-1}\|_{L_{\gamma}(U_\sigma)},\quad \delta\le \sigma'<\sigma\le
1,\; \gamma\in (0,1],
\]
where $C=C(\nu,\Lambda,\delta,\tau,\mu,N)$ and
$\tau_0=\tau_0(\mu,N)$. This shows that the first hypothesis of
Lemma~\ref{abslemma} is satisfied by any positive constant multiple
of $u^{-1}$ with $\beta_0=\infty$.

Take now $f_1=u^{-1}e^{c(u)}$ where $c(u)$ is the constant from
Theorem \ref{logest}. If we apply Theorem~\ref{logest} with $\tau:=2\tau$, $\eta:=\jd, $ $\delta:= \jd$ and $r:=2r$, then  $K_-=U'_1$ and $K_+=U_1$, and  since $\log
f_1=c(u)-\log u$, we see that
(\ref{logestright}) gives
\[
\nuNj(\{(t,x)\in U_1:\;\log f_1(t,x)>\lambda\})\le
M\Phi(2r)\nuN(B_r) \lambda^{-1}\le M\nuNj(U_1)\lambda^{-1},\quad \lambda>0
\]
where $M=M(\nu,\Lambda,\tau,\mu,N)$ and $0 <r \leq r^{*}(\mu)$. % After applying proposition~\ref{podwajanie} we get
%\[
%\nuNj(\{(t,x)\in U_1:\;\log f_1(t,x)>\lambda\})\le
%M\nuNj(U_1)\lambda^{-1},\quad \lambda>0.
%\]
Hence we may apply
Lemma \ref{abslemma} with $\beta_0=\infty$ to $f_1$ and the family
$U_\sigma$; thereby we obtain
\[
\esup_{U_\delta} f_1\le M_1\] with
$M_1=M_1(\nu,\Lambda,\delta,\tau,\eta,\mu,N)$. In terms of $u$
this means that
\begin{equation} \label{HH1}
e^{c(u)}\le M_1\, \einf_{U_\delta} u.
\end{equation}

On the other hand, Theorem \ref{superest2} applied with $\eta:=\tau$ and $p_{0}:=p$  yields for  $0<r \leq r^{*}(\mu,p)$
\[
\|u\|_{L_{p}(U_{\sigma'}')}\le \Big(\frac{C\nuNj(U'_1)^{-1}
}{(\sigma-\sigma')^{\tau_1}}\Big)^{1/\gamma-1/p}
\|u\|_{L_{\gamma}(U'_\sigma)},\quad \delta\le \sigma'<\sigma\le 1,\;
0<\gamma\le p/\tilde{\kappa}.
\]
Here $C=C(\nu,\Lambda,\delta,\tau,\mu,N,p)$ and
$\tau_1=\tau_1(\mu,N,p)$. Thus the first hypothesis of Lemma
\ref{abslemma} is satisfied by any positive constant multiple of $u$
with $\beta_0:=p$, $\beta:=\gamma$ and $\eta:=1/\tilde{\kappa}$. Taking $f_2=u
e^{-c(u)}$ with $c(u)$ as above, we have $\log f_2=\log u-c(u)$
and so Theorem \ref{logest}, estimate (\ref{logestleft}) applied with the same parameters as earlier, gives
\[
\nuNj(\{(t,x)\in U'_1:\;\log f_2(t,x)>\lambda\})\le
M\nuNj(U'_1)\lambda^{-1},\quad \lambda>0,
\]
where $M$ is as above. %After applying proposition~\ref{podwajanie} we get
%\[
%\nuNj(\{(t,x)\in U'_1:\;\log f_2(t,x)>\lambda\})\le
%M\nuNj(U'_1)\lambda^{-1},\quad \lambda>0,
%\]
%where $M=M(\nu,\Lambda,\tau,\mu,N)$.
Therefore we may again apply Lemma \ref{abslemma}, this time
to the function $f_2$ and the sets $U'_\sigma$, and with $\beta_0=p$
and $\eta=1/\tilde{\kappa}$; we get
\[
\|f_2\|_{L_p(U'_\delta)}\le M_2 \nuNj(U'_1)^{1/p},
\]
where $M_2=M_2(\nu,\Lambda,\delta,\tau,\eta,\mu,N,p)$. Rephrasing
then yields
\begin{equation} \label{HH2}
\nuNj(U'_1)^{-1/p}\|u\|_{L_p(U'_\delta)}\le M_2 e^{c(u)}.
\end{equation}
Finally, we combine (\ref{HH1}) and (\ref{HH2}) to the result
\[
\nuNj(U'_1)^{-1/p}\|u\|_{L_p(U'_\delta)}\le M_1 M_2\, \einf_{U_\delta}
u
\]
for  $0<r \leq r^{*}(\mu,p)$, which proves the assertion, because $\nuNj(U'_\delta) = \delta^{N+1} \nuNj(U'_1)$. \hfill $\square$
%***********************************************************************************

%*****************************************\

\subsection{Application to strong maximum principle}

Similarly as in the single-order fractional derivative case, the strong maximum principle for weak subsolutions of (\ref{MProb})
may be easily derived as a consequence of the weak Harnack
inequality.
%**********************************************************************
\begin{satz} \label{strongmax}
Let $T>0$, and $\Omega\subset \iR^N$ be a bounded
domain. Suppose the assumptions (H1)--(H3) are satisfied. Let $u\in
Z$ be a weak subsolution of (\ref{MProb}) in $\Omega_T$ with $f\equiv 0$ and
assume that $0\le \esup_{\Omega_T}u<\infty$ and that $\esup_{\Omega}
u_0\le \esup_{\Omega_T}u$. Then, if for some cylinder
$Q=(t_0,t_0+\tau \vdr)\times B(x_0,r/2)\subset \Omega_T$ with
$t_0,\tau,r>0$ and $\overline{B(x_0,r)}\subset \Omega$ we have
\begin{equation} \label{strrel}
\esup_{Q}u \,=\,\esup_{\Omega_T}u,
\end{equation}
then the function $u$ is constant on $(0,t_0)\times \Omega$.
\end{satz}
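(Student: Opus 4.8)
\noindent First I would set $M:=\esup_{\Omega_T}u$ and $v:=M-u$, and observe that $v$ is a nonnegative weak supersolution of (\ref{MProb}) in $\Omega_T$ with $f\equiv 0$ and initial datum $v_0:=M-u_0$. Indeed $v\ge 0$ a.e.\ since $u\le M$ a.e., and $v_0\ge 0$ a.e.\ since $\esup_\Omega u_0\le M$; moreover $k\ast v=M(1\ast k)-k\ast u$ lies in $C([0,T];L_2(\Omega))$ with $(k\ast v)|_{t=0}=0$, so $v\in Z$, and substituting $M-u$ for $u$ in (\ref{BWF}) merely reverses the inequality. The hypothesis (\ref{strrel}) is exactly $\einf_Q v=0$, and the strategy is to propagate this backwards in time and across $\Omega$ by iterating Theorem~\ref{localweakHarnack}; it then remains to prove $v\equiv 0$ a.e.\ on $(0,t_0)\times\Omega$.

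The second step, which is the only genuinely analytic one, is to produce a single cylinder of the form (\ref{defQpm}) on which $v$ vanishes a.e. Fix an admissible exponent $p\in(0,\tilde{\kappa})$, with $\tilde{\kappa}$ as in (\ref{defkappaf}), and let $\hat\tau>0$ play the role of the parameter $\tau$ in Theorem~\ref{localweakHarnack}. When $t_0$ is large enough and $r$ small enough one may simply fit the whole of $Q$ inside a cylinder $Q_+(\hat t_0,x_0,\hat r,\hat\delta)$ with $\hat\delta\in[\tfrac12,1)$, $\hat r\le\min\{r,r^*(\mu,p)\}$, $B(x_0,\hat r)\subset\Omega$ and $\hat t_0+2\hat\tau\Phi(2\hat r)\le T$, so that $\einf_{Q_+}v\le\einf_Q v=0$. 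In general — in particular for small $t_0$, for $r>2r^*(\mu,p)$, or when the essential infimum in (\ref{strrel}) is attained only on a null set — I would instead pick a point $z_*=(s_*,x_*)\in Q$ with the property that $v$ has essential infimum $0$ over every cylindrical neighbourhood of $z_*$ (a Lebesgue density point of $\{v=0\}\cap Q$ when that set has positive measure, otherwise a limit point of the sets $\{v<1/n\}\cap Q$ together with a short covering argument) and, using that $\Phi$ is continuous and strictly increasing with $\Phi(0)=0$ (Lemma~\ref{fi}) and that $s_*<T$, $x_*\in\Omega$, place a small cylinder $Q_+(\hat t_0,x_*,\rho,\hat\delta)$ around it with $\rho\le r^*(\mu,p)$, $B(x_*,\rho)\subset\Omega$, $\hat t_0+2\hat\tau\Phi(2\rho)\le T$, and $\einf_{Q_+}v=0$. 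In either case Theorem~\ref{localweakHarnack} (with $f\equiv 0$) forces $\int_{Q_-}v^p\,d\nuNj=0$, hence $v\equiv 0$ a.e.\ on the cylinder $Q_-$ corresponding to the chosen $Q_+$; this cylinder lies in $\Omega_T$, at times strictly earlier than those of $Q_+$.

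The third and main step is an iteration of the weak Harnack inequality to enlarge the zero-set. Suppose $v\equiv 0$ a.e.\ on some cylinder $C=(a,b)\times B(y,\varrho)\subset\Omega_T$. For any centre $y'\in\Omega$ with $B(y',\varrho')\subset\Omega$, $\varrho'\le r^*(\mu,p)$ and $B(y',\varrho')\cap B(y,\varrho)$ of positive measure, and any level $\hat t_0\ge 0$ slightly below $a$, one can position a cylinder $Q_+(\hat t_0,y',\varrho',\hat\delta)\subset\Omega_T$ whose time interval overlaps $(a,b)$ and whose spatial ball overlaps $B(y,\varrho)$ in positive measure; then $Q_+\cap C$ has positive measure, so $\einf_{Q_+}v=0$, and Theorem~\ref{localweakHarnack} yields $v\equiv 0$ a.e.\ on the corresponding $Q_-$, a cylinder lying strictly below $b$ and possibly reaching spatially beyond $B(y,\varrho)$. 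Since $\Omega$ is connected and every $t_1\in(0,t_0)$ is at positive distance from $0$, finitely many such steps — moving the centre across $\Omega$ and stepping the time interval downward — show that every point of $(0,t_0)\times\Omega$ possesses a cylindrical neighbourhood on which $v\equiv 0$ a.e. Hence $v\equiv 0$ a.e.\ on $(0,t_0)\times\Omega$, i.e.\ $u\equiv M$ a.e.\ there, which is the assertion.

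The hard part will be the geometric bookkeeping of this iteration: because (\ref{MProb}) has no natural scaling, the cylinders $Q_\pm(\cdot,\cdot,r,\delta)$ at different radii are not dilates of one another, so one must check — using only the qualitative properties of $\Phi$ from Lemma~\ref{fi} — that cylinders of sufficiently small radius can always be fitted below a prescribed time level and linked across $\Omega$ while keeping a positive-measure overlap with the current zero-set and respecting the admissibility conditions $B(\cdot,\cdot)\subset\Omega$, $r\le r^*(\mu,p)$ and $\hat t_0+2\hat\tau\Phi(2r)\le T$. The other delicate point is the seeding step, where the essential content of Theorem~\ref{localweakHarnack} — an $L^p$-bound by the essential infimum — is precisely what turns ``$\einf_Q v=0$'' into ``$v\equiv 0$ a.e.\ on a genuine cylinder''.
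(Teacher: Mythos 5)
Your proposal is correct and follows essentially the same route as the paper's (very terse) proof: set $v=M-u$, observe that $v$ is a nonnegative weak supersolution of the homogeneous problem with $v_0=M-u_0\ge 0$, apply the weak Harnack inequality (Theorem~\ref{localweakHarnack}) to pass from $\einf_{Q_+}v=0$ to $\int_{Q_-}v\,d\nuNj=0$, and chain across $(0,t_0)\times\Omega$. You are in fact somewhat more careful than the paper — in particular you notice that for $r>r^*(\mu,p)$ one cannot apply Theorem~\ref{localweakHarnack} at the given radius and must first seed with a small cylinder, and you make explicit the chaining step the paper only alludes to; the one minor slip is that the accumulation point $z_*$ you construct lies a priori in $\overline{Q}$ rather than in the open $Q$, but this is harmless, since $\overline{B(x_0,r/2)}\subset B(x_0,r)\subset\Omega$ and $t_0>0$ leave room to place a small $Q_+$-cylinder whose intersection with $Q$ is a one-sided neighbourhood of $z_*$ and therefore still has vanishing essential infimum of $v$.
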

\begin{proof} Let $M=\esup_{\Omega_T}u$. Then $v:=M-u$ is a
nonnegative weak supersolution of (\ref{MProb}) with $u_0$ replaced
by $v_0:=M-u_0\ge 0$. For any $0\le t_1< t_1+\eta \vdr<t_0$
the weak Harnack inequality with $p=1$ applied to $v$ yields an
estimate of the form
\[
r^{-N}(\vdr)^{-1}\int_{t_1}^{t_1+\eta
\vdr}\int_{B(x_0,r)}(M-u)\,dx\,dt\le C\,\einf_Q (M-u)\,=\,0.
\]
Hence $u=M$ a.e. in $(0,t_0)\times B(x_0,r)$ and the assertion now follows
by a chaining argument.
\end{proof}

\subsection{Weak Harnack inequality with inhomogeneity}
We finish this chapter with a simple proof of Theorem \ref{localweakHarnackinhomo}. We will later apply this result to deduce H\"older regularity of weak solutions to (\ref{holdeq}) (Theorem~\ref{holder}).
We recall the notation $\bvr:=\Phi(2r)$.

%We take $t_{0} = 0$. Can we do without it?

\begin{proof}[Proof of Theorem \ref{localweakHarnackinhomo}.]
If $u$ is a weak supersolution to (\ref{MProbG}) then it satisfies
\begin{equation} \label{BWFG}
\int_{0}^{2\tau
\bvr} \int_{B(x_{0},r)} \Big(-\eta_t [k\ast (u-u_0)]+
(ADu|D \eta)\Big)\,dxdt \geq \int_{0}^{2\tau
\bvr} \int_{B(x_{0},r)}\eta f dxdt
\end{equation}
for any nonnegative $\eta \in H^{1}_{2}((0,2\tau \bvr);L_{2}(B(x_{0}, r)))\cap L_{2}((0,2\tau \bvr);{\overset{\circ}{H}}{}^{1}_{2}(B(x_{0}, r)))$ with \m{$\eta_{|t=2\tau \bvr}=0$.} Let us denote $M = \|f^{-}\|_{L_{\infty}((0,2\tau \bvr)\times B(x_{0}, r))}$ and define $w = u + M 1*l$. Then
\[
\int_{0}^{2\tau
\bvr} \int_{B(x_{0},r)}\Big(-\eta_t [k\ast (w-u_0)]+
(ADw|D \eta)\Big)\,dxdt
\]
\[
= \int_{0}^{2\tau
\bvr} \int_{B(x_{0},r)} \Big(-\eta_t [k\ast (u-u_0)] - M \eta_t [k*1*l]+
(ADu|D \eta)\Big)\,dxdt
\]
\begin{align*}
& \geq \int_{0}^{2\tau
\bvr} \int_{B(x_{0},r)} \eta f dxdt + M\int_{0}^{2\tau
\bvr} \int_{B(x_{0},r)}\eta \cdot \partial_t(k*l*1)dxdt \\
& =\int_{0}^{2\tau
\bvr} \int_{B(x_{0},r)} \eta (M+f) dxdt \geq 0.
\end{align*}
Consequently $w\geq u \geq 0 $, and from  Theorem \ref{localweakHarnack} applied to the function $w$ we obtain
\[
 \Big(\frac{1}{\nuNj\big(Q_-(0,x_0,r, \delta)\big)}\,\int_{Q_-(0,x_0,r,\delta)}u^p\,d\nuNj\Big)^{1/p} \leq \Big(\frac{1}{\nuNj\big(Q_-(0,x_0,r, \delta)\big)}\,\int_{Q_-(0,x_0,r,\delta)}w^p\,d\nuNj\Big)^{1/p}
 \]
 \[
\le C \einf_{Q_+(0,x_0,r, \delta)} w \leq C \left( \einf_{Q_+(0,x_0,r, \delta)} u +M\int_{0}^{2\tau\bvr}l(s)ds \right),
\]
where $C=C(\nu,\Lambda,\delta,\tau,\mu,N,p)$. Applying estimate  (\ref{zn2}) with $p=1$ together with (\ref{estiPhieta}) we arrive at
\[
\int_{0}^{2\tau\bvr}l(s)ds \leq  C(\tau) r^{2},
\]
which finishes the proof.
\end{proof}

\section{Proof of the H\"older regularity}
In this chapter we prove Theorem \ref{holder}. We will deduce the H\"older regularity of weak solutions to (\ref{holdeq}) via the weak Harnack estimate. This is a novel approach compared to  \cite{Zhol}, where H\"older regularity for solutions to the problem with single order fractional time derivative was derived by means of growth lemmas without the use of Harnack inequalities. Here we provide a much simpler and shorter argument than the one in \cite{Zhol}.

Let $u$ be a bounded weak solution to
\eqq{
\partial_t (k*(u-u_{0}))-\mbox{div}\,\big(A(t,x)Du\big)=f \m{ in } (0,2\eta\bvr)\times B(x_{0},2r),
}{osc4}
where we assume that $r\in (0,r_{*}]$  and $r^{*}=r^{*}(\mu)$ comes from Theorem \ref{localweakHarnackinhomo} with $p=1$. Then $\bvr \leq 1$. We assume further that $u_{0} \in L_{\infty}(B(x_{0},2r)),f \in L_{\infty}((0,2\eta\bvr) \times B(x_{0},2r)).$
Again, in order to abbreviate the notation, we will often write $B_r(x)$ instead of $B(x,r)$. Moreover, in this section, by $l$ we denote an integer number, not to confuse with the kernel $l$ associated with the kernel $k$.

Let us define $F(t,x) = f(t,x) + u_{0}(x)k(t)$. We normalize the equation by putting
\eqq{
v:=\frac{u}{2D}, \hd G = \frac{F}{2D}, \hd D:=\|u\|_{L_{\infty}((0,2\eta\bvr)\times B_{2r}(x_{0}))} +  r^{2}\|F\|_{L_{\infty}((\frac{\eta}{2}\bvr,2\eta\bvr)\times B_{2r}(x_{0}))}.
}{osc3}
Then $v$ is a weak solution to
\[
\partial_t (k* v)-\mbox{div}\,\big(A(t,x)Dv\big)=G \m{ in } (0,2\eta\bvr)\times B(x_{0},2r)
\]
and
\eqq{
\|v\|_{L_{\infty}((0,2\eta\bvr)\times B_{2r}(x_{0}))} \leq \frac{1}{2}, \hd \|G\|_{L_{\infty}((\frac{\eta}{2}\bvr,2\eta\bvr)\times B_{2r}(x_{0}))}\leq \frac{1}{2r^{2}}, \hd \eosc_{(0,2\eta\bvr)\times B_{2r}(x_{0})} v \leq 1.
}{vges}

We point out that in (\ref{vges}) it is crucial to take the full working cylinder for the $L_{\infty}$-bound of $v$ and a sub-cylinder that has positive
distance from the points with time $t=0$ for the $G$-term.
Let $(t_{1},x_{1}) \in (\eta\bvr,2\eta\bvr) \times B_{r}(x_{0})$. We consider the family of nested cylinders with arbitrary, fixed $\theta > 0$
\[
Q(\rho) = (t_{1}-\theta \overline{\Phi}(\rho r),t_{1}) \times B_{\rho r}(x_{1}), \hd \rho = 2^{-l}, \hd l \in \mathbb{Z}
\]
and we denote  $Q_{dom}:=(0,2\eta\bvr) \times B_{2r}(x_{0})$. Further, let us denote by $\tilde{l} \geq 0$ the integer that corresponds to the largest of those cylinders $Q(2^{-l})$ that are properly  contained in  $Q_{dom}$. Then
\eqq{
|x_{1}-x_0|+2^{-\tl}r < 2r \m{\hd  and \hd  } t_{1}-\theta\overline{\Phi}(2^{-\tl}r) > 0.
}{alter2}

\no We will show that there exists $\gamma = \gamma(\theta,\eta) \geq 0$ such that
\eqq{
\tilde{l} \leq \gamma(\theta,\eta).
}{ltildees}
Indeed, if $\tl$ corresponds to the largest cylinder contained in $Q_{dom}$, it means that $Q(2^{-(\tl-1)})$ is not contained in $Q_{dom}$. Hence, either
\eqq{
|x_{1}-x_{0}|+2^{-(\tl-1)}r \geq  2r \m{\hd  or \hd  } t_{1}-\theta\overline{\Phi}(2^{-(\tl-1)}r) \leq  0.
}{alter1}
In the first case we get
\[
2^{-(\tl-1)} \geq  2-\frac{|x_{1}-x_{0}|}{r} > 1, \m{ thus } \tl< 1,
\]
while in the second case
\[
\overline{\Phi}(2^{-(\tl-1)}r) \geq  \frac{t_{1}}{\theta} > \frac{\eta \bvr}{\theta}.
\]
Applying Proposition (\ref{philambda}) we obtain
\[
\frac{\eta \bvr}{\theta} < \overline{\Phi}(2^{-(\tl-1)}r) \leq 2^{-2(\tl-1)}\overline{\Phi}(r)
\]
and thus
%Since $\ki$ is decreasing the inequality above is equivalent with
%\eqq{
%\ki(\overline{\Phi}(2^{-(\tl-1)}r) ) < \ki\left(\frac{\eta \bvr}{\theta}\right),
%}{trzyjeden}
%but we have
%\[
%\ki\left(\frac{\eta \bvr}{\theta}\right) = \izj \left(\frac{\theta}{\eta}\right)^{\al}(\bvr)^{-\al}\dd \leq \max\left\{\frac{\theta}{\eta},1 \right\} \ki(\bvr).
%\]
%Applying (\ref{zn1}) we deduce that if (\ref{trzyjeden}) is satisfied, then
%\[
%\frac{1}{4}r^{-2}4^{\tl-1} \leq \frac{1}{4} \max\left\{\frac{\theta}{\eta},1 \right\}r^{-2},
%\]

\[
\tl \leq 1 + \log_{4}\max\left\{\frac{\theta}{\eta},1 \right\},
\]
and we arrive at (\ref{ltildees}).

Let $l_{0}\geq \tl$ be an integer that will be fixed later. We set
\[
a_{l}:=\essinf_{Q_{dom}\cap Q(2^{-l})}v, \hd \hd  b_{l} = a_{l}+2^{-(l-l_{0})\kappa} \hd \m{ for } \hd l \leq l_{0},
\]
where $\kappa \in (0,1)$ will also be chosen later. Then, by definition, for all $j \leq l_{0}$, $j \in \mathbb{Z}$  we have
\eqq{a_{j} \leq v \leq b_{j} \hd \m{ a.e. in  } \hd Q_{dom}\cap Q(2^{-j}), \hd b_{j}-a_{j} = 2^{-(j-l_{0})\kappa},}{abclaim}
because $\eosc_{Q_{dom}} v \leq 1$ and $b_{j}-a_{j} \geq 1$ for $j \leq l_{0}$. We would like to prove the property (\ref{abclaim}) for $j > l_{0}$
with appropriate sequences $a_{j}$, $b_{j}$. More precisely,
we will show that for $j>l_{0}$ there exist a nondecreasing sequence  $ (a_{j})$  and nonincreasing sequence  $(b_{j})$  such that (\ref{abclaim}) holds. We will prove it by induction. Firstly, we note that for $j\leq l_{0}$ the condition (\ref{abclaim}) trivially holds. Now, let $l \geq l_{0}$ and  assume that (\ref{abclaim}) holds for all $j\leq l$. Under this assumption, we will construct $a_{l+1} \geq a_{l}$ and $b_{l+1}\leq b_{l}$ such that (\ref{abclaim}) holds  also for $j=l+1$.

Let us denote  $\tilde{t} = t_{1} - \theta \overline{\Phi}(2^{-l}r)$. Then $(\tilde{t},t_{1}) \times B(x_{1},2^{-l}r) = Q(2^{-l})$. At first we will show that the induction hypothesis implies the following estimate for
the memory term.
\begin{lemma}
Let $l \geq l_{0}$. Assume that (\ref{abclaim}) holds for all $j \leq l$, and set $m_{l} := \frac{a_{l}+b_{l}}{2}$. Then there exists $c(\theta) \geq 1$ such that
\eqq{
|v(t,x)-m_{l}| \leq (b_{l}-m_{l})\left(2\cdot 2^{\kappa}\left(c(\theta)\frac{\ki(t_{1}-\tilde{t})}{\ki(t_{1}-t)}\right)^{\frac{\kappa}{2}}-1\right) \m{ for a.a. } (t,x) \in (0,\tilde{t})\times B_{2^{-l}r}(x_{1}).
}{history}
\end{lemma}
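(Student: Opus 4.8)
The plan is to estimate $v(t,x)$ for $(t,x)$ in the ``past'' region $(0,\tilde t)\times B_{2^{-l}r}(x_1)$ using the oscillation bounds $a_j\le v\le b_j$ on the nested cylinders $Q(2^{-j})$ that are already established for $j\le l$. The key observation is a covering/chaining argument in time: a point $(t,x)$ with $t<\tilde t = t_1-\theta\Pk(2^{-l}r)$ lies in the time-slab of $Q(2^{-j})$ precisely when $t_1 - t \le \theta\Pk(2^{-j}r)$, i.e. when $\ki(\Pk(2^{-j}r))\le \ki((t_1-t)/\theta)$ roughly speaking; by Lemma~\ref{fi} and monotonicity of $\ki$ this translates into $2^{-2j}r^{-2}\lesssim \ki(t_1-t)$, so the smallest index $j$ for which $(t,x)$ is still captured by $Q(2^{-j})$ (spatially $B_{2^{-l}r}(x_1)\subset B_{2^{-j}r}(x_1)$ automatically since $j\le l$) satisfies $2^{-2j}\sim r^2\ki(t_1-t)$, up to the factor $\theta$. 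For that index, $|v(t,x)-m_l|\le |v(t,x)-m_j| + |m_j-m_l|$, and both differences are controlled by $b_j-a_j = 2^{-(j-l_0)\kappa}$ since the midpoints $m_i$ form a Cauchy-type sequence with $|m_j-m_l|\le \sum_{i=j}^{l}(b_i-a_i)/2 \le c\,2^{-(j-l_0)\kappa}$ (geometric series, using $\kappa\in(0,1)$ but $2^{-\kappa}<1$).

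First I would make precise the relation between the scale index and $t_1-t$. Given $(t,x)$ with $0<t<\tilde t$, set $j=j(t)$ to be the largest integer $\le l$ with $t_1-t\le\theta\Pk(2^{-j}r)$; such $j$ exists because $t_1-t\ge t_1-\tilde t=\theta\Pk(2^{-l}r)$ forces $j\le l$, and one checks $t>0$ together with $\Pk(0)=0$ gives a finite lower bound so $j(t)$ is well-defined (here one may need to enlarge $l_0$ or invoke $\tilde l\le\gamma(\theta,\eta)$; but actually the statement only claims the bound on the ``past'' region, and the Cauchy estimate for $m_j$ only uses indices between $j(t)$ and $l$, all $\ge$ some fixed value — I would just carry the constants). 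By maximality, $t_1-t>\theta\Pk(2^{-(j+1)}r)$, and Proposition~\ref{philambda} gives $\Pk(2^{-(j+1)}r)\ge$ a multiple of $\Pk(2^{-j}r)$ from below is false in that direction, so instead I use $\zn1$ directly: $\ki(\Pk(2^{-j}r))=(2^{-j}r)^{-2}$ and similarly for $2^{-(j+1)}$, hence $t_1-t>\theta\Pk(2^{-(j+1)}r)$ combined with monotonicity of $\ki$ yields $\ki((t_1-t)/\theta) < \ki(\Pk(2^{-(j+1)}r)) = 4\cdot 2^{2j}r^{-2}$. Likewise $t_1-t\le\theta\Pk(2^{-j}r)$ gives $\ki((t_1-t)/\theta)\ge 2^{2j}r^{-2}$. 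Using $\ki(\lambda s)\le\max\{\lambda^{-1},1\}\ki(s)$ type estimates (from $\estioddolualfa$-style manipulations, i.e. pulling $\theta$ out of $\ki$), I get $\ki(t_1-t)\sim_\theta 2^{2j}r^{-2}$, so $2^{-2(j-l_0)\kappa}$ can be rewritten in terms of $(\ki(t_1-\tilde t)/\ki(t_1-t))^{\kappa/2}$ since $\ki(t_1-\tilde t)=\ki(\theta\Pk(2^{-l}r))\sim_\theta 2^{2l}r^{-2}$ and $l_0\ge\tl$ is absorbed.

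Then I would assemble: $|v(t,x)-m_l|\le |v-m_j|+|m_j-m_l| \le (b_j-a_j)/2 + \sum_{i=j}^{l-1}(b_i-b_{i+1}+a_{i+1}-a_i)/2 \le (b_j-a_j)/2 + \sum_{i=j}^{l}(b_i-a_i)/2 \le c\,2^{-(j-l_0)\kappa} = c\,(b_l-m_l)\,2^{(l-j)\kappa}$, because $b_l-m_l=(b_l-a_l)/2=2^{-(l-l_0)\kappa-1}$. Now $2^{(l-j)\kappa}\le 2^\kappa(c(\theta)\ki(t_1-\tilde t)/\ki(t_1-t))^{\kappa/2}$ once $2^{2(l-j)}\le c(\theta)\ki(t_1-\tilde t)/\ki(t_1-t)$, which is exactly the scale relation above. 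Bounding the resulting constant $c$ by (something)$\,\le 2\cdot 2^\kappa$ after pulling the $-1$ out, i.e. writing $c\,X \le (2\cdot 2^\kappa X^{\kappa/2}-1)$ for $X\ge 1$ large enough and adjusting $c(\theta)$ — this matches the claimed form $(b_l-m_l)(2\cdot 2^\kappa(c(\theta)\ki(t_1-\tilde t)/\ki(t_1-t))^{\kappa/2}-1)$.

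The main obstacle I anticipate is the careful bookkeeping of the $\theta$-dependence when moving $\theta$ in and out of $\ki$ and $\Pk$: because $\ki$ is not homogeneous (this is precisely the ``lack of scaling'' emphasized in the introduction), estimates like $\ki(\theta^{-1}s)\le\max\{\theta,1\}\ki(s)$ must be justified from $\estioddolualfa$/$\estioddolu$-type inequalities and only hold for $s$ in a bounded range (which is fine here since $t_1-t\le t_1\le 2\eta\bvr\le 2\eta$), and one has to make sure the constant $c(\theta)\ge 1$ comes out depending only on $\theta$ (and implicitly $\mu$). A secondary subtlety is handling the boundary case $j(t)=l$ (when $t$ is just barely below $\tilde t$), where the sum is empty and the estimate reduces to $|v-m_l|\le b_l-m_l$, consistent with the claimed bound since the right-hand side is then $\ge(b_l-m_l)(2\cdot 2^\kappa\cdot 1-1)\ge b_l-m_l$.
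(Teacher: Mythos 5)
Your overall plan matches the paper's: locate the scale $j$ (the paper's $l_*-1$) so that $(t,x)\in Q(2^{-(l_*-1)})$, invoke the induction hypothesis $a_{l_*-1}\le v\le b_{l_*-1}$, and convert $2^{2(l-l_*)}$ into $c(\theta)\,k_1(t_1-\tilde t)/k_1(t_1-t)$ via (\ref{zn1}) and the quasi-homogeneity of $k_1$ under scaling by $\theta$. That scale bookkeeping is sound and gives $c(\theta)=\max\{1,\theta\}/\min\{1,\theta\}$ exactly as in (\ref{twoes}).

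The flaw is in the final assembly. You bound $|m_j-m_l|$ by $\sum_{i=j}^{l}(b_i-a_i)/2$ and sum the geometric series, producing a constant of order $1/(1-2^{-\kappa})$, which blows up as $\kappa\to 0$. This is fatal here: the lemma must hold for all $\kappa\in(0,1)$ with a $\kappa$-independent constant, because downstream (the bounds for $I_1,I_2$ leading to $\ve(\mu,\theta,\eta,\kappa)\to 0$) one lets $\kappa\to 0$ and needs the integrand $[(4c(\theta)p)^{\kappa/2}-1]\to 0$ pointwise for dominated convergence. A constant $c(\kappa)>2\cdot 2^{\kappa}$ makes the inequality $c(\kappa)X^{\kappa/2}\le 2\cdot 2^{\kappa}X^{\kappa/2}-1$ false for every $X>0$, so no enlargement of $c(\theta)$ rescues it. The slack is unnecessary: the sum you wrote telescopes exactly, $\sum_{i=j}^{l-1}\big[(b_i-b_{i+1})+(a_{i+1}-a_i)\big]=(b_j-a_j)-(b_l-a_l)$, giving $|m_j-m_l|\le \tfrac12[(b_j-a_j)-(b_l-a_l)]$ with no $\kappa$-dependence, and hence $|v-m_l|\le (b_j-a_j)-\tfrac12(b_l-a_l)=(b_l-m_l)\big(2\cdot 2^{(l-j)\kappa}-1\big)$. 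Even more directly (and this is what the paper does, bypassing midpoints), one step of monotonicity $a_{l_*-1}\le a_l$ gives $v-m_l\le b_{l_*-1}-m_l\le (b_{l_*-1}-a_{l_*-1})+(a_l-m_l)$, which is the same bound; the lower bound is symmetric using $b_{l_*-1}\ge b_l$.

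A minor point: your $j$ (largest integer with $t_1-t\le\theta\overline{\Phi}(2^{-j}r)$) puts $(t,x)$ only on the closure of $Q(2^{-j})$; to stay strictly inside the open cylinder used in (\ref{abclaim}) one should step up one scale to $Q(2^{-(j-1)})$, which is precisely what the paper's index $l_*-1$ does. This only shifts the constant and is harmless, but worth tightening.
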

\begin{proof}
Let $(t,x) \in (0,\tilde{t})\times B_{2^{-l}r}(x_{1})$. We note that $\frac{t_{1}-t}{\theta} \geq \frac{t_{1}-\tilde{t}}{\theta} = \overline{\Phi}(2^{-l}r)$. Since $\overline{\Phi}$ is increasing, continuous and onto $[0,\infty)$, there exists $l_{*} \leq l$ such that
\[
\overline{\Phi}(2^{-l_{*}}r) \leq \frac{t_{1}-t}{\theta} < \overline{\Phi}(2^{-(l_{*}-1)}r).
\]
Hence,
\[
t_{1}-\theta \overline{\Phi}(2^{-(l_{*}-1)}r) <t < t_{1},
\]
which together with $B_{2^{-l}r}(x_{1})\subset B_{2^{-(l_{*}-1)}r}(x_{1})$ implies $(t,x) \in Q(2^{-(l_{*}-1)})$. Since $l_{*}-1 < l$ we may apply the induction hypothesis to get
\[
v(t,x) - m_{l} \leq b_{l_{*}-1} - m_{l} \leq b_{l_{*}-1}-a_{l_{*}-1}+a_{l} - m_{l} = 2^{-(l_{*}-1-l_{0})\kappa} - \frac{1}{2} 2^{-(l-l_{0})\kappa}
\]
\[
=(b_{l}-m_{l})(2\cdot 2^{-(l_{*}-1-l)\kappa}-1).
\]
We recall that $t_{1}-\tilde{t} = \theta \overline{\Phi}(2^{-l}r)$ and $t_{1}-t \geq \theta \overline{\Phi}(2^{-l_{*}}r)$. Using this together with (\ref{zn1}) and the fact that $\ki$ is decreasing, we have
\eqq{
2^{-2(l_{*}-l)} = \frac{\frac{1}{4}2^{2l}r^{-2}}{\frac{1}{4}2^{2l_{*}}r^{-2}}
=\frac{\ki(\overline{\Phi}(2^{-l}r))}{\ki(\overline{\Phi}(2^{-l_{*}}r))} \leq c(\theta)
\frac{\ki(\theta\overline{\Phi}(2^{-l}r))}{\ki(\theta\overline{\Phi}(2^{-l_{*}}r))}
\leq  c(\theta) \frac{\ki(t_{1}-\tilde{t})}{\ki(t_{1}-t)},
}{twoes}
where $c(\theta)=\frac{\max\{1,\theta\}}{\min\{1,\theta\}}$. This way we obtain the upper bound. The lower bound we obtain analogously. Indeed,
\[
v(t,x) - m_{l} \geq a_{l_{*}-1} - m_{l} \geq a_{l_{*}-1}-b_{l_{*}-1}+b_{l} - m_{l} = -2^{-(l_{*}-1-l_{0})\kappa} + \frac{1}{2} 2^{-(l-l_{0})\kappa}
\]
\[
=-(b_{l}-m_{l})(2\cdot 2^{-(l_{*}-1-l)\kappa}-1).
\]
Making use of (\ref{twoes}) we arrive at the lower bound and thus the claim is proven.
\end{proof}
Now we will construct smaller cylinders inside $Q(2^{-l})$.
Let us introduce $\theta_{1},\theta_{2}$ such that $\frac{1}{4} < \theta_{1}<\theta_{2}<1$. We define the numbers
\[
t_{**} = t_{1}-\theta\theta_{2}\overline{\Phi}(2^{-l}r), \hd  t_{*} = t_{1}-\theta\theta_{1}\overline{\Phi}(2^{-l}r)
\]
and the cylinder
\eqq{
Q^{-} :=(t_{**},t_{*})\times B_{2^{-(l+1)}r}(x_{1}).
}{ab3}
We will show that
\eqq{
\tilde{t} < t_{**}<t_{*}<t_{1}-\theta\overline{\Phi}(2^{-(l+1)}r),
}{tstars}
which implies that $Q^{-}$ and $Q(2^{-(l+1)}) = (t_{1}-\theta\overline{\Phi}(2^{-(l+1)}r),t_{1}) \times B_{2^{-(l+1)}r}(x_{1})$ are disjoint and contained in $Q(2^{-l})$.
Only the last inequality in (\ref{tstars}) needs explanation. By the definition of $t_{*}$,  it is equivalent to
\[
\overline{\Phi}(2^{-(l+1)}r) < \theta_{1}\overline{\Phi}(2^{-l}r).
\]
Actually we will show that there exists $b \in (0,1)$ which depends only on $\theta_{1}$ such that
\eqq{
\overline{\Phi}(2^{-(l+1)}r) < b\theta_{1}\overline{\Phi}(2^{-l}r).
}{boxdist}
\nic{Since $\ki$ is decreasing the last one is equivalent with
\[
\ki(b\theta_{1}\overline{\Phi}(2^{-l}r)) < \ki(\overline{\Phi}(2^{-(l+1)}r)) = 2^{2l}r^{-2}.
\]
We note that
\[
\ki(b\theta_{1}\overline{\Phi}(2^{-l}r))  \leq (b\theta_{1})^{-1}\ki(\overline{\Phi}(2^{-l}r)) = (b\theta_{1})^{-1}\cdot 2^{2l}\frac{1}{4}r^{-2}.
\]
Since $\theta_{1} \in (\frac{1}{4},1)$, one may find $b \in (0,1)$ such that
\[
(b\theta_{1})^{-1}\cdot 2^{2l}r^{-2} < 4\cdot 2^{2l}r^{-2}
\]
i.e. for $b\in (1/4\theta_{1},1)$ the inequality (\ref{boxdist}) holds and we arrive at (\ref{tstars}).}
From Proposition \ref{philambda} we infer that
\[
\overline{\Phi}(2^{-(l+1)}r)  \leq \frac{1}{4}\overline{\Phi}(2^{-l}r)
\]
Since $\theta_{1} \in (\frac{1}{4},1)$, one may find $b \in (0,1)$ such that (\ref{boxdist}) holds.

Now, our strategy is to discuss two cases $(A)$ and $(B)$:
\eqq{
(A) \hd \nuNj(\{(t,x)\in Q^{-}:v(t,x) \leq m_{l}\}) \geq \frac{1}{2}\nuNj(Q^{-}),
}{A}
\eqq{
(B)\hd  \nuNj(\{(t,x)\in Q^{-}:v(t,x) \leq m_{l}\}) \leq \frac{1}{2}\nuNj(Q^{-}).
}{B}
In any case we will apply the weak Harnack inequality for a certain shifted problem with cylinders $Q_{-} \supseteq Q^{-}-t_{**}$ and $Q_{+} \supseteq Q(2^{-(l+1)})-t_{**}$, where $\cdot-t_{**}$ denotes the shift only in time variable. In this way we will obtain the required estimates in the cylinder $Q(2^{-(l+1)})$.

Suppose (A) holds. Set $w = b_{l}-v$. Then, by the induction hypothesis $w \geq 0$ on $Q(2^{-l})$.   Furthermore,
\eqq{
\partial_{t}(k*w)(t,x) - \divv (A(t,x)D w(t,x)) = b_{l}\cdot k(t) - G(t,x)
}{trzy1}
in a weak sense for $(t,x) \in (0,2\eta\bvr)\times B_{2r}(x_{0})$.
Let $t \in (t_{**},t_{1})$. We shift the time introducing $s=t-t_{**}$ and  $\ti{w}(s,x)= w(s+\tss,x)$. Then we may write
\[
(k*w)\hspace{0.05cm} \ti{} \hspace{0.05cm}  (s,x)= (k*w)(s+\tss,x) = \left(\int_{0}^{\ti{t}}+\int_{\ti{t}}^{\tss}+\int_{\tss}^{s+\tss} \right)k(s+\tss - \tau )w(\tau,x)d\tau
\]
\[
= \int_{0}^{\ti{t}}  k(s+\tss - \tau )w(\tau,x)d\tau +\int_{0}^{\tss - \ti{t}}  k(s+(\tss -\ti{t})- p )w(p+\tit,x )dp + \int_{0}^{s} k(s-p )\ti{w}(p,x)dp,
\]
where in the second integral we substitute $p:=\tau - \tit$ and in the third $p:=\tau - \tss$ and \m{$s\in (0, t_{1}-\tss)$.} Differentiating with respect to $s$ gives
\begin{align*}
\partial_{s}(k*w)  (s,x) & = \int_{0}^{\ti{t}}  \dot{k}(s+\tss - \tau )w(\tau,x)d\tau\\
& \quad \;+\int_{0}^{\tss - \ti{t}}  \dot{k}(s+(\tss -\ti{t})- p )w(p+\tit,x )dp + \partial_{s} (k*\ti{w})(s,x).
\end{align*}
Therefore, from (\ref{trzy1}) we get

\[
\partial_{s}(k*\tilde{w}) - \divv(\tilde{A}D\tilde{w}) = \izj \frac{\al}{\Gamma(1-\al)} \int_{0}^{t_{**}-\tilde{t}}(s+(t_{**}-\tilde{t})-p)^{-\al-1}w(p+\tilde{t},x)dp \dd
\]
\[
+ H(w)(s,x) + b_{l}k(t_{**}+s) - \tilde{G}(s,x), \hd \hd s \in (0,t_{1}-t_{**}),\,x\in B_{2r}(x_{0}),
\]
where
\[
H(w)(s,x) = \izj \frac{\al}{\Gamma(1-\al)} \int_{0}^{\tilde{t}}(s+t_{**}-p)^{-\al-1}w(p,x)dp\dd.
\]
Recall that $\tit = t_{1}- \theta \overline{\Phi}(2^{-l}r)$ and $\tss<t_{1}$, hence $(\tilde{t},t_{**}) \times B_{2^{-l}r}(x_{1}) \subset Q(2^{-l})$. Since $w \geq 0 $ on $Q(2^{-l})$ we  see that the first term on the RHS is nonnegative and we deduce that  $\tilde{w}$ satisfies in a weak sense
\begin{align}
\partial_{s}(k*\tilde{w})(s,x) -  \divv(\tilde{A}D\tilde{w})(s,x) & \geq H(w)(s,x) + b_{l}k(t_{**}+s) - \tilde{G}(s,x) \nonumber\\
& =:\Psi(s,x), \hd (s,x) \in (0,t_{1}-t_{**})\times B_{2^{-l}r}(x_{1}).
 \label{defPsi}
\end{align}
In particular, we have
\eqq{
\partial_{s}(k*\tilde{w}) - \divv(\tilde{A}D\tilde{w}) \geq -\Psi^{-} \hd \m{ in \hd } (0,t_{1}-t_{**})\times B_{2^{-l}r}(x_{1}),
}{nad}
where $\Psi=\Psi^{+}- \Psi^{-}$ and $\Psi^{-}\geq 0$ denotes the negative part of $\Psi$.

%***************************************************************

The nonnegativity of $w$ in $Q(2^{-l})$ implies that $\tw(s,x)=w(t_{**}+s, x)\geq 0$ on $\Qdl-t_{**}$. Furthermore,
\begin{align*}
\Qdl-t_{**} & =(t_{1}-\theta \vkrl - t_{**}, t_{1}-t_{**})\times \blrj\\
& =(\theta(\theta_{2}-1) \vkrl , \theta \theta_{2}\vkrl)\times \blrj .
\end{align*}
In particular, $\tw \geq 0 $ on $(0, t_{1}-t_{**})\times \blrj $. Thus, $\tw$ is nonnegative weak supersolution to (\ref{nad}) in $(0, t_{1}-t_{**})\times \blrj $. Therefore, we may apply Theorem~\ref{localweakHarnackinhomo} to $\tw$ with parameters: $u_{0}:=0$, $r:=2^{-l}r$, $\delta:=\frac{3}{4}$, $x_{0}:=x_{1}$, $p:=1$ and we obtain
\eqq{\frac{1}{\nuNj(Q_{-})}\int_{Q_{-}} \tw dxds \leq C\left[ \essinf_{Q_{+}} \tw + 2^{-2l}r^{2} \| \Psi^{-} \|_{L_{\infty}((0,2\tau \vkrl)\times \blrj)}
\right],
}{zharnacka}
where
\[
Q_{-}:=Q_{-}(0,x_{1}, \dml, \frac{3}{4} )= \left(0,\frac{3}{4} \tau \vkrl \right)\times B_{2^{-(l+1)}\cdot \frac{3}{2}r} (x_{1}),
\]
\[
Q_{+}:=Q_{+}(0,x_{1}, \dml, \frac{3}{4} )= \left(\frac{5}{4} \tau \vkrl, 2\tau \vkrl \right)\times B_{2^{-(l+1)}\cdot \frac{3}{2}r} (x_{1})
\]
and $C=C(\mu, \Lambda, \tau , \nu, N)$, provided $2\tau \vkrl \leq \theta \theta_{2}\vkrl $, i.e. $2 \tau  \leq \theta \theta_{2}$. We note that $\essinf_{Q_{+}}\tw = b_{l}- \esssup_{Q_{+}}\tv = b_{l}- \esssup_{t_{**}+Q_{+}} v$, hence
\eqq{
\frac{1}{\nuNj(Q_{-})} \int_{Q_{-}} \tw dxds \leq  C\left[ b_{l}-\esssup_{t_{**}+Q_{+}} v + 2^{-2l}r^{2} \| \Psi^{-} \|_{L_{\infty}((0,2\tau \vkrl)\times \blrj)}
\right].
}{ab1}
Now, we shall determine $\tau\in (0,\jd \theta\theta_{2}]$ and $\theta_{2}\in (\theta_{1},1)$ such that
\eqq{\Qdll \subseteq Q_{+}+t_{**}.}{zawieranie}
Since
\[
Q_{+}+t_{**}= \left(t_{1}- \theta\theta_{2} \vkrl  + \frac{5}{4} \tau \vkrl  , t_{1}- \theta\theta_{2}\vkrl+ 2\tau \vkrl \right)\times \bllrt,
\]
and
\[
\Qdll = (t_{1}- \theta\vkrll , t_{1})\times \bllrj,
\]
the inclusion (\ref{zawieranie}) holds, provided
\eqq{\theta\theta_{2}\vkrl - \frac{5}{4}\tau \vkrl \geq \theta \vkrll \hd \m{ and \hd } \theta\theta_{2}\leq 2\tau. }{pierwsza}
With regard to the second inequality we choose $\tau=\jd \theta\theta_{2}$ and
consequently we get the condition for $\theta_{2}$
\eqq{\frac{3}{8}\theta_{2} \vkrl \geq  \vkrll. }{nateta}
From Proposition (\ref{philambda}) we have
\[
\vkrll \leq \frac{1}{4}\vkrl.
\]
\nic{The last inequality is equivalent to
\[
\ki\left(\frac{3\theta_{2}}{8} \vkrl\right) \leq \ki\left(\vkrll \right)= 2^{2l}r^{-2}.
\]
The left-hand side of the above inequality may be estimated as follows
\[
\ki\left(\frac{3\theta_{2}}{8} \vkrl\right) =\int_{0}^{1}  \left( \frac{3\theta_{2}}{8} \right)^{-\al} \vkrl^{-\al} \dd
\]
\[
\leq \left( \frac{3\theta_{2}}{8} \right)^{-1} \int_{0}^{1}   \vkrl^{-\al} \dd = \frac{8}{3\theta_{2}}  \ki(\vkrl) = \frac{8}{3\theta_{2}} 2^{2(l-1)}r^{-2}.
\]
}
Therefore, (\ref{nateta}) is satisfied if we choose $\theta_{2}\in \left( \max\{  \theta_{1}, \frac{2}{3} \},1\right)$. Hence, we fix such $\theta_{2}$ and for $\tau = \jd \theta\theta_{2}$ we have (\ref{zawieranie}).
Combining (\ref{ab1}) and (\ref{zawieranie}) we arrive at the following estimate
\eqq{
\frac{1}{\nuNj(Q_{-})} \int_{Q_{-}} \tw \,dxds \leq  C\left[ b_{l}-\esssup_{\Qdll} v + 2^{-2l}r^{2} \| \Psi^{-} \|_{L_{\infty}((0,2\tau \vkrl)\times \blrj)}
\right],
}{ab2}
where $C=C(\mu,\Lambda, \nu,N,\theta,\theta_2)$.

Next, we would like to estimate from below the term on the LHS of (\ref{ab2}) by $(b_{l}-m_{l})$. We note that $\nuNj(Q_{-})=\left(\frac{3}{2}\right)^{N+1}\frac{\theta_{2}}{4(\theta_{2}-\theta_{1})}\nuNj(Q^{-})$ (see (\ref{ab3})) and from the assumption (A) we obtain
\begin{align*}
\jd \nuNj(Q_{-})& = \left(\frac{3}{2}\right)^{N+1}\frac{\theta_{2}}{4(\theta_{2}-\theta_{1})} \jd \nuNj(Q^{-})\\
&\leq  \left(\frac{3}{2}\right)^{N+1}\frac{\theta_{2}}{4(\theta_{2}-\theta_{1})} \nuNj(\{(t,x)\in Q^{-}:\hd v(t,x)\leq m_{l} \}).
\end{align*}
Thus,
\begin{align*}
\left(\frac{2}{3}\right)^{N+1}  \frac{2(\theta_{2}-\theta_{1})}{\theta_{2}}
& \nuNj(Q_{-}) \leq  \nuNj(\{(s,x)\in Q^{-}-\tss:\hd \tv (s,x)\leq m_{l} \})\\
& = \nuNj(\{(s,x)\in Q^{-}-\tss:\hd b_{l}-m_{l} \leq \tw (s,x) \})\\
& =  (b_{l}-m_{l})^{-1}
\int_{ \{(s,x)\in Q^{-}-\tss:\hd b_{l}-m_{l} \leq \tw (s,x) \}  } (b_{l}- m_{l})\, dxds \\
& \leq (b_{l}-m_{l})^{-1}
\int_{ Q^{-}-\tss  } \tw(s,x) \,dxds
\leq  (b_{l}-m_{l})^{-1}
\int_{  Q_{-}  } \tw(s,x)\, dxds,
\end{align*}
provided
\eqq{Q^{-}-\tss \subseteq Q_{-}.}{pop1}
The inclusion (\ref{pop1}) holds iff $\frac{5}{8}\theta_{2}\leq \theta_{1}$. Assuming further that $\theta_{1}$ and $\theta_{2}$ also  satisfy this last condition we  get
\eqq{\jd (b_{l}-m_{l})\leq \left(\frac{3}{2}\right)^{N+1} \frac{\theta_{2}}{4(\theta_{2}-\theta_{1})} \frac{1}{\nuNj(Q_{-})} \int_{Q_{-}} \tw dxds.}{ab4}
Combining (\ref{ab2}) and (\ref{ab4}) we obtain
\[
\jd (b_{l}-m_{l}) \leq  C\left[ b_{l}-\esssup_{\Qdll} v+ 2^{-2l}r^{2} \| \Psi^{-} \|_{L_{\infty}((0,2\tau \vkrl)\times \blrj)}
\right],
\]
so,
\eqq{
\esssup_{\Qdll } v \leq b_{l}+ 2^{-2l}r^{2} \| \Psi^{-} \|_{L_{\infty}((0,\theta\theta_{2} \vkrl)\times \blrj)}  - \frac{1}{2C} (b_{l}-m_{l}),
}{wazne2}
where $C=C(\mu, \Lambda, \nu , N, \theta, \theta_{1}, \theta_{2})$.

%*************************************************************************
\nic{Set
\[
\tilde{Q}^{-}:=\tilde{Q}^{-}(2^{-(l+1)}):=(0,t_{*}-t_{**})\times B_{2^{-(l+1)}r}(x_{1}),
\]
\[
\tilde{Q}^{+}:=\tilde{Q}^{+}(2^{-(l+1)}):=(t_{1}-\theta\overline{\Phi}(2^{-(l+1)}r)-t_{**},t_{1}-t_{**})\times B_{2^{-(l+1)}r}(x_{1})
\]
We apply Theorem \ref{localweakHarnackinhomo} to $\tilde{w}$ in a local box $\hat{Q}:=(0,t_{1}-t_{**})\times B_{2^{-l}r}(x_{1})$ with scaling parameter $\rho = 2^{-l}r$. \\
Explanation to myself: Using definitions of $t_{*}, t_{**}$ our boxes look like
\[
\tilde{Q}^{-}=(0,\theta(\theta_{2}-\theta_{1})\overline{\Phi}(2^{-l}r)) \times B_{2^{-(l+1)}r}(x_{1}),
\]
\[
\tilde{Q}^{+} = (\overline{\Phi}(2^{-l}r)\theta[\theta_{2}-\frac{\overline{\Phi}(2^{-(l+1)}r)}{\overline{\Phi}(2^{-l}r)}],\theta\theta_{2}\overline{\Phi}(2^{-l}r))\times B_{2^{-(l+1)}r}(x_{1}).
\]
From  (\ref{boxdist}) we obtain that
\[
\tilde{Q}^{+} \subset Q^{+}:= (\overline{\Phi}(2^{-l}r)\theta[\theta_{2}-b\theta_{1}],\theta\theta_{2}\overline{\Phi}(2^{-l}r))\times B_{2^{-(l+1)}r}(x_{1}).
\]
%I think that for weak Harnack we have to show that there exists %$A,B>0$ independent of $r$ and $l$ such that
%\[
%\theta(\theta_{2}-\theta_{1}) < A<\theta[\theta_{2}-\frac{\Phi(2^{-(l+1)}r)}{\Phi(2^{-l}r)}] < B< \theta\theta_{2}.
%\]
%The existence of $A$ follows from (\ref{boxdist}). To prove the existence of $B$ it is enough to show that there exists $a \in (0,1)$ independent on $r,l$ such that
\[
\frac{\Phi(2^{-(l+1)}r)}{\Phi(2^{-l}r)} > a.
\]
This is equivalent with
\[
r^{-2}2^{2l}\cdot 4 = k(\Phi(2^{-(l+1)}r)) < k(a\Phi(2^{-l}r)).
\]
We note that
\[
k(a\Phi(2^{-l}r)) \geq \int_{\gmb}^{1}\mg a^{-\al}\Phi(2^{-l}r)^{-\al}d\al \geq a^{-\gmb}\int_{\gmb}^{1}\mg \Phi(2^{-l}r)^{-\al}d\al \geq c(\mu)a^{-\gmb}k(\Phi(2^{-l}r)),
\]
where in the last estimate we used (\ref{estioddolu}). Thus it is enough to have $a$ satisfying
\[
4 < a^{-\gmb} c(\mu), \m{ hence } \frac{1}{a} > (\frac{4}{c(\mu)})^{\frac{1}{\gmb}}.
\]
In the first estimate we make use of assumption (A).
\[
\frac{1}{2}(b_{l}-m_{l}) \leq \frac{1}{|\tilde{Q}^{-}|}\int_{\tilde{Q}^{-}} \tilde{w}dxds
\leq C(essinf_{Q^{+}}\tilde{w} + 4\cdot2^{-2l}r^{2}|\Psi^{-}|_{L_{\infty}(\hat{Q})})
\]
\[
\leq C(essinf_{\tilde{Q}^{+}}\tilde{w} +4\cdot 2^{-2l}r^{2}|\Psi^{-}|_{L_{\infty}(\hat{Q})}) \leq
C(b_{l}-\esssup_{\tilde{Q}^{+}}\tilde{v} + 4\cdot 2^{-2l}r^{2}|\Psi^{-}|_{L_{\infty}(\hat{Q})}).
\]
and thus
\eqq{
\esssup_{\tilde{Q}^{+}}\tilde{v} \leq b_{l}-\frac{1}{2C}(b_{l}-m_{l}) + 4\cdot 2^{-2l}r^{2}|\Psi^{-}|_{L_{\infty}(\hat{Q})}.
}{vsupes}
}

It remains to estimate the terms contained in $\Psi^{-}$, which was defined in (\ref{defPsi}). Using estimate (\ref{history}) we have
\[
H(w)(s,x) = \izj  \frac{\al}{\Gamma(1-\al)} \int_{0}^{\tilde{t}}(s+t_{**}-\tau)^{-\al-1}(b_{l}-v(\tau,x))d\tau\dd
\]
\[
= \izj  \frac{\al}{\Gamma(1-\al)} \int_{0}^{\tilde{t}}(s+t_{**}-\tau)^{-\al-1}[b_{l}-m_{l}+m_{l}-v(\tau,x)]d\tau\dd
\]
\[
\geq -(b_{l}-m_{l})\izj  \frac{\al}{\Gamma(1-\al)} \int_{0}^{\tilde{t}}(s+t_{**}-\tau)^{-\al-1}\left[2\cdot 2^{\kappa}\left(c(\theta)\frac{\ki(t_{1}-\tilde{t})}{\ki(t_{1}-\tau)}\right)^{\frac{\kappa}{2}}-2\right]d\tau \dd
\]
\[
= -(b_{l}-a_{l})\izj  \frac{\al}{\Gamma(1-\al)} \int_{0}^{\tilde{t}}(s+t_{**}-\tau)^{-\al-1}\left[ 2^{\kappa}\left(c(\theta)\frac{\ki(t_{1}-\tilde{t})}{\ki(t_{1}-\tau)}\right)^{\frac{\kappa}{2}}-1\right]d\tau \dd =\podst{p=\frac{t_{1}-\tau}{t
_{1}-\tilde{t}}}{dp = -\frac{1}{t_{1}-\tilde{t}}d\tau}
\]
\[
=-(b_{l}-a_{l})(t_{1}-\tilde{t})\izj  \frac{\al}{\Gamma(1-\al)} \int_{1}^{\frac{t_{1}}{t_{1}-\tilde{t}}}(t_{**}-t_{1}+s+p(t_{1}-\tilde{t}))^{-\al-1}\left[ 2^{\kappa}\left(c(\theta)\frac{\ki(t_{1}-\tilde{t})}{\ki(p(t_{1}-\tilde{t}))}\right)^{\frac{\kappa}{2}}-1\right]dp \dd
\]
\[
\geq -(b_{l}-a_{l})\izj  \frac{\al}{\Gamma(1-\al)} \int_{1}^{\frac{t_{1}}{t_{1}-\tilde{t}}}\left(\frac{t_{**}-t_{1}}{t_{1}-\tilde{t}}+p\right)^{-\al-1}(t_{1}-\tilde{t})^{-\al}[2^{\kappa}(c(\theta)p)^{\frac{ \kappa}{2}}-1]dp\dd,
\]
where in the last inequality we applied for $p \geq 1$
\[
\ki(p(t_{1}-\tilde{t})) = \izj  p^{-\al}(t_{1}-\tilde{t})^{-\al}\dd \geq p^{-1}\ki(t_{1}-\tilde{t}).
\]
We note that $\frac{t_{**}-t_{1}}{t_{1}-\tilde{t}} = -\theta_{2}$ and $t_{1}-\tilde{t} = \theta\overline{\Phi}(2^{-l}r)$. Thus we arrive at
\[
H(w)(s,x) \geq -(b_{l}-a_{l})\izj  \frac{\al}{\Gamma(1-\al)} \int_{1}^{\frac{t_{1}}{\theta\overline{\Phi}(2^{-l}r)}}\left(p-\theta_{2}\right)^{-\al-1}(\theta\overline{\Phi}(2^{-l}r))^{-\al}[2^{\kappa}(c(\theta)p)^{\frac{ \kappa}{2}}-1]dp\dd.
\]
We choose  $\gamma \in (0,\jd)$ such that $\int_{2\gamma}^{1}\ma d\al > 0$. Then we may decompose the integral as follows
\[
\izj  \frac{\al}{\Gamma(1-\al)} \int_{1}^{\frac{t_{1}}{\theta\overline{\Phi}(2^{-l}r)}}
\left(p-\theta_{2}\right)^{-\al-1}(\theta\overline{\Phi}(2^{-l}r))^{-\al}[2^{\kappa}(c(\theta)p)^{\frac{ \kappa}{2}}-1]dp\dd
\]
\[
=\left(\int_{\gamma}^{1} + \int_{0}^{\gamma}\right) \frac{\al}{\Gamma(1-\al)} \int_{1}^{\frac{t_{1}}{\theta\overline{\Phi}(2^{-l}r)}}\left(p-\theta_{2}\right)^{-\al-1}(\theta\overline{\Phi}(2^{-l}r))^{-\al}[2^{\kappa}(c(\theta)p)^{\frac{ \kappa}{2}}-1]dp\dd
=:I_{1}+I_{2}.
\]

In order to estimate $I_{1}$ we apply the inequality
\[
(p-\theta_{2})^{-\al-1}\leq (1- \theta_{2})^{-1} (p-\theta_{2})^{-\gamma-1} \hd \m{ for \hd } \al\in (\gamma, 1), \hd p\geq 1.
\]
Recalling (\ref{znk}), we obtain
\[
I_{1}\leq c(\theta_{2})\int_{\gamma}^{1} \frac{1}{\Gamma(1-\al)} (\theta\overline{\Phi}(2^{-l}r))^{-\al}\dd \int_{1}^{\infty}(p-\theta_{2})^{-\gamma-1}[(4c(\theta)p)^{\frac{ \kappa}{2}}-1]dp
\]
\[
\leq c(\theta_{2},\theta)\ki(\overline{\Phi}(2^{-l}r))\int_{1}^{\infty}(p-\theta_{2})^{-\gamma-1}[(4c(\theta)p)^{\frac{ \kappa}{2}}-1]dp=:\ki(\overline{\Phi}(2^{-l}r)) \ve_{1}(\theta,\theta_{2},\mu,\kappa)
\]
and $ \ve_{1} \rightarrow 0$ as $\kappa \rightarrow 0$ by the dominated convergence theorem.

To estimate $I_{2}$ we first notice that $\overline{\Phi}(r)\leq 1$, because $r\in (0,r_{*})$, hence, from conditions $l\geq l_{0}\geq \tilde{l}\geq 0$ we get  $\overline{\Phi}(2^{-l}r)\leq 1$. Thus, we have
\[
I_{2} \leq c(\theta)(\overline{\Phi}(2^{-l}r))^{-\gamma}\int_{0}^{\gamma} \frac{1}{\Gamma(1-\al)} \int_{1}^{\frac{t_{1}}{\theta\overline{\Phi}(2^{-l}r)}}\left(p-\theta_{2}\right)^{-\al-1}[2^{\kappa}(c(\theta)p)^{\frac{ \kappa}{2}}-1]dp\dd.
\]
We note that for $q\in (0,1)$ there holds
\[
q^{-\gamma} = q^{\gamma}q^{-2\gamma}\left(\int_{2\gamma}^{1}\frac{1}{\Gamma(1-\al)} \dd \right) \left(\int_{2\gamma}^{1}\frac{1}{\Gamma(1-\al)} \dd\right)^{-1}
\]
\[
\leq q^{\gamma} \left(\int_{2\gamma}^{1}q^{-\al}\frac{1}{\Gamma(1-\al)} \dd\right) \left(\int_{2\gamma}^{1}\frac{1}{\Gamma(1-\al)} \dd \right)^{-1}
\leq q^{\gamma} c(\mu)k(q).
\]
Applying this to $q=\overline{\Phi}(2^{-l}r)$ and recalling (\ref{znk}) we have
\[
I_{2} \leq c(\mu,\theta) (\overline{\Phi}(2^{-l}r))^{\gamma} \ki(\overline{\Phi}(2^{-l}r))\int_{0}^{\gamma}\frac{1}{\Gamma(1-\al)} \int_{1}^{\frac{t_{1}}{\theta\overline{\Phi}(2^{-l}r)}}\left(p-\theta_{2}\right)^{-\al-1}[2^{\kappa}(c(\theta)p)^{\frac{ \kappa}{2}}-1]dp\dd.
\]
We note that $t_{1}\leq 2\eta \overline{\Phi}(r)\leq 2\eta$, hence for  $p \leq \frac{2\eta}{\theta \overline{\Phi}(2^{-l}r)}$ we have $\overline{\Phi}(2^{-l}r)^{\gamma} \leq (\frac{2\eta}{\theta})^{\gamma}p^{-\gamma}$. Thus, we arrive at
\begin{align*}
I_{2} & \leq c(\mu,\theta,\eta)\ki(\overline{\Phi}(2^{-l}r))\int_{0}^{\gamma}\frac{1}{\Gamma(1-\al)} \int_{1}^{\frac{2\eta}{\theta\overline{\Phi}(2^{-l}r)}}p^{-\gamma}\left(p-\theta_{2}\right)^{-\al-1}[(4c(\theta)p)^{\frac{ \kappa}{2}}-1]dp\dd \\
& \leq c(\mu,\theta,\eta)\ki(\overline{\Phi}(2^{-l}r))\int_{0}^{\gamma}\frac{1}{\Gamma(1-\al)} \int_{1}^{\infty}\left(p-\theta_{2}\right)^{-\al-\gamma-1}[(4c(\theta)p)^{\frac{ \kappa}{2}}-1]dp\dd\\
& =:\ki(\overline{\Phi}(2^{-l}r)) \ve_{2}(\mu,\gamma,\theta,\theta_{2},\eta,\kappa),
\end{align*}
and $\ve_{2} \rightarrow 0$ as $\kappa \rightarrow 0$ by the dominated convergence theorem.

All in all we see that for $(s,x)\in (0,\theta\theta_{2} \vkrl)\times \blrj$
\eqq{
H(w)(s,x) \geq -(b_{l}-a_{l})k_1(\overline{\Phi}(2^{-l}r)) \ve(\mu,\theta,\theta_{2},\eta,\kappa) \m{ where } \ve \rightarrow 0 \m{ as } \kappa \rightarrow 0.
}{hw}
In view of (\ref{zn1})
\eqq{
4\cdot 2^{-2l}r^{2}\|H(w)^{-}\|_{L_{\infty}((0,\theta\theta_{2} \vkrl)\times \blrj)} \leq (b_{l}-a_{l})\ve(\mu,\theta,\theta_{2},\eta,\kappa).
}{estiHw}

To estimate $\Gf$ we recall that $\Gf(s,x)= G(s+\tss,x)$,\hd  $s\in (0,t_{1}-\tss)$, $x\in \blrj$. We have
\[
\| \Gf \|_{L_{\infty}((0,t_{1}-\tss)\times \blrj)}\leq \| G  \|_{L_{\infty}((\tilde{t},t_{1})\times \blrj)} = \| G \|_{L_{\infty}(Q(2^{-l}r))}.
\]
Having in mind (\ref{vges}) we would like to verify the following inclusion
\eqq{\Qdl=(\tilde{t}, t_{1})\times \blrj \subseteq \left(\frac{\eta}{2}\Pk(r) ,2\eta\Pkr \right)\times B_{2r}(x_{0}).}{zawieranie1}
Since $x_{1}\in B_{r}(x_{0})$ and $l\geq l_{0} \geq \lf \geq 0 $ we get $\blrj\subseteq B_{2r}(x_{0})$. Next, from the condition $t_{1}\in (\eta \Pkr, 2\eta \Pkr)$ we deduce that (\ref{zawieranie1}) holds if
\eqq{\theta\vkrl \leq \frac{\eta}{2} \Pkr.}{zaw2}
The parameters $\theta, \eta>0$ are fixed, so proceeding as earlier we deduce that there exists  $l_{0}\geq \lf$ sufficiently large such that (\ref{zaw2}) holds for $l\geq l_{0}$ and $l_{0}= l_{0}(\eta, \theta)$.
Consequently, from (\ref{vges}) and for $l_{0}$ as above we obtain
\eqq{\| \Gf\|_{L_{\infty} ((0,t_{1}-\tss)\times \blrj) }\leq \frac{1}{2r^{2}}.}{estigf}
\nic{

We recall that $|\tilde{G}|_{L_{\infty}(\hat{Q})} \leq \frac{1}{8r^{2}}$, hence
\[
4\cdot2^{-2l}r^{2}|\tilde{G}|_{L_{\infty}(\hat{Q})} \leq 2^{-2l-1}.
\]}

Concerning the $b_{l}$-term in $\Psi$, we firstly note that $l \geq l_{0}\geq \tilde{l}$ implies that
\[
t_{**}+s \geq \tss = t_{1}-\theta\theta_{2}\overline{\Phi}(2^{-l}r) \geq t_{1}-\theta\theta_{2}\overline{\Phi}(2^{-\tl}r) = \theta\overline{\Phi}(2^{-\tl}r)\left[\frac{t_{1}}{\theta\overline{\Phi}(2^{-\tl}r)}-\theta_{2}\right] \geq \theta\overline{\Phi}(2^{-\tl}r)[1-\theta_{2}],
\]
where the last inequality is a consequence of (\ref{alter2}). Next, $a_{l_{0}}\leq a_{l} \leq b_{l} \leq b_{l_{0}}$ implies $b_{l}\geq -|a_{l_{0}}| $, hence since $k$ is decreasing we may estimate as follows
\eqnsl{
b_{l}k(t_{**}+s) \geq -|a_{l_{0}}|k(t_{**}+s) \geq -|a_{l_{0}}|k\left(\theta\overline{\Phi}(2^{-\tl}r)[1-\theta_{2}]\right) \geq -|a_{l_{0}}| c(\theta,\theta_{2})k(\overline{\Phi}(2^{-\tl}r))
\\
\geq -\jd c(\theta,\theta_{2})k(\overline{\Phi}(2^{-\tl}r))\geq -\jd c(\theta,\theta_{2})\ki(\overline{\Phi}(2^{-\tl}r)) \geq -  \jd c(\theta,\theta_{2})2^{2\tl-2}r^{-2},
}{estibl}
where we used the normalization condition $|a_{l_{0}}| \leq \|v\|_{L_{\infty}(Q_{dom})} \leq \frac{1}{2}$  (see (\ref{vges})) and (\ref{znk}).

Finally, from (\ref{defPsi}), (\ref{estiHw}), (\ref{estibl}) and (\ref{estigf}) we have
\eqq{
2^{-2l}r^{2}\|\Psi^{-}\|_{L_{\infty}((0, \theta\theta_{2} \vkrl )\times \blrj)} \leq (b_{l}-a_{l})\ve(\mu, \theta, \eta, \kappa) + c(\theta,\theta_{2})2^{2(\tl-l)} + 2^{-2l-1}.
}{ab8}
Thus by (\ref{wazne2}) we obtain the following bound on the essential supremum of $v$ on a smaller cylinder
\eqq{
\esssup_{\Qdll}v \leq b_{l}-\frac{1}{4C}(b_{l}-a_{l}) + (b_{l}-a_{l})\ve(\mu, \theta, \eta, \kappa) + c(\theta,\theta_{2})2^{2(\tl-l)} + 2^{-2l-1}=:\beta_{l+1}.
}{estisup1}
We define
\[
a_{l+1} = a_{l}, \hd b_{l+1} = a_{l}+2^{-(l+1-l_{0})\kappa}.
\]
Hence, from (\ref{abclaim}) $b_{l+1}\leq b_{l}$ and  we have
\[
a_{l+1}=a_{l}\leq \essinf_{\Qdl} v \leq \essinf_{\Qdll} v \leq v \hd \m{ on \hd } \Qdll.
\]
Therefore, if $\beta_{l+1}\leq b_{l+1}$, then $v\leq b_{l+1}$ on $\Qdll$ and (\ref{abclaim}) holds for $j=l+1$ in case (A).
We will show that choosing $\kappa$ small enough and $l_{0}$ big enough the inequality $\beta_{l+1}\leq b_{l+1}$ holds. Indeed,
\[
\beta_{l+1} \leq b_{l+1} \iff (b_{l}-a_{l})\left(1-\frac{1}{4C}+\ve(\mu, \theta, \eta, \kappa)\right)+c(\theta,\theta_{2})2^{2(\tl-l)} + 2^{-2l -1} \leq 2^{-(l+1-l_{0})\kappa}.
\]
If we apply the induction hypothesis (\ref{abclaim}) for $j=l$, then the inequality  above is satisfied if
\[
2^{\kappa}\left(1-\frac{1}{4C}+\ve(\mu, \theta, \eta, \kappa)\right)+c(\theta,\theta_{2})2^{\kappa+ 2(\tl-l)+(l-l_{0})\kappa} + 2^{\kappa -2l -1+(l-l_{0})\kappa} \leq 1,
\]
i.e.
\[
2^{\kappa}\left(1-\frac{1}{4C}+\ve(\mu, \theta, \eta, \kappa)\right)+2^{\kappa-(l-l_{0})(2-\kappa) -2l_{0}}\left[ c(\theta,\theta_{2})2^{2\lf} + 2^{-1}\right] \leq 1.
\]
Since $l \geq l_{0}$, $\tilde{l}\leq \gamma(\theta,\eta)$ and $-(l-l_{0})(2-\kappa) \leq 0$ for $\kappa\in (0,2)$ it is enough to show that
\[
2^{\kappa}\left(1-\frac{1}{4C}+\ve(\mu, \theta, \eta, \kappa)\right)+2^{\kappa -2l_{0}}\left[ c(\theta,\theta_{2})2^{2\gamma(\theta,\eta)} + 2^{ -1}\right] \leq 1.
\]
We recall that here $C=C(\mu, \Lambda, \nu, N, \theta,\theta_{1}, \theta_{2})$.
\nic{
Then by induction hypothesis
\[
\beta_{l+1} \leq b_{l+1} \iff (b_{l}-a_{l})(1-\frac{1}{4C}+\ve)+\frac{1}{2}c(\theta,\theta_{2})2^{-2(l-\tl)} + 2^{-2l -1} \leq 2^{-(l+1-l_{0})\kappa}.
\]
\eqq{
\iff 2^{\kappa}(1-\frac{1}{4C}+\ve)+2^{\kappa-(l-l_{0})(2-\kappa)}(\frac{1}{2}c(\theta,\theta_{2})2^{-2(l_{0}-\tl)}+2^{-2l_{0}-1}) \leq 1.
}{lessone}
We note that
\[
2^{\kappa-(l-l_{0})(2-\kappa)}(\frac{1}{2}c(\theta,\theta_{2})2^{-2(l_{0}-\tl)}+2^{-2l_{0}-1}) = 2^{l(\kappa-2)-l_{0}\kappa+\kappa}(\frac{1}{2}c(\theta,\theta_{2})2^{2\tl}+\frac{1}{2})
\]
and
\[
l(\kappa-2)-l_{0}\kappa+2l_{0} = (l-l_{0})(\kappa-2) \leq 0.
\]
Since $l \geq l_{0}$ and $\tilde{l}\leq \gamma(\theta,\eta)$ it is enough to show that
\[
2^{\kappa}(1-\frac{1}{4C}+\ve)+2^{\kappa-2l_{0}}(c(\theta,\theta_{2})2^{2\gamma}+\frac{1}{2}) \leq 1.
\]}
In view of $\ve(\mu, \theta, \eta, \kappa)\rightarrow 0$ as $\kappa \rightarrow 0$ we may choose the para\-meter $\kappa=\kappa(\mu, \Lambda, \nu, N, \theta,\theta_{1}, \theta_{2}, \eta)$ small enough so that the first summand is smaller then $1-\frac{1}{8C}$. Having fixed $\kappa$ we may then choose $l_{0}\geq \tl$ so large that the second summand is smaller then $\frac{1}{8C}$. In this way we arrive at $\beta_{l+1}\leq b_{l+1}$, thus (\ref{abclaim}) is satisfied for $j=l+1$.

In the case (B) we proceed similarly.
Now, we consider $w=v-a_{l}$. Making the same shifts as before we arrive at
\begin{align*}
\partial_s (k*\tilde{w})(s,x) - \divv(\tilde{A}D\tilde{w})(s,x) &\geq H(w)(s,x) - a_{l}k(t_{**}+s) + \tilde{G}(s,x)\\
& =:\Psi(s,x), \hd (s,x) \in (0,t_{1}-t_{**})\times B_{2^{-l}r}(x_{1}).
\end{align*}
We apply Theorem~\ref{localweakHarnackinhomo} with the same parameters and sets to $\tilde{w}$ and we obtain   the analog of~(\ref{ab2})
\eqq{
\frac{1}{\nuNj(Q_{-})} \int_{Q_{-}} \tw dxds \leq  C\left[ \essinf_{\Qdll} v -a_{l}+ 2^{-2l}r^{2} \| \Psi^{-} \|_{L_{\infty}((0,2\tau \vkrl)\times \blrj)}
\right].
}{ab5}
Proceeding as earlier,  from the assumption (B) we obtain
\[
\jd \nuNj(Q_{-})= \left(\frac{3}{2}\right)^{N+1}\frac{\theta_{2}}{4(\theta_{2}-\theta_{1})} \jd \nuNj(Q^{-})\leq \left(\frac{3}{2}\right)^{N+1} \frac{\theta_{2}}{4(\theta_{2}-\theta_{1})}  \nuNj(\{(t,x)\in Q^{-}:\hd v(t,x)> m_{l} \}).
\]
Thus,
\begin{align*}
 \left(\frac{2}{3}\right)^{N+1} \frac{2(\theta_{2}-\theta_{1})}{\theta_{2}} & \nuNj(Q_{-}) \leq  \nuNj(\{(s,x)\in Q^{-}-\tss:\hd \tv (s,x)> m_{l} \})\\
& = \nuNj(\{(s,x)\in  Q^{-}-\tss:\hd m_{l}-a_{l} < \tw (s,x) \})\\
& =  (m_{l}-a_{l})^{-1}
\int_{ \{(s,x)\in Q^{-}-\tss:\hd m_{l}-a_{l} < \tw (s,x) \}  } (m_{l}-a_{l}) dxds \\
& \leq   (m_{l}-a_{l})^{-1}
\int_{  Q^{-}-\tss  } \tw(s,x)\, dxds
\leq   (m_{l}-a_{l})^{-1}
\int_{ Q_{-}  } \tw(s,x) \,dxds,
\end{align*}
where in the last inequality we applied (\ref{pop1}). Consequently, we get
\eqq{\jd (m_{l}-a_{l})\leq \left(\frac{3}{2}\right)^{N+1}\frac{\theta_{2}}{4(\theta_{2}-\theta_{1})} \frac{1}{\nuNj(Q_{-})} \int_{Q_{-}} \tw dxds.}{ab6}
From (\ref{ab5}) and (\ref{ab6}) we obtain
\[
\frac{1}{2}(m_{l}-a_{l}) \leq   C\left[ \essinf_{\Qdll} v -a_{l}+ 2^{-2l}r^{2} \| \Psi^{-} \|_{L_{\infty}((0,2\tau \vkrl)\times \blrj)}
\right],
\]
where $C=C(\mu,\Lambda, \nu, N, \theta,\theta_1,\theta_2)$. Thus,
\eqq{
   a_{l} + \frac{1}{2C}(m_{l}-a_{l}) - 2^{-2l}r^{2} \| \Psi^{-} \|_{L_{\infty}((0,2\tau \vkrl)\times \blrj)} \leq  \essinf_{\Qdll} v .
}{ab7}
Proceeding as in case $(A)$, we obtain the same estimate of the function $\Psi^{-}$ as in the previous case and we arrive at (\ref{ab8}). Hence,
\eqq{
\al_{l+1}:= a_{l} + \frac{1}{4C}(b_{l}-a_{l}) -  (b_{l}-a_{l})\ve(\mu, \theta, \eta, \kappa) - c(\theta,\theta_{2})2^{2(\tl-l)} - 2^{-2l-1} \leq \essinf_{\Qdll} v .
}{ab9}
In this case we set $b_{l+1}=b_{l}$ and $a_{l+1} = b_{l}-2^{-(l+1-l_{0})\kappa}$. Then using (\ref{abclaim}) for $j=l$ we get
\[
a_{l} = b_{l} - 2^{-(l-l_{0})\kappa} \leq b_{l} - 2^{-(l+1-l_{0})\kappa}= a_{l+1}
\]
and
\[
b_{l+1}-a_{l+1}= 2^{-(l+1-l_{0})\kappa}.
\]
Furthermore,
\[
v\leq \esssup_{\Qdll}{ v }\leq \esssup_{\Qdl} v \leq b_{l+1}=b_{l} \hd \m{ on } \hd \Qdll.
\]
Thus, if we show that $a_{l+1}\leq \al_{l+1}$, then  $a_{l+1}\leq v$ on $\Qdll$ and (\ref{abclaim}) holds for $j=l+1$ in case (B).  We note that
\[
a_{l+1} \leq \al_{l+1} \iff (b_{l}-a_{l})\left(1-\frac{1}{4C}+\ve(\mu, \theta, \eta, \kappa)\right)+c(\theta,\theta_{2})2^{2(\tl-l)} + 2^{-2l -1} \leq 2^{-(l+1-l_{0})\kappa},
\]
thus we obtain the same condition as in case (A). Proceeding further as in case (A) we deduce that (\ref{abclaim}) is satisfied for $j=l+1$ in case (B).

By the induction principle,  property  (\ref{abclaim}) holds for all $j \in \mathbb{Z}$ and
\eqq{
\eosc_{Q_{dom}\cap Q(2^{-j})} v \leq 2^{-(j-l_{0})\kappa} \hd \m{ for } \hd j\in \mathbb{Z}.
}{osc1}
Recalling that $u=2D v$ we get
\[
\eosc_{Q_{dom}\cap Q(2^{-j})}{u} \leq 2D 2^{-(j-l_{0})\kappa} = C_{H}2^{-j\kappa}D,
\]
where we denoted $C_{H}:=2^{l_{0}\kappa+1}$.
Hence, we arrive at the following oscillation estimate
\eqq{
\eosc_{ Q(2^{-j})}{u} \leq C_{H}2^{-j\kappa}D \hd \m{ for \hd }  j \geq\tl.
}{osc2}
We will make this estimate continuous by standard argument. We define the cylinders
\[
\tQ(\rho r) = (t_{1}-\theta\overline{\Phi}(\rho r),t_{1}) \times B(x_{1},\rho r) \hd \m{ for } \hd  \rho \in (0,\rho_{0}), \hd \rho_{0} = 2^{-\gamma(\theta,\eta)},
\]
where  $\gamma(\theta,\eta)$ comes from (\ref{ltildees}). Then there exists $j_{*} \geq \tilde{l}$ such that $2^{-(j_{*}+1)} < \rho \leq 2^{-j_{*}}$. Then from (\ref{osc2}) we get
\[
\eosc_{\tQ(\rho r)} u \leq \eosc_{Q(2^{-j_{*}} r)} u  \leq C_{H}2^{-j_{*}\kappa}D \leq C_{H}2^{\kappa}\rho^{\kappa}D=:\tilde{C}\rho^{\kappa}D,
\]
where $\tilde{C}=\tilde{C}(\mu, \Lambda, \nu, N, \eta, \theta)$. Let $\tilde{\rho} = \rho r$ and recall that $D$ were defined in (\ref{osc3}).  Then for every $(t_{1},x_{1}) \in (\eta\bvr,2\eta\bvr) \times B(x_{0},r)$ we have
\[
\eosc_{\tQ(\tilde{\rho})} u\leq \tilde{C}\left(\frac{\tilde{\rho}}{r}\right)^{\kappa}\left(\|u\|_{L_{\infty}((0,2\eta\bvr)\times B_{2r}(x_{0}))} + 4 r^{2}\|F\|_{L_{\infty}((\frac{\eta}{2}\bvr,2\eta\bvr)\times B_{2r}(x_{0}))}\right).
\]
We note that
\[
r^{2}\|F\|_{L_{\infty}((\frac{\eta}{2}\bvr,2\eta\bvr)\times B_{2r}(x_{0}))} \leq r^{2}\|f\|_{L_{\infty}((\frac{\eta}{2}\bvr,2\eta\bvr)\times B_{2r}(x_{0}))} + r^{2}\|u_{0}\|_{L_{\infty}(B(x_{0},2r))}k\left(\frac{\eta}{2}\bvr \right)
\]
\[
\leq r^{2}\|f\|_{L_{\infty}((\frac{\eta}{2}\bvr,2\eta\bvr)\times B_{2r}(x_{0}))} + c(\eta)\|u_{0}\|_{L_{\infty}(B(x_{0},2r))},
\]
where we used the estimate
\[
r^{2}k\left(\frac{\eta}{2}\bvr \right)  \leq c(\eta )r^{2}\ki\left(\vr \right) =c(\eta).
\]
To sum-up, we have proved that there exists $r^{*}=r^{*}(\mu )>0$ such that  for each  $r\in (0,r^{*})$, $\eta>0$ and $\theta>0$ there exists $\gamma=\gamma(\eta, \theta)>0$,   $\tilde{C}=\tilde{C}(\mu, \theta, \eta, \Lambda, \nu, N)$ and $\kappa=\kappa (\mu, \theta, \eta, \Lambda, \nu, N)$ such that for every weak solution of (\ref{osc4}), for every $(t_{1},x_{1})\in (\eta\bvr, 2\eta\bvr)\times B(x_{0},r)$
\eqnsl{\eosc_{(t_{1}- \theta \Pkro, t_{1} )\times B(x_{1},\rok)}{u} & \\
\leq \tilde{C}\left(\frac{\tilde{\rho}}{r}\right)^{\kappa}(\|u\|_{L_{\infty}((0,2\eta\bvr)\times B_{2r}(x_{0}))} +& r^{2}\|f\|_{L_{\infty}((\frac{\eta}{2}\bvr,2\eta\bvr)\times B_{2r}(x_{0}))} + \|u_{0}\|_{L_{\infty}(B(x_{0},2r))}),}{osc5}
where $\rok\in (0,\rho_{0})$, $\rho_{0}= 2^{-\gamma(\eta, \theta)}$.

In order to deduce from the oscillation estimate the H\"older continuity of weak solution we establish the following proposition.
\begin{prop}\label{estiprop}
Let $\gmb\in (0,1)$ be such that (\ref{intmugk}) is satisfied. If we denote
\[
c_{\mu}:=\min\left\{\left(\int_{\gmb}^{1} \dd \right)^{1/\gmb}, 1  \right\},
\]
then $c_{\mu} r^{\frac{2}{\gmb}}\leq \vr$ for $r\in (0,1)$.
\end{prop}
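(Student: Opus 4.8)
The plan is to translate the desired inequality $c_{\mu}r^{2/\gmb}\le\Phi(r)$ into a statement about $k_{1}$ and then verify it by a one-line estimate. Since $\Phi$ is defined only implicitly through $k_{1}(\Phi(r))=r^{-2}$ (Lemma \ref{fi}), and $k_{1}$ is \emph{strictly decreasing} on $(0,\infty)$, proving $c_{\mu}r^{2/\gmb}\le\Phi(r)$ is equivalent to proving the reversed inequality for $k_{1}$, namely
\[
k_{1}\!\left(c_{\mu}r^{2/\gmb}\right)\ \ge\ k_{1}(\Phi(r))\ =\ r^{-2}.
\]
So the whole proof reduces to establishing this lower bound on $k_{1}(c_{\mu}r^{2/\gmb})$ for $r\in(0,1)$.

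First I would observe that the argument $x:=c_{\mu}r^{2/\gmb}$ lies in $(0,1]$: indeed $r\in(0,1)$ gives $r^{2/\gmb}\in(0,1)$, and $c_{\mu}\le 1$ by its definition, so $x\le 1$. Because $x\le 1$, the map $\al\mapsto x^{-\al}=e^{-\al\ln x}$ is nondecreasing in $\al$ (as $-\ln x\ge 0$). Hence, restricting the integral defining $k_{1}$ to $[\gmb,1]$ and using $x^{-\al}\ge x^{-\gmb}$ for $\al\in[\gmb,1]$,
\[
k_{1}(x)=\int_{0}^{1}x^{-\al}\,d\mu(\al)\ \ge\ \int_{\gmb}^{1}x^{-\al}\,d\mu(\al)\ \ge\ x^{-\gmb}\int_{\gmb}^{1}d\mu(\al)
= c_{\mu}^{-\gmb}\,r^{-2}\int_{\gmb}^{1}d\mu(\al).
\]

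Finally I would use the definition $c_{\mu}=\min\{(\int_{\gmb}^{1}d\mu(\al))^{1/\gmb},1\}$, which in particular gives $c_{\mu}^{\gmb}\le\int_{\gmb}^{1}d\mu(\al)$, hence $c_{\mu}^{-\gmb}\int_{\gmb}^{1}d\mu(\al)\ge 1$. Combining with the previous display yields $k_{1}(c_{\mu}r^{2/\gmb})\ge r^{-2}=k_{1}(\Phi(r))$, and then the strict monotonicity of $k_{1}$ from Lemma \ref{fi} forces $c_{\mu}r^{2/\gmb}\le\Phi(r)$, which is the claim. There is no real obstacle here; the only points requiring a little care are checking that $c_{\mu}r^{2/\gmb}\le 1$ (so that the direction of the exponent inequality $x^{-\al}\ge x^{-\gmb}$ is correct) and invoking the correct monotonicity direction of $k_{1}$ when passing back from $k_{1}$ to $\Phi$. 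Note also that $\int_{\gmb}^{1}d\mu(\al)>0$ by \eqref{intmugk}, so $c_{\mu}>0$ and the bound is non-trivial.
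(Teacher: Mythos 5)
Your argument is correct and is essentially identical to the paper's proof: both reduce the claim to $k_{1}(c_{\mu}r^{2/\gmb})\ge r^{-2}$ via the monotonicity of $k_{1}$, restrict the integral to $[\gmb,1]$, and use that for $x:=c_{\mu}r^{2/\gmb}\le 1$ one has $x^{-\al}\ge x^{-\gmb}=c_{\mu}^{-\gmb}r^{-2}$ on that range, finishing with $c_{\mu}^{\gmb}\le\int_{\gmb}^{1}d\mu(\al)$. The paper merely writes the bound $x^{-\al}\ge x^{-\gmb}$ as the two separate inequalities $c_{\mu}^{-\al}\ge c_{\mu}^{-\gmb}$ and $r^{-2\al/\gmb}\ge r^{-2}$, which is the same computation.
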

\begin{proof}
It is enough to show that $r^{-2}=\ki(\vr)\leq \ki(c_{\mu} r^{\frac{2}{\gmb}})$. To obtain this estimate we note that
\[
\ki(c_{\mu} r^{\frac{2}{\gmb}}) = \izj  c_{\mu}^{-\al}r^{-\frac{2\al}{\gmb}} \dd \geq \int_{\gmb}^{1}
 c_{\mu}^{-\al}r^{-\frac{2\al}{\gmb}} \dd \geq c_{\mu}^{-\gmb}r^{-2} \int_{\gmb}^{1}
\dd \geq r^{-2}.
\]
\end{proof}
Finally, using the above proposition we get
\eqq{\eosc_{(t_{1}- \theta c_{\mu} 2^{\frac{2}{\gmb}} {\rok}^{\frac{2}{\gmb}}, t_{1} )\times B(x_{1},\rok)}{u} \leq \eosc_{(t_{1}- \theta \Pkro, t_{1} )\times B(x_{1},\rok)}{u},}{osc6}
hence (\ref{osc5}) implies
\[
\sup_{\begin{array}{l} x_{1}\in B(x_{0},r) \\ t_{1} \in (\eta \bvr, 2 \eta \bvr)  \end{array}} \esssup_{\begin{array}{l} |t_{1}-t_{2}|< \theta c_{\mu} (2^{1-\gamma(\eta, \theta)})^{\frac{2}{\gmb}}  \\ |x_{1}-x_{2}|<2^{-\gamma(\eta, \theta)}\end{array}} \frac{|u(t_{1},x_{1})-u(t_{2},x_{2}) |}{\left(|t_{1}-t_{2}|^{\frac{\gmb}{2}}+|x_{1}-x_{2}| \right)^{\kappa} }
\]
\[
\leq \tilde{C}\left(\frac{1}{r}\right)^{\kappa}(\|u\|_{L_{\infty}((0,2\eta\bvr)\times B_{2r}(x_{0}))} + r^{2}\|f\|_{L_{\infty}((\frac{\eta}{2}\bvr,2\eta\bvr)\times B_{2r}(x_{0}))} + c(\eta)\|u_{0}\|_{L_{\infty}(B(x_{0},2r))}),
\]
where $\tilde{C}=\tilde{C}(\mu, \theta, \eta, \Lambda, \nu, N)$. Then, by a standard argument we deduce the H\"older continuity of weak solutions on the set $(\eta\bvr, 2\eta\bvr)\times B(x_{0},r)$.

To finish the proof of Theorem~\ref{holder}, for given subset $V\subset \Om_{T}$ separated from the parabolic boundary of $\Om_{T}$ it is enough to choose a finite  covering of $V$ consisting of sets of the form $(t_m+\eta\bvr, t_m+2\eta\bvr)\times B(x_{n},r)=:Q_{n,m}$, where $r\in (0,r^{*})$ and $\eta$ are sufficiently small. Then for $(t,x) \in Q_{n,m}$ we introduce the shifted time $s=t-t_m$ and set $\tilde{g}(s) = g(s+t_m)$ for $s \in (\overline{\Phi}(r)\eta,2\overline{\Phi}(r)\eta)$. Then $\tilde{u}(s,x)  = u(s+t_m,x)$ is a weak solution to
\eqq{
\partial_s (k*(\tilde{u}-u_0))(s,x) + \divv (\tilde{A}(s,x)D\tilde{u}(s,x)) = \tilde{f}(s,x) + \int_{0}^{t_m}\dot{k}(s+t_m-\tau)(u(\tau,x)-u_0(x))d\tau,
}{osc7}
even though $u_0$ is not the natural initial data for $\tilde{u}$ at $s=0$.
Since $u$ is bounded, the right-hand-side of (\ref{osc7}) is bounded on $Q_{n,m}$ and
\[
\norm{\int_{0}^{t_m}\dot{k}(\cdot+t_m-\tau)(u(\tau,\cdot) - u_0(x))d\tau }_{L_{\infty}((\frac{\eta}{2}\overline{\Phi}(r), 2\eta \overline{\Phi}(r))\times B(x_{n}, 2r) )}
\]
\[
\leq (\norm{u}_{L_{\infty}(\Omega_{T})} + \norm{u_0}_{L_{\infty}(\Omega)})k\left(\overline{\Phi}(r)\frac{\eta}{2}\right) \leq c(\eta)(\norm{u}_{L_{\infty}(\Omega_{T})} + \norm{u_0}_{L_{\infty}(\Omega)}) r^{-2}.
\]
Thus, $u$ is H\"older continuous on each  $Q_{n,m}$, and hence $u$ is H\"older continuous on $V$ and the estimate (\ref{holderkoniec}) holds.

%*******************************************************************

\end{document}